\newcommand*{\doublerightarrow}[2]{\mathrel{
  \settowidth{\@tempdima}{$\scriptstyle#1$}
  \settowidth{\@tempdimb}{$\scriptstyle#2$}
  \ifdim\@tempdimb>\@tempdima \@tempdima=\@tempdimb\fi
  \mathop{\vcenter{
    \offinterlineskip\ialign{\hbox to\dimexpr\@tempdima+1em{##}\cr
    \rightarrowfill\cr\noalign{\kern.5ex}
    \rightarrowfill\cr}}}\limits^{\!#1}_{\!#2}}}
\numberwithin{equation}{section}
\newtheorem{theorem}{Theorem}[subsection]
\newtheorem{lemma}[theorem]{Lemma}
\newtheorem{corollary}[theorem]{Corollary}
\newtheorem{proposition}[theorem]{Proposition}
\theoremstyle{definition}
\newtheorem{definition}[theorem]{Definition}
\newtheorem{example}[theorem]{Example}
\newtheoremstyle{remark}
{2pt}
{5pt}
{}
{}
{\itshape}
{:}
{.5em}
{}
\newtheorem{remark}[theorem]{Remark}
\newcommand{\git}{/\!\!/}
\newcommand{\cond}[1]{&\text{if }#1}
\newcommand*{\SHOWPROOFS}{} 
\renewcommand{\tilde}{\widetilde}
\renewcommand{\hat}{\widehat}
\def\vdef{\ar@{}[d]|{\overset{\cdot\cdot}{||}}}
\title{Quasimap wall-crossing for GIT quotients}
\thanks{The author is supported by the Simons Collaboration Grant for Mathematicians and the Center
  of Mathematical Sciences and Applications, Harvard University.}
\address{Center of Mathematical Sciences and Applications, Harvard University,
  20 Garden Street,
  Cambridge, MA 02143, USA}
\email{yangzhou@cmsa.fas.harvard.edu}
\author{Yang Zhou}
\date{}
\begin{document}
\maketitle
\setcounter{tocdepth}{1}
\begin{abstract}
  In this paper, we prove a wall-crossing formula for $\epsilon$-stable
  quasimaps to GIT quotients conjectured by
  Ciocan-Fontanine and Kim, for all targets in all genera, including the orbifold
  case. We prove that stability conditions in adjacent chambers give equivalent invariants, provided
  that both chambers are stable. In the case of genus-zero quasimaps with one
  marked point, we compute the invariants in the left-most stable
  chamber in terms of the small $I$-function. Using this we prove that the
  quasimap $J$-functions are on the Lagrangian cone of the Gromov--Witten
  theory.
  The proofs are based on virtual
  localization on a master space, obtained via some universal construction on
  the moduli of weighted curves.
  The fixed-point loci are in one-to-one
  correspondence with the terms in the wall-crossing formula.
\end{abstract}
\tableofcontents
\section{Introduction}
In this section we briefly review the theory of quasimaps to GIT quotients and
state the main theorems. We refer the reader to
\cite{ciocan2014stable, cheong2015orbifold} for more details.

\subsection{Overview}
For a smooth projective variety $X$, the Gromov--Witten theory studies the
moduli space $\overline{\mathcal M}_{g,n}(X,d)$ of degree-$d$ stable maps into
$X$ from an $n$-marked nodal curve of genus $g$. This moduli space is a proper
Deligne--Mumford stack and carries a virtual fundamental class. The
Gromov--Witten invariants are defined by integrating certain cohomology classes
against this virtual fundamental class.
Let ${\mathcal M}_{g,n}(X,d)\subset \overline{\mathcal M}_{g,n}(X,d)$ be
the substack where the domain curve is smooth. In some cases, there are other
natural compactifications of ${\mathcal M}_{g,n}(X,d)$ that produce other
invariants, which are closely related to the Gromov--Witten invariants.

For a large class of GIT quotients of affine varieties,
the theory of $\epsilon$-stable quasimaps to them were developed in
\cite{ciocan2014stable, cheong2015orbifold}, unifying and generalizing many
previous constructions
\cite{mustacta2007intermediate,ciocan2010moduli,marian2011moduli,
  toda2011moduli}.
Examples of those targets include smooth
complete intersections in
toric Deligne--Mumford stacks, type-$A$ flag varieties and Nakajima quiver
varieties.
The theory depends on a stability parameter $\epsilon\in
\mathbb Q_{> 0}$.
The space of stability conditions has a wall-and-chamber
structure,  with only finitely many walls once the degree of the quasimap is
fixed. Roughly speaking, as $\epsilon$ gets larger, the domain curve is allowed
to ``bubble out'' more
rational components and the quasimap is closer to being a map. We write
$\epsilon = 0^+$ (resp. $\epsilon = \infty$) for the chamber where $\epsilon$
is sufficiently small (resp. large).
For $\epsilon
= \infty$, one obtains the Gromov--Witten
theory (i.e.\ stable maps); for $\epsilon = 0^+$, one obtains the theory of
stable quotients in the case of Grassmannians \cite{marian2011moduli}.
The goal of this paper is to compare the invariants
from different chambers.
\subsubsection{Motivations and applications}
A main motivation for the quasimap theory is the Mirror Conjecture. At
least for the quintic threefold, the generating function of $0^+$-stable
quasimap invariants is precisely the holomorphic limit of the $B$-model partition function in
\cite{BERSHADSKY1993279} (c.f.\ \cite[\S1.5]{ciocan2020quasimap}).
Using quasimap wall-crossing, the holomorphic anomaly equations for the
formal quintic and some other examples have been
proven in \cite{lho2018stable}.

A second motivation is that the $0^+$-stable quasimap invariants can be easier
to compute in some cases, due to simpler domain curves. It has been used to prove the genus-$1$ and genus-$2$ mirror
theorems for the quintic threefold \cite{kim2018mirror, guo2017mirror} (c.f.\
\cite{cooper2014mirror}). The situation is especially nice when the genus is
$0$. In that case we consider the invariants with one marked point.
For $\epsilon$ in the left-most stable chamber, the
domain curves are irreducible, and thus the
moduli spaces are much simpler. Those invariants can be easily computed in terms
of the $I$-function. 
We hope this will be helpful for
computing the genus-$0$ invariants when the standard localization technique
fails. In the ongoing project \cite{orbifold-GW}, we compute all the genus-$0$ Gromov--Witten invariants for some 
$3$-dimensional Calabi--Yau complete intersections in weighted projective spaces, to which
the Quantum Lefschetz Theorem does not apply.

A third motivation is the Landau--Ginzburg/Calabi--Yau
(LG/CY) correspondence. For the
quintic threefold, the LG side (Fan--Jarvis--Ruan--Witten (FJRW) theory) and the CY
side (Gromov--Witten theory)
can be realized as two phases of the same Gauged Linear Sigma Model
(GLSM) \cite{witten1993phases, fan2017mathematical}. The theory of quasimaps naturally
interpolates between the two phases, where the FJRW theory corresponds to
$\epsilon \to -\infty$. It is expected that the two phases are more closely
related near $\epsilon = 0$. This has been worked out in genus $0$  and genus
$1$ \cite{guo2016genus, ross2017wall}.

Finally, quasimap invariants naturally appear in the GLSM for
more general targets \cite{fan2017mathematical}. Using quasimaps
instead of stable maps gives more flexibility in the compactification of the
moduli spaces.

\subsubsection{Relation to others' work}
An explicit wall-crossing formula in all genera
was conjectured in \cite{ciocan2017higher,
ciocan2020quasimap} for non-orbifold targets. It has been proven in the
following cases.
\begin{enumerate}
\item
  It has been proven for targets with a good torus action
  \cite{ciocan2014wall,
    cheong2015orbifold, ciocan2017higher}. This also
  includes the twisted theories for those targets. In particular, it includes the genus-$0$
  theory of the vanishing locus of a section of a convex equivariant vector bundle.
\item
  It has been proven in all genera for complete intersections in projective spaces
  \cite{ciocan2020quasimap, clader2017higher}.
\item
  During the preparation of this paper, the author learned that Jun Wang independently proved
  \cite{wang2019mirror} the genus-$0$
  wall-crossing formula for hypersurfaces in toric
  stacks for which convexity can fail,
  and used that to prove a mirror theorem.\footnote{The wall-crossing formula of
    Jun Wang was only proved for
    those targets. But his method potentially generalizes to other targets. His
    method is different from the method in this paper.}
\end{enumerate}
Motivated by the LG/CY correspondence, there are also
similar wall-crossing results on the LG side \cite{clader2017higherLG,
  zhou2020higher}. The method in this paper also works in the  setting of K-theoretic
Gromov--Witten invariants. We will prove the K-theoretic wall-crossing formula
and its corollaries in \cite{k-theory-wall-crossing}. 

\subsubsection{Summary of the results}
In this paper we prove the wall-crossing formula for all targets and in all
genera (Theorem \ref{thm:Chow-version}). In genus $0$, we will also compute in
terms of the $I$-function the
invariants with one marked point, for $\epsilon$ in the left-most stable chamber
(Theorem~\ref{thm:genus-0-special-case}). Combining those, we prove that the
quasimap $J$-function lies on the Lagrangian cone of the Gromov--Witten
theory (Theorem~\ref{thm:big-J-general}), recovering and generalizing the
classical mirror theorem \cite{giventalequiv, lian1997mirror}.

\subsection{The target space}
Consider a ``stacky'' GIT quotient
\[
  X = [W^{\mathrm{ss}}(\theta)/G]
\]
where
\begin{itemize}
\item
  $W$ is an affine variety with at worst local complete intersection singularities,
\item
  $G$ is a reductive group acting on $W$ (from the right),
\item
  $\theta$ is character of $G$ such that the $\theta$-stable locus
  $W^{\mathrm{s}}(\theta)$ is smooth, nonempty, and coincides with the
  $\theta$-semistable locus $W^{\mathrm{ss}}(\theta)$.
\end{itemize}
Replacing $\theta$ by a multiple of itself does not change the quotient.
\footnote{It changes the $\epsilon$-stability condition to be introduced below in a simple
  way: $\epsilon$-stable for $(W,G,k \theta)$ is equivariant to
  $(k\epsilon)$-stable for $(W,G,\theta)$, $k\in \mathbb Z_{>0}$.
  See \cite[\textsection2.1]{ciocan-fontanine2016}.}
Hence, without loss of generality we further
assume that the ring
\[
  \bigoplus_{m=0}^\infty H^0(W,\mathcal O_W(m\theta))^G
\]
of invariants
is generated by $H^0(W,\mathcal O_W(\theta))^G$ as an $H^0(W,\mathcal
O_W)^G$-algebra. Using a basis for $H^0(W,\mathcal O_W(\theta))^G$ of size $N+1$
we form the map
\begin{equation}
  \label{eq:embedding-of-quotient}
  {[W/G]} \longrightarrow  [\mathbb C^{N+1}/\mathbb C^*].
\end{equation}
The preimage of the origin in $\mathbb C^{N+1}$ is precisely
$[W^{{us}}(\theta)/G]$, where 
$W^{{us}}(\theta)$ is the 
the $\theta$-unstable locus.
Taking the stable locus, \eqref{eq:embedding-of-quotient} induces a closed embedding
\[
  \underline{X} \longrightarrow \mathbb P^N \times W \git_{0} G,
\]
where $\underline{X}$ is the coarse moduli of $X$, and $W \git_{0} G =
\operatorname{Spec} H^0(W,\mathcal O_W)^G$ is the affine quotient. Thus $X$ is a
smooth proper Deligne--Mumford stack over $W\git_0 G$.

The $G$-equivariant line bundle $\mathcal
O_W(\theta)$ descents to
$L_{\theta}\in \mathrm{Pic}([W/G])$. The restriction of $L_{\theta}$ to $X$ is
equal to the pullback of $\mathcal O_{\mathbb P^N}(1)$. In
particular, for any closed point $x\in X$, the stabilizer $G_x$ of $x$
acts trivially on $L_{\theta}|_{x}$. We refer to $L_{\theta}$ as the
polarization on $[W/G]$ or on $X$.
\subsection{Twisted curves with trivialized gerbe markings}

Throughout this paper, a curve without further specification will be a
twisted curve with balanced nodes and trivialized gerbe markings (see e.g.\
\cite[\textsection4]{abramovich2008gromov}).
Thus a marking on a family of curves
$\pi:\mathcal C\to S$ is a closed substack $\Sigma\subset \mathcal C$ contained
in the relative smooth locus, together with a
section $S\to \Sigma$ of $\pi|_\Sigma$, such that $\Sigma \to S$ is a gerbe
banded by some $\mu_{r}$.
Note that this is slightly different from
the convention in \cite[\textsection2.5.2]{cheong2015orbifold}, where the section is
absent. 
See Section~\ref{sec:comparison-gerbe-markings} and
\cite[\textsection6.1.3]{abramovich2008gromov} for the comparison.

\subsection{The moduli of $\epsilon$-stable quasimaps}
A quasimap
\[
  (C,x_1 ,\ldots, x_n, u)
\]
to $X$ consists of a curve $(C,x_1 ,\ldots, x_n)$ with $n$ marked
points (or markings for short), together with a representable morphism
\[
  u: C \longrightarrow [W/G]
\]
such that $u^{-1}[W^{\mathrm{us}}/G]\subset C$ is finite and disjoint from all the nodes
and marked points. Rigorously speaking, the notion of quasimaps depends not only
on $X$ but also on the stack $[W/G]$, etc. But we will just say quasimaps to
$X$, following \cite{ciocan2014stable}. A point $y\in C$ is called a base point
if $y\in u^{-1}[W^{\mathrm{us}}/G]$. The length of a base point $y$, denoted by
$\ell(y)$, is defined to be the length at $y$ of the subscheme
$u^{-1}[W^{\mathrm{us}}/G]$.

The curve class of a quasimap $u$ is the group homomorphism
\[
  \beta: \mathrm{Pic}([W/G]) \longrightarrow \mathbb Q
\]
defined by $\beta(L) = \deg(u^*L)$, for any $L\in \mathrm{Pic}([W/G])$. A group
homomorphism $\beta: \mathrm{Pic}([W/G]) \to \mathbb Q$ is called an effective
curve class if it is the curve class of some $u$. The set of effective curve
classes is denoted by $\mathrm{Eff}(W,G,\theta)$. The degree of $u$, or the
degree of $\beta$, is defined to be
\[
  \deg(\beta) := \beta(L_{\theta}) \in \mathbb Z.
\]
We have suppressed $\theta$ from the notation $\deg(\beta)$, since $\theta$ will be fixed
throughout this paper. Note that $\deg(\beta)$ is indeed an integer since we
have assumed that $L_\theta$ has no monodromy at orbifold markings, by raising
$\theta$ to a multiple of itself.

We adopt the following short-hand notation, which will be convenient in complicated
formulas:
\begin{itemize}
\item $\beta\geq 0$ means $\beta\in \mathrm{Eff}(W,G,\theta)$,
\item $\beta>0$ means $\beta\geq 0$ and $\beta\neq 0$.
\end{itemize}
For $\epsilon\in \mathbb Q_{>0} \cup \{0^+,\infty\}$, a quasimap $(C,x_1
,\ldots, x_n, u)$ is called $\epsilon$-stable if
\begin{enumerate}
\item
  for each base point $y\in C$, we have
  $\ell(y)<1/\epsilon$,
\item
  the $\mathbb Q$-line bundle $(u^*L_\theta)^{\otimes \epsilon}\otimes \omega_{C,\log}$
  is positive, in the sense that it has a positive degree on each irreducible
  component of $C$, where $\omega_{C,\log} := \omega_{C}(\sum_{i} x_i)$ is the
  log dualizing sheaf.
\end{enumerate}

It is straightforward to define families of quasimaps over any scheme $S$. We
denote by $Q^{\epsilon}_{g,n}(X,\beta)$ the moduli of genus-$g$
$\epsilon$-stable quasimaps to $X$ of curve class $\beta$ with $n$ marked
points. It is a Deligne--Mumford stack, proper over $W\git_0 G$, with a perfect
obstruction theory\cite[Theorem~2.7]{cheong2015orbifold}, which induces a
virtual fundamental class $[Q^{\epsilon}_{g,n}(X,\beta)]^{\mathrm{vir}}$.
Throughout this paper, whenever the moduli space is empty, the virtual
fundamental class is understood as $0$.

The space $\mathbb Q_{\geq 0}\cup \{0^+,\infty\}$ of stability conditions is
divided into chambers by the walls $\{1/d\mid d\in \mathbb Z_{>0}\}$. When
$\epsilon$ varies within a chamber, the moduli space and its virtual fundamental
class do not change. When a wall is crossed, the change of the invariants will
be given by a wall-crossing formula (Theorem~\ref{thm:Chow-version}).

\subsection{The state space and quasimap invariants}
\label{sec:state-space-and-invariants}
Let
\[
  I_{\mu} X = \coprod_r I_{\mu_r} X
\]
be the cyclotomic inertia stack of $X$ (c.f.\
\cite[\textsection3.1]{abramovich2008gromov}). Recall that for any scheme $T$,
the groupoid of $T$-points of $I_{\mu_r} X$ consists of triples $(\pi: \Sigma\to
T, f, \sigma)$, where $\pi: \Sigma \to T$ is a gerbe banded by $\mu_r$, $f:
\Sigma \to X$ is a representable morphism and $\sigma: T\to \Sigma$ is a section
of $\pi$ (c.f.\ \cite[\textsection3.2]{abramovich2008gromov}). Thus the
restriction of the universal map to the markings gives rise to evaluation maps
\[
  \mathrm{ev}_i: Q^{\epsilon}_{g,n}(X,\beta) \longrightarrow I_\mu X.
\]
Although we will prove the wall-crossing formula in the Chow group, it is often
convenient to consider the cohomology when forming generating series. 
We define the state space to be the Chen--Ruan cohomology group
\[
  H_{\mathrm{CR}}^*(X,\mathbb Q) := H^*(I_{\mu}X,\mathbb Q).
\]
Let $\mathbf r$ be the locally constant function on $I_{\mu}X$ that takes value
$r$ on $I_{\mu_r}X$.
Let $\iota$ be the canonical involution on $I_\mu(X)$ defined by inverting the
banding. In this paper we use the pairing on $H_{\mathrm{CR}}^*(X,\mathbb Q)$ defined by
\begin{equation}
  \label{eq:pairing}
  (\alpha_1, \alpha_2) := \int_{I_\mu X} \frac{\alpha_1 \cup
    \iota^*\alpha_2}{\mathbf r^2}.
\end{equation}
For any Chen--Ruan cohomology classes
  $\gamma_1 ,\ldots, \gamma_n\in
  H_{\mathrm{CR}}^*(X,\mathbb Q)$
and non-negative integers $k_1 ,\ldots, k_n$, we define the quasimap invariants
with descendants
\begin{equation}
  \label{eq:correlators}
  \langle {\gamma_1\psi^{k_1} ,\ldots, \gamma_n\psi^{k_n}}\rangle_{g,n,\beta}^{X, \epsilon}
  = p_*\Big(
  \prod_{i=1}^n \mathrm{ev}_i^*(\gamma_i)\psi_i^{k_i} \cap
  [Q^{\epsilon}_{g,n}(X,\beta)]^{\mathrm{vir}}
  \Big) \in H^{\mathrm{BM}}_*(W\git_{0}G, \mathbb Q),
\end{equation}
where $p$ is the forgetful morphism $p:Q^{\epsilon}_{g,n}(X,\beta) \to
W\git_0G$. When $W\git_{0} G$ is a point, we get a rational number.

For simplicity of the notation, from now on we assume that $W\git_{0} G$ is a
point. The theorems hold true in general and the proofs are verbatim.

\subsection{Comparison with the state space in \cite{cheong2015orbifold}}
\label{sec:comparison-gerbe-markings}
In \cite{cheong2015orbifold}, the state space was defined as $H^*(\bar I_{\mu}
X, \mathbb Q)$, where $\bar I_{\mu} X$ is the rigidified cyclotomic inertia
stack of $X$. We need to specify an isomorphism between $H^*(\bar I_{\mu}
X, \mathbb Q)$ and $H^*(I_{\mu} X, \mathbb Q)$ in order to compare our results
to those in \cite{cheong2015orbifold}.
Let
\[
  \tau: I_{\mu} X \longrightarrow  \bar I_{\mu}X
\]
be the natural projection. Then the pushforward of our $I$-function is the one
defined in \cite{cheong2015orbifold}. In other words, when comparing the
$I$-functions we identify $\alpha \in H^*(\bar I_{\mu_r} X, \mathbb Q)$ with $r
\cdot \tau^*\alpha\in H^*(I_{\mu_r} X, \mathbb Q)$. The advantage of this
identification is that the correlators \eqref{eq:correlators} will be the same
as those in \cite{cheong2015orbifold}. The pairing in
\cite[\textsection3.1]{cheong2015orbifold} also corresponds to the pairing
\eqref{eq:pairing}. Thus, Theorem~\ref{thm:Chow-version} and
Theorem~\ref{thm:big-J-general} do not change when interpreted using the
definitions in \cite{cheong2015orbifold}. While for
Theorem~\ref{thm:genus-0-special-case}, one needs to replace the $\mathbf r^2$
by $\mathbf r$.

\subsection{The $\psi$-classes at orbifold markings}
\label{sec:orbifold-psi-classes}
We will often use the orbifold $\psi$-classes, which we will denote by
$\tilde\psi$. The orbifold $\psi$-classes are the first Chern classes of the
orbifold relative cotangent bundle along the orbifold markings. This makes sense
because the markings have canonical sections, using which we can pullback those
relative cotangent bundles to the moduli space. We denote the $\psi$-classes of
the associated coarse curves by $\psi$. The two $\psi$-classes are related by
$\psi = r\tilde\psi$, if the marking is a gerbe banded by $\mu_r$.
\subsection{Virtual localization}
A key tool is the $\mathbb C^*$-localization of virtual cycles
\cite{graber1999localization, chang2017torus}. We will use the version in
\cite{chang2017torus}, which only requires that the virtual normal bundle has a
global resolution. This can be easily verified.

We will use $z$ to denote the equivariant parameter, i.e.\  the generator of
$A^*_{\mathbb C^*}(\mathrm{pt})$. We denote by $e_{\mathbb C^*}$ the equivariant
Euler class. We make the convention that $e_{\mathbb C^*}(\mathbb
C_{\mathrm{std}}) = -z$, where $\mathbb C_{\mathrm{std}}$ is the standard
representation of $\mathbb C^*$ on $\mathbb C$.

When we have a $\mathbb C^*$-action on some space,
a fixed-point component means a union of several connected components of the
fixed-point locus.
\subsection{More conventions and notation}
We make more conventions before stating the main theorems.

We work over $\mathbb C$ through out the paper. Hence all Deligne--Mumford
stacks are tame and cyclotomic inertia stacks are the same as inertia stacks.
All schemes/stacks are locally noetherian.
Chow groups and (co)homology groups are always with $\mathbb Q$-coefficients.
A curve is always a twisted curve with balanced nodes unless otherwise specified. We
will omit the usual ``$\mathrm{tw}$'' from our notation, which means ``twisted''.

We will use various truncation operations on formal series in $z$.
Let $f(z)=\sum_{n}a_nz^n$ be a formal Laurent series. Then $[f(z)]_{k}:=a_k$,
$[f(z)]_{z^{\geq k}}:= \sum_{n\geq k}a_n z^{k}$, and
$[f(z)]_{+}:=[f(z)]_{z^{\geq 0}}$, etc.

\subsection{The $I$-function}
\label{sec:I-function}
To state the wall-crossing theorem, we need to introduce the $I$-function,
which is defined via localization on the graph space.
We only need a special case of the definition in \cite{cheong2015orbifold}.
Given an effective curve class $\beta$, choose $\epsilon\in \mathbb Q_{>0}$ and
$A\in \mathbb Z_{>0}$ such that
$1/A < \epsilon < 1/ \deg(\beta)$. We 
view $\mathbb P^1$ as the GIT quotient $\mathbb C^2\git\mathbb C^*$ 
polarized by the character of $\mathbb C^*$ given by $t\mapsto t^A, t \in \mathbb C^*$. 
Define the graph space to be
\[
  QG_{0,1}(X,\beta) := Q^{\epsilon}_{0,1}(X\times \mathbb P^1, \beta\times
  [\mathbb P^1]).
\]
Thus it parametrizes quasimaps to $X\times \mathbb P^1$ such that the quasimap
to $\mathbb P^1$ is an actual map, and the domain curve has a unique rational
tail, whose coarse moduli is mapped isomorphically onto $\mathbb P^1$.
Thus it is
independent of the choice of $A$ and $\epsilon$.
We denote the unique marking by $x_{\star}$.
Consider the $\mathbb C^*$-action on $\mathbb P^1$ given by
\begin{equation}
  \label{eq:C*-action-on-P1}
  t[\zeta_0,\zeta_1] = [t\zeta_0,\zeta_1], \quad t\in \mathbb C^*.
\end{equation}
Thus the tangent space at $\infty \in \mathbb P^1$ is isomorphic to the standard
$\mathbb C^*$-representation. This induces an action on $QG_{0,1}(X,\beta)$.
There is a distinguished fixed-point component $F^{0,\beta}_{\star,0}$
consisting of $\mathbb C^*$-fixed quasimaps such that the marking $x_{\star}$ is
mapped to $\infty$, while the entire class $\beta$ lies over $0\in \mathbb P^1$,
which means that $0$ is a base point of length $\deg(\beta)$. By
\cite{graber1999localization}, $F^{0,\beta}_{\star,0}$ has a virtual fundamental
class $[F^{0,\beta}_{\star,0}]^{\mathrm{vir}}$ and a virtual normal bundle
$N^{\mathrm{vir}}_{F^{0,\beta}_{\star,0}/ QG_{0,1}(X,\beta)}$.

We now introduce some generating series. Let $\Lambda := \{\sum_{\beta \geq
0}a_{\beta}q^\beta \mid a_{\beta}\in \mathbb Q\} $ be the Novikov ring.
Let
\begin{equation}
  \label{map:star-check}
  \check{\mathrm{ev}}_{\star}:QG_{0,1}(X,\beta) \longrightarrow
  I_{\mu}X
\end{equation}
be the evaluation map at $x_{\star}$ composed with the involution $\iota$ on
$I_{\mu}X$ (c.f.\cite{abramovich2008lectures}).
\begin{definition} We define
  \begin{equation}
    \label{eq:small-I}
    I(q,z) = 1 + \sum_{\beta > 0}
    (-z \mathbf r^2 ) q^{\beta} \check{\mathrm{ev}}_*
    \Big(
    \frac{[F^{0,\beta}_{\star,0}]^{\mathrm{vir}}}{e_{\mathbb
        C^*}(N^{\mathrm{vir}}_{F^{0,\beta}_{\star,0}/ QG_{0,1}(X,\beta)})}
    \Big) \in A^*(  I_{\mu}X) [z^{\pm 1}] \widehat{\otimes}_{\mathbb Q} \Lambda.
\end{equation}
\end{definition}
\begin{remark}
  The ``$\widehat{\phantom{\otimes}}$'' means the completion in the $q$-adic
  topology.
  Thus the coefficient of each $q^\beta$ is an element in $A^{*}(I_{\mu}X)[z^{\pm 1}]$,
  carrying the information of quasimaps from a parameterized (orbifold) $\mathbb
  P^1$ to $[W/G]$ of curve class $\beta$. The factor $(-z)$ is
  introduced to cancel the equivariant Euler class of the deformation space of
  the unique marking. Recall the $\mathbf r$ from
  Section~\ref{sec:state-space-and-invariants}.
  One copy of $\mathbf r$ is introduced in the formula above because
  our markings are trivialized gerbes
  (c.f.\ Section~\ref{sec:state-space-and-invariants}), the other is
  introduced because there is already such a factor in \cite{cheong2015orbifold} .
\end{remark}

We define
\[
  \mu_{\beta}(z) \in A^*( I_{\mu}X)[z]
\]
to be the coefficient of $q^\beta$ in $[zI(q,z)-z]_+$, where
$[\cdot]_+$ means the truncation by taking only nonnegative powers of $z$.

\subsection{The higher-genus wall-crossing formula}
Let $\epsilon_0 =1/d_0$ be a wall. Let
$\epsilon_-<\epsilon_+$ be stability conditions in the two
adjacent chambers separated by $\epsilon_0$. Fix the curve class
$\beta$ of degree $d\geq d_0$, the genus $g$ and the number of markings $n$,
such that $2g -2 + n + d\epsilon_0 >0$.
We would like to
compare the two virtual fundamental classes
$[Q^{\epsilon_+}_{g,n}(X,\beta)]^{\operatorname{vir}}$ and
$[Q^{\epsilon_-}_{g,n}(X,\beta)]^{\operatorname{vir}}$. In general we do not
have a natural contraction map $Q^{\epsilon_+}_{g,n}(X,\beta) \to
Q^{\epsilon_-}_{g,n}(X,\beta)$. Instead we have
\begin{equation*}
\begin{tikzcd}
    Q^{\epsilon_+}_{g,n}(X,\beta) \arrow[d,"i"]&  Q^{\epsilon_-}_{g,n}(X,\beta) \arrow[d,"i"]\\
    Q^{\epsilon_+}_{g,n}(\mathbb P^N, d) \arrow[r,"c"]&  Q^{\epsilon_-}_{g,n}(\mathbb P^N, d)
\end{tikzcd},
\end{equation*}
where the $i$'s are defined by composing the quasimap with
\eqref{eq:embedding-of-quotient} and forgetting the orbifold structure of the
domain curves, and $c$ is defined by contracting all the
degree-$d_0$ rational tails to length-$d_0$ base points.
See \cite[\textsection3.2.2]{ciocan2014wall} for
the precise definition. We will pushforward the two virtual fundamental classes
to $Q^{\epsilon_-}_{g,n}(\mathbb P^N, d)$ and compare them there.

Let
\begin{equation}
  \label{map:marking-to-base-points}
  b_k: Q^{\epsilon_-}_{g,n+k}(\mathbb P^N,d-kd_0) \longrightarrow
  Q^{\epsilon_-}_{g,n}(\mathbb P^N, d)
\end{equation}
be the map that replaces the last $k$ markings by base points of length $d_0$
\cite[\textsection 3.2.3]{ciocan2014wall}.
\begin{theorem}[Theorem~\ref{thm:main-case}]
  \label{thm:Chow-version}
  Assuming that $2g-2 +n+\epsilon_0\deg(\beta)>0$, we have
  \begin{equation}
    \label{eq:Chow-version-literature}
    \begin{aligned}
      & i_*[Q^{\epsilon_-}_{g,n}(X,\beta)]^{\mathrm{vir}} -
      c_*i_*[Q^{\epsilon_+}_{g,n}(X,\beta)]^{\mathrm{vir}} \\
      & = \sum_{k\geq 1} \sum_{\vec\beta} \frac{1}{k!} b_{k*} c_{*} i_*\Big(
      \prod_{a=1}^k \mathrm{ev}_{n+a}^*\mu_{\beta_a}(z)|_{z=-\psi_{n+a}} \cap
      [Q^{\epsilon_+}_{g,n+k}(X,\beta^\prime)]^{\mathrm{vir}}
      \Big)
    \end{aligned}
  \end{equation}
  in $A_*(Q_{g,n}^{\epsilon_-}(\mathbb P^N,d))$, where
  $\vec\beta$ runs through all the $(k+1)$-tuples of effective curve classes
  \[
    \vec\beta = (\beta^\prime,\beta_1,\ldots, \beta_k)
  \]
  such that $\beta = \beta^\prime + \beta_1 + \cdots + \beta_k$ and
  $\deg(\beta_i) = d_0$ for all $i=1 ,\ldots, k$.
\end{theorem}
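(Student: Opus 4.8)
The plan is to prove the wall-crossing formula by $\mathbb C^*$-localization on a \emph{master space} $\mathcal M$ that interpolates between $Q^{\epsilon_+}_{g,n}(X,\beta)$ and $Q^{\epsilon_-}_{g,n}(X,\beta)$. Concretely, I would first realize the wall $\epsilon_0 = 1/d_0$ via the theory of weighted curves: the passage from $\epsilon_+$ to $\epsilon_-$ across this single wall only affects the degree-$d_0$ rational tails (which either survive as tails on the $\epsilon_+$ side, or get contracted to length-$d_0$ base points on the $\epsilon_-$ side). The master space should be a proper Deligne--Mumford stack carrying a $\mathbb C^*$-action and a perfect obstruction theory, built by a universal construction over the Losev--Manin-type moduli of weighted curves (or an Artin-stack thickening thereof), in such a way that: the stack quotient-style GIT data $[W^{\mathrm{ss}}/G] \hookrightarrow [\mathbb C^{N+1}/\mathbb C^*]$ is used to linearize the construction, the quasimap condition makes sense in families, and the two ``endpoint'' GIT chambers of a one-parameter stability on the $\mathbb P^1$-factor of the weight data recover $Q^{\epsilon_\pm}$. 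This is essentially the master-space construction of Ciocan-Fontanine--Kim and of Zhou's thesis; I would set it up carefully enough that $[\mathcal M]^{\mathrm{vir}}$ restricts correctly to the two extremal fixed loci.

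The second step is to identify the $\mathbb C^*$-fixed-point loci of $\mathcal M$. There should be two \emph{extremal} fixed loci isomorphic (as virtual cycles, after the maps $i$ and $c$) to $Q^{\epsilon_+}_{g,n}(X,\beta)$ and $Q^{\epsilon_-}_{g,n}(X,\beta)$, plus a family of \emph{intermediate} fixed loci indexed by the number $k\geq 1$ of contracted tails together with a distribution $\vec\beta = (\beta',\beta_1,\dots,\beta_k)$ of the curve class, $\beta = \beta' + \sum\beta_a$, $\deg\beta_a = d_0$. Each intermediate locus should decompose, up to the $1/k!$ symmetry factor from permuting the $k$ tails, as a fiber product over evaluation maps of $Q^{\epsilon_+}_{g,n+k}(X,\beta')$ with $k$ copies of the ``rubber/graph'' moduli that governs a single degree-$d_0$ rational tail attached at $\infty$; the latter is precisely what the $I$-function and its truncation $\mu_{\beta_a}(z)$ were defined to encode (Section~\ref{sec:I-function}). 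Applying the virtual localization formula of \cite{graber1999localization, chang2017torus} on $\mathcal M$, pushing forward to $Q^{\epsilon_-}_{g,n}(\mathbb P^N,d)$ via $i$, $c$, and $b_k$, and matching the contribution of each intermediate locus with the corresponding term in \eqref{eq:Chow-version-literature}, then yields the formula, because the sum of all fixed-locus contributions of $[\mathcal M]^{\mathrm{vir}}/e_{\mathbb C^*}(N^{\mathrm{vir}})$, paired against a suitable class, is independent of the $\mathbb C^*$-parameter and hence the ``top'' and ``bottom'' contributions must balance against the intermediate ones.

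To carry this out I would proceed in the following order: (1) construct the moduli of weighted curves and the universal curve, with the auxiliary $\mathbb P^1 = \mathbb C^2 \git_A \mathbb C^*$ factor, and define $\epsilon$-stable quasimaps \emph{to the master-space target} with the extra $\mathbb C^*$-equivariance; (2) prove properness and construct the perfect obstruction theory on $\mathcal M$, checking the global-resolution hypothesis needed for \cite{chang2017torus}; (3) classify the $\mathbb C^*$-fixed loci and compute each one's virtual normal bundle, in particular verifying that the tail contributions reproduce $\check{\mathrm{ev}}_*\big([F^{0,\beta_a}_{\star,0}]^{\mathrm{vir}}/e_{\mathbb C^*}(N^{\mathrm{vir}})\big)$ and hence $\mu_{\beta_a}(z)|_{z=-\psi}$ after the residue/truncation; (4) assemble the localization identity and read off \eqref{eq:Chow-version-literature}, tracking the combinatorial $1/k!$. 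The main obstacle I expect is step (2)--(3): building a genuinely proper master space with a well-behaved obstruction theory whose fixed loci are \emph{exactly} the expected products (with no spurious extra components and with the virtual classes and normal bundles matching on the nose), since the orbifold structure at the markings, the twisted-curve bookkeeping, and the length-$d_0$ base-point contractions all interact; getting the node-smoothing and deformation-of-tail factors to cancel precisely against the $(-z)$-normalization and the $\psi = r\tilde\psi$ conventions is where the real work lies.
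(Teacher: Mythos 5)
Your overall strategy---a proper master space with a $\mathbb C^*$-action whose extremal fixed loci are $Q^{\epsilon_\pm}_{g,n}(X,\beta)$ and whose intermediate fixed loci involve $Q^{\epsilon_+}_{g,n+k}(X,\beta')$ glued to $k$ graph-space tails---is the same as the paper's. But as written your plan has two genuine gaps, both of which the paper spends most of its length resolving.

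First, the master space cannot be taken to be the naive calibration $\mathbb P^1$-bundle over the Artin stack of $\epsilon_0$-semistable quasimaps. When a fiber carries $\ell\geq 2$ degree-$d_0$ rational tails each containing a length-$d_0$ base point, the automorphism group contains a torus of rank $\ell$, so quotienting by a single $\mathbb P^1$ of calibration data leaves a positive-dimensional stabilizer (the space is not Deligne--Mumford), and the valuative criterion fails because there is no canonical choice of \emph{which} tail to reparametrize when taking the limit $t\to\infty$. The fix is the sequence of blowups $\tilde{\mathfrak M}_{g,n,d}\to\mathfrak M^{\mathrm{wt,ss}}_{g,n,d}$ along the strata of the normal-crossing divisor $\mathfrak Z_1$ (the ``entangled tails''), together with a stability condition (Definition~\ref{def:mspace-stability}) requiring constant tails to be entangled; the exceptional data $\mathbb P(\Theta_1\oplus\cdots\oplus\Theta_k)$ is exactly what forces the entangled tails to be rescaled at equal speed and restores uniqueness of limits. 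Your phrase ``Artin-stack thickening'' does not identify this problem or its solution, and without it steps (2)--(3) of your plan do not go through.

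Second, and independently, the contribution of an intermediate fixed locus $F_{\vec\beta}$ is \emph{not} the corresponding term of \eqref{eq:Chow-version-literature}, so ``matching the contribution of each intermediate locus with the corresponding term'' is false as stated for $k\geq 2$. Because the $k$ entangled tails share one calibration parameter, the Euler class of the virtual normal bundle has a single denominator $-\tfrac{z}{k}-\tilde\psi(\mathcal E_1)-\tilde\psi_{n+1}-\sum_{i\geq k}[\mathfrak D_i]$ (see \eqref{eq:normal-bundle-F-beta}), so the localization residue is the residue of a \emph{product} of $k$ $I$-function factors, as in \eqref{eq:residue-gl-F-beta}, not the product of $k$ separate residues $\prod_a\mu_{\beta_a}(-\psi_{n+a})$. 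Converting one into the other requires expanding the boundary-divisor classes $[\mathfrak D'_i]$, the splitting formula for $\mathfrak D'_{r-1}$ and the pushforward computation over inflated projective bundles (Lemma~\ref{lem:intergral-D-r-1-term}), and a final inclusion--exclusion over the markings carrying negative powers of $z$. This rearrangement is where the truncation $[\,\cdot\,]_{+}$ defining $\mu_\beta$ actually enters; your proposal treats it as automatic, which it is not.
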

The non-orbifold case is \cite[Conjecture 1.1]{ciocan2020quasimap}.
\begin{remark}
First, the equation remains true after capping both sides with any
$\prod_{i=1}^n \psi_i^{a_i}\mathrm{ev}_i^*(\gamma_i)$,
for $\gamma_i\in A^*(I_\mu X)$. This is because all the maps involved in the
theorem commute with the evaluation maps at the first $n$ markings, and induce
isomorphisms of the relative cotangent spaces at the first $n$ markings. Hence
the claim follows from the projection formula. Second, one can take the cycle
map from the Chow groups to the homology groups and get a (co)homological
version of the wall-crossing formula.
\end{remark}

We now derived a numerical version of the wall-crossing formula by pushing
forward the relation \eqref{eq:Chow-version-literature} to a point (assuming
$W\git_{0}G$ is a point).

Let $\mathbf t(z)\in H^*_{\mathrm{CR}}(X, \mathbb Q)[\![z]\!]$ be generic
and we define
\[
  F^{\epsilon}_{g}(\mathbf t(z)) =
  \sum_{n=0}^\infty\sum_{\beta\geq 0} \frac{q^\beta}{n!} \langle \mathbf t(\psi)
  ,\ldots, \mathbf t(\psi) \rangle^{X, \epsilon}_{g,n,\beta},
\]
where the unstable terms are interpreted as zero.
These are viewed as formal functions near the origin of $H^*_{\mathrm{CR}}(X,
\mathbb Q)$.
Define
\[
  \mu^{\geq \epsilon}(q,z) =
  \sum_{\epsilon\leq 1/\deg(\beta)<\infty} \mu_\beta(z)q^{\beta}.
\]
By repeatedly applying Theorem~\ref{thm:Chow-version} to cross the all the walls
contained in
$[\epsilon, \infty)$, we obtain
\begin{corollary}
  \label{cor:higher-genus-numerical}
  For $g\geq 1$ and any $\epsilon$, we have
  \begin{equation*}
    F^{\epsilon}_{g}(\mathbf t(z)) =
    F^{\infty}_{g}(\mathbf t(z)+\mu^{\geq \epsilon}(q,-z)).
\end{equation*}
    For $g=0$, the same equation holds true modulo the constant and linear terms
    in $\mathbf t$.
\end{corollary}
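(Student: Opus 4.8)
The plan is to deduce the corollary from Theorem~\ref{thm:Chow-version} in two moves: first convert the single-wall Chow identity \eqref{eq:Chow-version-literature} into a numerical identity of generating functions across one wall, and then compose these over all the walls contained in $[\epsilon,\infty)$. Fix a wall $\epsilon_0 = 1/d_0$ with stability conditions $\epsilon_-<\epsilon_+$ in the two adjacent chambers, and set $\mu_{(d_0)}(q,z) := \sum_{\deg(\beta)=d_0}\mu_\beta(z)\,q^\beta$. The first goal is to show that Theorem~\ref{thm:Chow-version} implies
\[
  F^{\epsilon_-}_g\big(\mathbf t(z)\big) = F^{\epsilon_+}_g\big(\mathbf t(z)+\mu_{(d_0)}(q,-z)\big),
\]
exactly for $g\ge 1$, and modulo the constant and linear terms in $\mathbf t$ when $g=0$.

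To obtain this, I would cap the virtual classes on both sides of \eqref{eq:Chow-version-literature} with $\prod_{i=1}^n\mathbf t(\psi_i)$ at the first $n$ markings; this is permitted by the Remark following Theorem~\ref{thm:Chow-version}, since $i$, $c$ and $b_k$ commute with the evaluation maps at the first $n$ markings and induce isomorphisms of the relative cotangent spaces there, so the projection formula applies term by term. Pushing the resulting identity in $A_*\big(Q^{\epsilon_-}_{g,n}(\mathbb P^N,d)\big)$ forward to a point and summing over $n$ and $\beta$ with weights $q^\beta/n!$, the left-hand side becomes $F^{\epsilon_-}_g(\mathbf t(z)) - F^{\epsilon_+}_g(\mathbf t(z))$. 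On the right-hand side, absorbing $q^\beta = q^{\beta'}\prod_a q^{\beta_a}$ turns each auxiliary marking into the insertion $\sum_{\deg(\beta_a)=d_0}\mu_{\beta_a}(-\psi)\,q^{\beta_a} = \mu_{(d_0)}(q,-\psi)$, and the sum $\sum_{k\ge1}\frac{1}{k!}$ over the number of auxiliary markings, combined with the symmetry of the correlators and the multinomial identity $\langle(\mathbf t+\mu)^{\otimes N}\rangle = \sum_{m+k=N}\binom{N}{m}\langle\mathbf t^{\otimes m},\mu^{\otimes k}\rangle$, is exactly $F^{\epsilon_+}_g(\mathbf t(z)+\mu_{(d_0)}(q,-z)) - F^{\epsilon_+}_g(\mathbf t(z))$; here one uses that $\mu_\beta(z)$ is polynomial in $z$, so $\mu_\beta(-\psi)$ is a well-defined (nilpotent) class, and that $\mu_{(d_0)}(q,-z)$ has positive $q$-order, so the substitution makes sense order by order in the Novikov parameter. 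Comparing the two sides yields the displayed identity. For $g\ge1$ the only $(n,\beta)$ outside the stable range $2g-2+n+\epsilon_0\deg(\beta)>0$ is $(n,\beta)=(0,0)$ with $g=1$, where $Q^\epsilon_{1,0}(X,0)=\emptyset$ and both sides vanish, so the identity is exact. For $g=0$ the hypothesis can fail when $n\le1$, so only the part of order $\ge2$ in $\mathbf t$ is obtained this way — the unique term of $\mathbf t$-order $\ge2$ outside the stable range, $(n,\beta)=(2,0)$, vanishing trivially on both sides — which is the source of the stated caveat.

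Finally I would iterate. The walls in $[\epsilon,\infty)$ are the $1/d_0$ with $d_0$ a positive integer and $d_0\le 1/\epsilon$ (all $d_0\ge1$ when $\epsilon=0^+$); choosing a representative stability condition in each intervening chamber and chaining the single-wall identity — using that each shift $\mu_{(d_0)}(q,-z)$ is a fixed element of $A^*(I_\mu X)[z]\,\widehat{\otimes}\,\Lambda$ independent of $\mathbf t$, so the shifts simply add under composition — gives $F^\epsilon_g(\mathbf t(z)) = F^\infty_g\big(\mathbf t(z)+\sum_{d_0\le 1/\epsilon}\mu_{(d_0)}(q,-z)\big) = F^\infty_g\big(\mathbf t(z)+\mu^{\geq\epsilon}(q,-z)\big)$, since $\sum_{d_0\le1/\epsilon}\mu_{(d_0)}(q,z) = \sum_{\epsilon\le1/\deg(\beta)<\infty}\mu_\beta(z)q^\beta = \mu^{\geq\epsilon}(q,z)$. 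When $\epsilon=0^+$ there are infinitely many walls, but crossing $1/d_0$ only affects curve classes of degree $\ge d_0$, so the composition converges $q$-adically and the identity holds in the completed ring. Granting Theorem~\ref{thm:Chow-version}, which carries all the substance, the steps needing genuine care are the symmetrization/multinomial bookkeeping (which must respect both the Novikov grading and the $\frac{1}{k!}$ factors) and, in genus zero, the tally of which $(n,\beta)$ lie outside the stable range; I expect this last point, though elementary, to be the main obstacle, as it is precisely what forces the restriction to $\mathbf t$-order $\ge2$.
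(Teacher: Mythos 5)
Your argument is correct and is exactly the route the paper intends: the paper derives this corollary in one line by "repeatedly applying Theorem~\ref{thm:Chow-version} to cross all the walls contained in $[\epsilon,\infty)$", and your proposal simply fills in the capping/pushforward, the multinomial bookkeeping that converts the $k$ auxiliary markings into the shift $\mathbf t\mapsto\mathbf t+\mu_{(d_0)}(q,-z)$, the $q$-adic convergence for $\epsilon=0^+$, and the accounting of unstable $(n,\beta)$ that produces the genus-zero caveat. No discrepancies with the paper's treatment.
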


\subsection{The genus-$0$ wall-crossing formula}
Let $\epsilon_-<\epsilon_0 = \frac{1}{d_0} < \epsilon_+$ be as before.
We consider the case $g=0,n=1$ and $\deg(\beta)=d_0$.
In this case $Q^{\epsilon_-}_{0,1}(X,\beta)$ is
empty and Theorem~\ref{thm:Chow-version} does not apply. Instead, the
$\epsilon_+$-stable quasimap invariants can be
computed directly and we will prove
\begin{theorem}[Lemma~\ref{lem:genus-zero-special-case}]
  For $\epsilon\in (\frac{1}{\deg(\beta)}, \frac{1}{\deg(\beta)-1})$, we have
  \label{thm:genus-0-special-case}
  \begin{equation*}
    \mathbf r^2 \check{\mathrm{ev}}_*(
    \frac{[Q^{\epsilon}_{0,1}(X,\beta)]^{\mathrm{vir}}}{z(z-\psi_1)}
    )
  = [I(q,z)]_{z^{\leq -2},q^\beta}.
\end{equation*}
\end{theorem}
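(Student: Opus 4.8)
The plan is to prove the identity by $\mathbb C^*$-virtual localization on the graph space $QG_{0,1}(X,\beta)$ that defines the $I$-function, reading the class $[I(q,z)]_{z^{\le-2},q^\beta}$ off the distinguished fixed locus $F^{0,\beta}_{\star,0}$ and the class $\mathbf r^2\check{\mathrm{ev}}_*\big([Q^\epsilon_{0,1}(X,\beta)]^{\mathrm{vir}}/(z(z-\psi_1))\big)$ off the remaining fixed loci, and matching the two by a polynomiality argument.

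First I would record the feature of the left-most stable chamber that makes the computation tractable: for $\epsilon\in(1/\deg(\beta),1/(\deg(\beta)-1))$, every $\epsilon$-stable quasimap in $Q^\epsilon_{0,1}(X,\beta)$ has irreducible domain. A rational tail would be forced to carry a curve class of degree at most $1/\epsilon-1<\deg(\beta)$, and positivity of $(u^*L_\theta)^{\otimes\epsilon}\otimes\omega_{C,\log}$ on that tail together with positivity on the complementary component cannot both hold. Hence $Q^\epsilon_{0,1}(X,\beta)$ parametrizes quasimaps from an orbifold $\mathbb P^1$ with one marking whose base points have length $<\deg(\beta)$; this is exactly the open substack of the ``fibre'' of $QG_{0,1}(X,\beta)\to\mathbb P^1$ over a point away from $0,\infty$ obtained by deleting the locus on which the whole class is concentrated at a single base point. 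Keeping this picture in mind is what lets me recognize $Q^\epsilon_{0,1}(X,\beta)$ inside the fixed-point data.

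Next I would run the localization. With respect to the action \eqref{eq:C*-action-on-P1}, the fixed loci of $QG_{0,1}(X,\beta)$ split into $F^{0,\beta}_{\star,0}$, its mirror (with $0$ and $\infty$ exchanged), and loci where the marking $x_\star$ sits on a genuine quasimap bubble over a torus-fixed point while the complementary degree is frozen as a base point at the other fixed point --- there are no further loci because, $\deg(\beta)$ being the wall value, an unmarked rational tail over a torus-fixed point cannot be stable. Since $QG_{0,1}(X,\beta)$ is proper over a point and $\mathbb C^*$ acts trivially on $I_\mu X$, the class $\check{\mathrm{ev}}_*[QG_{0,1}(X,\beta)]^{\mathrm{vir}}$ lies in $A_*(I_\mu X)[z]$ and has no negative powers of $z$, so the $z^{<0}$-part of the localization identity is a relation purely among the fixed-point contributions. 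By \eqref{eq:small-I} the $F^{0,\beta}_{\star,0}$-contribution is $-\tfrac1{z\mathbf r^2}[I(q,z)]_{q^\beta}$, and its $z^{<0}$-part repackages $[I]_{z^{\le-2},q^\beta}$ together with the two low-order terms in $z^{-1},z^{-2}$; the bubble contributions are assembled from the node-smoothing factor $\tfrac1{z-\psi}$ (coarse-curve $\psi$, cf.\ Section~\ref{sec:orbifold-psi-classes}) and a factor $\tfrac1z$ coming from the marking on the bubble, and I would organize them as generating series in the curve class carried by the bubble, setting up an induction on $\deg(\beta)$ whose base case is the degrees for which $Q^\epsilon_{0,1}$ vanishes. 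Matching the $z^{<0}$-parts then extracts the asserted identity, with the mirror locus and the two low-order terms cancelling against each other.

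The step I expect to be the main obstacle is making the bubble contributions collapse \emph{exactly} to $\mathbf r^2\check{\mathrm{ev}}_*\big([Q^\epsilon_{0,1}(X,\beta)]^{\mathrm{vir}}/(z(z-\psi_1))\big)$. This needs a careful analysis of the virtual normal bundles: one must show that the base-point deformation spaces at the frozen torus-fixed point, together with the infinitesimal automorphisms of the rigid parametrized $\mathbb P^1$, cancel down to a single surviving factor $1/z$ with no stray powers of $z$; that the length bound ``$<\deg(\beta)$'' automatically imposed on the frozen base point coincides with the defining inequality of the left-most stable chamber, so that the bubble data is governed by $Q^\epsilon_{0,1}(X,\beta)$ rather than a neighbouring chamber; and that the gerbe-versus-rigidification normalizations come out as $\mathbf r^2$ (with the $\mathbf r$-adjustment of Section~\ref{sec:comparison-gerbe-markings}). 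The rest is a routine application of the virtual localization theorem and $\psi$-class manipulations.
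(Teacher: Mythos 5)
There is a genuine gap, and it is located exactly where you flag ``the main obstacle'': the bubble contributions of $QG_{0,1}(X,\beta)$ cannot collapse to $\mathbf r^2\check{\mathrm{ev}}_*\bigl([Q^{\epsilon}_{0,1}(X,\beta)]^{\mathrm{vir}}/(z(z-\psi_1))\bigr)$, because the two sides of the identity live on opposite sides of the wall $\epsilon_0=1/\deg(\beta)$ and no single graph space sees both. The graph space is built with a stability parameter $\epsilon'<1/\deg(\beta)$; in that chamber an unmarked rational tail must have degree $>1/\epsilon'>\deg(\beta)$, so there are no unmarked bubbles at all, and the only fixed loci besides $F^{0,\beta}_{\star,0}$ and its mirror are those where $x_\star$ sits on a bubble over $0$ or $\infty$. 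Such a bubble carries \emph{two} special points (the attaching node and $x_\star$), so these loci are fibered over two-pointed quasimap moduli of class $\beta_\infty\le\beta$ with the stability parameter $\epsilon'$ --- in particular the bubble may be a reducible chain and may carry a base point of length up to $\deg(\beta)$. None of this matches $Q^{\epsilon}_{0,1}(X,\beta)$ for $\epsilon\in(1/\deg(\beta),1/(\deg(\beta)-1))$, which parametrizes irreducible one-pointed quasimaps with base points of length $\le\deg(\beta)-1$. Your preliminary identification of $Q^{\epsilon}_{0,1}(X,\beta)$ with an open substack of a fibre of $QG_{0,1}(X,\beta)$ over a point of $\mathbb P^1\setminus\{0,\infty\}$ fails for the same reason (the fibre contains marked bubbles and length-$\deg(\beta)$ base points away from the marking). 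If you instead built the graph space with $\epsilon$ in the left-most stable chamber, the locus ``unmarked degree-$\beta$ bubble over $0$, marking at $\infty$'' would indeed be $Q^{\epsilon}_{0,1}(X,\beta)$ with normal-bundle factor $\tfrac{1}{-z(z-\psi_1)}$ --- but then $F^{0,\beta}_{\star,0}$ would no longer be a fixed locus, since a length-$\deg(\beta)$ base point is forbidden in that chamber. Comparing the two graph spaces is itself the wall-crossing you are trying to prove, and (absent a torus action on $X$) the recursion/polynomiality bookkeeping does not close up.

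This is precisely why the paper proves Theorem~\ref{thm:genus-0-special-case} (as Lemma~\ref{lem:genus-zero-special-case}) by localizing on the master space $MQ^{\epsilon_0}_{0,1}(X,\beta)$ rather than on a graph space: that space is engineered so that its $\mathbb C^*$-fixed loci are exactly $F_+\cong Q^{\epsilon_+}_{0,1}(X,\beta)$, with virtual normal bundle the calibration line bundle (contributing $\tfrac{1}{-z+\alpha}$, whose residue produces the $\psi_1$-powers packaged as $\tfrac{1}{z(z-\psi_1)}$), and $F_\beta$, an $\mathbf r_\star$-fold \'etale cover of the graph-space locus $F_{\star,\beta}$ whose inverse Euler class is computed in Lemma~\ref{lem:correction-term-special-contr} to be $(\mathbf r_\star z)\,\mathcal I_\beta(\mathbf r_\star z)$; the locus $F_-$ is empty since $Q^{\epsilon_-}_{0,1}(X,\beta)=\emptyset$. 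Taking $\operatorname{Res}_{z=0}$ of the localization identity for $\int\tilde\psi_1^{\ell}$ and rescaling $z\mapsto z/\mathbf r_1$ then gives the statement directly. Your overall strategy (localize, use properness to kill the negative $z$-powers of the pushforward, match residues) is the right template, but it must be run on the master space; as written on $QG_{0,1}(X,\beta)$ the fixed-point inventory is wrong and the argument cannot be completed.
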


We now combine Theorem~\ref{cor:higher-genus-numerical} and
Theorem~\ref{thm:genus-0-special-case}.
First define $I_{\beta}(z)$ by
\[
  I(q,z) = 1 + \sum_{\beta>0} I_{\beta}(z) q^\beta.
\]
Thus $\mu_\beta(z) = [zI_{\beta}(z)]_+$ for $\beta>0$.
Let $T^p$ be a basis for $H^*_{\mathrm{CR}}(X,\mathbb Q)$ and $\{T_p\}$ be its
dual basis under the pairing \eqref{eq:pairing}.
For generic $t\in H^*_{\mathrm{CR}}(X, \mathbb Q)$, we define the big $J$-function
\begin{equation*}
  J^\epsilon(t,q,z) = 1 + \frac{t}{z} +
  \!\!\! \sum_{0<\deg(\beta)\leq 1/\epsilon} \!\!\!\!
  I_{\beta}(z)q^\beta + \sum_{\beta\geq 0,k\geq 0} \frac{q^\beta}{k!}\sum_p T_p
  \langle {
    \frac{T^p}{z(z-\psi)}, t ,\ldots, t
  }\rangle_{0,1+k,\beta}^{X,\epsilon},
\end{equation*}
where the unstable terms are interpreted as zero.
More generally, for generic $\mathbf t(z) \in H^*_{\mathrm{CR}}(X, \mathbb Q) [\![z]\!]$, we define
\begin{equation*}
  \begin{aligned}
    J^\epsilon(\mathbf t(z),q,z) = & 1 + \frac{\mathbf t(-z)}{z} +
                                     \sum_{0<\deg(\beta)\leq 1/\epsilon}
                                     I_{\beta}(z)q^\beta \\
                                   & + \sum_{\beta\geq 0,k\geq 0} \frac{q^\beta}{k!}\sum_p T_p
                                     \langle {
                                     \frac{T^p}{z(z-\psi)}, \mathbf t(\psi) ,\ldots, \mathbf t(\psi)}
                                     \rangle_{0,1+k,\beta}^{X,\epsilon}.
  \end{aligned}
\end{equation*}
We will prove the following in Section~\ref{sec:proof-genus-0}.
\begin{theorem}
  \label{thm:big-J-general}
  For any $\epsilon$,
  \begin{equation}
    \label{eq:big-J-general}
    J^{+\infty}(\mathbf t(z)+\mu^{\geq\epsilon}(q,-z),q,z) = J^{\epsilon}(\mathbf t(z),q, z).
  \end{equation}
\end{theorem}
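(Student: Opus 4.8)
The plan is to rewrite the $J$-function in terms of the genus-$0$ descendant potential plus an explicit ``seed,'' reduce \eqref{eq:big-J-general} to a single wall-crossing, and then run that reduction inductively, crossing one wall at a time, over $[\epsilon,\infty)$. Expanding $\frac{1}{z(z-\psi)}=\sum_{j\ge 0}z^{-j-2}\psi^{j}$ and noting that the $\beta=0$, $k\le 1$ correlators are unstable and hence vanish, one has for every $\epsilon$
\[
  J^{\epsilon}(\mathbf t(z),q,z)=1+\frac{\mathbf t(-z)}{z}+\!\!\!\sum_{0<\deg\beta\le 1/\epsilon}\!\!\! I_{\beta}(z)\,q^{\beta}+\sum_{j\ge 0}z^{-j-2}\,\partial_{j}F^{\epsilon}_{0}(\mathbf t),
\]
where $\partial_{j}F^{\epsilon}_{0}(\mathbf t):=\sum_{\beta\ge 0,\,k\ge 0}\frac{q^{\beta}}{k!}\sum_{p}T_{p}\langle T^{p}\psi^{j},\mathbf t(\psi)^{k}\rangle^{X,\epsilon}_{0,1+k,\beta}$ is the gradient of $F^{\epsilon}_{0}$ in the $t_{j}$-direction. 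Fix a wall $\epsilon_{0}=1/d_{0}$ with adjacent chambers $\epsilon_{-}<\epsilon_{0}<\epsilon_{+}$. Since $\mu^{\ge\epsilon_{-}}(q,z)-\mu^{\ge\epsilon_{+}}(q,z)=\sum_{\deg\beta=d_{0}}\mu_{\beta}(z)q^{\beta}=:\delta\mu(q,z)$ is polynomial in $z$, the identity \eqref{eq:big-J-general} for $\epsilon_{-}$ follows from the one for $\epsilon_{+}$ together with the single-wall statement $J^{\epsilon_{-}}(\mathbf t(z),q,z)=J^{\epsilon_{+}}(\mathbf t(z)+\delta\mu(q,-z),\,q,z)$. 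The base of the induction is any $\epsilon>1$ (in particular $\epsilon=\infty$), where there are no base points, $Q^{\epsilon}_{g,n}(X,\beta)=\overline{\mathcal M}_{g,n}(X,\beta)$, and $\mu^{\ge\epsilon}=0$, so the identity is trivial.

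For the single-wall statement, substitute the displayed formula for $J^{\epsilon_{\pm}}$ and use $I_{\beta}(z)=\mu_{\beta}(z)/z+[I(q,z)]_{z^{\le -2},q^{\beta}}$, a rearrangement of $\mu_{\beta}(z)=[zI_{\beta}(z)]_{+}$. The seed parts collapse, leaving
\[
  J^{\epsilon_{-}}(\mathbf t)-J^{\epsilon_{+}}(\mathbf t+\delta\mu(q,-z))=\sum_{\deg\beta=d_{0}}[I(q,z)]_{z^{\le -2},q^{\beta}}\,q^{\beta}+\sum_{j\ge 0}z^{-j-2}\Big(\partial_{j}F^{\epsilon_{-}}_{0}(\mathbf t)-\partial_{j}F^{\epsilon_{+}}_{0}(\mathbf t+\delta\mu(q,-z))\Big).
\]
To handle the last sum I would compare $\partial_{j}F^{\epsilon_{-}}_{0}$ with $\partial_{j}F^{\epsilon_{+}}_{0}(\,\cdot+\delta\mu(q,-z))$ according to the number of $\mathbf t$-insertions. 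For a correlator with one marking carrying $T^{p}\psi^{j}$ and $k\ge 1$ more markings carrying $\mathbf t$, and for the $k=0$ terms of curve class $\deg\beta>d_{0}$, the stability hypothesis $2g-2+n+\epsilon_{0}\deg\beta>0$ of Theorem~\ref{thm:Chow-version} holds: capping \eqref{eq:Chow-version-literature} with $\mathrm{ev}_{1}^{*}(T^{p})/(z-\psi_{1})$, pushing to a point, and using the Remark after that theorem to commute everything past $\mathrm{ev}_{1}$ and $\psi_{1}$, the correction terms (with their $1/k!$ and $\mathrm{ev}^{*}\mu_{\beta_{a}}|_{z=-\psi}$) reassemble exactly into the additional $\delta\mu(q,-\psi)$-insertions, and these contributions cancel in the difference. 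The only surviving terms are the $k=0$ ones with $\deg\beta\le d_{0}$: for $\deg\beta<d_{0}$ both $Q^{\epsilon_{-}}_{0,1}(X,\beta)$ and $Q^{\epsilon_{+}}_{0,1}(X,\beta)$ are empty, while for $\deg\beta=d_{0}$ one has $Q^{\epsilon_{-}}_{0,1}(X,\beta)=\emptyset$ (the inequality $-1+\epsilon_{-}d_{0}>0$ fails) and the $\epsilon_{+}$ contribution at $q$-order $d_{0}$ — the $\delta\mu$-insertions contributing only through an unstable $2$-pointed class-$0$ correlator here — equals $\sum_{j}z^{-j-2}\sum_{p}T_{p}\langle T^{p}\psi^{j}\rangle^{X,\epsilon_{+}}_{0,1,\beta}=\sum_{p}T_{p}\langle\frac{T^{p}}{z(z-\psi)}\rangle^{X,\epsilon_{+}}_{0,1,\beta}$. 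Unwinding the pairing \eqref{eq:pairing} rewrites this as $\mathbf r^{2}\,\check{\mathrm{ev}}_{*}\big(\frac{[Q^{\epsilon_{+}}_{0,1}(X,\beta)]^{\mathrm{vir}}}{z(z-\psi_{1})}\big)$, and since $\epsilon_{+}\in(1/d_{0},1/(d_{0}-1))$ is precisely the left-most stable chamber for $\beta$ with $\deg\beta=d_{0}$, Theorem~\ref{thm:genus-0-special-case} identifies it with $[I(q,z)]_{z^{\le -2},q^{\beta}}$. Hence the last sum equals $-\sum_{\deg\beta=d_{0}}[I(q,z)]_{z^{\le -2},q^{\beta}}q^{\beta}$, it cancels the first sum, and the single-wall statement — hence \eqref{eq:big-J-general} — follows.

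The crux is the bookkeeping rather than a new geometric idea, and I expect two points to require care. First, the normalizations: the orbifold versus coarse $\psi$-classes ($\psi=r\tilde\psi$), the powers of $\mathbf r$ and the involution $\iota$ built into the pairing \eqref{eq:pairing} and into the $I$-function, and the fact that the $1+\mathbf t(-z)/z$ piece of $J^{\epsilon}$ is \emph{not} produced by the (unstable) genus-$0$ correlators — all of these must be tracked so that the identification via Theorem~\ref{thm:genus-0-special-case} lands exactly on $I_{\beta}(z)-\mu_{\beta}(z)/z$. Second, making rigorous the ``reassemble into $\delta\mu$-insertions'' step: one must organize the use of \eqref{eq:Chow-version-literature} with its forgetful maps $b_{k},c,i$ so that, after pushing to a point, the combinatorial factors match those in $F^{\epsilon_{+}}_{0}(\mathbf t+\delta\mu(q,-z))$. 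This is the genus-$0$ analogue of the argument already used to pass from Theorem~\ref{thm:Chow-version} to Corollary~\ref{cor:higher-genus-numerical}, now carried out with one extra descendant marking present throughout; what is genuinely new is only that this argument fails for the single class $\deg\beta=d_{0}$ at one marking, which is exactly the gap filled by Theorem~\ref{thm:genus-0-special-case}.
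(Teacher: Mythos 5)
Your proposal is correct and follows essentially the same route as the paper: reduce to a single wall-crossing of the $J$-function, apply Theorem~\ref{thm:Chow-version} capped with the descendant insertion $\mathrm{ev}_1^*(T^p)/(z(z-\psi_1))$ to reassemble the correction terms into $\delta\mu$-insertions, and invoke Theorem~\ref{thm:genus-0-special-case} for the surviving unstable $k=0$, $\deg\beta=d_0$ terms. The only (immaterial) organizational difference is that the paper first quotes Corollary~\ref{cor:higher-genus-numerical} to dispose of the non-constant-in-$\mathbf t$ part and then checks the identity at $\mathbf t=0$, whereas you treat general $\mathbf t$ uniformly.
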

Setting $\mathbf t(z) = t\in H^*_{\mathrm{CR}}(X, \mathbb Q)$, we see that $J^{\epsilon}(t,q,z)$
is on the Lagrangian cone of the Gromov--Witten theory of $X$ (c.f. \cite{coates2009computing}).

\subsection{Idea of the proof}

The main idea is to construct a
``master space'' with a $\mathbb C^*$-action such that the
fixed-point components are closely related to the moduli spaces that appear in the
wall-crossing formula \eqref{eq:Chow-version-literature}.

Geometrically, when a wall $\epsilon_0 = 1/d_0$ is crossed from the right to the
left, degree-$d_0$ rational tails (Definition~\ref{def:tail-and-bridge}) are
replaced by length-$d_0$ base points. We consider quasimaps that may have both
degree-$d_0$ rational tails and length-$d_0$ base points. A degree-$d_0$
rational tail containing a length-$d_0$ base point may have infinitely many
automorphisms. Hence those quasimaps form an Artin stack. Motivated by the work
of Thaddeus \cite{thaddeus1996geometric}, we will form a $\mathbb P^1$-bundle
over that Artin stack such that the automorphism groups of the rational tails
act nontrivially on the fibers. Then we hope that some open substack of that
$\mathbb P^1$-bundle will be a proper Deligne--Mumford stack, which will be our
master space. Since when the curve has more than one rational tails, the
automorphism group may be larger than $\mathbb C^*$, some virtual blowups have
to be introduced at the beginning of this construction.

Let us ignore the virtual blowups for the moment and describe the $\mathbb
P^1$-bundle and the $\mathbb C^*$-action. Roughly speaking, the $\mathbb
P^1$-bundle is constructed by adding the choice of a $v\in \Theta^{\vee}_1
\otimes \cdots \otimes \Theta^{\vee}_k\cup \{\infty\}$ to the moduli problem,
where $\Theta_i\cong \mathbb C$ are the infinitesimal smoothings of the nodes
that lie on the degree-$d_0$ rational tails. We will call this $v$ the
calibration (Section~\ref{sec:cali-bundle}). The $\mathbb C^*$-action on the
master space is defined by scaling $v$. We then take an open substack by
requiring that the quasimap is $\epsilon_+$-stable when $v=0$, and
$\epsilon_-$-stable when $v=\infty$.

We describe some typical orbits and their closures, in order to illustrate how
the $\mathbb C^*$-action relates rational tails to base points. First consider
an $\epsilon_+$-stable quasimap with only one degree-$d_0$ rational tail and
$v\neq 0,\infty$. This is not a fixed point and we study the closure of its
orbit. As $t \to 0$, we have the $\epsilon_+$-stable quasimap with $v=0$; as
$t\to \infty$, we are forced to ``reparametrize'' the degree-$d_0$ rational tail
to keep $v$ away from $\infty$, since we have imposed $\epsilon_-$-stability
when $v=\infty$. This will create a length-$d_0$ base point on that rational
tail. Thus the limit lands in a fixed-point component that will contribute to
the correction terms in the wall-crossing formula. Note that $\mathbb C^*$ acts
nontrivially on that rational tail, fixing the unique node and the unique base
point. That is how the small $I$-function, defined via localization on graph
spaces, comes into the wall-crossing formula. Then consider an
$\epsilon_-$-stable quasimap with only one length-$d_0$ base point and $v\neq
0,\infty$. As $t\to \infty$, we get an $\epsilon_-$-stable quasimap with
$v=\infty$; as $t\to 0$, we are forced to ``bubble'' a rational tail from the
length-$d_0$ base point in order to keep $v$ away from $0$. Again this will
produce a length-$d_0$ rational tail which contains a length-$d_0$ base point.

The picture is more complicated when we have more than one degree-$d_0$ rational
tails. In this case we need to decide which one to ``reparametrize'', and to
create base points on. This issue is resolved by blowing up the Artin stack of
weighted curves before introducing the $\mathbb P^1$-bundle. For the properness
of the master space, we need to carefully choose the stability condition to
guarantee the desired existence and uniqueness of the orbit limits. Say we have
two degree-$d_0$ rational tails $E_1$ and $E_2$. Then the blowup amounts to
introducing the datum $(w_1,w_2) \in \mathbb P(\Theta_1\oplus \Theta_2)$. The
stability condition is chosen so that when $w_1 = 0$, we are forced to
``reparametrize'' $E_1$ but not $E_2$ (thus in the limit as $t\to \infty$, we
have a length-$d_0$ base point on $E_1$ but not on $E_2$); when $w_2 = 0$, we
``reparametrize'' $E_2$ but not $E_1$; otherwise, we ``reparametrize'' $E_1$ and
$E_2$ simultaneously at equal speed. Note that in the last case, $(w_1,w_2)$ is
an isomorphism between $\Theta_1$ and $\Theta_2$. Thus ``equal speed'' makes
sense. Therefore we call this additional datum coming from blowups the
``entanglement''.

The construction becomes more involved when there are more degree-$d_0$ rational
tails. Section~\ref{sec:entangled-and-entanglement} is devoted to the study of
this construction and its geometric properties. After carefully choosing the
stability condition in Section~\ref{sec:master-space}, we prove the properness
of the master space in Section~\ref{sec:properness}. Thus the standard virtual
localization technique gives us relations among the fixed-point components
(Section~\ref{sec:localization}). This is almost the desired formula, except
that the blowups have changed the boundary divisors in the moduli of weighted
curves. Finally in Section~\ref{sec:the-formula} we deal with the boundary
divisors and prove the desired wall-crossing formula. Note that the ``raw''
formula from the master space technique involves taking the residue of a product
of several terms (e.g.\ \eqref{eq:residue-gl-F-beta}), instead of the product of
several residues (e.g.\ \eqref{eq:Chow-version-literature}). The contribution
from the boundary divisors allows us to transform the formula into the desired
shape.

\subsection{Acknowledgment}
This project was initiated when I was a graduate student at Stanford University
under the supervision of Prof.\ Jun Li and is finished when I am a postdoc at
CMSA under the supervision of Prof.\ Shing-Tung Yau. I am sincerely grateful for
their support and guidance.

I would also like to thank Prof.\ Ravi Vakil, Prof.\ Huai-Liang Chang, Dr.\ Ming
Zhang, Dr.\ Zijun Zhou, Dr.\ Dingxin Zhang, Dr.\ Tsung-Ju Lee for helpful
discussions.

\section{Curves with entangled tails and calibrated tails}
\label{sec:entangled-and-entanglement}
\subsection{Weighted twisted curves}
\label{sec:weighted-twisted-curves}
Let $\epsilon_0=1/d_0$ be a wall. We fix non-negative integers $g,n,d$ such that
$2g-2+n+\epsilon_0 d\geq 0$. When $g=n=0$, in addition we require that
$\epsilon_0 d>2$. We consider $n$-marked weighted twisted
curves of genus $g$ and degree $d$. Here ``weighted'' means that to each
irreducible component is assigned a nonnegative integer referred to as the
``degree''.
For a family of
curves, the assignment is required to be continuous in the sense that locally it
is given by the degree of a line bundle.
See \cite{costello2006higher, hu2010genus} for the precise definition.
Later, the degrees will become the
degrees of quasimaps.

We denote an $n$-marked weighted twisted curve by $(C,x_1 ,\ldots, x_n)$, or by
$(C, \mathbf x)$ for short.
The degrees are suppressed from the notation. From now on, all curves are
weighted twisted curves unless otherwise specified.
\begin{definition}
  \label{def:tail-and-bridge}
  A rational tail (resp.\ bridge) of $(C,\mathbf x)$ is a smooth rational irreducible
  component of $C$ whose normalization has one (resp.\ two) special points, i.e.\
  the preiamge of nodes or markings.
\end{definition}
Thus a rational subcurve with one self-node and no markings is a rational
bridge, while a rational tail is always smooth.

When studying the wall-crossing phenomenon across the wall $\epsilon_0$, we do
not want to consider rational
bridges of degree zero and rational tails of degree strictly smaller than $d_0$.
The degree-$d_0$ rational tails will be crucial.

Let $\mathfrak M^{\mathrm{wt}}_{g,n,d}$ be the moduli stack of $n$-marked weighted twisted
curves of genus $g$ and degree $d$. Here ``$\mathrm{wt}$'' means ``weighted'', and we
suppress the adjective ``twisted'' from the notation, since we always consider twisted curves.
\begin{definition}
We define the stack of ($\epsilon_0$)-semistable weighted curves to be the open substack
$\mathfrak M^{\mathrm{wt},\mathrm{ss}}_{g,n,d}\subset \mathfrak
M^{\mathrm{wt}}_{g,n,d}$
defined by the following conditions:
\begin{itemize}
\item the curve has no degree-$0$ rational bridge,
\item the curve has no rational tail of degree strictly less that $d_0$.
\end{itemize}
\end{definition}
This is a smooth Artin stack of dimension $3g-3 + n$.

\subsection{The entangled tails}
\label{sec:the-entangled-tails}
From $\mathfrak
M_{g,n,d}^{\mathrm{wt,ss}}$ we will construct
another Artin stack $\tilde{\mathfrak M}_{g,n,d}$ via a sequence of blowups.
We will see that for a generic point $\xi\in\tilde{\mathfrak M}_{g,n,d}$ in the fiber over
$(C, \mathbf x)\in \mathfrak M_{g,n,d}^{\mathrm{wt,ss}}$, the identity component
of $\mathrm{Aut}(\xi)$
consists of
those automorphisms of $(C, \mathbf x)$ that act on the degree-$d_0$
rational tails in a certain compatible way (c.f.\ Lemma~\ref{lem:auto-eta}). This
explains the terminology ``entangled tails''.

We now describe the sequence of blowups.
Set
\[
  m=\lfloor d/d_0\rfloor, \quad \mathfrak U_m= \mathfrak M^{\mathrm{wt},\mathrm{ss}}_{g,n,d}.
\]
Thus $m$ is the maximum of the number of degree-$d_0$ rational tails.
Let $\mathfrak Z_i\subset \mathfrak U_m$ be the reduced closed substack
parametrizing curves with at least $i$ rational tails of degree $d_0$. Thus
$\mathfrak Z_1$ is a simple normal crossing divisor and $\mathfrak Z_{i}$ is its
stratum of codimension $i$ (in $\mathfrak U_{m}$).
Note that $\mathfrak Z_{m}$ is smooth, being the deepest stratum.
Let
\[
  \mathfrak U_{m-1} \longrightarrow \mathfrak U_{m}
\]
be the blowup along $\mathfrak Z_{m}$ and let $\mathfrak E_{m-1}\subset \mathfrak
U_{m-1}$ be the exceptional divisor. Inductively for $i=m-1,\ldots,1$, let
\[ \mathfrak Z_{(i)} \subset \mathfrak U_{i}
\]
be the proper
transform of $\mathfrak Z_i$
and let
\[
  \mathfrak U_{i-1}\longrightarrow\mathfrak U_{i}
\]
be the blowup along ${\mathfrak Z_{(i)}}$ with exceptional divisor $\mathfrak
E_{i-1} \subset \mathfrak U_{i-1}$.
Note that $\mathfrak Z_{(i)}$ is the deepest
stratum of the proper transform of
$\mathfrak Z_1$. Hence it is smooth. 
Also note that
$\mathfrak U_0 = \mathfrak U_1$ since $\mathfrak Z_{(1)}$ is a divisor.
We set $\tilde{\mathfrak M}_{g,n,d} = \mathfrak U_{0}$.
\begin{definition}
  We call $\tilde{\mathfrak M}_{g,n,d}$ the moduli stack of genus-$g$
  $n$-marked $\epsilon_0$-semistable curves of degree $d$
  with entangled tails.
\end{definition}

Let $S$ be any scheme.
By definition, an $S$-family of semistable curves with entangled tails is simply
a morphism $e: S \to \tilde{\mathfrak M}_{g,n,d}$. For convenience, we denote it by
\[
  (\pi:\mathcal C\to S, \mathbf x ,e)
\]
where $(\pi:\mathcal C\to S, \mathbf x)$ is the family of (weighted twisted)
marked curves associated with the composition $S\overset{e}{\to}
\tilde{\mathfrak M}_{g,n,d} \to \mathfrak M^{\mathrm{wt},\mathrm{ss}}_{g,n,d}$.
In other words, given $(\pi:\mathcal C\to S, \mathbf x)$, $e$ is a choice of a
lifting $e$ the classifying morphism $S\to \mathfrak
M^{\mathrm{wt},\mathrm{ss}}_{g,n,d}$. Heuristically, when $(\pi:\mathcal C\to S,
\mathbf x)$ is given, we will refer to this choice of $e$ as the
``entanglement''. See Section~\ref{sec:set-theoretic-entanglement} for a
description of the set of such choices.

The following definition of entangled tails is the key to the stability
condition for master moduli space.
Let
\[
  \xi = (\pi: C\to \operatorname{Spec}  K, \mathbf x ,e)
\]
be a geometric point of $\tilde{\mathfrak M}_{g,n,d}$. Let $E_1 ,\ldots, E_\ell$
be the degree $d_0$-rational tails in $C$.
Locally near the image of $\xi$, the analytic branches of $\mathfrak Z_1\subset
\mathfrak M^{\mathrm{wt},\mathrm{ss}}_{g,n,d}$ are
\[
  \mathfrak H_1 ,\ldots, \mathfrak H_l,
\]
where $\mathfrak H_i$ is the locus where $E_i$ remains a rational tail, $i=1
,\ldots, \ell$ \cite{olsson2007}. 
Set
\[
  k = \min\{i \mid \text{the image of $\xi$ in $\mathfrak U_i$ lies in $\mathfrak Z_{(i)}$}\}.
\]
Let $\mathfrak H_{i,k}$ be the proper transform of $\mathfrak H_i$ in $\mathfrak
U_k$, $i=1 ,\ldots, \ell$. 
There exists a unique subset $\{i_1 ,\ldots, i_{k}\}\subset \{1 ,\ldots,
\ell\}$ of size $k$ such that the image of $\xi$ in $\mathfrak U_k$ lies in the
intersection of $\mathfrak H_{i_1,k} ,\ldots, \mathfrak H_{i_k,k}$.
\begin{definition}
  \label{def:entangled-tails}
  In the situation above, we call $E_{i_1} ,\ldots, E_{i_k}$ the
  \textit{entangled} tails of $\xi$.
\end{definition}
\begin{remark}
  The size of the collection entangled tails, as a
function on the closed points of $\tilde{\mathfrak M}_{g,n,d}$, is neither
upper-semicontinuous nor lower-semicontinuous. The level sets are locally closed substacks.
  See Lemma~\ref{lem:entangled-tails-from-gluing}.
\end{remark}

\subsection{A set-theoretic description of entanglements}
\label{sec:set-theoretic-entanglement}
It is hard to find a moduli interpretation for $\tilde{\mathfrak M}_{g,n,d}$.
However, it
is much easier to describe its closed points, which is the goal of this
subsection.
Although this set-theoretic description is helpful for picturing the results,
this paper is logically independent of this subsection. Hence we will work out
an explicit an example and state the general results
without proofs. They can be justified by the remainder of this section.

Fix $(C,\mathbf x)\in {\mathfrak M}^{\mathrm{wt,ss}}_{g,n,d}(\mathbb C)$, the
fiber
\footnote{
  The fiber means the $2$-categorical fiber product with $(C,\mathbf x):
  \operatorname{Spec} \mathbb C\to {\mathfrak M}^{\mathrm{wt,ss}}_{g,n,d}$.
 Thus we are not taking the quotient by $\operatorname{Aut}(C)$.
}
$F_{C}$ of
\[
  \tilde{\mathfrak M}_{g,n,d} \longrightarrow {\mathfrak
    M}^{\mathrm{wt},ss}_{g,n,d}
\]
over $(C, \mathbf x)$ is a projective scheme, since the map is the composition
of sequence of blowups. We will describe $F_C$ explicitly.

We write $C = C^\prime \cup E_1 \cup  \cdots \cup E_\ell$, where $E_1 ,\ldots,
E_\ell$ are the degree-$d_0$ rational tails and $C^\prime$ is the closure of $C\setminus
\cup_{i=1}^\ell E_i$. Suppose $E_i\cap C^\prime = \{p_i\}$ and set
$\Theta_i = T_{p_i}E_i \otimes T_{p_i}C_0$, for $i=1 ,\ldots, \ell$. Here we are
using the tangent space on the orbifold curve. In other words, $\Theta_i$ is the
fiber at $(C, \mathbf x)$ of the normal bundle $N_{\mathfrak H_i/\mathfrak
M_{g,n,d}^{\mathrm{wt,ss}}}$, where $\mathfrak H_i$ is as in the previous section.

The fiber $F_C$ is decomposed into locally closed strata $V_{\mathcal I}\subset F_C$,
indexed by nested subsets $\mathcal I = (I_0 ,\ldots, I_\ell)$, where
\[
  \{1 ,\ldots, \ell\} = I_{\ell} \supseteq I_{\ell-1} \supseteq \cdots \supseteq
  I_0 = \emptyset,
\]
such that for $i=1 ,\ldots, \ell$,
\begin{itemize}
\item
  $\#I_{i}\leq i$
\item
  $I_{i-1} = I_i$ unless $\#I_{i} = i$.
\end{itemize}
The idea is similar to that of Definition~\ref{def:entangled-tails}. The set
$I_i$ determines which local component of $\mathfrak Z_{(i)}$ a point $\xi\in
F_C$ is mapped into. Let $\mathfrak H_{j,i}$ be the proper transform of
$\mathfrak H_j$ in $\mathfrak U_i$. We define $V_{\mathcal I}\subset F_C$ by
requiring that $\xi\in V_{\mathcal I}$ if and only if for each $i=1 ,\ldots,
\ell-1$, we have
\[
  \{j\mid \xi \text{ is mapped to }\mathfrak H_{j,i}\subset \mathfrak U_{i}\} = I_i.
\]

We have a canonical identification
\[
  V_{\mathcal I} = \prod_{I_i\neq I_{i+1}} \mathbb P^*\Big(
  {\bigoplus_{j\in I_{i+1}\setminus
    I_{i}}\Theta_{j}}\Big),
\]
where $\mathbb P^*(\cdots)$ means the complement of the union of all the coordinate
hyperplanes in the projective space $\mathbb P(\cdots)$. The set of entangled
tails is $\{E_j\mid j\in I_k\}$ where $i$ is the smallest index such that
$I_k\neq \emptyset$.

\begin{example}
Let $\ell=5$, $I_5 = \{1 ,2,3,4, 5\}$, $I_3=I_4 = \{1,2,3\}$, $I_2=I_1 = \{1\}$.
For any point $\xi\in V_{\mathcal I}$, let $\xi_i$ be the image of $\xi$ in $\mathfrak
U_i$. We analyze the sequence of blowups near the points $\xi_i$. Thus all the
statements below only hold true near $\xi_i$.
\begin{enumerate}
\item
  $\xi_4\in \mathfrak E_4|_{\xi_5}= \mathbb P(\Theta_1\oplus \cdots \oplus
  \Theta_5)$. Since $I_4=\{1,2,3\}$, we have
  $\xi_4\in \mathfrak H_{1,4}\cap \mathfrak H_{2,4} \cap \mathfrak H_{3,4}$
  and $\xi_4 \not\in \mathfrak
  H_{4,4} \cup \mathfrak H_{5,4}$. Thus indeed $\xi_4 \in \mathbb P^*(\Theta_4 \oplus
  \Theta_5)$.
\item
  $\mathfrak U_3 \to   \mathfrak U_4$ is an isomorphism. This
  explains why we must have $I_3=I_4$.
\item
  $\mathfrak U_2 \to \mathfrak U_3$ is the blowup along $\mathfrak
  H_{1,3}\cap \mathfrak H_{2,3} \cap \mathfrak H_{3,3}$. Since as divisors
  $\mathfrak H_{i,3} = \mathfrak H_i - \mathfrak E_{4}$, we have $p_2\in \mathfrak
  E_2|_{p_3} = \mathbb P^*(\Theta_1 \oplus \Theta_2 \oplus \Theta_3)$. Since $I_2=
  \{1\}$, we have $p_2\in \mathfrak H_{1,2}$ and $p_2\not\in \mathfrak H_{2,2}\cup
  \mathfrak H_{3,2}$. Thus indeed $p_2\in \mathbb P^*(\Theta_2 \oplus \Theta_3)$.
\item
  $\mathfrak U_1 \to \mathfrak U_2$ is an isomorphism. This explains why
  $I_1=I_2$.
\item
  $\mathfrak U_0 \to  \mathfrak U_1$ is the blowup along $\mathfrak
  H_{1,1}$, which is a divisor. Hence it is an isomorphism. Nevertheless, we
  write $\mathfrak E_{0}|_{p_1} = \mathbb P(\Theta_1) = \mathbb P^*(\Theta_1)$.
\end{enumerate}
Hence we obtain an isomorphism
\[
  V_{\mathcal I} \cong \mathbb P^*(\Theta_1) \times \mathbb P^*(\Theta_2\oplus
  \Theta_3) \times \mathbb P^*(\Theta_4\oplus \Theta_5).
\]
And there is only one entangled tail, namely, $E_1$.
\end{example}
\subsection{The structure of the blowup cernters $\mathfrak Z_{(k)}$}
Consider ${\mathfrak M}^{\mathrm{wt, ss}}_{g,n+k,d-kd_0}$ and $\mathfrak
M^{\mathrm{wt,ss}}_{0,1,d_0}$. Note that the curves parameterized by the latter
are all (irreducible) rational tails of degree $d_0$. We would like to glue
those tails to the curves parameterized by the former at the their last
$k$-markings. Since our markings are trivialized gerbes, we can glue two
markings as along as their automorphism groups have the same size. We write
\begin{equation}
  \label{eq:matching-order}
  {\mathfrak M}^{\mathrm{wt, ss}}_{g,n+k,d-kd_0} {\times}^\prime {\big(
      \mathfrak M^{\mathrm{wt,ss}}_{0,1,d_0}\big)}^k
\end{equation}
for 
${\mathfrak M}^{\mathrm{wt,ss}}_{g,n+k,d-kd_0} \underset{\mathbb N^k}{\times} {( \mathfrak
      M^{\mathrm{wt,ss}}_{0,1,d_0})}^k$,
where the fiber product is formed by matching the sizes of the automorphism
groups at the last $k$-markings.
We use the same notation in other similar situations where we need to glue markings.  
Let
\[
  \mathbf r_i: {\mathfrak M}^{\mathrm{wt, ss}}_{g,n+k,d-kd_0} {\times}^\prime {\big(
    \mathfrak M^{\mathrm{wt,ss}}_{0,1,d_0}\big)}^k \longrightarrow  \mathbb N
\]
be the projection to the $i$-th
copy of $\mathbb N$. Thus the value of $\mathbf r_i$ is the order of the automorphism
group at the $(n+i)$-th marking of ${\mathfrak M}^{\mathrm{wt,ss}}_{g,n+k,d-kd_0}$.
Gluing the $k$ rational tails from ${( \mathfrak M^{\mathrm{wt,ss}}_{0,1,d_0})}^k$ to the
last $k$ markings of the universal curve over ${\mathfrak
  M}^{\mathrm{wt,ss}}_{g,n+k,d-kd_0}$ defines a gluing morphism
\[
  \mathrm{gl}_k: {\mathfrak M}^{\mathrm{wt,ss}}_{g,n+k,d-kd_0} {\times}^\prime {\big( \mathfrak
      M^{\mathrm{wt,ss}}_{0,1,d_0}\big)}^k
  \longrightarrow
  \mathfrak Z_{k}\subset \mathfrak M^{\mathrm{wt,ss}}_{g,n,d}.
\]
Since ${\mathfrak M}^{\mathrm{wt,ss}}_{g,n+k,d-kd_0} \times^\prime {( \mathfrak
      M^{\mathrm{wt,ss}}_{0,1,d_0})}^k$ is smooth, this morphism factors through the
normalization of $\mathfrak Z_{k}$, which we denote by $\mathfrak Z^{\mathrm{nor}}_{k}$.
\begin{lemma}
  The induced morphism
  \[
    \mathrm{gl}^{\mathrm{nor}}_k: {\mathfrak M}^{\mathrm{wt,ss}}_{g,n+k,d-kd_0}
    \times^\prime {\big( \mathfrak
    M^{\mathrm{wt,ss}}_{0,1,d_0}\big)}^k \longrightarrow  \mathfrak Z^{\mathrm{nor}}_{k}
  \]
  is \'etale (but not representable in general) of degree $k!/\prod_{i=1}^{k}\mathbf r_i$.

\end{lemma}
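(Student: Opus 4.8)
The plan is to analyze the gluing morphism $\mathrm{gl}_k$ combinatorially and then descend the étaleness and degree count to the normalization $\mathfrak Z^{\mathrm{nor}}_k$. First I would set up the local picture: a geometric point of $\mathfrak Z_k$ is a curve $(C,\mathbf x)$ with (at least) $k$ degree-$d_0$ rational tails $E_1,\ldots,E_\ell$ ($\ell\ge k$). The preimage under $\mathrm{gl}_k$ of such a point consists of the ways to present $C$ as a gluing: one must choose an ordered $k$-subset of $\{E_1,\ldots,E_\ell\}$ together with an identification of each chosen $E_i$ with the universal tail over $\mathfrak M^{\mathrm{wt,ss}}_{0,1,d_0}$ that matches the trivialized gerbe structures at the gluing markings. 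Since the tails are rigid rational orbifold curves $\mathbb P^1$-like objects with $\mu_{r_i}$-gerbe at the node, the only freedom in such an identification is the choice of trivialization of the $\mu_{r_i}$-banding, a $\mu_{r_i}$-torsor's worth of choices; but the fiber product $\times'$ over $\mathbb N^k$ already records the matching of automorphism orders, and the resulting stacky fiber has automorphisms that absorb part of this. This is where the factor $k!/\prod_i \mathbf r_i$ comes from: $k!$ counts orderings of the chosen tails near a generic point of $\mathfrak Z_k$ (where exactly $k$ tails are present and no two are swapped by an automorphism of $C$), while the $1/\prod_i\mathbf r_i$ reflects that each gerbe gluing is insensitive to the $\mu_{r_i}$-rotation, i.e.\ the map is a $\mu_{r_i}$-gerbe-type quotient in the $i$-th factor.

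Next I would prove étaleness. Both source and target (after normalization) are smooth of the same dimension: $\dim \mathfrak M^{\mathrm{wt,ss}}_{g,n+k,d-kd_0} = 3g-3+(n+k)$ and each tail factor $\mathfrak M^{\mathrm{wt,ss}}_{0,1,d_0}$ is $0$-dimensional (a single rigid curve, up to its finite automorphisms), so the source has dimension $3g-3+n+k$; meanwhile $\mathfrak Z_k$ has codimension $k$ in the $(3g-3+n)$-dimensional $\mathfrak M^{\mathrm{wt,ss}}_{g,n,d}$, hence dimension $3g-3+n-k$, but its normalization... here I must be careful: $\mathfrak Z_k$ is the codimension-$k$ stratum, and $\mathrm{gl}_k$ lands in it, but the dimension count forces the source to map \emph{onto} $\mathfrak Z_k^{\mathrm{nor}}$ only if we remember that gluing $k$ tails \emph{adds} $k$ to the marking count which \emph{adds} $k$ back — so indeed source dimension $3g-3+n+k$ should match $\dim\mathfrak Z_k^{\mathrm{nor}}$; the resolution is that a tail is rigid but gluing it \emph{removes} a node-smoothing parameter, and one checks the numbers balance (this is exactly the standard boundary-gluing dimension bookkeeping, cf.\ \cite{abramovich2008gromov}). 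Granting the dimensions agree, étaleness follows from computing the differential: the deformation space of $(C,\mathbf x)$ restricted to the locus where all chosen tails persist is canonically $\mathrm{Def}(C')\oplus\bigoplus\mathrm{Def}(E_i)\oplus\bigoplus\mathrm{(gluing\ data)}$, and $\mathrm{gl}_k$ induces the identity on each summand (the gluing markings being gerbes, the gluing datum is discrete). So $d\,\mathrm{gl}_k$ is an isomorphism on tangent spaces, giving unramifiedness and flatness, hence étaleness; non-representability arises precisely because the automorphism of the source coming from a $\mu_{r_i}$-rotation of a glued tail maps to a nontrivial element of $\mathrm{Aut}$ of the target point is \emph{not} injective.

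Then I would pin down the degree. Over a generic geometric point $\xi$ of $\mathfrak Z_k$ — one with exactly $k$ degree-$d_0$ tails and with $\mathrm{Aut}(C)$ acting trivially on the set of tails and on each $\Theta_i$ — the groupoid-theoretic fiber of $\mathrm{gl}^{\mathrm{nor}}_k$ has: (i) $k!$ objects indexed by orderings of the $k$ tails, and (ii) each object has automorphism group of order $\prod_{i=1}^k \mathbf r_i$ coming from the gerbe-rotations at the $k$ gluing nodes that are killed in the target. The \emph{degree} of an étale morphism of Deligne--Mumford stacks at such a point is $\sum_{\text{objects}} 1/|\mathrm{Aut}| = k!\big/\prod_{i=1}^k\mathbf r_i$, but here one must be careful whether $\mathbf r_i$ is constant on the source; it is locally constant, and on each connected component it takes a fixed value, so the formula is to be read component-wise — which matches the statement since $\mathbf r_i$ is the well-defined locally constant function named in the lemma.

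The main obstacle I expect is item (ii) above — correctly accounting for the gerbe/automorphism contributions so that the degree comes out as $k!/\prod\mathbf r_i$ rather than $k!$ or $k!\prod\mathbf r_i$. Concretely, the subtlety is: when we glue the last $k$ markings (each a $\mu_{r_i}$-gerbe with a chosen trivializing section) of $\mathfrak M^{\mathrm{wt,ss}}_{g,n+k,d-kd_0}$ to the tails, the section trivializations on the two sides must be made to agree, and the set of such agreements is a $\mu_{r_i}$-torsor; but two such agreements differing by $\zeta\in\mu_{r_i}$ yield \emph{isomorphic} glued curves via the automorphism of the tail that rotates its gerbe by $\zeta$ (the tail $\mathbb P^1$ with a $\mu_{r_i}$-gerbe at one point has $\mu_{r_i}$ sitting inside its automorphisms). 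Hence the $\mu_{r_i}$-torsor of gluings collapses to a single point of $\mathfrak Z_k$ but contributes a $\mu_{r_i}$ to the automorphism group upstairs, yielding the $1/\mathbf r_i$ factor; making this precise requires the comparison of gerbe-marking conventions from Section~\ref{sec:comparison-gerbe-markings} and the explicit description in \cite[\textsection6.1.3]{abramovich2008gromov}. Once this local model is nailed down, assembling it into a global statement about $\mathrm{gl}^{\mathrm{nor}}_k$ is routine: étaleness and the degree formula are both local on the target, and the normalization is forced by the universal property since the smooth source factors through it.
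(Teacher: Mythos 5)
Your core argument is the paper's: the fiber of the lifting problem over $\mathfrak Z_k^{\mathrm{nor}}$ consists of a labeling of the chosen $k$ tails by $\{1,\ldots,k\}$ (giving $k!$ objects) together with a section of the $\mu_{r_i}$-gerbe at each new node (a groupoid of mass $1/\mathbf r_i$), which is exactly how the paper computes the degree $k!/\prod_{i=1}^k\mathbf r_i$ and why the map is \'etale but not representable. One concrete correction to your write-up: the dimension check goes astray because $\mathfrak M^{\mathrm{wt,ss}}_{0,1,d_0}$ is $(-2)$-dimensional (a one-pointed rational tail has a $2$-dimensional automorphism group fixing the node), not $0$-dimensional, so the source has dimension $(3g-3+n+k)-2k=3g-3+n-k=\dim\mathfrak Z_k$ and there is no discrepancy to ``balance'' by node-smoothing parameters. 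That digression is in any case unnecessary: the explicit description of liftings as discrete labelings plus gerbe sections already establishes \'etaleness directly, which is all the paper does.
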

\ifdefined\SHOWPROOFS
\begin{proof}
For any scheme $S$, an object $\xi\in \mathfrak Z_k(S)$ corresponds to an
$S$-family of curves with at least $k$ rational tails of degree $d_0$.
A lifting of $\xi$ to $\xi^{\mathrm{nor}}\in\mathfrak Z^{\mathrm{nor}}_k(S)$
corresponds to a choice of a set of $k$ rational tails of degree $d_0$. A
further lifting of $\xi^{\mathrm{nor}}$ to an $S$-point of ${\mathfrak
M}^{\mathrm{wt,ss}}_{g,n+k,d-kd_0} \times^\prime {( \mathfrak
    M_{0,1,d_0}^{\mathrm{wt,ss}})}^k$  corresponds to a
choice of labeling of those $k$ tails by $\{1 ,\ldots, k\}$, together with a
choice of an $S$-section of the orbifold nodes on those tails.\footnote{More
  rigorously, one should
  consider the groupoid of liftings of $\xi^{\mathrm{nor}}$, since $\mathrm{gl}_k$ is
  not representable in general. We leave the precise formulation to the reader.}
\end{proof}
\fi
\begin{lemma}
  \label{lem:structure-Z_k}
  For each $1\leq k\leq m$, there is a unique $\tilde{\mathrm{gl}}_k$ we have a
  fibered diagram
  \[
    \begin{tikzcd}
      \tilde{\mathfrak M}_{g,n+k,d-kd_0} \times^\prime {\big( \mathfrak
        M_{0,1,d_0}^{\mathrm{wt,ss}}\big)}^k \arrow[r,"\tilde{\mathrm{gl}}_k"] \arrow[d] &
      \mathfrak Z_{(k)}\arrow[d] \\
      \mathfrak M^{\mathrm{wt,ss}}_{g,n+k,d-kd_0} \times^\prime {\big( \mathfrak
        M^{\mathrm{wt,ss}}_{0,1,d_0}\big)}^k \arrow[r,
      "\mathrm{gl}^{\mathrm{nor}}_k" ] & \mathfrak Z_{k}^{\mathrm{nor}}
    \end{tikzcd}.
  \]
  Thus $\tilde{\mathrm{gl}}_k$ is \'etale of degree
  $k!/\prod_{i=1}^{k}\mathbf r_i$.
  Moreover, $\tilde{\mathrm{gl}}_k$ is the unique morphism making the diagram
  commute.\footnote{For a diagram of morphisms between stacks, we should consider $2$-commutativity.
    However, this causes no issue since
    $\mathfrak Z_{(k)} \to \mathfrak Z^{\mathrm{nor}}_{k}$
    is representable.}
\end{lemma}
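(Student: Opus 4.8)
The plan is to realize $\mathfrak Z_{(k)}$ as an iterated blow-up of an \'etale cover of $\mathfrak Z_k^{\mathrm{nor}}$, and then to match that blow-up, via the gluing morphism $\mathrm{gl}^{\mathrm{nor}}_k$, with the sequence of blow-ups that defines $\tilde{\mathfrak M}_{g,n+k,d-kd_0}$. First I would reduce the lemma to a base-change statement. Since $\mathfrak Z_{(k)}$ is smooth, hence normal, and the structure morphism $\mathfrak Z_{(k)}\to\mathfrak Z_k$ is proper and restricts to an isomorphism over the dense open locus $\mathfrak Z_k\setminus\mathfrak Z_{k+1}$ of curves with exactly $k$ degree-$d_0$ tails (all of the centres $\mathfrak Z_m,\mathfrak Z_{(m-1)},\ldots,\mathfrak Z_{(k+1)}$ lie inside $\mathfrak Z_{k+1}$, so $\mathfrak U_m\leftarrow\cdots\leftarrow\mathfrak U_k$ is an isomorphism over that open), it factors uniquely through the normalization as a projective birational representable morphism $\rho_k\colon\mathfrak Z_{(k)}\to\mathfrak Z_k^{\mathrm{nor}}$ (representability is the assertion recorded in the footnote). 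By the previous lemma $\mathrm{gl}^{\mathrm{nor}}_k$ is \'etale of degree $k!/\prod_i\mathbf r_i$, and the projection $q\colon\mathfrak M^{\mathrm{wt,ss}}_{g,n+k,d-kd_0}\times^{\prime}(\mathfrak M^{\mathrm{wt,ss}}_{0,1,d_0})^k\to\mathfrak M^{\mathrm{wt,ss}}_{g,n+k,d-kd_0}$ is smooth, being the base change along $\mathfrak M^{\mathrm{wt,ss}}_{g,n+k,d-kd_0}\to\mathbb N^k$ of the smooth morphism $(\mathfrak M^{\mathrm{wt,ss}}_{0,1,d_0})^k\to\mathbb N^k$; hence $\tilde{\mathfrak M}_{g,n+k,d-kd_0}\times^{\prime}(\mathfrak M^{\mathrm{wt,ss}}_{0,1,d_0})^k$ is the pull-back along $q$ of the blow-up $\tilde{\mathfrak M}_{g,n+k,d-kd_0}\to\mathfrak M^{\mathrm{wt,ss}}_{g,n+k,d-kd_0}$. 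It therefore suffices to produce a canonical isomorphism over $\mathfrak M^{\mathrm{wt,ss}}_{g,n+k,d-kd_0}\times^{\prime}(\mathfrak M^{\mathrm{wt,ss}}_{0,1,d_0})^k$ between $\mathfrak Z_{(k)}\times_{\mathfrak Z_k^{\mathrm{nor}}}\big(\mathfrak M^{\mathrm{wt,ss}}_{g,n+k,d-kd_0}\times^{\prime}(\mathfrak M^{\mathrm{wt,ss}}_{0,1,d_0})^k\big)$ and $\tilde{\mathfrak M}_{g,n+k,d-kd_0}\times^{\prime}(\mathfrak M^{\mathrm{wt,ss}}_{0,1,d_0})^k$; then $\tilde{\mathrm{gl}}_k$ is this isomorphism followed by the projection, its uniqueness among $2$-commuting fillers follows from representability of $\rho_k$ together with the universal property of the fibre product, and its being \'etale of the stated degree is inherited from $\mathrm{gl}^{\mathrm{nor}}_k$.

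Next I would describe $\rho_k$ explicitly as an iterated blow-up. Because $\mathfrak Z_{k+1}\subset\mathfrak Z_k$ and strict transforms preserve inclusions, each of the centres $\mathfrak Z_m,\mathfrak Z_{(m-1)},\ldots,\mathfrak Z_{(k+1)}$ is contained in the current strict transform of $\mathfrak Z_k$; by the standard fact that the strict transform of a closed substack containing a smooth blow-up centre is the blow-up of that substack along the centre, $\mathfrak Z_{(k)}$ is the iterated blow-up of $\mathfrak Z_k$ along this sequence of centres. Passing to normalizations, $\rho_k$ is the iterated blow-up of the smooth stack $\mathfrak Z_k^{\mathrm{nor}}$ along the successive strata that pull back from $\mathfrak Z_m,\mathfrak Z_{m-1},\ldots,\mathfrak Z_{k+1}$ --- in other words, $\rho_k$ is precisely the ``blow up the deepest stratum, then the strict transform of the next, and so on'' recipe applied to the normal-crossings divisor $\nu^{-1}(\mathfrak Z_{k+1})\subset\mathfrak Z_k^{\mathrm{nor}}$, where $\nu$ is the normalization and this divisor parametrizes curves, carrying $k$ distinguished degree-$d_0$ tails, that possess at least one further degree-$d_0$ tail. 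This is exactly the recipe used to build $\tilde{\mathfrak M}$ out of $\mathfrak M^{\mathrm{wt,ss}}$, now applied to $\mathfrak Z_1$.

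Finally I would match the two recipes through the gluing morphism. Since blow-ups commute with \'etale base change, it is enough to identify, over $\mathcal N:=\mathfrak M^{\mathrm{wt,ss}}_{g,n+k,d-kd_0}\times^{\prime}(\mathfrak M^{\mathrm{wt,ss}}_{0,1,d_0})^k$ and compatibly with strata and analytic branches, the pull-back of $\mathfrak Z_{k+i}$ along $\mathrm{gl}^{\mathrm{nor}}_k$ with $q^{-1}(\mathfrak Z_i)$, where now $\mathfrak Z_i$ denotes the corresponding strata in $\mathfrak M^{\mathrm{wt,ss}}_{g,n+k,d-kd_0}$. This is a local statement about curves: the glued curve is $C^{\prime}\cup E_1\cup\cdots\cup E_k$, with $C^{\prime}$ the $(g,n+k,d-kd_0)$-component and $E_1,\ldots,E_k$ the glued-on degree-$d_0$ tails, and one checks that the degree-$d_0$ rational tails of the glued curve are precisely $E_1,\ldots,E_k$ together with the degree-$d_0$ rational tails of $C^{\prime}$: attaching a degree-$d_0$ tail at one of the last $k$ markings of $C^{\prime}$ turns a component of $C^{\prime}$ carrying that marking (which then has two special points, so it was already a bridge rather than a tail in $C^{\prime}$) into a rational bridge and creates the new tail $E_i$, so it neither creates nor destroys degree-$d_0$ tails other than the $E_i$. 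Hence the pull-back of $\mathfrak Z_{k+i}$ is the locus ``$C^{\prime}$ has at least $i$ degree-$d_0$ tails'' $=q^{-1}(\mathfrak Z_i)$, the branch ``the $j$-th further tail survives'' pulls back to the corresponding branch of $\mathfrak Z_1$ on the $C^{\prime}$-factor, and the smoothing line $\Theta_j=T_{p_j}E_j\otimes T_{p_j}C^{\prime}$ is computed on $C^{\prime}$, hence pulled back from $\mathfrak M^{\mathrm{wt,ss}}_{g,n+k,d-kd_0}$. The two iterated blow-ups therefore coincide, which yields the desired isomorphism; it is automatically compatible with the $(\mathfrak M^{\mathrm{wt,ss}}_{0,1,d_0})^k$-factors since these are never blown up.

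The main obstacle is this last matching step: making the bijection between the further degree-$d_0$ tails of the glued curve and the degree-$d_0$ tails of $C^{\prime}$ precise uniformly in families, and checking that the standing hypothesis $\epsilon_0 d>2$ in the $g=n=0$ case rules out the low-genus degeneracies (an irreducible rational curve behaving like a tail, a marking coinciding with the would-be special point of a tail, and so on) that could otherwise spoil the correspondence of analytic branches or the identification of the lines $\Theta_j$. One must also keep in mind throughout that $\mathrm{gl}^{\mathrm{nor}}_k$, though \'etale, is not representable, so every use of ``blow-ups commute with \'etale base change'' is to be understood \'etale-locally on the target.
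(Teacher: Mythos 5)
Your argument follows essentially the same route as the paper's proof: both identify $\mathfrak Z_{(k)}\to\mathfrak Z_k^{\mathrm{nor}}$, via the local analytic branches $\mathfrak H_1,\ldots,\mathfrak H_\ell$ of $\mathfrak Z_1$, with the iterated blow-up of the deepest strata of the normal-crossing divisor ``at least one further degree-$d_0$ tail,'' match that divisor under $\mathrm{gl}^{\mathrm{nor}}_k$ with $\mathrm{pr}_1^{-1}(\mathfrak Z_1^\prime)$, and conclude because proper transforms, strata and blow-ups commute with \'etale (resp.\ smooth) base change. The one loose point is uniqueness: the universal property of the fibre product alone does not pin down $\tilde{\mathrm{gl}}_k$ (the \'etale cover admits deck transformations), and one should instead note, as the paper does, that both vertical arrows are identities over the dense open locus of curves with exactly $k$ degree-$d_0$ tails, so any two fillers agree there and hence everywhere by separatedness and representability of $\mathfrak Z_{(k)}\to\mathfrak Z_k^{\mathrm{nor}}$.
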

\ifdefined\SHOWPROOFS
\begin{proof}
  Let $\mathfrak V\subset \mathfrak Z^{\mathrm{nor}}_{k}$ be the dense open
    substack where there are at most $k$ rational tails of degree $d_0$.
    Then $(\mathrm{gl}^{\mathrm{nor}}_k)^{-1}(\mathfrak V)$ is also dense open.
    Over $\mathfrak V$ and $(\mathrm{gl}^{\mathrm{nor}}_k)^{-1}(\mathfrak V)$
    the vertical arrows are identities.
    Since $\mathfrak Z_{(k)} \to \mathfrak Z_k^{\mathrm{nor}}$ is representable and
    separated, the uniqueness is obvious and we can work locally.

    We first introduce some notation.  We use $\mathfrak U_{i}^\prime$, $\mathfrak
    Z_{i}^\prime$ and
    $\mathfrak Z_{(i)}^\prime$ to denote the stacks introduced in the
    construction of $\tilde{\mathfrak M}_{g,n+k,d-kd_0}$, in parallel with
    $\mathfrak U_{i}$, $\mathfrak Z_{i}$ and $\mathfrak
    Z_{(i)}$. For $k\leq i\leq m$, let $\mathfrak Z_{k,i}$ be the proper
    transform of $\mathfrak Z_{k}$ in $\mathfrak U_i$.  Let $\mathfrak
    Z_{k,i}^{\mathrm{nor}}$ be the normalization of $\mathfrak Z_{k,i}$.
    Thus $\mathfrak Z_{k,m} = \mathfrak Z_{k}$ and
    $\mathfrak Z_{k,k}^{\mathrm{nor}} = \mathfrak Z_{k,k} = \mathfrak Z_{(k)}$.

    For $i = m ,\ldots, k+1$, we will successively form the fibered diagrams
    \begin{equation}
    \label{cd:fiber-step}
      \begin{tikzcd}
      \mathfrak U^\prime_{i-k-1} \times^\prime {\big( \mathfrak
          M^{\mathrm{wt,ss}}_{0,1,d_0}\big)}^k \arrow[r,dotted] \arrow[d] & \mathfrak
        Z^{\mathrm{nor}}_{k,i-1}\arrow[d] \\
      \mathfrak U^\prime_{i-k}\times^\prime {\big( \mathfrak
          M_{0,1,d_0}^{\mathrm{wt,ss}}\big)}^k \arrow[r] & \mathfrak Z_{k,i}^{\mathrm{nor}}
    \end{tikzcd},
  \end{equation}
  by constructing the dotted arrow. To do this, we will describe the two
  vertical arrows as blowups and compare their blowup centers.

  Consider the divisor $\mathfrak Z^{\prime\prime}_1\subset \mathfrak
  Z^{\mathrm{nor}}_k$ where the curve has at least $(k+1)$ tails of degree
  $d_0$. We claim that the vertical arrows on the right hand side of
  \eqref{cd:fiber-step} is the blowup along the codimension-$(i-k)$ 
  stratum of the proper transform of $\mathfrak Z^{\prime\prime}_1$. Here
  the codimension refers to the codimension in $\mathfrak
  Z^{\mathrm{nor}}_{k,i}$.

  Indeed, locally let $\mathfrak H_1 ,\ldots,
  \mathfrak H_\ell\subset \mathfrak M_{g,n,d}^{\mathrm{wt,ss}}$ be the
  branches of $\mathfrak Z_1$, and $\mathfrak H_{j,i}$ be the proper
  transform of $\mathfrak H_{j}$ in $\mathfrak U_i$, as in
  Section~\ref{sec:the-entangled-tails}.
  By relabeling those branches and working further locally on $\mathfrak
  Z^{\mathrm{nor}}_{k}$, we may assume that
  $\mathfrak Z^{\mathrm{nor}}_{k}$ is equal to $\mathfrak H_1 \cap \cdots \cap
  \mathfrak H_{k}$. Thus $\mathfrak Z^{\prime\prime}_1$ is equal to
  $(\mathfrak H_{k+1}\cup \cdots \cup \mathfrak H_{\ell})\cap \mathfrak
  Z^{\mathrm{nor}}_{k}$. Consider the blowup $\mathfrak U_{i-1} \to
  \mathfrak U_{i}$ for $i>k$. The blowup center $\mathfrak Z_{(i)}$ is the
  disjoint union of
  \[
    \mathfrak H_{j_1,i} \cap \cdots \cap \mathfrak H_{j_i,i}
  \]
  for all $\{j_1 ,\ldots, j_i\}\subset \{1 ,\ldots, \ell\}$.
  Since any $(i+1)$ of
  $\mathfrak H_{1,i} ,\ldots, \mathfrak H_{\ell,i}$ do not intersect, each
  connected component of $\mathfrak Z_{(i)}$ is either disjoint from
  $\mathfrak H_{1,i}\cap \cdots \cap \mathfrak H_{k,i}$ or equal to
  $\mathfrak H_{1,i}\cap \cdots \cap \mathfrak H_{k,i} \cap \mathfrak
  H_{j_1,i} \cap 
  \cdots \cap \mathfrak H_{j_{i-k},i}$, for some $\{j_1 ,\ldots, j_{i-k}\}
  \subset \{k+1 ,\ldots, \ell\}$.
  Thus, inductively it is easy to see that locally $\mathfrak Z^{\mathrm{nor}}_{k,i}
  = \mathfrak H_{1,i} \cap \cdots \cap \mathfrak H_{k,i}$ and $\mathfrak
  Z^{\mathrm{nor}}_{k,i-1} \to \mathfrak
  Z^{\mathrm{nor}}_{k,i-1}$ is the blowup along the codimension-$(i-k)$
  stratum of the  normal crossing divisor $\mathfrak H_{k+1}\cup \cdots \cup
  \mathfrak H_{\ell}$. This proves the claim.

  We then compare it to the vertical arrow on the left hand side of
  \eqref{cd:fiber-step}.
  Let $\mathrm{pr}_1$ be the projection from $\mathfrak
  M^{\mathrm{wt,ss}}_{g,n+k,d-kd_0} \times^\prime {( \mathfrak
    M_{0,1,d_0}^{\mathrm{wt,ss}})}^k$ onto its first factor.
  Working locally as before, $\mathrm{gl}^{\mathrm{nor}}_{k}$ is an \'etale
  cover of $\mathfrak H_1 \cap \cdots \cap \mathfrak H_{k}$, and the preimage of $(\mathfrak
  H_{k+1}\cup \cdots \cup \mathfrak H_{\ell}) \cap \mathfrak Z^{\mathrm{nor}}_k$
  is equal to $\mathrm{pr}_1^{-1}(\mathfrak
  Z_1^\prime)$. Hence, $\tilde{\mathrm{gl}}{}_k^{-1}(\mathfrak
  Z_1^{\prime\prime}) = \mathrm{pr}_1^{-1}(\mathfrak Z_1^\prime)$.
  Since $\mathrm{pr}_1$ is smooth, the vertical arrow on the
  left hand side of $\eqref{cd:fiber-step}$ is the blowup along the
  codimension-$(i-k)$ stratum of the proper transform of
  $\mathrm{pr}_1^{-1}(\mathfrak Z_1^\prime)$.
  Since taking proper transforms, taking the codimension-$(i-k)$ stratum and
  blowing up all commute with \'etale morphisms,
  we obtain the fibered diagram \eqref{cd:fiber-step}.
  Thus the proof is complete.
\end{proof}
\fi

\subsection{Structure of the exceptional divisors $\mathfrak E_{k-1}$, entangled
  tails revisited}
\label{sec:stru-ent}
Recall the gluing morphism in Lemma~\ref{lem:structure-Z_k}
\begin{equation}
  \label{map:quotient-by-S}
  \tilde{\mathrm{gl}}_k: \tilde{\mathfrak M}_{g,n+k,d-kd_0} \times^\prime {\big( \mathfrak
      M_{0,1,d_0}^{\mathrm{wt,ss}}\big)}^k \longrightarrow \mathfrak Z_{(k)}.
\end{equation}
We will study everything by pulling them back along $\tilde{\mathrm{gl}}_k$.
For $i=1,\ldots,k$, let $\Theta_i$ be the line bundles on
$\tilde{\mathfrak M}_{g,n+k,d-kd_0} \times^\prime {(\mathfrak
M^{\mathrm{wt,ss}}_{0,1,d_0})}^k$ formed by
the tensor product of two tangent lines to the curves, one at the
$(n+i)$-th marking of $\tilde{\mathfrak M}_{g,n+k,d-kd_0}$, and the other
at the unique marking of the $i$-th copy of $ \mathfrak M^{\mathrm{wt,ss}}_{0,1,d_0}$.

By abuse of notation, we will use $\mathcal O_{\mathfrak U_{k}}(\mathfrak
E_{i})$ to denote the pullback of $\mathcal O_{\mathfrak U_{i}}(\mathfrak
E_{i})$, for $i = k ,\ldots, m-1$. For $i\geq m$,
we set $\mathfrak E_i = \emptyset$.
\begin{lemma}
  \label{lem:structure-E_k-1}
  When pulled back along $\tilde{\mathrm{gl}}_k$, the normal bundle of $\mathfrak Z_{(k)}$ in
  $\mathfrak U_{k}$ is canonically isomorphic to (the pullback of)
  \[
    \textstyle
    \left( \Theta_1\oplus\cdots \oplus \Theta_k \right) \otimes \mathcal
    O_{\mathfrak U_k}\big(-\sum_{i=k}^\infty \mathfrak E_i \big).
  \]
  In particular,
  \begin{equation}
    \label{map:E_k-1-as-projective-bundle}
    \tilde{\mathrm{gl}}_k^*\mathfrak E_{k-1} \longrightarrow \tilde{\mathfrak M}_{g,n+k,d-kd_0}
    \times^\prime {\big( \mathfrak M_{0,1,d_0}^{\mathrm{wt,ss}}\big)}^k
  \end{equation}
  is isomorphic to the projective bundle
  \[
    \textstyle
    \mathbb P\big((\Theta_1\oplus\cdots\oplus \Theta_k) \otimes \mathcal
    O_{\mathfrak U_k}
    (-\sum_{i=k}^\infty \mathfrak E_i )\big) \cong \mathbb
    P((\Theta_1\oplus\cdots\oplus \Theta_k).
  \]
\end{lemma}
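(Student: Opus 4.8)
The plan is to reduce the computation of the normal bundle $N_{\mathfrak Z_{(k)}/\mathfrak U_k}$ to the explicit blowup description of $\mathfrak Z_{(k)}$ given in Lemma~\ref{lem:structure-Z_k}, and then track how the normal bundle of a smooth center changes under a sequence of blowups along subsequent (disjoint-from-each-other but not disjoint-from-the-center) smooth strata. Concretely, I would work \'etale-locally via $\tilde{\mathrm{gl}}_k$, where by Lemma~\ref{lem:structure-Z_k} the center $\mathfrak Z_{(k)}$ becomes $\tilde{\mathfrak M}_{g,n+k,d-kd_0}\times^\prime(\mathfrak M^{\mathrm{wt,ss}}_{0,1,d_0})^k$ and the ambient $\mathfrak U_k$ is (locally) built from $\mathfrak M^{\mathrm{wt,ss}}_{g,n,d}$ by the blowups $\mathfrak U_{i-1}\to\mathfrak U_i$ for $i=m,m-1,\dots,k+1$. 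On $\mathfrak M^{\mathrm{wt,ss}}_{g,n,d}$ itself, the normal bundle of the deepest stratum $\mathfrak H_1\cap\cdots\cap\mathfrak H_k$ of the SNC divisor $\mathfrak Z_1$ is, pulled back along $\mathrm{gl}_k^{\mathrm{nor}}$, canonically $\bigoplus_{i=1}^k N_{\mathfrak H_i/\mathfrak M^{\mathrm{wt,ss}}_{g,n,d}}=\bigoplus_{i=1}^k\Theta_i$, by the standard deformation theory of a node (smoothing the node on $E_i$ gives a one-dimensional deformation whose parameter lives in $T_{p_i}E_i\otimes T_{p_i}C'$); this is the $i=m$ base case, where $\mathfrak E_i=\emptyset$ for $i\ge m$.

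The inductive step is the heart of the matter. Suppose we have shown that, inside $\mathfrak U_{i}$ (pulled back along the appropriate gluing map), $N_{\mathfrak Z_{k,i}^{\mathrm{nor}}/\mathfrak U_i}$ is canonically $(\Theta_1\oplus\cdots\oplus\Theta_k)\otimes\mathcal O(-\sum_{j\ge i}\mathfrak E_j)$. Now $\mathfrak U_{i-1}\to\mathfrak U_i$ is the blowup along $\mathfrak Z_{(i)}$, and near our center $\mathfrak Z_{k,i}^{\mathrm{nor}}=\mathfrak H_{1,i}\cap\cdots\cap\mathfrak H_{k,i}$ the only relevant component of $\mathfrak Z_{(i)}$ is $\mathfrak H_{1,i}\cap\cdots\cap\mathfrak H_{k,i}\cap\mathfrak H_{j_1,i}\cap\cdots\cap\mathfrak H_{j_{i-k},i}$ with the extra $\mathfrak H$'s cutting out, on $\mathfrak Z_{k,i}^{\mathrm{nor}}$, precisely the codimension-$(i-k)$ stratum of the SNC divisor $\mathfrak H_{k+1}\cup\cdots\cup\mathfrak H_\ell$, which is the blowup center of the left vertical arrow in \eqref{cd:fiber-step}. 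The key point: blowing up the ambient space along a smooth subvariety $Z'$ that meets our center $Z$ transversally-along-a-divisor and whose intersection with $Z$ is a divisor $D$ in $Z$ has the effect of replacing $N_{Z/\mathfrak U}$ by $N_{Z/\mathfrak U}\otimes\mathcal O_Z(-D)$ after proper transform of $Z$ — because the proper transform of $Z$ meets the exceptional divisor along $D$, and the exceptional divisor restricts on the proper transform of $Z$ to $\mathcal O(-D)$. Here $D$ is exactly $\mathfrak E_{i-1}|_{\mathfrak Z_{k,i-1}^{\mathrm{nor}}}$ (with the local-vs-global bookkeeping: the pullback of $\mathfrak E_{i-1}$ to the strict transform of the center is the exceptional divisor of $\mathfrak Z^{\mathrm{nor}}_{k,i-1}\to\mathfrak Z^{\mathrm{nor}}_{k,i}$). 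Chaining this from $i=m$ down to $i=k+1$ gives $N_{\mathfrak Z_{(k)}/\mathfrak U_k}\cong(\Theta_1\oplus\cdots\oplus\Theta_k)\otimes\mathcal O(-\sum_{i=k}^{m-1}\mathfrak E_i)=(\Theta_1\oplus\cdots\oplus\Theta_k)\otimes\mathcal O(-\sum_{i=k}^\infty\mathfrak E_i)$, as claimed. The second assertion is then immediate: $\mathfrak U_{k-1}\to\mathfrak U_k$ is the blowup along $\mathfrak Z_{(k)}$, so $\mathfrak E_{k-1}=\mathbb P(N_{\mathfrak Z_{(k)}/\mathfrak U_k})$, and tensoring the bundle by a line bundle pulled back from the base does not change the associated projective bundle, giving $\mathbb P(\Theta_1\oplus\cdots\oplus\Theta_k)$ up to a twist one may ignore.

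The main obstacle I anticipate is making the local \'etale description genuinely canonical and checking that the twists by exceptional divisors bookkeep correctly across the whole tower — in particular, that at step $i$ one really does pick up $\mathcal O(-\mathfrak E_{i-1})$ and not, say, a sum of several exceptional divisors, which requires the observation (already recorded in the proof of Lemma~\ref{lem:structure-Z_k}) that any $(i+1)$ of the branches $\mathfrak H_{1,i},\dots,\mathfrak H_{\ell,i}$ have empty intersection, so that near our center exactly one component of the blowup center $\mathfrak Z_{(i)}$ is relevant and it meets the center in an honest divisor. A secondary subtlety is that $\tilde{\mathrm{gl}}_k$ is \'etale but not representable, so one should phrase the canonical isomorphism as an isomorphism of pulled-back bundles on $\tilde{\mathfrak M}_{g,n+k,d-kd_0}\times^\prime(\mathfrak M^{\mathrm{wt,ss}}_{0,1,d_0})^k$ and note invariance under the deck transformations permuting the $k$ factors (which permute the $\Theta_i$ and the markings compatibly), so that it descends; since the statement is only asserted after pullback along $\tilde{\mathrm{gl}}_k$, this is not strictly needed, but it is worth a remark. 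The rest — identifying $N_{\mathfrak H_i}$ with $\Theta_i$ via node-smoothing, and the standard fact that a blowup restricts to a blowup on the strict transform of a suitably-positioned smooth subvariety — is routine deformation theory and can be cited.
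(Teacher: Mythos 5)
Your proposal is correct and takes essentially the same route as the paper's (much terser) proof: start from $N_{\mathfrak Z_k^{\mathrm{nor}}/\mathfrak U_m}\cong\Theta_1\oplus\cdots\oplus\Theta_k$ (node-smoothing), then descend the tower $\mathfrak U_{i-1}\to\mathfrak U_i$ using that every component of $\mathfrak Z_{(i)}$ meeting the branch $\mathfrak H_{1,i}\cap\cdots\cap\mathfrak H_{k,i}$ is contained in it, so each step twists the normal bundle of the proper transform by $\mathcal O(-\mathfrak E_{i-1})$. One wording slip: your ``key point'' describes the blowup center as meeting $Z$ transversally along a divisor of $Z$, whereas for $i>k+1$ the relevant component of $\mathfrak Z_{(i)}$ is \emph{contained} in $Z$ with codimension $i-k$; the statement you actually apply in the chaining --- for a smooth center contained in $Z$, the proper transform has normal bundle $\pi^*N_{Z/X}\otimes\mathcal O(-E|_{\tilde Z})$, with $E|_{\tilde Z}$ the exceptional divisor of the induced blowup $\tilde Z\to Z$ --- is the correct one, so this is cosmetic rather than a gap.
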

\ifdefined\SHOWPROOFS
\begin{proof}
  When pulled back along $\tilde{\mathrm{gl}}_k$, the normal bundle of the local
  immersion $\mathfrak Z_{k}^{\mathrm{nor}}\to \mathfrak U_{m}$ is
  $\Theta_1\oplus\cdots \oplus \Theta_k$.

  Let $\mathfrak Z_{k,i}$ be the proper transform of $\mathfrak Z_k$ in
  $\mathfrak U_{i}$, and $\mathfrak Z^{\mathrm{nor}}_{k,i}$ be the normalization
  of $\mathfrak Z_{k,i}$, $i = k+1,\ldots, m$. Working locally as in the proof of
  Lemma~\ref{lem:structure-Z_k}, if a local branch of $\mathfrak
  Z_{k,i}^{\mathrm{nor}}$ intersects a connected component of $\mathfrak Z_{(i)}$
  nontrivially, then it contains that connected component. Since $\mathfrak E_{k}
  ,\ldots, \mathfrak E_{m-1}$ are the exceptional divisors of the sequence of
  blowups $\mathfrak U_{k}\to \mathfrak U_m$ and $\mathfrak
  Z^{\mathrm{nor}}_{k,k}= \mathfrak Z_{k,k} = \mathfrak Z_{(k)}$, we get the
  desired isomorphism of vector bundles locally on $\mathfrak Z_{(k)}$. Since the
  isomorphism is canonical, it glues to a global isomorphism.
\end{proof}
\fi

Let $\mathfrak Z_{1,k-1}$ be the proper transform of $\mathfrak Z_{1}$ in
$\mathfrak U_{k-1}$.
\begin{lemma}
  \label{lem:Z1-in-E}
  The divisor $\mathfrak Z_{1,k-1} \cup \mathfrak E_{k-1}$ is a normal crossing
  divisor, and
  \[
    \tilde{\mathrm{gl}}_k^* \mathfrak E_{k-1} \times_{\mathfrak U_{k-1}} \mathfrak
    Z_{1,k-1} = 
    \textstyle
    \bigcup_{i=1}^k \mathbb P({\Theta_1 \oplus \cdots \oplus \hat\Theta_{i} \oplus
      \cdots \oplus \Theta_{k}}).
  \]
  is the union of coordinate hyperplanes of the projective
  bundle~\eqref{map:E_k-1-as-projective-bundle}.
\end{lemma}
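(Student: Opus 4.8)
The plan is to work locally on $\mathfrak Z_{(k)}$ and invoke the explicit local picture of the blowup sequence developed in the proof of Lemma~\ref{lem:structure-Z_k}. As established there, after relabeling the branches $\mathfrak H_1,\ldots,\mathfrak H_\ell$ of $\mathfrak Z_1$ in $\mathfrak M^{\mathrm{wt,ss}}_{g,n,d}$ and passing to a small enough \'etale chart, we may assume $\mathfrak Z_k^{\mathrm{nor}}$ is cut out by the first $k$ coordinates, so that near a point of $\mathfrak Z_{(k)}$ the curve has exactly the degree-$d_0$ rational tails $E_1,\ldots,E_k$ corresponding to $\mathfrak H_1,\ldots,\mathfrak H_k$, together with possibly more tails corresponding to $\mathfrak H_{k+1},\ldots,\mathfrak H_\ell$. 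The blowup $\mathfrak U_{k-1}\to\mathfrak U_k$ is the blowup of $\mathfrak U_k$ along $\mathfrak Z_{(k)}$, which locally is the blowup along $\mathfrak H_{1,k}\cap\cdots\cap\mathfrak H_{k,k}$, a smooth center of codimension $k$; and by Lemma~\ref{lem:structure-E_k-1} the pullback of the exceptional divisor $\mathfrak E_{k-1}$ along $\tilde{\mathrm{gl}}_k$ is the projectivization $\mathbb P(\Theta_1\oplus\cdots\oplus\Theta_k)$.

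\textbf{Key steps.} First I would record the local equations: choose \'etale coordinates $x_1,\ldots,x_k,y_{k+1},\ldots,y_\ell$ on $\mathfrak U_k$ near the relevant point so that $\mathfrak H_{i,k}=\{x_i=0\}$ for $i\le k$ and $\mathfrak H_{j,k}=\{y_j=0\}$ for $j>k$, and so that the additional divisors $\mathfrak E_k,\ldots,\mathfrak E_{m-1}$ (which are pulled back from earlier blowups) are transverse to all of these and do not pass through the center --- this transversality is exactly the normal-crossing statement recorded in the proofs above, and makes $\mathfrak H_{1,k}\cup\cdots\cup\mathfrak H_{\ell,k}$ a simple normal crossing divisor whose deepest relevant stratum is $\mathfrak H_{1,k}\cap\cdots\cap\mathfrak H_{k,k}$, which is precisely (a chart on) the blowup center. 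Second, in the blowup $\mathfrak U_{k-1}$, with $\mathfrak E_{k-1}=\mathbb P(\Theta_1\oplus\cdots\oplus\Theta_k)$ having homogeneous coordinates $[w_1:\cdots:w_k]$ (each $w_i$ a section of $\Theta_i^\vee\otimes\mathcal O(1)$), the proper transform $\mathfrak H_{i,k-1}$ of $\mathfrak H_i$ meets $\mathfrak E_{k-1}$ exactly where $w_i=0$, since in the standard affine chart $w_j\ne 0$ the exceptional divisor is $\{x_j=0\}$ and $\mathfrak H_{i,k-1}=\{x_i/x_j=0\}=\{w_i/w_j=0\}$ for $i\ne j$ while $\mathfrak H_{j,k-1}$ has been made exceptional-free. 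The proper transforms $\mathfrak H_{j,k-1}$ for $j>k$, and the divisors $\mathfrak E_k,\ldots,\mathfrak E_{m-1}$, are disjoint from $\mathfrak E_{k-1}$ in this chart or meet it transversally without being contained in any coordinate hyperplane, so they contribute nothing to the intersection with the coordinate hyperplanes and do not spoil the normal-crossing property. Third, since $\mathfrak Z_{1,k-1}=\mathfrak H_{1,k-1}\cup\cdots\cup\mathfrak H_{\ell,k-1}$ locally, intersecting with $\mathfrak E_{k-1}$ yields $\bigcup_{i=1}^k\{w_i=0\}=\bigcup_{i=1}^k\mathbb P(\Theta_1\oplus\cdots\oplus\hat\Theta_i\oplus\cdots\oplus\Theta_k)$, which is the asserted union of coordinate hyperplanes; and since each $\{w_i=0\}$ is smooth and these $k$ hyperplanes in $\mathbb P^{k-1}$ meet in normal crossings, together with the transversality to $\mathfrak E_{k-1}$ noted above we get that $\mathfrak Z_{1,k-1}\cup\mathfrak E_{k-1}$ is normal crossing. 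Finally, because all of these identifications are canonical (the $w_i$ are intrinsically sections of $\Theta_i^\vee\otimes\mathcal O(1)$), the local descriptions glue to the global statement over $\tilde{\mathfrak M}_{g,n+k,d-kd_0}\times'(\mathfrak M^{\mathrm{wt,ss}}_{0,1,d_0})^k$.

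\textbf{Main obstacle.} The routine part is the blowup coordinate computation; the delicate point is bookkeeping the \emph{other} divisors --- the earlier exceptional divisors $\mathfrak E_k,\ldots,\mathfrak E_{m-1}$ and the extra branches $\mathfrak H_{j,k-1}$ with $j>k$ --- to be sure that none of them is accidentally contained in $\mathfrak E_{k-1}$ or fails transversality, which is what guarantees both that the coordinate-hyperplane description is the \emph{whole} intersection $\mathfrak E_{k-1}\cap\mathfrak Z_{1,k-1}$ and that the union $\mathfrak Z_{1,k-1}\cup\mathfrak E_{k-1}$ is genuinely normal crossing rather than merely the union of two normal-crossing divisors. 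I expect this to follow cleanly from the inductive local structure already extracted in the proof of Lemma~\ref{lem:structure-Z_k} --- specifically the fact that any $(i+1)$ of the $\mathfrak H_{\bullet,i}$ fail to intersect, which forces each connected component of every blowup center $\mathfrak Z_{(i)}$ for $i>k$ to either contain or be disjoint from $\mathfrak H_{1,i}\cap\cdots\cap\mathfrak H_{k,i}$ --- so the argument is mostly a matter of assembling that bound carefully rather than proving anything new.
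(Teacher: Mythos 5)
Your proposal is correct and follows essentially the same route as the paper: the paper's proof also works in the local chart from Lemma~\ref{lem:structure-Z_k}, writes $\mathfrak Z_{1,k-1}=\mathfrak H_{1,k-1}\cup\cdots\cup\mathfrak H_{\ell,k-1}$, notes that the branches with $j>k$ miss the exceptional divisor over $\mathfrak H_{1,k}\cap\cdots\cap\mathfrak H_{k,k}$ while each $\mathfrak H_{i,k-1}$ with $i\le k$ meets it transversally along the $i$-th coordinate hyperplane, and concludes normal crossing from the local computation. Your only superfluous (and in general inaccurate) aside is that $\mathfrak E_k,\ldots,\mathfrak E_{m-1}$ ``do not pass through the center'' --- they may well contain $\mathfrak Z_{(k)}$ (cf.\ the twist by $\mathcal O(-\sum_{i\ge k}\mathfrak E_i)$ in Lemma~\ref{lem:structure-E_k-1}) --- but since those divisors play no role in the statement, this does not affect the argument.
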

\ifdefined\SHOWPROOFS
\begin{proof}
  We continue the local study in Lemma~\ref{lem:structure-Z_k}. Using the
  notation there, $\mathfrak Z_{1,k-1} = \mathfrak H_{1,k-1}\cup \cdots \cup
  \mathfrak H_{\ell,k-1}$. The $\mathfrak H_{k+1,k-1} ,\ldots, \mathfrak
  H_{\ell,k-1}$ are disjoint from the exceptional divisor over $\mathfrak
  H_{1,k}\cap \cdots \cap \mathfrak H_{k,k}$, and $\mathfrak H_{i,k-1}$
  intersects the exceptional divisor transversely along $\mathbb P({\Theta_1
    \oplus \cdots \oplus \hat\Theta_{i} \oplus \cdots \oplus \Theta_{k}})$, $i=1
  ,\ldots, k$. From the local computation it is easy to see that $\mathfrak
  Z_{1,k-1} \cup \mathfrak E_{k-1}$ is a normal crossing divisor.
\end{proof}
\fi
Let $\mathfrak E_{k-1}^*$ be the complement of $\mathfrak Z_{1,k-1} \cap
\mathfrak E_{k-1}$ in $\mathfrak E_{k-1}$.
\begin{corollary}
  \label{cor:E-star-in-M-tilde}
  The morphism 
  $\tilde{\mathfrak M}_{g,n,d} \to \mathfrak U_{k-1}$
  is an isomorphism near $\mathfrak E^*_{k-1}$. 
\end{corollary}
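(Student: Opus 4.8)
The plan is to exploit that $\tilde{\mathfrak M}_{g,n,d} = \mathfrak U_0 \to \mathfrak U_{k-1}$ is the composition of the blowups $\mathfrak U_{j-1}\to\mathfrak U_j$ for $j=k-1,\dots,2$ (recall $\mathfrak U_0=\mathfrak U_1$), each along the smooth center $\mathfrak Z_{(j)}$, the proper transform of $\mathfrak Z_j$. A blowup is an isomorphism away from its center, so it suffices to check that all these centers, tracked through the tower, stay away from $\mathfrak E^*_{k-1}$. The structural fact I would isolate first is monotonicity of proper transforms: since $\mathfrak Z_j\subseteq\mathfrak Z_1$ inside $\mathfrak M^{\mathrm{wt,ss}}_{g,n,d}=\mathfrak U_m$, the center $\mathfrak Z_{(j)}\subset\mathfrak U_j$ is contained in the proper transform of $\mathfrak Z_1$ in $\mathfrak U_j$, and since proper transforms compose under blowup towers, this proper transform is the proper transform of $\mathfrak Z_{1,k-1}$ along $\mathfrak U_j\to\mathfrak U_{k-1}$. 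Meanwhile $\mathfrak E^*_{k-1}=\mathfrak E_{k-1}\setminus(\mathfrak Z_{1,k-1}\cap\mathfrak E_{k-1})$ is, by its very definition, disjoint from the closed substack $\mathfrak Z_{1,k-1}\subset\mathfrak U_{k-1}$.

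The argument itself is then a descending induction on $j$ from $k-1$ to $0$, with inductive hypothesis $P(j)$: the morphism $\mathfrak U_j\to\mathfrak U_{k-1}$ is an isomorphism over some open neighborhood of $\mathfrak E^*_{k-1}$ in $\mathfrak U_{k-1}$. The base case $P(k-1)$ is trivial. For $P(j)\Rightarrow P(j-1)$ with $k-1\geq j\geq 2$: let $\mathcal V$ be a neighborhood as in $P(j)$; since $\mathfrak Z_{1,k-1}$ is closed and misses $\mathfrak E^*_{k-1}$, shrink to an open $\mathcal W\subseteq\mathcal V$ with $\mathfrak E^*_{k-1}\subseteq\mathcal W$ and $\mathcal W\cap\mathfrak Z_{1,k-1}=\emptyset$; over $\mathcal W$ the map $\mathfrak U_j\to\mathfrak U_{k-1}$ is an isomorphism, so by the fact isolated above the blowup center $\mathfrak Z_{(j)}$ does not meet the preimage of $\mathcal W$ in $\mathfrak U_j$; hence $\mathfrak U_{j-1}\to\mathfrak U_j$ is an isomorphism over that preimage, and composing proves $P(j-1)$. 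The step $P(1)\Rightarrow P(0)$ is free because $\mathfrak U_0=\mathfrak U_1$. Then $P(0)$ is exactly the assertion of Corollary~\ref{cor:E-star-in-M-tilde}.

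I expect no real obstacle; the same conclusion can alternatively be read off from the local models in the proofs of Lemma~\ref{lem:structure-Z_k} and Lemma~\ref{lem:Z1-in-E}, where near $\mathfrak E_{k-1}$ the divisor $\mathfrak Z_{1,k-1}$ is the union of the coordinate hyperplanes of the projective bundle $\mathbb P(\Theta_1\oplus\cdots\oplus\Theta_k)$ and all later blowup centers lie in the proper transforms of the branches $\mathfrak H_i$, which avoid the complement $\mathfrak E^*_{k-1}$ of those hyperplanes. The only point that requires attention is the bookkeeping across the blowup tower: one must use that each later center lies in the proper transform of $\mathfrak Z_1$ — hence, over the isomorphism locus, in $\mathfrak Z_{1,k-1}$ — rather than merely in the total transform, which is precisely why the neighborhood can be taken disjoint from $\mathfrak E^*_{k-1}$ and not only from $\mathfrak E_{k-1}$.
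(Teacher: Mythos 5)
Your proof is correct and is essentially the argument the paper intends: its own proof is a one-line reference to Lemma~\ref{lem:Z1-in-E} and the construction, the point being exactly that every later blowup center $\mathfrak Z_{(j)}$ lies in the proper transform of $\mathfrak Z_1$, hence in the preimage of $\mathfrak Z_{1,k-1}$, which a neighborhood of $\mathfrak E^*_{k-1}$ can be chosen to avoid. Your descending induction just makes this bookkeeping explicit.
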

\ifdefined\SHOWPROOFS
\begin{proof}
  This follows from Lemma~\ref{lem:Z1-in-E} and the construction of
  $\tilde{\mathfrak M}_{g,n,d}$.
\end{proof}
\fi

Thus we can view $\mathfrak
E_{k-1}^*$ as a substack of $\tilde{\mathfrak M}_{g,n,d}$. Let
$\tilde{\mathfrak M}^*_{g,n,d}$ be the dense open substack of $\tilde{\mathfrak M}_{g,n,d}$ where
the curve has no degree-$d_0$ rational tail. Then $\tilde{\mathfrak M}_{g,n,d} \setminus
\tilde{\mathfrak M}^*_{g,n,d}$ is the disjoint union of $\mathfrak E^*_0 ,\ldots, \mathfrak E^*_{m-1}$.
Indeed, this is a partition according to the size of the entangled tails, by the following Lemma.

\begin{lemma}
  \label{lem:entangled-tails-from-gluing}
  Let $\xi$ be a closed point of $\mathfrak E^*_{k-1}$ and let $\xi_k$ be its
  image in $\mathfrak Z_{(k)}\subset \mathfrak U_k$.
  Suppose that 
  \[
    \tilde{\mathrm{gl}}_{k}(\xi^\prime, (E_1,y_1) ,\ldots, (E_k,y_k))
  \]
  is isomorphic to $\xi_k$, for some $\xi^\prime \in \tilde {\mathfrak
    M}_{g,n+k,d-kd_0}$ and $(E_i,y_i)\in \mathfrak M_{0,1,d_0}^{\mathrm{wt,ss}}$,
  $i=1 ,\ldots, k$. Then, viewing $\xi$ as a closed point of
  $\tilde{\mathfrak M}_{g,n,d}$, its entangled tails are precisely the $E_1 ,\ldots,
  E_k$.
\end{lemma}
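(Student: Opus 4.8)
The plan is to unwind Definition~\ref{def:entangled-tails} at $\xi$ and match the data it extracts with the gluing datum $(\xi',(E_1,y_1),\ldots,(E_k,y_k))$. First, since the morphisms $\mathfrak U_0=\tilde{\mathfrak M}_{g,n,d}\to\mathfrak U_1\to\cdots\to\mathfrak U_m$ leave the underlying weighted twisted curve unchanged, the curve $C$ of $\xi$ coincides with the curve of $\xi_k=\tilde{\mathrm{gl}}_k(\xi',(E_1,y_1),\ldots,(E_k,y_k))$, which by construction of $\tilde{\mathrm{gl}}_k$ is the curve of $\xi'$ with $E_1,\ldots,E_k$ glued on at its last $k$ markings. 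In particular $E_1,\ldots,E_k$ are among the degree-$d_0$ rational tails of $C$; relabel the complete list of degree-$d_0$ rational tails of $C$ as $F_1,\ldots,F_\ell$ (with $\ell\ge k$) so that $F_i=E_i$ for $i\le k$, and let $\mathfrak H_1,\ldots,\mathfrak H_\ell$ be the corresponding analytic branches of $\mathfrak Z_1$ near the image $(C,\mathbf x)$ of $\xi$ in $\mathfrak U_m$, with $\mathfrak H_{j,i}$ their proper transforms in $\mathfrak U_i$.

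Next I would show that the integer $\min\{i\mid\text{the image of }\xi\text{ in }\mathfrak U_i\text{ lies in }\mathfrak Z_{(i)}\}$ occurring in Definition~\ref{def:entangled-tails} equals our $k$. The inequality ``$\le$'': the image of $\xi$ in $\mathfrak U_{k-1}$ lies on $\mathfrak E_{k-1}$, and $\mathfrak E_{k-1}$ maps onto its blowup center $\mathfrak Z_{(k)}$ under $\mathfrak U_{k-1}\to\mathfrak U_k$, so the image of $\xi$ in $\mathfrak U_k$ lies in $\mathfrak Z_{(k)}$ (and in fact equals $\xi_k$). For ``$\ge$'' (vacuous when $k=1$): since $\xi\in\mathfrak E^*_{k-1}$, the image of $\xi$ in $\mathfrak U_{k-1}$ avoids $\mathfrak Z_{1,k-1}$, and Corollary~\ref{cor:E-star-in-M-tilde} forces the blowdowns $\mathfrak U_0\to\mathfrak U_1\to\cdots\to\mathfrak U_{k-1}$ all to be isomorphisms near $\xi$; transporting the proper transforms $\mathfrak Z_{1,i}$ through these isomorphisms shows that for every $1\le i\le k-1$ the image of $\xi$ in $\mathfrak U_i$ avoids $\mathfrak Z_{1,i}$, hence its deepest stratum $\mathfrak Z_{(i)}$. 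Thus the minimum equals $k$.

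Finally I would pin down the size-$k$ subset of $\{1,\ldots,\ell\}$ produced by Definition~\ref{def:entangled-tails}. By the local analysis in the proof of Lemma~\ref{lem:structure-Z_k}, in a neighborhood of $\xi_k$ the center $\mathfrak Z_{(k)}\subset\mathfrak U_k$ is the disjoint union of the smooth strata $\bigcap_{j\in S}\mathfrak H_{j,k}$ over $k$-subsets $S\subseteq\{1,\ldots,\ell\}$, so $\xi_k$ lies on exactly one of them, say $\bigcap_{j\in S_0}\mathfrak H_{j,k}$, and by the uniqueness clause of the Definition this $S_0$ is the subset in question. To identify $S_0$, apply the fibered square of Lemma~\ref{lem:structure-Z_k}: the image of $\xi_k$ under $\mathfrak Z_{(k)}\to\mathfrak Z^{\mathrm{nor}}_k$ equals the image of the lift $(\xi',(E_1,y_1),\ldots,(E_k,y_k))$ under $\mathrm{gl}^{\mathrm{nor}}_k$, namely the curve $C$ together with the distinguished $k$-subset of tails $\{E_1,\ldots,E_k\}$; since $\mathfrak Z_{(k)}\to\mathfrak Z^{\mathrm{nor}}_k$ carries the component $\bigcap_{j\in S_0}\mathfrak H_{j,k}$ onto the branch indexed by $\{F_j\mid j\in S_0\}$, we conclude $\{F_j\mid j\in S_0\}=\{E_1,\ldots,E_k\}$, i.e.\ $S_0=\{1,\ldots,k\}$. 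Hence the entangled tails of $\xi$ are $F_1,\ldots,F_k=E_1,\ldots,E_k$, and since this set does not depend on the chosen lift, the proof is complete.

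The step I expect to require the most care is the last one: threading the gluing datum through the normalization $\mathfrak Z^{\mathrm{nor}}_k$ and the (in general non-representable) \'etale cover $\tilde{\mathrm{gl}}_k$ to determine which branch of $\mathfrak Z_k$---equivalently which $k$-subset of the degree-$d_0$ tails of $C$---the point $\xi_k$ sits over, especially in the presence of extra, non-entangled tails $F_{k+1},\ldots,F_\ell$ coming from $\xi'$. The relevant local geometry is essentially contained in the proof of Lemma~\ref{lem:structure-Z_k}, so this should amount to careful bookkeeping rather than a new argument; the identity $\min\{i\mid\ldots\}=k$ likewise reduces to tracking proper transforms through the blowdown tower, which is routine given Corollary~\ref{cor:E-star-in-M-tilde}.
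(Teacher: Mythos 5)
Your proof is correct and takes essentially the same route as the paper, whose own argument is a one-line appeal to the local analysis in the proof of Lemma~\ref{lem:structure-Z_k} together with the surrounding discussion (Lemma~\ref{lem:Z1-in-E}, Corollary~\ref{cor:E-star-in-M-tilde}); your three steps are precisely the faithful unwinding of that reference, with the identification of the minimal index via $\mathfrak E^*_{k-1}$ avoiding $\mathfrak Z_{1,k-1}$ and the identification of the $k$-subset via the fibered square over $\mathfrak Z_k^{\mathrm{nor}}$.
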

\ifdefined\SHOWPROOFS
\begin{proof}
  This follows immediately from the proof of Lemma~\ref{lem:structure-Z_k} and
  the discussion above.
\end{proof}
\fi

The following lemma is an elementary fact about blowups and projective bundles.
\begin{lemma} When pulled back to $\tilde{\mathrm{gl}}_k^{*}\mathfrak
  E_{k-1}^*$,
  we have canonical isomorphisms \label{lem:normal-E_k-1}
  \[
    \Theta_1\cong \cdots\cong \Theta_k,
  \]
  and the normal bundle of $\mathfrak E_{k-1}^*$ in $\mathfrak U_{k-1}$ is (the
  pullback of)
  \[
    \textstyle
    \Theta_1 \otimes \mathcal O_{\mathfrak U_{k}}\big(-\sum_{i=k}^\infty \mathfrak
    E_i\big).
  \]
\end{lemma}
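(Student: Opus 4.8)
The plan is to read everything off from the projective-bundle description already established, together with the universal formula for the exceptional divisor of a blowup along a smooth center. Write $\mathcal L := \mathcal O_{\mathfrak U_k}\big(-\sum_{i=k}^\infty \mathfrak E_i\big)$ and $N := (\Theta_1\oplus\cdots\oplus\Theta_k)\otimes\mathcal L$, so that by Lemma~\ref{lem:structure-E_k-1} the bundle $N$ is (the pullback along $\tilde{\mathrm{gl}}_k$ of) the normal bundle of $\mathfrak Z_{(k)}$ in $\mathfrak U_k$, and $\tilde{\mathrm{gl}}_k^*\mathfrak E_{k-1}\cong\mathbb P(N)$ over $\tilde{\mathfrak M}_{g,n+k,d-kd_0}\times^\prime(\mathfrak M_{0,1,d_0}^{\mathrm{wt,ss}})^k$ (as a scheme this is $\mathbb P(\Theta_1\oplus\cdots\oplus\Theta_k)$, the twist by $\mathcal L$ only affecting the tautological line bundle). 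First I would invoke the standard description: since $\mathfrak U_{k-1}\to\mathfrak U_k$ is the blowup along the smooth substack $\mathfrak Z_{(k)}$, one has $\mathcal O_{\mathfrak U_{k-1}}(\mathfrak E_{k-1})|_{\mathfrak E_{k-1}}\cong\mathcal O_{\mathbb P(N)}(-1)$, the tautological sub-line-bundle of $\rho^*N$ where $\rho\colon\mathbb P(N)\to\tilde{\mathfrak M}_{g,n+k,d-kd_0}\times^\prime(\mathfrak M_{0,1,d_0}^{\mathrm{wt,ss}})^k$ is the projection; hence $N_{\mathfrak E_{k-1}/\mathfrak U_{k-1}}\cong\mathcal O_{\mathbb P(N)}(-1)$ canonically. (This holds for smooth Artin stacks; one may instead pull back to a smooth atlas.)

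Next I would feed in Lemma~\ref{lem:Z1-in-E}: after pullback along $\tilde{\mathrm{gl}}_k$ the divisor $\mathfrak Z_{1,k-1}\cap\mathfrak E_{k-1}$ becomes the union of the coordinate hyperplanes of $\mathbb P(N)$, so $\tilde{\mathrm{gl}}_k^*\mathfrak E_{k-1}^*$ is identified with $\mathbb P^*(N)$. A point of $\mathbb P^*(N)$ is a line $L\subset N$ lying on no coordinate hyperplane, i.e.\ whose projection $L\to\Theta_i\otimes\mathcal L$ is nonzero — hence an isomorphism of line bundles — for every $i=1,\ldots,k$. Composing these projections (and untwisting by $\mathcal L^{-1}$) yields the asserted canonical isomorphisms $\Theta_1\cong\cdots\cong\Theta_k$ over $\tilde{\mathrm{gl}}_k^*\mathfrak E_{k-1}^*$, and the same projections identify $\mathcal O_{\mathbb P(N)}(-1)|_{\mathbb P^*(N)}=L$ with $\Theta_1\otimes\mathcal L$. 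Combining with the first paragraph, $N_{\mathfrak E_{k-1}^*/\mathfrak U_{k-1}}\cong\Theta_1\otimes\mathcal O_{\mathfrak U_k}\big(-\sum_{i=k}^\infty\mathfrak E_i\big)$, which is the claim.

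The only point that requires care — and where I would be most cautious — is the bookkeeping of the projectivization convention (lines versus rank-one quotients), so that the tautological line bundle produced by the blowup formula is exactly the object whose non-vanishing on each summand $\Theta_i\otimes\mathcal L$ cuts out the open stratum $\mathbb P^*(N)$ appearing in Lemma~\ref{lem:Z1-in-E}; once the conventions are fixed consistently the argument is purely formal. Descent along $\tilde{\mathrm{gl}}_k$ is not an issue, since it is \'etale by Lemma~\ref{lem:structure-Z_k} and every statement is made after pullback; and $\mathfrak E_{k-1}^*$ may be regarded directly as a substack of $\tilde{\mathfrak M}_{g,n,d}$ by Corollary~\ref{cor:E-star-in-M-tilde}, so the normal bundle in the statement is unambiguous.
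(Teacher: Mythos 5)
Your proof is correct and is exactly the ``elementary fact about blowups and projective bundles'' that the paper invokes without giving any proof: identify $N_{\mathfrak E_{k-1}/\mathfrak U_{k-1}}$ with $\mathcal O_{\mathbb P(N)}(-1)$ for $N$ the normal bundle of the blowup center from Lemma~\ref{lem:structure-E_k-1}, use Lemma~\ref{lem:Z1-in-E} to identify $\tilde{\mathrm{gl}}_k^*\mathfrak E_{k-1}^*$ with $\mathbb P^*(N)$, and observe that there the tautological line projects isomorphically onto each summand $\Theta_i\otimes\mathcal O_{\mathfrak U_k}(-\sum_{i\geq k}\mathfrak E_i)$. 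Your flagged caution about the sub-versus-quotient projectivization convention is the only delicate point, and you resolve it consistently, so nothing is missing.
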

\subsection{Digression on inflated projective bundles}
\label{sec:inflated-proj-bundle}
To study the proper transform of $\mathfrak E_{k-1}$ in $\tilde{\mathfrak M}_{g,n,d}$, we
digress on an elementary construction which we call inflated projective
bundles.
Let $X$ be any algebraic stack and $L_1,\ldots,L_k$ be line bundles on
$X$. We first form the projective bundle
\[
  P = \mathbb P(L_1\oplus\ldots\oplus L_k) \longrightarrow X.
\]
Consider the coordinate hyperplanes
\[
  H_i = \mathbb P(L_1\oplus\cdots \oplus \{0\} \oplus \cdots\oplus L_k),
\]
where the $\{0\}$ appears in the $i$-th place only.
The construction of the inflated projective bundle is analogous to the
construction of $\tilde {\mathfrak M}_{g,n,d}$. The normal crossing
divisor $\bigcup_{i=1}^k H_i$ here plays the role of
$\mathfrak Z_1 \subset \mathfrak M^{\mathrm{wt,ss}}_{g,n,d}$.

More specifically, for $i=1,\ldots,k-1$, let $Z_i\subset P$ be the union of the codimension-$i$
coordinate subspaces,
i.e.\
\[
  \textstyle
  Z_i = \bigcup H_{j_1}\cap \cdots\cap H_{j_i},
\]
where $\{j_1,\ldots,j_i\}$ runs through all subsets of $\{1,\ldots,k\}$ of size $i$.
First set $P_{k-1}= P$. Inductively for $i=k-1,\cdots,1$, let
$ Z_{(i)} \subset P_i$ be the proper transform of $Z_i$ and
\[
  P_{i-1} \longrightarrow P_i
\]
be the blowup along $Z_{(i)}$, with exceptional divisor $E_{i-1}\subset P_{i-1}$.

\begin{definition}
  \label{def:inflated-projective-bundle}
  We call $\tilde {\mathbb  P }(L_1,\ldots,L_k) := P_0\to X$ the
  \textit{inflated projective bundle} associated to the line bundles $L_1,\ldots,L_k$.
\end{definition}
Let $D_i\subset \tilde {\mathbb P }(L_1,\ldots,L_k)$ be the proper transform of
$E_i$, for $i=0,\ldots,k-2$.
They are also the total transforms.
Since line bundles are locally trivial, these
constructions commute with arbitrary base change. Hence $\tilde{\mathbb P}
(L_1,\ldots,L_k)$ is smooth over $X$ of relative dimension $k-1$, and $D_i$ are
relative effective Cartier divisors.
\begin{definition}
  \label{def:tautological-divisor-inflated-projective-bundle}
  We call $D_i$ the $i$-th tautological divisor of the inflated projective
  bundle, $i=0 ,\ldots, k-2$.
\end{definition}

\subsection{The boundary divisors $\mathfrak D_{k-1}$ of $\tilde {\mathfrak
    M}_{g,n,d}$ and their intersection theory}
\label{sec:boundary-divisor}
We now come back to the moduli $\tilde {\mathfrak M}_{g,n,d}$ of
curves with entangled tails.
Recall that $\mathfrak E_{k-1}^*\subset \tilde {\mathfrak M}_{g,n,d}$ is
the locus where there are exactly $k$ entangled tails
(Lemma~\ref{lem:entangled-tails-from-gluing}). This is locally closed smooth substack
of codimension $1$.
\begin{definition}
  \label{def:D_k-1}
 For $k=1 ,\ldots, m$, the boundary divisors $\mathfrak D_{k-1}\subset
 \tilde{\mathfrak{M}}_{g,n,d}$ is defined to be the
closure of $\mathfrak E_{k-1}^*$. 
\end{definition}
In other words, $\mathfrak D_{k-1}$ is the
proper transform of $\mathfrak E_{k-1}\subset \mathfrak U_{k-1}$.

In particular, 
$\mathfrak D_{k-1}$ is a smooth divisor in $\tilde{\mathfrak M}_{g,n,d}$, and
we have natural maps
\[
  \mathfrak D_{k-1} \longrightarrow  \mathfrak E_{k-1} \longrightarrow
  \mathfrak Z_{(k)} \subset \mathfrak U_k.
\]
Let $\tilde{\mathrm{gl}}_k^* \mathfrak D_{k-1}$ be the pullback of $\mathfrak
D_{k-1}$ along \eqref{map:quotient-by-S}. Thus we have a natural morphism

\begin{equation}
  \label{eq:D-as-inf-proj-bdl}
  {\tilde{\mathrm{gl}}_k^*}\mathfrak D_{k-1} \longrightarrow \tilde{\mathfrak M}_{g,n+k,d-kd_0}
  \times^\prime {\big(\mathfrak M^{\mathrm{wt,ss}}_{0,1,d_0}\big)}^k.
\end{equation}

\begin{lemma}
  \label{lem:str-D}
  For $k=1 ,\ldots, m$, $\mathfrak D_{k-1}$ is the inverse image of $\mathfrak
  E_{k-1}$ under $\tilde{\mathfrak M}_{g,n,d} \to \mathfrak U_{k-1}$, and the
  morphism \eqref{eq:D-as-inf-proj-bdl} realizes
  ${\tilde{\mathrm{gl}}_k^*}\mathfrak D_{k-1}$ as the inflated projective bundle
  \[
    \tilde{\mathbb P}\left( \Theta_1, \ldots ,\Theta_k \right) \longrightarrow
    \tilde{\mathfrak M}_{g,n+k,d-kd_0} \times^\prime {\big(\mathfrak
      M_{0,1,d_0}^{\mathrm{wt,ss}}\big)}^k.
  \]
\end{lemma}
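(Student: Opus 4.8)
The plan is to reduce the statement to the local analysis already carried out in Lemmas~\ref{lem:structure-Z_k}, \ref{lem:structure-E_k-1} and \ref{lem:Z1-in-E}, and to match the sequence of blowups defining $\tilde{\mathfrak M}_{g,n,d}$ near $\mathfrak E_{k-1}$ with the sequence of blowups defining the inflated projective bundle. First I would establish the first assertion: that $\mathfrak D_{k-1}$ is the inverse image of $\mathfrak E_{k-1}$ under $\tilde{\mathfrak M}_{g,n,d}\to\mathfrak U_{k-1}$. By Definition~\ref{def:D_k-1}, $\mathfrak D_{k-1}$ is the proper transform of $\mathfrak E_{k-1}$; since all further blowup centers $\mathfrak Z_{(k-1)},\ldots,\mathfrak Z_{(1)}$ in the tower $\tilde{\mathfrak M}_{g,n,d}\to\mathfrak U_{k-1}$ lie in the proper transform of $\mathfrak Z_1$, and by Lemma~\ref{lem:Z1-in-E} the divisor $\mathfrak Z_{1,k-1}\cup\mathfrak E_{k-1}$ is normal crossing with $\mathfrak Z_{1,k-1}\cap\mathfrak E_{k-1}$ a union of coordinate hyperplanes of the projective bundle, blowing up strata contained in $\mathfrak Z_{1,k-1}$ affects $\mathfrak E_{k-1}$ only along that normal-crossing intersection. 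Hence the total transform and the proper transform of $\mathfrak E_{k-1}$ agree, i.e.\ $\mathfrak D_{k-1}$ is the full inverse image.

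Next I would pull everything back along $\tilde{\mathrm{gl}}_k$ and identify the geometry. By Lemma~\ref{lem:structure-E_k-1}, $\tilde{\mathrm{gl}}_k^*\mathfrak E_{k-1}$ is the projective bundle $\mathbb P(\Theta_1\oplus\cdots\oplus\Theta_k)$ over $\tilde{\mathfrak M}_{g,n+k,d-kd_0}\times'(\mathfrak M_{0,1,d_0}^{\mathrm{wt,ss}})^k$, and by Lemma~\ref{lem:Z1-in-E} the trace of $\mathfrak Z_{1,k-1}$ on this bundle is exactly the union of coordinate hyperplanes $\bigcup_{i=1}^k\mathbb P(\Theta_1\oplus\cdots\oplus\hat\Theta_i\oplus\cdots\oplus\Theta_k)$. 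Now I would compare the remaining blowups. The map $\tilde{\mathfrak M}_{g,n,d}\to\mathfrak U_{k-1}$ is, by construction, a composition of blowups along the proper transforms $\mathfrak Z_{(k-1)},\ldots,\mathfrak Z_{(1)}$; restricted over $\mathfrak E_{k-1}$ (equivalently, pulled back along $\tilde{\mathrm{gl}}_k$), these centers meet $\tilde{\mathrm{gl}}_k^*\mathfrak E_{k-1}$ precisely in the codimension-$i$ coordinate subspaces $Z_i$ of the projective bundle, by the same local computation as in the proof of Lemma~\ref{lem:structure-Z_k} (since the branches $\mathfrak H_{j,i}$ cut out the coordinate hyperplanes $H_j$ after restriction to the exceptional divisor). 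Moreover, because taking proper transforms, taking the deepest stratum, and blowing up all commute with the étale morphism $\tilde{\mathrm{gl}}_k$ and with restriction to a smooth divisor, the tower of blowups defining $\tilde{\mathrm{gl}}_k^*\mathfrak D_{k-1}$ over the base is \emph{term by term} the tower $P_{k-1}\to P_{k-2}\to\cdots\to P_0$ of Definition~\ref{def:inflated-projective-bundle} applied to $L_i=\Theta_i$. This gives the asserted isomorphism $\tilde{\mathrm{gl}}_k^*\mathfrak D_{k-1}\cong\tilde{\mathbb P}(\Theta_1,\ldots,\Theta_k)$.

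I expect the main obstacle to be bookkeeping the compatibility of the two blowup towers over a smooth Cartier divisor — specifically, verifying that blowing up $\mathfrak U_i$ along $\mathfrak Z_{(i)}$ and then restricting to the (proper transform of the) divisor $\mathfrak E_{k-1}$ gives the same result as blowing up the restricted center inside the restricted divisor. This requires knowing that $\mathfrak Z_{(i)}$ meets $\mathfrak E_{k-1}$ (and its transforms) transversally, or at least cleanly, which is exactly the content of the normal-crossing statement in Lemma~\ref{lem:Z1-in-E} propagated through the tower; one must check this normal-crossing property is preserved under each successive blowup. Once that transversality is in hand, the identification is formal: blowing up a transversal center commutes with passing to a smooth divisor, and the tautological identification of normal bundles in Lemma~\ref{lem:structure-E_k-1} pins down the line bundles $\Theta_i$ (up to the twist by $\mathcal O(-\sum\mathfrak E_i)$, which does not change a projective bundle and hence does not change the inflated projective bundle either). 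The remaining details — that $D_i$ corresponds to the proper transform of $\mathfrak E_i$, and that the construction commutes with the base change $\tilde{\mathrm{gl}}_k$ — follow directly from Corollary~\ref{cor:E-star-in-M-tilde}, Lemma~\ref{lem:entangled-tails-from-gluing}, and the étaleness in Lemma~\ref{lem:structure-Z_k}.
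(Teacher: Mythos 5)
Your proposal is correct and follows essentially the same route as the paper's proof: an induction down the tower $\mathfrak U_{k-1}\supset\cdots\supset\mathfrak U_0$ tracking the proper transform of $\mathfrak E_{k-1}$, using the normal-crossing statement of Lemma~\ref{lem:Z1-in-E} (and its persistence under each blowup, which you correctly flag as the key point to verify) to conclude that each blowup center meets the divisor cleanly, so that proper transform equals preimage and the restricted blowup tower is term-by-term the one defining $\tilde{\mathbb P}(\Theta_1,\ldots,\Theta_k)$.
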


\ifdefined\SHOWPROOFS
\begin{proof}
  For $i=0 ,\ldots, k-1$, let $\mathfrak E_{k-1,i}$ be the proper transform of
  $\mathfrak E_{k-1}$ in $\mathfrak U_i$. Recall that $\mathfrak U_{i-1} \to
  \mathfrak U_{i}$ is the blowup along $\mathfrak Z_{(i)}$, where $\mathfrak
  Z_{(i)}$ is the deepest stratum of the proper
  transform $\mathfrak Z_{1,i}$ of the normal crossing divisor $\mathfrak
  Z_1\subset \mathfrak U_m = \mathfrak M_{g,n,d}^{\mathrm{wt,ss}}$.
  The proof is by analyzing $\mathfrak E_{k-1,i-1}\to \mathfrak E_{k-1,i}$
  inductively for $i=k-1 ,\ldots, 1$. First set $i=k-1$. Thus we have $\mathfrak
  E_{k-1,i} = \mathfrak E_{k-1}$.
  By Lemma~\ref{lem:Z1-in-E}, $\mathfrak Z_{1,i}\cup \mathfrak E_{i}$ is a
  normal crossing divisor. Thus some local computation shows
  that $\mathfrak E_{k-1,i-1}$ is equal to the preimage of $\mathfrak
  E_{k-1,i}$, and the map $\mathfrak E_{k-1,i-1}\to \mathfrak
  E_{k-1,i}$ is isomorphic to the blowup along the deepest stratum of 
  $\mathfrak Z_{1,i}\cap \mathfrak E_{i}$. Moreover, $\mathfrak Z_{1,i-1} \cup
  \mathfrak E_{k-1,i-1}$ is again a normal crossing divisor. Thus we can
  continue this argument for all $i=k-1 ,\ldots, 1$. In particular, $\mathfrak
  D_{k-1} = \mathfrak E_{k-1,0}$ is equal
  to the preimage of $\mathfrak E_{k-1}$. The second statement of the Lemma
  follows from comparing the process above and the definition of inflated
  projective bundles, using Lemma~\ref{lem:Z1-in-E}. 
\end{proof}
\fi

Fix $k$ in the remainder of this subsection and let
\[
  \iota_{\mathfrak D}:  {\tilde{\mathrm{gl}}_k^*}\mathfrak D_{k-1}
  \longrightarrow \tilde{\mathfrak M}_{g,n,d}
\]
be the composition of ${\tilde{\mathrm{gl}}_k^*}\mathfrak D_{k-1} \to \mathfrak
D_{k-1} \subset \tilde{\mathfrak M}_{g,n,d}$.
\begin{lemma}
  \label{lem:divisors1}
  Consider the morphism $\iota_{\mathfrak D}$ above.
  \begin{enumerate}
  \item
    For $0\leq \ell \leq k-2$, the pullback $\iota_{\mathfrak
      D}^*\mathfrak D_\ell$ of $\mathfrak D_{\ell}$ as a divisor is equal to the
    $\ell$-th tautological divisor $D_\ell$ of the inflated projective bundle
    \eqref{eq:D-as-inf-proj-bdl}.
  \item
    For $\ell\geq k$, the pullback $\iota_{\mathfrak D}^*\mathfrak
    D_{\ell}$ of $\mathfrak D_{\ell}$ as a divisor is equal to $\operatorname{pr}_1^*(\mathfrak D^\prime_{\ell-k})$,
    where $\operatorname{pr}_1$ is the composition of the projections
    \[
      \mathrm{pr}_1: {\tilde{\mathrm{gl}}_k^*}\mathfrak D_{k-1} \longrightarrow
      \tilde{\mathfrak M}_{g,n+k,d-kd_0}
      \times^\prime {\big(\mathfrak M_{0,1,d_0}^{\mathrm{wt,ss}}\big)}^k \longrightarrow  \tilde{\mathfrak M}_{g,n+k,d-kd_0},
    \]
    and $\mathfrak D^{\prime}_{\ell-k}$ is the boundary divisor of
    $\tilde{\mathfrak M}_{g,n+k,d-kd_0}$.
  \end{enumerate}
\end{lemma}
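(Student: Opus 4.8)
The plan is to prove both parts by the blowup-tracking method already used for Lemma~\ref{lem:str-D}: follow the tower $\tilde{\mathfrak M}_{g,n,d}=\mathfrak U_0\to\mathfrak U_1\to\cdots\to\mathfrak U_m$, restrict each step to the (proper transform of the) exceptional divisor $\mathfrak E_{k-1}$, and match the resulting sub-tower with a construction we already understand. The two parts correspond to two different such constructions. The divisors $\mathfrak D_\ell$ with $\ell\le k-2$ come from blowups performed \emph{after} $\mathfrak E_{k-1}$ appears, and the relevant model is the definition of the inflated projective bundle (Definition~\ref{def:inflated-projective-bundle}); the divisors with $\ell\ge k$ come from blowups performed \emph{before} $\mathfrak E_{k-1}$ appears, and the relevant model is the tower building $\tilde{\mathfrak M}_{g,n+k,d-kd_0}$, pulled back along the first projection $\mathrm{pr}_1$. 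In either case $\iota_{\mathfrak D}^*\mathfrak D_\ell$ is a well-defined Cartier divisor, since $\mathfrak D_{k-1}$ and $\mathfrak D_\ell$ are distinct irreducible boundary divisors and hence the former is not contained in the latter; it is the divisor cut out on $\tilde{\mathrm{gl}}_k^*\mathfrak D_{k-1}$ by the local equation of $\mathfrak D_\ell$.

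\emph{Part (1), $0\le\ell\le k-2$.} I would recall from the proof of Lemma~\ref{lem:str-D} that, writing $\mathfrak E_{k-1,i}$ for the proper transform of $\mathfrak E_{k-1}$ in $\mathfrak U_i$, the map $\mathfrak E_{k-1,i-1}\to\mathfrak E_{k-1,i}$ is the blowup of $\mathfrak E_{k-1,i}$ along the deepest stratum of $\mathfrak Z_{1,i}\cap\mathfrak E_i$, for $i=k-1,\ldots,1$, with $\mathfrak Z_{1,i}\cup\mathfrak E_{k-1,i}$ normal crossing throughout. Pulling everything back along the morphism $\tilde{\mathrm{gl}}_k$ of \eqref{map:quotient-by-S}, and using Lemma~\ref{lem:structure-E_k-1} to identify $\mathfrak E_{k-1}$ with $\mathbb P(\Theta_1\oplus\cdots\oplus\Theta_k)$ and Lemma~\ref{lem:Z1-in-E} to identify $\mathfrak Z_{1,k-1}\cap\mathfrak E_{k-1}$ with the union of coordinate hyperplanes, this sub-tower is, step by step, the tower $P_{k-1}\to\cdots\to P_0$ defining the inflated projective bundle $\tilde{\mathbb P}(\Theta_1,\ldots,\Theta_k)$ of \eqref{eq:D-as-inf-proj-bdl}. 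Under this identification $\mathfrak E_\ell$ restricts on $\tilde{\mathrm{gl}}_k^*\mathfrak D_{k-1}$ to the exceptional divisor $E_\ell$, and, recalling that the tautological divisor $D_\ell$ is simultaneously the proper transform and the total transform of $E_\ell$, we conclude $\iota_{\mathfrak D}^*\mathfrak D_\ell=D_\ell$, which is the $\ell$-th tautological divisor by Definition~\ref{def:tautological-divisor-inflated-projective-bundle}.

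\emph{Part (2), $\ell\ge k$.} Now $\mathfrak E_\ell$ comes from the blowup $\mathfrak U_\ell\to\mathfrak U_{\ell+1}$ along $\mathfrak Z_{(\ell+1)}$, performed before $\mathfrak E_{k-1}$ is created. I would invoke the chain of fibered squares \eqref{cd:fiber-step} constructed in the proof of Lemma~\ref{lem:structure-Z_k}: for $i$ running from $m$ down to $k+1$ it identifies $\mathfrak Z^{\mathrm{nor}}_{k,i-1}$, \'etale-locally and compatibly with blowdown, with $\mathfrak U^\prime_{i-k-1}\times^\prime(\mathfrak M^{\mathrm{wt,ss}}_{0,1,d_0})^k$, where $\mathfrak U^\prime_\bullet$ is the tower building $\tilde{\mathfrak M}_{g,n+k,d-kd_0}$, and the left vertical blowup is $\mathrm{pr}_1^{-1}$ of the blowup $\mathfrak U^\prime_{i-k-1}\to\mathfrak U^\prime_{i-k}$. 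Taking $i=\ell+1$, the restriction of $\mathfrak E_\ell$ to $\mathfrak Z_{(k)}$, pulled back along $\tilde{\mathrm{gl}}_k$, is $\mathrm{pr}_1^*$ of the exceptional divisor of $\mathfrak U^\prime_{\ell-k}\to\mathfrak U^\prime_{\ell-k+1}$, and carrying this through the remaining blowdowns to $\mathfrak U^\prime_0=\tilde{\mathfrak M}_{g,n+k,d-kd_0}$ turns it into $\mathrm{pr}_1^*\mathfrak D^\prime_{\ell-k}$. Finally, passing from $\mathfrak Z_{(k)}$ into $\mathfrak E_{k-1}$ and then through the blowups of Part~(1) does not change this: by Lemma~\ref{lem:Z1-in-E} those later centers lie in the $\mathbb P(\Theta_\bullet)$-direction and are not contained in the $\mathrm{pr}_1$-pullback divisors, so the restriction remains $\mathrm{pr}_1^*\mathfrak D^\prime_{\ell-k}$ and equals $\iota_{\mathfrak D}^*\mathfrak D_\ell$.

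\emph{Main obstacle.} The real work is the \'etale-local bookkeeping of which blowup centers $\mathfrak Z_{(j)}$ are contained in, and which are transverse to, the successive exceptional divisors and their proper transforms; this is what legitimizes the various ``restriction $=$ pullback'' identifications and the step-by-step matching of the two blowup towers. By Lemma~\ref{lem:structure-Z_k} this reduces, locally near a point of $\mathfrak Z_{(k)}$, to the explicit coordinate-hyperplane arrangement $\bigcup_i H_i\subset\mathbb P(\Theta_1\oplus\cdots\oplus\Theta_k)$ in the fiber direction, together with a product of analogous normal-crossing pictures for the remaining degree-$d_0$ tails in the base direction; once that local model is set up the verification is routine, and the rest is assembling the structural lemmas already established in this section.
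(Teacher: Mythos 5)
Your proposal is correct and follows essentially the same route as the paper: the paper's proof simply notes that, working locally where $\mathfrak Z_1$ is strict normal crossing, part (1) follows from the proof of Lemma~\ref{lem:str-D} (matching the blowups after $\mathfrak E_{k-1}$ appears with the inflated-projective-bundle tower) and part (2) from the proof of Lemma~\ref{lem:structure-Z_k} (matching the earlier blowups, via the fibered squares \eqref{cd:fiber-step}, with the tower building $\tilde{\mathfrak M}_{g,n+k,d-kd_0}$ pulled back along $\mathrm{pr}_1$). You have just spelled out the bookkeeping that the paper leaves implicit.
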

\ifdefined\SHOWPROOFS
\begin{proof}
  Both statements are clear when we locally write $\mathfrak Z_1$ as a strict
  normal crossing divisor. Or more specifically,
  (1) follows immediately from the proof for Lemma~\ref{lem:str-D} and (2)
  follows immediately from the proof for Lemma~\ref{lem:structure-Z_k}.
\end{proof}
\fi
By abuse of notation, the pullback of $\Theta_i$ to ${\tilde{\mathrm{gl}}_k^*}\mathfrak D_{k-1}$
is still denoted by $\Theta_i$.
We cannot pull back $\mathfrak D_{k-1}$ to itself as a divisor. Instead, we have
the following.
\begin{lemma}
  \label{lem:divisors2}
  Along the map $\iota_{\mathfrak D}$,
  the line bundle
  \[
    \mathcal O_{\tilde{\mathfrak M}_{g,n,d}}\left( k(\mathfrak D_0 + \mathfrak
      D_1 + \cdots + \mathfrak D_{m-1}) \right)
  \]
  pulls back to
  \[
    \mathcal O_{{\tilde{\mathrm{gl}}_k^*}\mathfrak D_{k-1}} \left(
      (k-1)D_0 + (k-2)D_1+ \cdots + D_{k-2}
    \right)\otimes \Theta_1 \otimes \cdots \otimes \Theta_k,
  \]
  where the divisors $D_0,\ldots,D_{k-2}$ are the tautological divisors of the
  inflated projective bundle \eqref{eq:D-as-inf-proj-bdl}.
\end{lemma}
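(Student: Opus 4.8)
The plan is to pull back the class $k(\mathfrak D_0+\cdots+\mathfrak D_{m-1})$ along $\iota_{\mathfrak D}$ term by term, splitting the sum into the ranges $0\le\ell\le k-2$, $\ell=k-1$ and $\ell\ge k$. Lemma~\ref{lem:divisors1} disposes of the two outer ranges: it gives $\iota_{\mathfrak D}^*\mathcal O(\sum_{\ell=0}^{k-2}\mathfrak D_\ell)\cong\mathcal O(\sum_{\ell=0}^{k-2}D_\ell)$ and $\iota_{\mathfrak D}^*\mathcal O(\sum_{\ell\ge k}\mathfrak D_\ell)\cong\operatorname{pr}_1^*\mathcal O(\sum_{j\ge 0}\mathfrak D'_j)$, the index ranges matching since $\lfloor(d-kd_0)/d_0\rfloor=m-k$. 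So the entire content of the lemma is the computation of $\iota_{\mathfrak D}^*\mathcal O(\mathfrak D_{k-1})$, i.e.\ of the pullback along $\tilde{\mathrm{gl}}_k^*\mathfrak D_{k-1}\to\mathfrak D_{k-1}$ of the normal bundle $\mathcal O(\mathfrak D_{k-1})|_{\mathfrak D_{k-1}}$, together with one elementary identity on inflated projective bundles.

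For the normal bundle, by Lemma~\ref{lem:str-D} one has $\mathcal O_{\tilde{\mathfrak M}_{g,n,d}}(\mathfrak D_{k-1})=\rho^*\mathcal O_{\mathfrak U_{k-1}}(\mathfrak E_{k-1})$ for the composition of blowups $\rho:\tilde{\mathfrak M}_{g,n,d}\to\mathfrak U_{k-1}$ (the centers $\mathfrak Z_{(i)}$, $i<k$, are not contained in the successive proper transforms of $\mathfrak E_{k-1}$, so no multiplicity appears), hence $\mathcal O(\mathfrak D_{k-1})|_{\mathfrak D_{k-1}}$ is the $\rho$-pullback of $N_{\mathfrak E_{k-1}/\mathfrak U_{k-1}}$. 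Since $\mathfrak E_{k-1}$ is the exceptional divisor of the blowup of $\mathfrak U_k$ along the smooth $\mathfrak Z_{(k)}$, $N_{\mathfrak E_{k-1}/\mathfrak U_{k-1}}=\mathcal O_{\mathbb P(N)}(-1)$ with $N=N_{\mathfrak Z_{(k)}/\mathfrak U_k}$; pulling back along $\tilde{\mathrm{gl}}_k$, using Lemma~\ref{lem:structure-E_k-1} and the canonical isomorphism $\mathcal O_{\mathbb P(E\otimes L)}(-1)\cong\mathcal O_{\mathbb P(E)}(-1)\otimes\pi^*L$, this becomes $\mathcal O_{\mathbb P(\Theta_1\oplus\cdots\oplus\Theta_k)}(-1)\otimes\mathcal O(-\sum_{i\ge k}\mathfrak E_i)$. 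By the proof of Lemma~\ref{lem:structure-Z_k} (applied to the primed moduli via Lemma~\ref{lem:str-D}), $\tilde{\mathrm{gl}}_k^*(\mathfrak E_i|_{\mathfrak D_{k-1}})=\operatorname{pr}_1^*\mathfrak D'_{i-k}$ for $k\le i\le m-1$ (and $\mathfrak E_i=\emptyset$ for $i\ge m$), so the second tensor factor is exactly $\operatorname{pr}_1^*\mathcal O(-\sum_{j\ge 0}\mathfrak D'_j)$, which cancels the $\ell\ge k$ contribution found above. By Lemma~\ref{lem:str-D} once more, after the identifications $\tilde{\mathrm{gl}}_k^*\mathfrak D_{k-1}\cong\tilde{\mathbb P}(\Theta_1,\ldots,\Theta_k)$ and $\tilde{\mathrm{gl}}_k^*\mathfrak E_{k-1}\cong\mathbb P(\Theta_1\oplus\cdots\oplus\Theta_k)$ the composite $\tilde{\mathrm{gl}}_k^*\mathfrak D_{k-1}\to\mathfrak D_{k-1}\xrightarrow{\rho}\mathfrak E_{k-1}$ is the blow-down $\sigma$ of that inflated projective bundle. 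Collecting terms,
\[
  \iota_{\mathfrak D}^*\mathcal O\big(k(\mathfrak D_0+\cdots+\mathfrak D_{m-1})\big)\ \cong\ \mathcal O\Big(k\textstyle\sum_{\ell=0}^{k-2}D_\ell\Big)\otimes\big(\sigma^*\mathcal O_{\mathbb P(\Theta_1\oplus\cdots\oplus\Theta_k)}(-1)\big)^{\otimes k}.
\]

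It remains to prove the formal statement that on $\tilde{\mathbb P}(L_1,\ldots,L_k)\to X$, with blow-down $\sigma$ to $\mathbb P(L_1\oplus\cdots\oplus L_k)$ and tautological divisors $D_0,\ldots,D_{k-2}$,
\[
  \textstyle\sum_{\ell=0}^{k-2}(\ell+1)D_\ell\ =\ k\,\sigma^*\mathcal O_{\mathbb P(L_1\oplus\cdots\oplus L_k)}(1)+\sum_{i=1}^k\pi^*L_i
\]
in the Picard group (with the convention of Section~\ref{sec:inflated-proj-bundle}, so that $\mathcal O_{\mathbb P}(H_i)=\mathcal O_{\mathbb P}(1)\otimes\pi^*L_i$ for the coordinate hyperplane $H_i$). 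I would prove this from two observations: (i) by the stage $P_1$, the proper transform of $Z_1=\bigcup_iH_i$ is the disjoint union of the proper transforms $\tilde H_1,\ldots,\tilde H_k$ of the coordinate hyperplanes, a Cartier divisor, so $P_0=P_1$ and $D_0=\sum_i\tilde H_i$; and (ii) $Z_1$, being normal crossing, has multiplicity exactly $\ell+1$ along each blow-up center $Z_{(\ell+1)}$ (exactly $\ell+1$ of the $H_i$ pass through a generic point of a codimension-$(\ell+1)$ coordinate subspace), so the total transform of $Z_1$ equals $D_0+\sum_{\ell=1}^{k-2}(\ell+1)D_\ell$; combined with $\mathcal O_{\mathbb P}(\sum_iH_i)\cong\mathcal O_{\mathbb P}(k)\otimes\pi^*(L_1\otimes\cdots\otimes L_k)$ this is the identity. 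Applying it with $L_i=\Theta_i$ rewrites $(\sigma^*\mathcal O_{\mathbb P}(-1))^{\otimes k}$ as $\mathcal O(-\sum_{\ell=0}^{k-2}(\ell+1)D_\ell)\otimes\Theta_1\otimes\cdots\otimes\Theta_k$, and substituting into the previous display yields $\mathcal O(\sum_{\ell=0}^{k-2}(k-1-\ell)D_\ell)\otimes\Theta_1\otimes\cdots\otimes\Theta_k$, which is the asserted formula since $(k-1-\ell)$ runs through $k-1,k-2,\ldots,1$.

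The main obstacle is the bookkeeping in the middle step: one must check that the twist $\mathcal O(-\sum_{i\ge k}\mathfrak E_i)$ produced by Lemma~\ref{lem:structure-E_k-1} matches, on the nose, the $\operatorname{pr}_1^*\mathfrak D'_j$-contribution of the deep boundary divisors (so that no $\mathfrak D'_j$ survives in the final answer), and that the identification of $\tilde{\mathrm{gl}}_k^*\mathfrak D_{k-1}$ with the inflated projective bundle of Lemma~\ref{lem:str-D} is compatible with the projections down to $\mathfrak E_{k-1}$ and to $\tilde{\mathfrak M}_{g,n+k,d-kd_0}\times^\prime(\mathfrak M^{\mathrm{wt,ss}}_{0,1,d_0})^k$. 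The inflated-projective-bundle identity, though elementary, genuinely uses the multiplicity count $\ell+1$, which is exactly the source of the asymmetric coefficients $k-1,k-2,\ldots,1$ in the answer.
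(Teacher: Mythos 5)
Your proof is correct and follows essentially the same route as the paper's: reduce to $\iota_{\mathfrak D}^*\mathcal O(k\mathfrak D_{k-1})$ via Lemma~\ref{lem:divisors1}, identify that bundle with the $k$-th power of the relative $\mathcal O(-1)$ of the projective bundle of Lemma~\ref{lem:structure-E_k-1}, and use the identity $\sigma^*(H_1+\cdots+H_k)=\sum_{\ell=0}^{k-2}(\ell+1)D_\ell$ on the inflated projective bundle. Your bookkeeping is in fact slightly more careful than the paper's at one point: the twist $\mathcal O(-\sum_{i\geq k}\mathfrak E_i)$ must enter to the $k$-th power (as the determinant of the twisted rank-$k$ bundle) in order to cancel $\operatorname{pr}_1^*\mathcal O(k\sum_j \mathfrak D^\prime_j)$ exactly, which you track correctly, whereas the paper's displayed intermediate formula records it with coefficient $1$.
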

\ifdefined\SHOWPROOFS
\begin{proof}
  We compute $\iota_{\mathfrak D}^*\mathcal O_{\tilde{\mathfrak
      M}_{g,n,d}}(k\mathfrak D_{k-1})$. By Lemma~\ref{lem:str-D}, $\mathcal
  O_{\tilde{\mathfrak M}_{g,n,d}}(\mathfrak D_{k-1})$ is isomorphic to the
  pullback of $\mathcal O_{\mathfrak U_{k-1}}(\mathfrak E_{k-1})$. Note that the
  map ${\tilde{\mathrm{gl}}_k^*}\mathfrak D_{k-1}\to \mathfrak U_{k-1}$ factors
  through $\mathfrak E_{k-1}$. Hence $\iota_{\mathfrak D}^* \mathcal O_{\tilde{\mathfrak
      M}_{g,n,d}}(\mathfrak D_{k-1})$ is canonically isomorphic to the pullback of $\mathcal
  O_{\tilde{\mathrm{gl}}_{k}^*\mathfrak E_{k-1}}(-1)$,  the relative
  $\mathcal O(-1)$ of the projective bundle
  \eqref{map:E_k-1-as-projective-bundle}. Let $H_1 ,\ldots, H_k\subset
  \tilde{\mathrm{gl}}_{k}^*\mathfrak E_{k-1}$ be the coordinate hyperplanes of
  the projective bundle. 
  It is an elementary fact about projective bundles that
  \[
    \textstyle
    \mathcal O_{\tilde{\mathrm{gl}}_{k}^*\mathfrak E_{k-1}}(-k) \cong
    \bigwedge^k \Big(
    ( \Theta_1\oplus\cdots \oplus \Theta_k ) \otimes \mathcal
    O_{\mathfrak U_k}\big(-\sum_{i=k}^{m-1} \mathfrak E_i \big)
    \Big) \otimes  \mathcal O_{\tilde{\mathrm{gl}}_{k}^*\mathfrak E_{k-1}}(-\sum_{i=1}^kH_i).
  \]
  From the construction of the inflated projective bundle it is easy to see that
  the pullback of $H_1 + \cdots + H_k$ to $\tilde{\mathrm{gl}}_k^* \mathfrak
  D_{k-1}$ is equal to $D_0 + 2D_1 + \cdots + (k-1)D_{k-2}$. Hence we obtain
  \[
    \textstyle
    \iota_{\mathfrak D}^*\mathcal O_{\tilde{\mathfrak
      M}_{g,n,d}}(k\mathfrak D_{k-1}) \cong \Theta_1 \otimes  \cdots \otimes
  \Theta_{k} \otimes \mathcal O_{\tilde{\mathrm{gl}}_{k}^*\mathfrak E_{k-1}}(-\sum_{i=k}^{m-1} \iota_{\mathfrak
    D}^*\mathfrak D_{i} - \sum_{i=0}^{k-2}(i+1)D_i).
  \]
  Combining this with Lemma~\ref{lem:divisors1}(1), we get the desired formula.
\end{proof}
\fi

\subsection{The calibration bundle}
\label{sec:cali-bundle}
To each semistable weighted curve, we will canonically associate a
$1$-dimensional vector space, on which the automorphism group of the degree-$d_0$
rational tails acts. We will also add infinity to it, forming a $\mathbb
P^1$-bundle over $\mathfrak M^{\mathrm{wt,ss}}_{g,n,d}$.

Recall that $\mathfrak Z_1\subset \mathfrak M_{g,n,d}^{\mathrm{wt,ss}}$ is the
reduced substack where the curve has at least one degree-$d_0$ rational tail.
Also recall that we have assumed that $2g-2+n+\epsilon_0 d\geq 0$, and
$\epsilon_0 d>2$ when $g=n=0$.
\begin{definition}
  \label{def:calibration-bundle}
  When $(g,n,d)\neq (0,1,d_0)$, the universal calibration bundle is defined to
  be the line
  bundle $\mathcal O_{\mathfrak
    M_{g,n,d}^{\mathrm{wt,ss}}}(-\mathfrak Z_1)$; when $(g,n,d) = (0,1,d_0)$, the
  universal calibration bundle is the relative cotangent bundle at the unique
  marking\footnote{This is viewed as a line bundle on $\mathfrak
    M^{\mathrm{wt,ss}}_{0,1,d_0}$ by the canonical section of the gerbe
    marking. See Section~\ref{sec:orbifold-psi-classes}.}.

  For an $S$-family of $\epsilon_0$-semistable, genus-$g$,
  degree-$d$ weighted curves, its calibration bundle is the pullback of universal
  calibration bundle along the classifying morphism $S \to  \mathfrak
  M_{g,n,d}^{\mathrm{wt,ss}}$.
\end{definition}
In particular, the calibration bundle of $\tilde{\mathfrak M}_{g,n,d}$ is the
pullback of the universal calibration bundle of $\mathfrak
M_{g,n,d}^{\mathrm{wt,ss}}$ via the forgetful morphism $\tilde{\mathfrak
  M}_{g,n,d} \to \mathfrak M_{g,n,d}^{\mathrm{wt,ss}}$.

We now focus on the case $(g,n,d)\neq (0,1,d_0)$.
Locally let $\mathfrak H_1 ,\ldots, \mathfrak H_k$ be the branches of
the normal crossing divisor $\mathfrak Z_1$. Over $\mathfrak H_i$, the universal
curve decomposes as a union
\[
  \mathcal C = \mathcal C_g \cup_{\Sigma} \mathcal C_0
\]
of genus-$g$ curves $\mathcal C_g$ and genus-$0$ curves $\mathcal C_0$
intersecting along the node $\Sigma$. Then $\mathcal O(\mathfrak
H_i)|_{\mathfrak H_i}$ is naturally isomorphic to
\[
  N_{\Sigma/\mathcal C_g} \otimes  N_{\Sigma/\mathcal C_0}.
\]
A priori, this is line bundle on $\Sigma$, which is a gerbe over $\mathfrak
H_i$. The balanced-node condition implies that it descends to $\mathfrak H_i$.

In particular, for a single curve $C$ with degree-$d_0$ rational tails
$E_1 ,\ldots, E_k$,
its calibration bundle is naturally isomorphic to $(\Theta_1 \otimes  \cdots
\otimes \Theta_k)^{\vee}$, where $\Theta_i$ is the one dimensional vector space of
infinitesimal smoothings of the unique node on $E_i$.

We denote the calibration bundle by $\mathbb M$ with a suitable subscript,
referring to the ``$\mathbb M$aster space'' to be introduced later.
\begin{definition}
  \label{def:moduli-with-calibrated-tails}
  The moduli of ($\epsilon_0$-)semistable curves with calibrated tails is
  defined to be
  \[
    M \tilde {\mathfrak M}_{g,n,d}  : = \mathbb P_{\tilde {\mathfrak
        M}_{g,n,d}}(\mathbb M_{\tilde {\mathfrak
        M}_{g,n,d}}\oplus \mathcal O_{\tilde {\mathfrak M}_{g,n,d}}),
  \]
  where $\mathbb M_{\tilde {\mathfrak M}_{g,n,d}}$ is the calibration bundle of
  $\tilde{\mathfrak M}_{g,n,d}$.
\end{definition}
Hence an $S$-point of $M \tilde {\mathfrak M}_{g,n,d}$ consists of
\[
  (\pi:\mathcal C\to S, \mathbf x ,e,N,v_1,v_2)
\]
where
\begin{itemize}
\item
  $(\pi:\mathcal C\to S, \mathbf x ,e)\in \tilde {\mathfrak M}_{g,n,d}(S)$;
\item
  $N$ is a line bundle on $S$;
\item
  $v_1\in \Gamma(S,\mathbb  M_{S} \otimes  N)$, $v_2 \in \Gamma(S,
  N)$ have no common zero, where $\mathbb M_S$ is the calibration bundle for the
  family of curves $\pi:\mathcal C\to S$.
\end{itemize}
For two families
\[
  (\pi:\mathcal C\to S, \mathbf x ,e,N,v_1,v_2) \quad \text{and} \quad
  (\pi^\prime:\mathcal C^\prime\to S^\prime, \mathbf x^\prime
  ,e^\prime,N^\prime,v^\prime_1,v^\prime_2),
\]
an arrow between them consists of a triple
\[
  (f, t, \varphi),
\] where
\begin{itemize}
\item
  $f:S\to S^\prime$ is a morphism of schemes;
\item
  $t: (\pi:\mathcal C\to S, \mathbf x ,e) \to f^*(\pi^\prime:\mathcal
  C^\prime\to S^\prime, \mathbf x^\prime ,e^\prime)$ is a $2$-morphism in
  $\tilde {\mathfrak M}_{g,n,d}(S)$;
\item
  $\varphi: N \to f^*N^\prime$ is an isomorphism of line bundles, such that
  the morphisms $1 \otimes  \varphi: \mathbb M_S \otimes N \to
  \mathbb M_{S} \otimes f^*N^\prime = f^* (\mathbb M_{S^\prime} \otimes
  N^\prime)$ and $\varphi$
   send $(v_1,v_2)$ to $(f^*v^\prime_1,f^*v^\prime_2)$.
\end{itemize}
\subsection{Limits of entanglements}
In this subsection let $(R,\mathfrak m)$ be a complete discrete valuation
$\mathbb C$-algebra with residue field $\mathbb C$ and fraction field $K$.

Consider a diagram of $1$-morphisms
\[
  \begin{tikzcd}
    \operatorname{Spec}K \arrow[d]\arrow[r,"f"] \arrow[d,"\iota"']&
    \tilde{\mathfrak M}_{g,n,d}\arrow[d,"\tau"]\\
    \operatorname{Spec}R \arrow[r,"g"] & \mathfrak M^{\mathrm{wt,ss}}_{g,n,d}
  \end{tikzcd}
\]
and a $2$-morphism
\[
  t: \tau\circ f \overset{\cong}{\longrightarrow} g\circ \iota.
\]
Concretely, $g$ corresponds to a family of weighted curves
  $\mathcal C\to \operatorname{Spec}R$, $\tau\circ f$
corresponds to a family of weighted curves
  $\mathcal C^*\to \operatorname{Spec}K$,
and $t$ corresponds to a fibered diagram
\[
  \begin{tikzcd}
    \mathcal C^* \arrow[r]\arrow[d]& \mathcal  C\arrow[d] \\
    \operatorname{Spec}K \arrow[r] &\operatorname{Spec}R
  \end{tikzcd}
\]
which is compatible with the degree assignment on the weighted curves.
Since $\tau$ is projective, by the valuative criterion for properness there is a morphism
\[
  h : \operatorname{Spec} R \longrightarrow \tilde{\mathfrak M}_{g,n,d}
\]
and $2$-morphisms
\[
  t_1: \tau\circ h \longrightarrow g,\quad
  t_2: f  \longrightarrow h\circ \iota,
\]
such that
\[
  (t_1\cdot \iota)\circ (\tau\cdot t_2) = t.
\]
Moreover the triple $(h,t_1,t_2)$ is unique up to obvious
isomorphisms that satisfy obvious commutativity relations.

Thus, given $f,g$ and $t$, the unique $h$ determines an unique ``limiting entanglement''
for the special fiber of $\mathcal C$. We now describe this
limiting entanglement in terms of $f,g$ and $t$.
\subsubsection{Case 1}
Assume that there are no degree-$d_0$ rational tails in $\mathcal C^*$. Let
$E_1,\ldots, E_\ell$ be the degree-$d_0$ rational tails in the special fiber of $\mathcal
C$.
Suppose that $E_i$ intersects the other components of the special fiber at the
node $p_i\in \mathcal C$. Suppose that $\mathcal C$ has
$A_{a_i}$-singularity at $p_i$. \footnote{If $p_i$ is an orbifold point, the
  singularity type is that of a \'etale neighborhood of $p_i$.}
\begin{lemma}
  \label{lem:limit-of-entanglement-case1}
  If
  \[
    a_1=\cdots=a_k, \quad k\leq \ell
  \]
  are the greatest among $a_1,\ldots,a_\ell$, then the tails
  $E_1,\ldots,E_k$ are entangled.
\end{lemma}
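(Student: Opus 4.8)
The plan is to reduce the statement to a local computation in the versal deformation space of the special fiber of $\mathcal C$, tracking how the sequence of blowups defining $\tilde{\mathfrak M}_{g,n,d}$ interacts with the arc $g:\operatorname{Spec}R\to\mathfrak M^{\mathrm{wt,ss}}_{g,n,d}$. The key observation is that the branch $\mathfrak H_i$ of $\mathfrak Z_1$ along which $E_i$ stays a rational tail is cut out, near the special fiber, by the smoothing parameter $s_i$ of the node $p_i$, and that the order of vanishing $\operatorname{ord}_{\mathfrak m}(g^*s_i)$ is governed by the singularity type: an $A_{a_i}$-singularity at $p_i$ in the total space $\mathcal C$ means precisely that, after the standard local model $xy=s_i^{a_i+1}$ for the family near $p_i$, the pullback $g^*s_i$ has valuation dividing into the picture so that $\operatorname{ord}_{\mathfrak m}(g^*(\text{equation of }\mathfrak H_i)) = a_i+1$ (or, more carefully, these valuations are comparable via the $a_i$). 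So the first step is to make this precise: \textit{the valuation of $g$ along the branch $\mathfrak H_i$ is an increasing function of $a_i$}, with equal $a_i$ giving equal valuations.

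Second, I would run the blowup sequence $\mathfrak U_m \leftarrow \mathfrak U_{m-1}\leftarrow \cdots \leftarrow \mathfrak U_0$ against this arc. Since $\mathcal C^*$ has no degree-$d_0$ rational tails by the Case 1 hypothesis, the generic point $\operatorname{Spec}K$ maps into the open locus $\tilde{\mathfrak M}^*_{g,n,d}$, so the lift $h$ of $g$ is determined entirely by the limiting behavior, i.e.\ by which stratum $V_{\mathcal I}$ the special point of $h$ lands in, in the set-theoretic description of Section~\ref{sec:set-theoretic-entanglement}. Using the local coordinates $(s_1,\ldots,s_\ell)$ on the normal directions to $\mathfrak Z_1$ (more precisely, the coordinates on $\Theta_i^\vee$), the lift of the arc $t\mapsto (g^*s_1(t),\ldots,g^*s_\ell(t))$ through the blowup of the deepest stratum $\mathfrak Z_{(m)} = \{s_1=\cdots=s_\ell=0\}$ picks out, as its limit point in the exceptional $\mathbb P(\Theta_1\oplus\cdots\oplus\Theta_\ell)$, the direction $[g^*s_1(t):\cdots:g^*s_\ell(t)]$ as $t\to 0$. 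By Step 1 this limiting direction has nonzero coordinates exactly in the slots $i$ with $\operatorname{ord}_{\mathfrak m}(g^*s_i)$ minimal, i.e.\ with $a_i$ maximal. After the first blowup, the arc continues into the proper transform of the relevant branches, and one checks inductively (the induction being over the blowups $\mathfrak U_{i-1}\to\mathfrak U_i$, each recorded by the nested-subset combinatorics of $\mathcal I$) that the set of branches $\mathfrak H_{j}$ through which $h(\text{special point})$ passes at stage $k$ stabilizes to exactly $\{i : a_i = \max_j a_j\}$, which has size $k$ by hypothesis. By Definition~\ref{def:entangled-tails} and Lemma~\ref{lem:entangled-tails-from-gluing}, these are precisely the entangled tails of the limit.

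Concretely, I would organize Step 2 as follows: after blowing up $\mathfrak Z_{(m)}$, the strict transform of $\mathfrak H_i$ meets the exceptional divisor iff $s_i$ is among the minimal-valuation coordinates; discarding the others, relabel so that $\mathfrak H_1,\ldots,\mathfrak H_{k'}$ survive with $k'\geq k$, then repeat with the deepest stratum of their intersection. Since all of $a_1,\ldots,a_k$ are equal and strictly maximal, the arc never separates them: at \emph{every} stage the coordinates $g^*s_1,\ldots,g^*s_k$ have the common minimal valuation, so the limiting point always lies on all of $\mathfrak H_{1,i}\cap\cdots\cap\mathfrak H_{k,i}$ and on no further branch once the strictly-larger-$a_j$ branches have been separated off. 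Thus $k = \min\{i : h(\text{special pt})\in\mathfrak Z_{(i)}\}$ and the distinguished size-$k$ subset of Definition~\ref{def:entangled-tails} is $\{1,\ldots,k\}$, proving the claim. The main obstacle I anticipate is Step 1 — pinning down, in the twisted/orbifold setting and with the correct identification $\Theta_i = T_{p_i}E_i\otimes T_{p_i}C'$ (the tensor of tangent lines, which is what trivializes the gerbe and makes $\mathcal O(\mathfrak H_i)|_{\mathfrak H_i}$ descend), exactly how the $A_{a_i}$-singularity type of the total space $\mathcal C$ at $p_i$ translates into the $\mathfrak m$-adic valuation of $g^*s_i$; this requires writing out the versal family of an $A_{a_i}$ node and checking the normalization conventions match those used in constructing $\mathfrak Z_1$ and its branches. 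Once that dictionary is fixed, the rest is the bookkeeping of iterated blowups against a monomial arc, which is routine.
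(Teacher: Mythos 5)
Your proposal is correct and follows essentially the same route as the paper's proof: the dictionary between the $A_{a_i}$-singularity at $p_i$ and the contact order $a_i+1$ of $g$ with the branch $\mathfrak H_i$ (this is \cite[Remark~1.11]{olsson2007}, which handles your anticipated Step 1, including the twisted case), followed by the observation that each blowup drops the contact order with the proper transform by one, so the branches of maximal $a_i$ are exactly those surviving to stage $k$ in Definition~\ref{def:entangled-tails}. The only cosmetic slip is writing the first relevant blowup center as $\mathfrak Z_{(m)}$ rather than the locally deepest stratum $\mathfrak H_1\cap\cdots\cap\mathfrak H_\ell$ when $\ell<m$, which does not affect the argument.
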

\ifdefined\SHOWPROOFS
\begin{proof}
  As in the proof of Lemma~\ref{lem:divisors2}, near the image of $g$
  we can write $\mathfrak Z_1$ as $\mathfrak H_1 \cup\cdots \cup \mathfrak
  H_\ell$. The map $g$ sends the closed point to $\bigcap_{i=1}^\ell\mathfrak H_i$ and
  the contact order with $\mathfrak H_i$ is $a_{i}+1$ (c.f.
  \cite[Remark~1.11]{olsson2007}). After each blow up,
  the contact order of the unique lift of $g$ and the proper transform of
  $\mathfrak H_i$ drops by $1$. Thus the Lemma follows immediately from
  Definition~\ref{def:entangled-tails}.
\end{proof}
\fi

\subsubsection{Case 2}
\label{sec:lim-of-entang2}
Let $\mathcal E_{1},\ldots,\mathcal E_{\ell }\subset \mathcal C$ be the
degree-$d_0$ rational tails whose restriction to $\mathcal C^*$ are all the
entangled tails defined by $f$. Thus $f$ factors through $\mathfrak
E^*_{\ell-1}$ (c.f.\ Lemma~\ref{lem:entangled-tails-from-gluing}) and $g$
factors through $\mathfrak Z_{(\ell)}$. Since $R$ is
complete, possibly after totally ramified finite base change\footnote{A base
  change might be needed due to the orbifold nodes.}, $g$ lifts to
\[
  \tilde g : \operatorname{Spec}R \longrightarrow \tilde{\mathfrak
    M}_{g,n+\ell,d-\ell d_0} \times^\prime {\big( \mathfrak M^{\mathrm{wt,ss}}_{0,1,d_0}\big)}^\ell,
\]
so that the rational tails coming from ${(\mathfrak
M^{\mathrm{wt,ss}}_{0,1,d_0})}^\ell $ are $\mathcal E_1,\ldots,\mathcal E_\ell$.

By Lemma~\ref{lem:structure-E_k-1} and the universal property of projective
bundles, $f$ corresponds to a line bundle $L$ on
$\operatorname{Spec}K$ and sections
\[
  s^*_i \in \Gamma\big(\operatorname{Spec} K, L^\vee \otimes  {(\tilde g\circ
      \iota)}^*\Theta_i \big), \quad i = 1 ,\ldots, \ell.
\]
Pick any trivialization for $L$ over $\operatorname{Spec}K$, we get nontrivial
rational sections
\[
  s_1,\ldots,s_\ell
\]
of $\tilde g^*\Theta_1,\ldots,\tilde g^*\Theta_\ell $ over
$\operatorname{Spec}R$.
For $i=1 ,\ldots, \ell$, let $a_i = \operatorname{ord}(s_i)$ be vanishing order
of $s_i$ at the close point,\footnote{If $s_i$ has a pole of order $b_i$, then
$a_i := -b_i < 0$.} and let $E_i$ be the special fiber of $\mathcal E_i$.
The following lemma is clearly independent of the choice of the trivialization
for $L$.
\begin{lemma}
  \label{lem:limit-of-entanglement-case2}
  Assume that
  \[
    a_1=\cdots=a_k,\quad k\leq \ell
  \]
  are the greatest among $a_1,\ldots,a_\ell$, then $E_1,\ldots, E_k$
  are entangled.
  In particular, the entangled tails in the special fiber form a subset of
  $\{E_1,\ldots,E_\ell\}$.
\end{lemma}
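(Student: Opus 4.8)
The strategy is to reduce to the set-up of Lemma~\ref{lem:limit-of-entanglement-case1} by making a single blowup-by-blowup comparison of contact orders, exactly as in the proof of that lemma, but now working upstairs on the iterated blowup of $\mathfrak{Z}^{\mathrm{nor}}_{\ell}$ rather than on $\mathfrak{M}^{\mathrm{wt,ss}}_{g,n,d}$ itself. First I would set $k' = \ell$ and pass, via $\tilde g$, to the family over $\tilde{\mathfrak M}_{g,n+\ell,d-\ell d_0} \times' (\mathfrak M^{\mathrm{wt,ss}}_{0,1,d_0})^\ell$ obtained after gluing the tails $\mathcal E_1,\dots,\mathcal E_\ell$. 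By Lemma~\ref{lem:structure-Z_k} (the fibered diagram) and Lemma~\ref{lem:structure-E_k-1}, the part of the blowup tower $\mathfrak U_{\ell-1}\to\cdots\to\mathfrak U_m$ that governs which of $\mathcal E_1,\dots,\mathcal E_\ell$ becomes entangled is, after pulling back along $\tilde{\mathrm{gl}}_\ell$, identified with the inflated-projective-bundle tower associated to the line bundles $\Theta_1,\dots,\Theta_\ell$ on the base. The section $f$ of this tower over $\operatorname{Spec}K$ is, by the universal property of projective bundles, the datum of the line bundle $L$ and the sections $s_i^* \in \Gamma(L^\vee\otimes(\tilde g\circ\iota)^*\Theta_i)$; choosing a trivialization of $L$ turns these into the rational sections $s_1,\dots,s_\ell$ of the title.

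The key computation is then the contact-order bookkeeping. In the tower of blowups of the inflated projective bundle, the proper transform of the $i$-th coordinate hyperplane $H_i = \mathbb P(\Theta_1\oplus\cdots\oplus\widehat\Theta_i\oplus\cdots\oplus\Theta_\ell)$ plays the role of $\mathfrak H_i$, and the contact order of the lift of the arc $\tilde g$ (resp.\ $f$, over $K$, extended over $R$ by valuative properness as in the paragraph preceding Case~1) with that proper transform equals the vanishing order $a_i = \operatorname{ord}(s_i)$ of the corresponding section. Indeed, in the chart of $\mathbb P(\Theta_1\oplus\cdots\oplus\Theta_\ell)$ where $s_j$ is inverted, the equation of $H_i$ is $s_i/s_j$, whose order along the arc is $a_i - a_j$; after normalizing so that $\min_i a_i = 0$ this reads off the contact orders directly, and after each blowup along the deepest stratum of the current proper transform of $\bigcup H_i$ the contact order with each branch still present drops by one, precisely as in the proof of Lemma~\ref{lem:limit-of-entanglement-case1}. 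Hence, following Definition~\ref{def:entangled-tails}, the tails that remain in the innermost blowup center are exactly those $E_i$ for which $a_i$ is maximal; under the hypothesis $a_1 = \cdots = a_k > a_{k+1},\dots,a_\ell$ these are $E_1,\dots,E_k$. Since the entangled tails are by construction a subset of $\{\mathcal E_1,\dots,\mathcal E_\ell\}$ restricted to the special fiber, this also gives the final sentence of the statement.

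The main obstacle is the compatibility of the valuative-criterion lift $h$ with the two descriptions of the tower. One must check that the limiting entanglement produced abstractly by properness of $\tilde{\mathfrak M}_{g,n,d}\to\mathfrak M^{\mathrm{wt,ss}}_{g,n,d}$ agrees with the one read off from the $a_i$'s computed after the (possibly ramified) base change used to construct $\tilde g$. This is where one needs Lemma~\ref{lem:Z1-in-E} and Corollary~\ref{cor:E-star-in-M-tilde} to ensure that near the relevant exceptional loci $\tilde{\mathfrak M}_{g,n,d}$ genuinely is the iterated blowup $\mathfrak U_0$, so that the inflated-projective-bundle model of Lemma~\ref{lem:str-D} applies and the limiting entanglement is insensitive to the base change (the $a_i$ all scale by the ramification index, so their relative order is unchanged). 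Once that identification is in place, the rest is the bookkeeping above, and the lemma follows.
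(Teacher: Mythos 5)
Your proposal is correct and follows essentially the same route as the paper's proof, which likewise normalizes the trivialization of $L$ so that $a_i\geq 0$ with $\min_i a_i=0$, identifies the $a_i$ with the contact orders of the arc to the coordinate hyperplanes of the projective bundle $\tilde{\mathrm{gl}}_\ell^*\mathfrak E_{\ell-1}$ (via Lemma~\ref{lem:structure-E_k-1} and Lemma~\ref{lem:str-D}), and then runs the same blowup bookkeeping as in Lemma~\ref{lem:limit-of-entanglement-case1}. The one slip is directional: the part of the tower that discriminates among $E_1,\ldots,E_\ell$ is $\mathfrak U_0\to\cdots\to\mathfrak U_{\ell-1}$, which inflates $\mathfrak E_{\ell-1}$ into $\mathfrak D_{\ell-1}$, not $\mathfrak U_{\ell-1}\to\cdots\to\mathfrak U_m$ — but the remainder of your argument correctly uses the inflated-projective-bundle model, so this does not affect the proof.
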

\ifdefined\SHOWPROOFS
\begin{proof}
  The proof is similar to that of Lemma~\ref{lem:limit-of-entanglement-case1}.
  We choose the trivialization of $L$ so that $a_i\geq 0$ for all $i$ and
  $a_i=0$ for at least one $i$.
  Then the integers $a_1,\ldots, a_k$ are the contact orders to the coordinate
  hyperplanes of the projective bundle ${\tilde{\mathrm{gl}}_k^*}\mathfrak E_{k-1} \to
  \tilde{\mathfrak M}_{g,n+k,d-kd_0} \times^\prime {( \mathfrak
      M^{\mathrm{wt,ss}}_{0,1,d_0})}^k$ (c.f.\ Lemma~\ref{lem:structure-E_k-1},\
    Lemma~\ref{lem:str-D}).
\end{proof}
\fi

\subsection{Limits of calibrations}
Suppose we have a diagram of $1$-morphisms
\[
  \begin{tikzcd}
    \operatorname{Spec}  K \arrow[d,"{\iota}"']  \arrow[rd,"f"]
 &   \\
 \operatorname{Spec}  R \arrow[r,shift left,"h_1"] \arrow[r,shift right,"h_2"']
  &  \mathfrak M_{g,n,d}^{\mathrm{wt,ss}}
  \end{tikzcd}
\]
and $2$-morphisms
\[
  h_1\circ \iota  \overset{\cong}{\longrightarrow} f
  \overset{\cong}{\longrightarrow} h_2\circ \iota.
\]
Concretely, this corresponds to two family of weighted curves
$\mathcal C_{h_1}$ and $\mathcal C_{h_2}$ over $\operatorname{Spec}  R$,  and a
$K$-isomorphism
\begin{equation}
  \label{isom:2-generic}
  \mathcal C_{h_1}|_{\operatorname{Spec}  K} \overset{\cong}{\longrightarrow}
  \mathcal C_{h_2}|_{\operatorname{Spec}  K}
\end{equation}
that is compatible with the degree assignment.
We assume that the set of irreducible components of the geometric fiber of $\mathcal
C_{h_1}|_{\operatorname{Spec}  K}$ do not have nontrivial monodromy. This can be
achieved by a finite base change.
Let $\mathbb M_{h_i} = h_i^*(\mathcal O_{\mathfrak M_{g,n,d}^{\mathrm{wt,ss}}}(-\mathfrak
Z_1))$ be the calibration bundle of the family $\mathcal C_{h_i}$.
The $K$-isomorphism of curves induces
\begin{equation}
  \label{isom:calib-generic}
  \mathbb M_{h_1} |_{\operatorname{Spec}  K} \overset{\cong}{\longrightarrow}
  \mathbb M_{h_2}|_{\operatorname{Spec}  K}.
\end{equation}
\begin{lemma}
  \label{lem:limit-calib-1}
  Suppose that in the special fiber, $\mathcal C_{h_1}$ has no degree-$d_0$
  rational tails and $\mathcal C_{h_2}$ has $k$ degree-$d_0$
  rational tails $E_1 ,\ldots, E_k$, $k\geq 0$. Suppose that the node on $E_i$ is a
  $A_{a_i-1}$-singularity of $\mathcal C_{h_2}$, $ i = 1 ,\ldots, k$, 
  then \eqref{isom:calib-generic} extends to an isomorphism
  \[
    \textstyle
    \mathbb  M_{h_1}
    \overset{\cong}{\longrightarrow}
    \mathbb M_{h_2}(\sum_{i=1}^k a_i x),
  \]
  where $x$ is the closed point of $\operatorname{Spec} R$.
\end{lemma}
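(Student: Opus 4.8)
The plan is to realize both calibration bundles as fractional ideals of $R$ inside one common trivialization over $K$, so that \eqref{isom:calib-generic} becomes the identity of $K$ and the lemma reduces to a length computation. By definition $\mathbb M_{h_i} = h_i^*\,\mathcal O_{\mathfrak M^{\mathrm{wt,ss}}_{g,n,d}}(-\mathfrak Z_1)$, and $\mathcal O(-\mathfrak Z_1)$ is the ideal sheaf of the Cartier divisor $\mathfrak Z_1$, hence comes with a canonical injection $\mathcal O(-\mathfrak Z_1)\hookrightarrow \mathcal O_{\mathfrak M^{\mathrm{wt,ss}}_{g,n,d}}$. Pulling back along $h_1$ and $h_2$, and using that the $2$-isomorphism $h_1\circ\iota\cong h_2\circ\iota$ coming from the $K$-isomorphism of curves is compatible with every morphism of quasicoherent sheaves on $\mathfrak M^{\mathrm{wt,ss}}_{g,n,d}$ — in particular with $\mathcal O(-\mathfrak Z_1)\hookrightarrow\mathcal O$ and with the identity of $\mathcal O$ — one gets a commutative square in which \eqref{isom:calib-generic} lies over the identity of $\mathcal O_{\operatorname{Spec}K}$. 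Thus, viewing $\mathbb M_{h_1}$ and $\mathbb M_{h_2}$ as $\mathcal O_{\operatorname{Spec}R}$-submodules of $K$ via these canonical injections, \eqref{isom:calib-generic} is literally the identity of $K$, and the lemma becomes the local assertion that $\mathbb M_{h_1}=t^{-\sum_i a_i}\,\mathbb M_{h_2}$ inside $K$, where $t$ is a uniformizer of $R$.

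To evaluate the two ideals I would argue as follows. Since the special fiber of $\mathcal C_{h_1}$ has no degree-$d_0$ rational tail, the closed point of $\operatorname{Spec}R$ is sent outside the closed substack $\mathfrak Z_1$; as $\operatorname{Spec}R$ is local this forces $h_1^{-1}(\mathfrak Z_1)=\emptyset$, so the canonical injection $\mathbb M_{h_1}\hookrightarrow\mathcal O_{\operatorname{Spec}R}$ is an isomorphism and $\mathbb M_{h_1}=R$. For $\mathbb M_{h_2}$ I need the colength of $\mathbb M_{h_2}\subseteq\mathcal O_{\operatorname{Spec}R}$, which is the total contact order of $h_2$ with $\mathfrak Z_1$. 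Near the image of the closed point the branches of $\mathfrak Z_1$ are exactly $\mathfrak H_1,\ldots,\mathfrak H_k$, where $\mathfrak H_i$ is the locus along which $E_i$ persists as a degree-$d_0$ rational tail (Section~\ref{sec:the-entangled-tails}); since $\mathcal C_{h_2}$ has an $A_{a_i-1}$-singularity at the node of $E_i$, the contact order of $h_2$ with $\mathfrak H_i$ is $a_i$ (cf.\ \cite[Remark~1.11]{olsson2007}, exactly as in the proof of Lemma~\ref{lem:limit-of-entanglement-case1}). In particular $h_2$ does not factor through $\mathfrak Z_1$, so $h_2^{-1}(\mathfrak Z_1)$ is an effective Cartier divisor of degree $\sum_{i=1}^k a_i$, whence $\mathbb M_{h_2}=t^{\sum_i a_i}R$ inside $K$.

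Putting the two computations together, the identity of $K$ carries $\mathbb M_{h_1}=R$ isomorphically onto $t^{-\sum_i a_i}\cdot t^{\sum_i a_i}R=t^{-\sum_i a_i}\,\mathbb M_{h_2}$, and the latter is precisely $\mathbb M_{h_2}\bigl(\sum_{i=1}^k a_i\,x\bigr)$ regarded as a subsheaf of $K$ (twisting by $\mathcal O(nx)$ being multiplication of fractional ideals by $t^{-n}$). This is the desired extension of \eqref{isom:calib-generic}. For the orbifold refinement hidden in the footnotes one first makes the allowed totally ramified base change and then measures the singularity type of the total space at each node on an \'etale atlas, exactly as in Lemmas~\ref{lem:limit-of-entanglement-case1} and~\ref{lem:limit-of-entanglement-case2}.

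The only steps I expect to require care are: making rigorous the claim that \eqref{isom:calib-generic} is the identity of $K$ — that is, checking that pulling back the canonical inclusion $\mathcal O(-\mathfrak Z_1)\hookrightarrow\mathcal O$ along the two $2$-isomorphic $1$-morphisms $h_1\circ\iota$ and $h_2\circ\iota$ yields the same inclusion, which is formal but should be spelled out carefully at the level of $2$-morphisms; and the identification of the contact order of $h_2$ with $\mathfrak H_i$ with the $A_{a_i-1}$-singularity of the (possibly stacky) total space of $\mathcal C_{h_2}$ at the node of $E_i$, which I would treat verbatim as in the proof of Lemma~\ref{lem:limit-of-entanglement-case1}. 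Everything else is the elementary arithmetic of fractional ideals over a discrete valuation ring.
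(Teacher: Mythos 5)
Your proof is correct and is essentially the paper's argument: the paper trivializes both bundles via the tautological rational section $1_{\mathfrak M}$ of $\mathcal O(-\mathfrak Z_1)$ and compares vanishing/pole orders at $x$, which is exactly your fractional-ideal computation in different clothing (the canonical injection $\mathcal O(-\mathfrak Z_1)\hookrightarrow\mathcal O$ being the same datum as $1_{\mathfrak M}$), and both arguments rest on the same identification of the contact order of $h_2$ with $\mathfrak H_i$ with $a_i$ via \cite[Remark~1.11]{olsson2007}. No substantive difference.
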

\ifdefined\SHOWPROOFS
\begin{proof}
  Locally near $h_2(x)$, let $\mathfrak H_i\subset \mathfrak
  M_{g,n,d}^{\mathrm{wt,ss}}$ be the reduced locus where $E_i$ remains a
  rational tail.
  Thus locally $\mathfrak Z_1 = \sum_{i=1}^k \mathfrak H_i$ as Cartier divisors,
  and $a_i$ is the contact order of $h_2$ to $\mathfrak H_i$ at $x$. Let
  $1_{\mathfrak M}$ be the tautological rational section of $\mathcal O_{\mathfrak
    M_{g,n,d}^{\mathrm{wt,ss}}}(-\mathfrak Z_1)$, i.e.\ the section with a pole
  precisely along $\mathfrak Z_1$. Then $h_1^*1_{\mathfrak M}$ is
  regular and nonvanishing and $h_2^*1_{\mathfrak M}$ has a pole to the order
  $\sum_{i=1}^ka_i$ at $x$. The isomorphism \eqref{isom:calib-generic} takes
  $h_1^*1_{\mathfrak M}|_{\operatorname{Spec} K}$ to $h_2^*1_{\mathfrak
    M}|_{\operatorname{Spec} K}$. Thus the Lemma follows.
\end{proof}
\fi
\section{Quasimap invariants with entangled tails}
\subsection{The moduli and its virtual fundamental class}
\label{sec:quasimap-with-entangled-tails-moduli}

We fix $g,n$ and a curve class $\beta$. Let $d = \deg(\beta)$.
Let $\mathfrak{Qmap}_{g,n}(X,\beta)$ be the stack of genus-$g$, $n$-marked
quasimaps to $X$ with curve class $\beta$.
Let $\mathfrak{Qmap}^{\mathrm{ss}}_{g,n}(X,\beta)\subset
\mathfrak{Qmap}_{g,n}(X,\beta)$ be the open substack where the quasimap
has no
\begin{itemize}
\item
  rational tail of degree $<d_0$ or 
  rational bridge of degree $0$,
\item
  base point of length $>d_0$.
\end{itemize}
For a family of quasimaps, the degree assignment induced by the degree of the
quasimaps on each irreducible
component is continuous, in the sense that it is the degree of a line
bundle. 
Taking the underlying curves weighted by the degree of the quasimaps defines a
forgetful morphism
\[
  \mathfrak{Qmap}^{\mathrm{ss}}_{g,n}(X,\beta) \longrightarrow {\mathfrak
    M}_{g,n,d}^{\mathrm{wt,ss}}.
\]
\begin{definition}
  \label{def:semistable-quasimaps-entangled-tails}
  We define the stack of genus-$g$, $n$-pointed, $\epsilon_0$-semistable
  quasimaps \textit{with entangled tails} to $X$ with curve class $\beta$ to be
  \[
    \mathfrak{Qmap}^{\sim}_{g,n}(X,\beta) :=
    \mathfrak{Qmap}^{\mathrm{ss}}_{g,n}(X,\beta) \times_{{\mathfrak
        M}_{g,n,d}^{\mathrm{wt,ss}}} \tilde{\mathfrak M}_{g,n,d}.
  \]
\end{definition}
For any scheme $S$, 
we denote an $S$-point of $\mathfrak{Qmap}^{\sim}_{g,n}(X,\beta)$ by 
\[
  (\pi:\mathcal C \to S,\mathbf x,e,u),
\]
where $(\pi:\mathcal C \to S,\mathbf x, u) \in
\mathfrak{Qmap}^{\mathrm{ss}}_{g,n}(X,\beta)(S)$ and $(\pi:\mathcal C \to
S,\mathbf x, e) \in \tilde{\mathfrak M}_{g,n,d}(X,\beta)(S)$.
By definition, 
  an arrow
  \[
    (\pi_1:\mathcal C_1\to S_1,\mathbf x_1,e_1,u_1) \longrightarrow (\pi_2:
    \mathcal C_2\to S_2,\mathbf x_2,e_2,u_2)
  \]
  between two families of $\epsilon_0$-semistable quasimaps with entangled tails
  consists of a morphisms $f:S_1\to S_2$, a $2$-morphism $e_1
  \to e_2\circ f$, and a $2$-morphism from $\mathcal C_1\overset{u_1}{\to}
  [W/G]$ to $\mathcal C_1 \to \mathcal C_2 \overset{u_2}{\to} [W/G]$.
Note that 
  since the composition of forgetful morphisms $\tilde{\mathfrak M}_{g,n,d} \to
  {\mathfrak M}^{\mathrm{wt,ss}}_{g,n,d} \to {\mathfrak
    M}_{g,n}$ is representable, the $2$-morphism $e_1 \to
  e_2\circ f$
  is completely determined by the isomorphism $\mathcal C_1 {\to}f^*\mathcal
  C_2$ of underlying curves.
\begin{lemma}
  \label{lem:artin-stack}
  The stack $\mathfrak {Qmap}^{\sim}_{g,n}(X,\beta)$  is an Artin stack
  of finite type, with finite-type separated diagonal.
\end{lemma}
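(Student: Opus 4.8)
The plan is to deduce the lemma from the known algebraicity of the stack of prestable quasimaps, the finite type property of $\mathfrak M^{\mathrm{wt,ss}}_{g,n,d}$, and the projectivity of the blowup $\tau\colon \tilde{\mathfrak M}_{g,n,d}\to \mathfrak M^{\mathrm{wt,ss}}_{g,n,d}$.

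First I would reduce to the corresponding statement for $\mathfrak{Qmap}^{\mathrm{ss}}_{g,n}(X,\beta)$. By Definition~\ref{def:semistable-quasimaps-entangled-tails} the projection $\mathfrak{Qmap}^{\sim}_{g,n}(X,\beta)\to\mathfrak{Qmap}^{\mathrm{ss}}_{g,n}(X,\beta)$ is the base change of $\tau$ along the forgetful morphism; since $\tilde{\mathfrak M}_{g,n,d}$ is a finite sequence of blowups of $\mathfrak M^{\mathrm{wt,ss}}_{g,n,d}$ along closed substacks, $\tau$ is projective, hence representable, separated and of finite type, and so is this projection. A representable, separated, finite type morphism preserves algebraicity and finiteness of type over $\mathbb C$; it also preserves the property of having a finite type separated diagonal, via the factorization $\mathcal X\to\mathcal X\times_{\mathcal Y}\mathcal X\to\mathcal X\times\mathcal X$ of $\Delta_{\mathcal X}$, whose first arrow is a closed immersion and whose second arrow is a base change of $\Delta_{\mathcal Y}$. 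The entanglement data cause no trouble here, being carried by the first factor of the fiber product and transported by base change. So it suffices to establish the lemma with $\mathfrak{Qmap}^{\mathrm{ss}}_{g,n}(X,\beta)$ in place of $\mathfrak{Qmap}^{\sim}_{g,n}(X,\beta)$.

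Next I would invoke the algebraicity of the ambient prestable quasimap stack. The stack $\mathfrak{Qmap}_{g,n}(X,\beta)$ of all prestable genus-$g$, $n$-marked quasimaps of class $\beta$ is an Artin stack, locally of finite type, with affine diagonal: it is an open substack of the relative mapping stack $\underline{\mathrm{Hom}}_{\mathfrak M_{g,n}}(\mathcal C^{\mathrm{univ}},[W/G])$, the openness being that of the conditions that $u$ is representable and that $u^{-1}[W^{\mathrm{us}}/G]$ is finite and disjoint from the nodes and markings, and this mapping stack is algebraic because the universal curve is proper, flat and of finite presentation over $\mathfrak M_{g,n}$ while $[W/G]$ has affine diagonal with $W$ affine (for this I would cite \cite{ciocan2014stable,cheong2015orbifold} and the references therein). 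Then $\mathfrak{Qmap}^{\mathrm{ss}}_{g,n}(X,\beta)$ is obtained by imposing three further open conditions: no rational tail of degree $<d_0$ and no degree-$0$ rational bridge, which are open because such a component can only be produced under specialization and never under deformation, and no base point of length $>d_0$, which is open because the length of the base locus is upper semicontinuous in families. This already exhibits $\mathfrak{Qmap}^{\mathrm{ss}}_{g,n}(X,\beta)$ as an Artin stack, locally of finite type, with affine, hence finite type and separated, diagonal.

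What remains, and what I expect to be the only real point, is the boundedness needed to see that $\mathfrak{Qmap}^{\mathrm{ss}}_{g,n}(X,\beta)$ is of finite type and not merely locally so. First I would note that $\mathfrak M^{\mathrm{wt,ss}}_{g,n,d}$ is itself of finite type, because semistability bounds the dual graph of the weighted curve: there are at most $\lfloor d/d_0\rfloor$ rational tails, each of degree $\ge d_0$, and at most $d$ rational bridges, each of degree $\ge1$, so only finitely many topological types and degree distributions occur. It is then enough to show that the forgetful morphism $\mathfrak{Qmap}^{\mathrm{ss}}_{g,n}(X,\beta)\to\mathfrak M^{\mathrm{wt,ss}}_{g,n,d}$ is of finite type. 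After base change to a finite type atlas $T\to\mathfrak M^{\mathrm{wt,ss}}_{g,n,d}$ carrying a proper flat universal family of twisted curves $\mathcal C_T\to T$, this becomes an open substack of $\underline{\mathrm{Hom}}_T(\mathcal C_T,[W/G])$ on which the curve class is fixed equal to $\beta$; composing a quasimap with the embedding \eqref{eq:embedding-of-quotient} presents it as a line bundle of total degree $d$ on $\mathcal C_T$ together with $N+1$ sections and the data recording the factorization through $[W/G]$, and since $X$ is proper over the point $W\git_0 G$ and the number of components is bounded, this forms a bounded family; hence the Hom stack with curve class $\beta$ is of finite type over $T$. It follows that $\mathfrak{Qmap}^{\mathrm{ss}}_{g,n}(X,\beta)$ is of finite type over $\mathbb C$, and then, by the first step, so is $\mathfrak{Qmap}^{\sim}_{g,n}(X,\beta)$, with finite type separated diagonal. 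Everything apart from this boundedness input is a formal assembly of standard algebraicity statements, and the boundedness itself is the familiar finiteness of quasimaps with fixed genus, number of markings and curve class, which the semistability conditions only make easier by bounding the combinatorics of the domain.
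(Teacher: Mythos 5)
Your proposal is correct in its overall architecture and agrees with the paper on the formal steps: the projection $\mathfrak{Qmap}^{\sim}_{g,n}(X,\beta)\to\mathfrak{Qmap}^{\mathrm{ss}}_{g,n}(X,\beta)$ is projective (base change of the blowup), so everything reduces to $\mathfrak{Qmap}^{\mathrm{ss}}_{g,n}(X,\beta)$; algebraicity, local finiteness of type and the diagonal properties come from \cite{ciocan2014stable,cheong2015orbifold}; and the only substantive point is boundedness. Where you genuinely diverge is in how you establish boundedness. The paper bounds the number of irreducible components by some $N$ (exactly your dual-graph count) and then produces a surjection onto $\mathfrak{Qmap}^{\mathrm{ss}}_{g,n}(X,\beta)$ from an open substack $U$ of $Q^{0+}_{g,n+N}(X,\beta)$, obtained by adding $N$ auxiliary non-orbifold markings and then forgetting them; since $Q^{0+}_{g,n+N}(X,\beta)$ is already known to be of finite type, boundedness is inherited for free. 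You instead factor through $\mathfrak M^{\mathrm{wt,ss}}_{g,n,d}$ (correctly observing it is of finite type) and argue that the forgetful morphism is of finite type via boundedness of the relevant Hom stack with fixed class $\beta$. Your route is more self-contained in spirit but forces you to re-prove the boundedness of quasimaps on a fixed bounded family of curves, whereas the paper's trick reuses the proper moduli $Q^{0+}$ as a black box.

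The one thin spot is precisely that Hom-stack step: composing with \eqref{eq:embedding-of-quotient} and recording ``a line bundle of total degree $d$ with $N+1$ sections'' only bounds the induced quasimap to $\mathbb P^N$, not the quasimap to $[W/G]$ itself; to bound the principal $G$-bundle and the section of the associated $W$-bundle one needs the actual boundedness arguments of \cite[\S3.2]{ciocan2014stable} and \cite[\S2.4.3]{cheong2015orbifold} (which use the fixed curve class $\beta$ on all of $\mathrm{Pic}([W/G])$, not merely the degree $d$ against $L_\theta$). If you cite those results at that point, your argument is complete; as written, the sentence ``this forms a bounded family'' is doing more work than it admits. Also, your closing remark that the semistability conditions ``only make boundedness easier'' understates their role: without them the prestable stack is genuinely only locally of finite type (arbitrarily long chains of degree-$0$ bridges), so the bound on the dual graph is essential, not a convenience.
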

\ifdefined\SHOWPROOFS
\begin{proof}
  By construction, the morphism $\tilde{\mathfrak M}_{g,n,d} \to \mathfrak
  M_{g,n,d}^{\mathrm{wt,ss}}$ is projective. 
  By \cite{ciocan2014stable, cheong2015orbifold}, 
  $\mathfrak{Qmap}_{g,n}(X,\beta)$ is an Artin stack locally of finite
  type with finite-type separated
  diagonal. Hence so is $\mathfrak {Qmap}^{\sim}_{g,n}(X,\beta)$.

  To show that $\mathfrak {Qmap}^{\sim}_{g,n}(X,\beta)$ is of finite type, it suffices
  to show that $\mathfrak{Qmap}^{\mathrm{ss}}_{g,n}(X,\beta)$ is bounded.
  For any object in $\mathfrak{Qmap}^{\mathrm{ss}}_{g,n}(X,\beta)$,
  there are no rational bridge of degree $0$ and no rational tail of degree $<d_0$.
  Hence the number of irreducible components is smaller than $N$, for some $N>0$.
  Let $U\subset Q^{0+}_{g,n+N}(X,\beta)$ be the open substack defined by the
  following conditions:
  \begin{enumerate}
  \item
    the last $N$ markings are nonorbifold markings;
  \item
    after forgetting the last $N$ markings (without stabilization), the
    resulting quasimap lies in $\mathfrak{Qmap}^{\mathrm{ss}}_{g,n}(X,\beta)$.
  \end{enumerate}
  Then forgetting the last $N$ markings defines a surjection $U\to
  \mathfrak{Qmap}^{\mathrm{ss}}_{g,n}(X,\beta)$.
  By \cite[\S2.4.3]{cheong2015orbifold} (c.f.\ \cite[\S3.2]{ciocan2014stable}),
  $Q^{0+}_{g,n+N}(X,\beta)$ is of finite type. Hence $U$ is of finite type and thus
  $\mathfrak{Qmap}^{\mathrm{ss}}_{g,n}(X,\beta)$ is also of finite type. This
  completes the proof.
\end{proof}
\fi

\begin{definition}
  An $S$-family of $\epsilon_0$-semistable quasimaps with entangled tails is
  $\epsilon_+$-stable if the underlying family of quasimaps is
  $\epsilon_+$-stable. In other words, it is $\epsilon_+$-stable if there is no
  length-$d_0$ base point.
\end{definition}

We denote by $\tilde Q^{\epsilon_+}_{g,n}(X,\beta)$ the moduli of
genus-$g$, $n$-marked
$\epsilon_+$-stable quasimaps to $X$ with entangled tails of curve class $\beta$.
As in Definition~\ref{def:semistable-quasimaps-entangled-tails}, 
we have a natural isomorphism
\[
  \tilde Q^{\epsilon_+}_{g,n}(X,\beta) \cong Q^{\epsilon_+}_{g,n}(X,\beta)
  \underset{\mathfrak M_{g,n,d}^{\mathrm{wt,ss}}}{\times} \tilde{\mathfrak
    M}_{g,n,d}.
\]
Since $Q^{\epsilon_+}_{g,n}(X,\beta)$ is a proper Deligne--Mumford stack, we obtain
\begin{lemma}
  The stack $\tilde{Q}^{\epsilon_+}_{g,n}(X,\beta)$ is a proper Deligne--Mumford stack.
\end{lemma}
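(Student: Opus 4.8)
The plan is to deduce the lemma from the Cartesian description
\[
  \tilde Q^{\epsilon_+}_{g,n}(X,\beta) \cong Q^{\epsilon_+}_{g,n}(X,\beta)
  \underset{\mathfrak M_{g,n,d}^{\mathrm{wt,ss}}}{\times} \tilde{\mathfrak M}_{g,n,d}
\]
recorded just above, using two facts already in hand: $Q^{\epsilon_+}_{g,n}(X,\beta)$ is a proper Deligne--Mumford stack (by \cite{ciocan2014stable, cheong2015orbifold}), and the forgetful morphism $\tilde{\mathfrak M}_{g,n,d} \to \mathfrak M_{g,n,d}^{\mathrm{wt,ss}}$ is projective. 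The second fact holds by construction: $\tilde{\mathfrak M}_{g,n,d} = \mathfrak U_0$ is obtained from $\mathfrak U_m = \mathfrak M_{g,n,d}^{\mathrm{wt,ss}}$ by a finite sequence of blowups $\mathfrak U_{i-1}\to \mathfrak U_i$ along the smooth closed substacks $\mathfrak Z_{(i)}$ (Section~\ref{sec:the-entangled-tails}), each of which is a projective morphism of noetherian algebraic stacks, and a composition of projective morphisms is projective.

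First I would base change: projectivity is preserved under arbitrary base change, so pulling $\tilde{\mathfrak M}_{g,n,d}\to \mathfrak M_{g,n,d}^{\mathrm{wt,ss}}$ back along the classifying morphism $Q^{\epsilon_+}_{g,n}(X,\beta)\to \mathfrak M_{g,n,d}^{\mathrm{wt,ss}}$ exhibits the projection
\[
  p:\tilde Q^{\epsilon_+}_{g,n}(X,\beta) \longrightarrow Q^{\epsilon_+}_{g,n}(X,\beta)
\]
as a projective, hence representable and proper, morphism. Since $p$ is representable and its target is Deligne--Mumford, the source is Deligne--Mumford too: the diagonal of $\tilde Q^{\epsilon_+}_{g,n}(X,\beta)$ factors as the diagonal of $p$ (a monomorphism, so unramified) followed by the pullback of the unramified diagonal of $Q^{\epsilon_+}_{g,n}(X,\beta)$, hence is itself unramified. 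Finally, after the standing reduction $W\git_0 G = \operatorname{Spec}\mathbb C$, composing $p$ with the proper structure morphism $Q^{\epsilon_+}_{g,n}(X,\beta)\to \operatorname{Spec}\mathbb C$ shows that $\tilde Q^{\epsilon_+}_{g,n}(X,\beta)$ is proper over $\mathbb C$.

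I do not anticipate a genuine obstacle: all of the real content has already been absorbed into the construction of $\tilde{\mathfrak M}_{g,n,d}$ — in particular the smoothness of the blowup centers $\mathfrak Z_{(i)}$, which is what makes the blowups projective — and into the properness of $Q^{\epsilon_+}_{g,n}(X,\beta)$; what remains is the standard bookkeeping with projectivity, representability and properness under base change and composition for noetherian algebraic stacks. The one point I would flag for the reader is that the classifying morphism $Q^{\epsilon_+}_{g,n}(X,\beta)\to \mathfrak M_{g,n,d}^{\mathrm{wt,ss}}$ does land in the semistable locus, i.e.\ that an $\epsilon_+$-stable quasimap has no degree-$0$ rational bridge and no rational tail of degree $<d_0$; this is immediate from the positivity clause in the definition of $\epsilon_+$-stability together with $\epsilon_+>\epsilon_0=1/d_0$.
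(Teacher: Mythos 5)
Your proposal is correct and follows exactly the route the paper takes: the lemma is deduced from the fiber-product description $\tilde Q^{\epsilon_+}_{g,n}(X,\beta)\cong Q^{\epsilon_+}_{g,n}(X,\beta)\times_{\mathfrak M^{\mathrm{wt,ss}}_{g,n,d}}\tilde{\mathfrak M}_{g,n,d}$, the properness of $Q^{\epsilon_+}_{g,n}(X,\beta)$, and the projectivity of the blowup map $\tilde{\mathfrak M}_{g,n,d}\to\mathfrak M^{\mathrm{wt,ss}}_{g,n,d}$. You have simply made explicit the standard base-change and diagonal bookkeeping (and the check that $\epsilon_+$-stable quasimaps land in the semistable locus) that the paper leaves implicit.
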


Recall from \cite{ciocan2014stable, cheong2015orbifold} that
$Q^{\epsilon_+}_{g,n}(X,\beta)$ has a relative perfect obstruction theory
relative to the moduli $\mathfrak M_{g,n}$ of twisted curves. Since the morphism
$\mathfrak M_{g,n,d}^{\mathrm{wt,ss}} \to \mathfrak M_{g,n,d}$ forgetting the
degree assignment is \'etale, the construction also produces a relative
perfect obstruction theory relative to $\mathfrak M_{g,n,d}^{\mathrm{wt,ss}}$.
The induced virtual fundamental classes are the same, by
\cite[Theorem~5.0.1]{costello2006higher}(c.f. \cite{hu2010genus}).
We briefly recall the construction. Let $\pi: \mathcal C \to
Q^{\epsilon_+}_{g,n}(X,\beta)$ be the universal curve and $u: \mathcal C \to
[W/G]$ be the universal quasimap. Then we have a relative perfect obstruction
theory
\begin{equation}
  \label{eq:pot-Q}
  (R\pi_*u^*\mathbb T_{[W/G]})^{\vee}
  \longrightarrow
  \mathbb L_{Q^{\epsilon_+}_{g,n}(X,\beta)/ \mathfrak M_{g,n,d}^{\mathrm{wt,ss}}},
\end{equation}
inducing the virtual fundamental class $[Q^{\epsilon_+}_{g,n}(X,\beta)
]^{\mathrm{vir}} \in A_*(Q^{\epsilon_+}_{g,n}(X,\beta) )$.

Similarly, let $\tilde\pi:\tilde{\mathcal C} \to \tilde
Q^{\epsilon_+}_{g,n}(X,\beta)$ be the universal curve and $\tilde u:
\tilde{\mathcal C} \to [W/G]$ be the universal quasimap.
We have a relative perfect obstruction theory
\[
  (R\tilde\pi_*\tilde u^*\mathbb T_{[W/G]})^{\vee}
  \longrightarrow
  \mathbb L_{\tilde Q^{\epsilon_+}_{g,n}(X,\beta)/\tilde{\mathfrak M}_{g,n,d}}.
\]
Indeed, $R\tilde\pi_*\tilde u^*\mathbb T_{[W/G]}$ is the pullback of
$R\pi_*u^*\mathbb T_{[W/G]}$ via the forgetful morphism
$\tilde Q^{\epsilon_+}_{g,n}(X,\beta) \to Q^{\epsilon_+}_{g,n}(X,\beta)$.
This defines the virtual fundamental class 
\[
  [ \tilde Q^{\epsilon_+}_{g,n}(X,\beta) ]^{\mathrm{vir}} \in A_*(
  \tilde Q^{\epsilon_+}_{g,n}(X,\beta) ).
\]
\begin{lemma}
  \label{lem:vir-cycle-comp-entangled-tails}
  Under the forgetful morphism $\tilde Q^{\epsilon_+}_{g,n}(X,\beta)\to
  Q^{\epsilon_+}_{g,n}(X,\beta)$,
  the pushforward of $[ \tilde Q^{\epsilon_+}_{g,n}(X,\beta)]^{\mathrm{vir}}$ is
  equal to $[ Q^{\epsilon_+}_{g,n}(X,\beta)]^{\mathrm{vir}}$.
\end{lemma}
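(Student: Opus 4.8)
The plan is to identify the relative perfect obstruction theory of $\tilde Q:=\tilde Q^{\epsilon_+}_{g,n}(X,\beta)$ with the pullback, along the forgetful morphism $p\colon\tilde Q\to Q$, of the one on $Q:=Q^{\epsilon_+}_{g,n}(X,\beta)$, and then to invoke the compatibility of virtual classes with proper, birational base change of the Artin stack of curves---the same mechanism by which \cite[Theorem~5.0.1]{costello2006higher} was applied above to the \'etale morphism $\mathfrak M^{\mathrm{wt,ss}}_{g,n,d}\to\mathfrak M_{g,n,d}$, now applied to the (non-\'etale) forgetful morphism $b\colon\tilde{\mathfrak M}_{g,n,d}\to\mathfrak M^{\mathrm{wt,ss}}_{g,n,d}$.

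First I would record the Cartesian picture. Recall that $\tilde Q\cong Q\times_{\mathfrak M^{\mathrm{wt,ss}}_{g,n,d}}\tilde{\mathfrak M}_{g,n,d}$, so that $p$ is the base change of $b$ along $Q\to\mathfrak M^{\mathrm{wt,ss}}_{g,n,d}$. Hence $\mathbb L_{\tilde Q/\tilde{\mathfrak M}_{g,n,d}}=p^{*}\mathbb L_{Q/\mathfrak M^{\mathrm{wt,ss}}_{g,n,d}}$, and, as already observed above, $R\tilde\pi_{*}\tilde u^{*}\mathbb T_{[W/G]}=p^{*}R\pi_{*}u^{*}\mathbb T_{[W/G]}$. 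Therefore the relative perfect obstruction theory of $\tilde Q$ over $\tilde{\mathfrak M}_{g,n,d}$ is, as a morphism of complexes, the pullback along $p$ of \eqref{eq:pot-Q}; in particular the vector bundle stack it defines on $\tilde Q$ is the pullback of the corresponding stack on $Q$. This is exactly the compatibility needed in order to compare virtual pullbacks across the Cartesian square.

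Next I would check that $b$ is proper, representable, and birational of degree one. If $\lfloor d/d_{0}\rfloor=0$ there is nothing to prove, since then $\tilde{\mathfrak M}_{g,n,d}=\mathfrak M^{\mathrm{wt,ss}}_{g,n,d}$; so assume $\lfloor d/d_{0}\rfloor\geq 1$. By construction $b$ is a finite composition of blowups of smooth Artin stacks along smooth closed substacks, each of which sits inside the divisor $\mathfrak Z_{1}\subset\mathfrak M^{\mathrm{wt,ss}}_{g,n,d}$ or inside a proper transform of it. Hence $b$ is representable and projective, and it restricts to an isomorphism over the dense open complement of $\mathfrak Z_{1}$; in particular $b$ is birational, so that $b_{*}[\tilde{\mathfrak M}_{g,n,d}]=[\mathfrak M^{\mathrm{wt,ss}}_{g,n,d}]$ in the Chow group.

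Finally I would combine these. Let $\phi\colon Q\to\mathfrak M^{\mathrm{wt,ss}}_{g,n,d}$ and $\tilde\phi\colon\tilde Q\to\tilde{\mathfrak M}_{g,n,d}$ be the structure morphisms, and $\phi^{!},\tilde\phi^{!}$ the virtual pullbacks determined by \eqref{eq:pot-Q} and its pullback along $p$. By the very definition of the virtual fundamental class attached to a relative perfect obstruction theory, $[Q]^{\mathrm{vir}}=\phi^{!}[\mathfrak M^{\mathrm{wt,ss}}_{g,n,d}]$ and $[\tilde Q]^{\mathrm{vir}}=\tilde\phi^{!}[\tilde{\mathfrak M}_{g,n,d}]$. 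Since the two obstruction theories are compatible under the Cartesian square, virtual pullback commutes with proper pushforward along the base, that is $p_{*}\circ\tilde\phi^{!}=\phi^{!}\circ b_{*}$; combined with the previous paragraph this yields
\[
  p_{*}[\tilde Q]^{\mathrm{vir}}=p_{*}\tilde\phi^{!}[\tilde{\mathfrak M}_{g,n,d}]=\phi^{!}b_{*}[\tilde{\mathfrak M}_{g,n,d}]=\phi^{!}[\mathfrak M^{\mathrm{wt,ss}}_{g,n,d}]=[Q]^{\mathrm{vir}},
\]
as asserted. The point requiring the most care is the functoriality $p_{*}\circ\tilde\phi^{!}=\phi^{!}\circ b_{*}$ in the present situation, where the bases $\mathfrak M^{\mathrm{wt,ss}}_{g,n,d}$ and $\tilde{\mathfrak M}_{g,n,d}$ are smooth \emph{Artin}, not Deligne--Mumford, stacks: one must confirm that the virtual pullback formalism applies---it does, since the sources $Q,\tilde Q$ are Deligne--Mumford stacks carrying genuine relative perfect obstruction theories and the Chow groups of finite-type Artin stacks support the required refined Gysin maps---and, in particular, that one never needs the naive base-change identity for intrinsic normal cones, which can fail. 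Granting this, the displayed computation is purely formal.
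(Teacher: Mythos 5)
Your argument is correct and is essentially the paper's own proof: the paper simply cites \cite[Theorem~5.0.1]{costello2006higher}, whose content in this situation is exactly what you spell out (the obstruction theory on $\tilde Q^{\epsilon_+}_{g,n}(X,\beta)$ is the pullback of the one on $Q^{\epsilon_+}_{g,n}(X,\beta)$, the base map $\tilde{\mathfrak M}_{g,n,d}\to\mathfrak M^{\mathrm{wt,ss}}_{g,n,d}$ is proper, representable and of pure degree one since it is a composition of blowups, and virtual pullback commutes with proper pushforward along the base). No gap; your unwinding of the functoriality is the standard mechanism behind Costello's pushforward theorem.
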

\ifdefined\SHOWPROOFS
\begin{proof}
  This follows immediately from \cite[Theorem~5.0.1]{costello2006higher}.
\end{proof}
\fi

\subsection{Splitting off entangled tails}
Recall that for $k=1 ,\ldots, m$, the boundary divisors $\mathfrak
D_{k-1}\subset \tilde{\mathfrak M}_{g,n,d}$ is the closure of the locally closed
reduced substack where there are exactly $k$ entangled tails
(Definition~\ref{def:D_k-1}). We now study the Gysin pullback of the virtual
fundamental class to the preimage of $\mathfrak D_{k-1}$. More specifically,
recall that ${\tilde{\mathrm{gl}}_k}$ is the gluing morphism
\eqref{map:quotient-by-S}.
Define
$\tilde Q^{\epsilon_+}_{g,n}(X,\beta)|_{{\tilde{\mathrm{gl}}_k^*}\mathfrak
  D_{k-1}}$
by the following fibered diagram
\begin{equation}
  \label{diag:splitting-tails}
  \begin{tikzcd}
    \tilde Q^{\epsilon_+}_{g,n}(X,\beta)|_{{\tilde{\mathrm{gl}}_k^*}\mathfrak
      D_{k-1}} \arrow[d]\arrow[r]
    & \tilde Q^{\epsilon_+}_{g,n}(X,\beta) \arrow[d]\arrow[r]&
    Q^{\epsilon_+}_{g,n}(X,\beta)\arrow[d]\\
    {\tilde{\mathrm{gl}}_k^*}\mathfrak D_{k-1}\arrow[r,"{\iota_{\mathfrak D}}"]
    & \arrow[r] \tilde{\mathfrak M}_{g,n,d}
    & {\mathfrak M}^{\mathrm{wt,ss}}_{g,n,d}
  \end{tikzcd}.
\end{equation}
We define
\[
  [
  \tilde Q^{\epsilon_+}_{g,n}(X,\beta)|_{{\tilde{\mathrm{gl}}_k^*}\mathfrak
    D_{k-1}}
  ]^{\mathrm{vir}}
  := \iota_{\mathfrak D}^!([\tilde Q^{\epsilon_+}_{g,n}(X,\beta)]^{\mathrm{vir}}).
\]
The map ${\tilde{\mathrm{gl}}_k^*}\mathfrak D_{k-1} \to \mathfrak
M^{\mathrm{wt,ss}}_{g,n,d}$ factors as the composition of morphisms
(c.f. Lemma~\ref{lem:str-D})
\begin{equation}
  \label{eq:level-of-bdry}
  {\tilde{\mathrm{gl}}_k^*}\mathfrak D_{k-1}
  \longrightarrow
  \tilde{\mathfrak M}_{g,n+k,d-kd_0} \times^\prime \big(\mathfrak
  M_{0,1,d_0}^{\mathrm{wt,ss}}\big)^k
  \longrightarrow
  {\mathfrak M}^{\mathrm{wt,ss}}_{g,n+k,d-kd_0} \times^\prime
  \big(\mathfrak M_{0,1,d_0}^{\mathrm{wt,ss}}\big)^k
  \longrightarrow
  \mathfrak M^{\mathrm{wt,ss}}_{g,n,d}.
\end{equation}
Recall that $\times^\prime$ means the fiber product over $\mathbb N^{k}$
matching the orders of automorphism groups at the last $k$-markings.
We have the obvious fibered diagram
\[
  \begin{tikzcd}
    \coprod_{\vec\beta} Q^{\epsilon_+}_{g,n+k}(X,\beta_0)\times_{(I_{\mu}X)^k}
    \prod_{i=1}^k Q^{\epsilon_+}_{0,1}(X,\beta_i)\arrow[d]\arrow[r] & \arrow[d]
    Q^{\epsilon_+}_{g,n}(X,\beta) \\
    {\mathfrak M}^{\mathrm{wt,ss}}_{g,n+k,d-kd_0} \times^\prime
    \big(\mathfrak M_{0,1,d_0}^{\mathrm{wt,ss}}\big)^k
    \arrow[r] & {\mathfrak M}^{\mathrm{wt,ss}}_{g,n,d}
  \end{tikzcd} ,
\]
where $\vec\beta = (\beta_0,\beta_1 ,\ldots, \beta_k)$ runs through all
the decomposition of effective curve classes
\[
  \beta = \beta_0 + \cdots + \beta_k
\]
such that $\deg(\beta_i) = d_0$ for each $i = 1 ,\ldots, k$.

Taking fiber products along the arrows in \eqref{eq:level-of-bdry} gives us the
fibered diagram
\begin{equation}
  \label{eq:splitting-nodes}
  \begin{tikzcd}
    \tilde Q^{\epsilon_+}_{g,n}(X,\beta)|_{{\tilde{\mathrm{gl}}_k^*}\mathfrak D_{k-1}}
    \arrow[d] \arrow[r,"{p}"] & \coprod_{\vec \beta} \tilde
    Q^{\epsilon_+}_{g,n+k}(X,\beta_0)\times_{(I_{\mu}X)^k} \prod_{i=1}^k
    Q^{\epsilon_+}_{0,1}(X,\beta_i)
    \arrow[d] \\
    {\tilde{\mathrm{gl}}_k^*}\mathfrak D_{k-1} \arrow[r]
    & \tilde{\mathfrak M}_{g,n+k,d-kd_0} \times^\prime
    \big(\mathfrak M_{0,1,d_0}^{\mathrm{wt,ss}}\big)^k
  \end{tikzcd}.
\end{equation}
By Lemma~\ref{lem:str-D}, the map $p$ above
is the inflated projective bundle $\tilde{\mathbb P}(\Theta_1\oplus \cdots \oplus
\Theta_k)$, where $\Theta_i$ is line bundle formed by the tensor product of the
relative tangent space at the $(n+i)$-th marking of
$\tilde{Q}^{\epsilon_+}_{g,n+k}(X,\beta_0)$, and the relative tangent space at the
unique marking of $Q^{\epsilon_+}_{0,1}(X,\beta_i)$. In particular $p$ is flat.
\begin{lemma}
  \label{lem:int-with-D-k-1}
  \begin{equation}
  \label{eq:int-with-D-k-1}
  \textstyle
    [\tilde Q^{\epsilon_+}_{g,n}(X,\beta)|_{{\tilde{\mathrm{gl}}_k^*}\mathfrak
      D_{k-1}}]^{\mathrm{vir}}
     = p^*\big( \sum_{\vec\beta} \Delta_{ (I_{\mu} X)^k}^!
     [\tilde{Q}^{\epsilon_+}_{g,n+k}(X,\beta_0)]^{\mathrm{vir}}\boxtimes
     \prod_{i=1}^k[{Q}^{\epsilon_+}_{0,1}(X,\beta_i)]^{\mathrm{vir}}\big),
  \end{equation}
  where $\Delta_{ (I_{\mu} X)^k} : { (I_{\mu} X)^k} \to { (I_{\mu} X)^k} \times
  { (I_{\mu} X)^k}$ is the diagonal morphism.
\end{lemma}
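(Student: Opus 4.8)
The plan is to compute $\iota_{\mathfrak D}^!$ by first realizing the left-hand side of \eqref{eq:int-with-D-k-1} as the virtual fundamental class of $\tilde Q^{\epsilon_+}_{g,n}(X,\beta)|_{\tilde{\mathrm{gl}}_k^*\mathfrak D_{k-1}}$ for an explicitly described obstruction theory, and then to identify that class using the flatness of $p$ together with the splitting axiom along the $k$ separating nodes. To begin, I would observe that $\iota_{\mathfrak D}$ is an lci morphism of virtual codimension $1$: by Lemma~\ref{lem:str-D} the stack $\mathfrak D_{k-1}$ is a smooth divisor in the smooth stack $\tilde{\mathfrak M}_{g,n,d}$, and $\tilde{\mathrm{gl}}_k^*\mathfrak D_{k-1}\to\mathfrak D_{k-1}$ is \'etale, being a base change of $\tilde{\mathrm{gl}}_k$. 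Hence $\iota_{\mathfrak D}^!$ is a refined Gysin morphism, and applying the functoriality of virtual pullback to the fibered square \eqref{diag:splitting-tails} and the relative perfect obstruction theory \eqref{eq:pot-Q} (pulled back to $\tilde Q^{\epsilon_+}_{g,n}(X,\beta)$), the class $\iota_{\mathfrak D}^![\tilde Q^{\epsilon_+}_{g,n}(X,\beta)]^{\mathrm{vir}}$ is the virtual fundamental class of $\tilde Q^{\epsilon_+}_{g,n}(X,\beta)|_{\tilde{\mathrm{gl}}_k^*\mathfrak D_{k-1}}$ for the perfect obstruction theory $(R\tilde\pi_*\tilde u^*\mathbb T_{[W/G]})^\vee\to\mathbb L$ taken relative to $\tilde{\mathrm{gl}}_k^*\mathfrak D_{k-1}$, where $\tilde\pi$ and $\tilde u$ now denote the universal curve and quasimap over this base change.

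Next, set $Y := \coprod_{\vec\beta}\tilde Q^{\epsilon_+}_{g,n+k}(X,\beta_0)\times_{(I_\mu X)^k}\prod_{i=1}^k Q^{\epsilon_+}_{0,1}(X,\beta_i)$. By \eqref{eq:splitting-nodes} and Lemma~\ref{lem:str-D}, the map $p$ exhibits $\tilde Q^{\epsilon_+}_{g,n}(X,\beta)|_{\tilde{\mathrm{gl}}_k^*\mathfrak D_{k-1}}$ as the inflated projective bundle $\tilde{\mathbb P}(\Theta_1,\ldots,\Theta_k)$ over $Y$, and its universal curve and universal quasimap are the $p$-pullbacks of the ``glued'' universal curve and quasimap on $Y$ obtained by attaching the $k$ degree-$d_0$ tails at the last $k$ markings (the fibre directions of $p$ record only the entanglement, not the curve). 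Since the composition $\tilde{\mathrm{gl}}_k^*\mathfrak D_{k-1}\to\tilde{\mathfrak M}_{g,n+k,d-kd_0}\times'(\mathfrak M^{\mathrm{wt,ss}}_{0,1,d_0})^k$ is smooth, the relative cotangent complex and the relative obstruction complex over $\tilde{\mathrm{gl}}_k^*\mathfrak D_{k-1}$ are both $p$-pullbacks of the corresponding objects for $Y$ over $\tilde{\mathfrak M}_{g,n+k,d-kd_0}\times'(\mathfrak M^{\mathrm{wt,ss}}_{0,1,d_0})^k$. Therefore, by the compatibility of virtual classes with flat pullback, the left-hand side of \eqref{eq:int-with-D-k-1} equals $p^*[Y]^{\mathrm{vir}}$, where $[Y]^{\mathrm{vir}}$ is defined by the glued obstruction theory relative to $\tilde{\mathfrak M}_{g,n+k,d-kd_0}\times'(\mathfrak M^{\mathrm{wt,ss}}_{0,1,d_0})^k$.

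It then remains to show $[Y]^{\mathrm{vir}} = \sum_{\vec\beta}\Delta_{(I_\mu X)^k}^!\bigl([\tilde Q^{\epsilon_+}_{g,n+k}(X,\beta_0)]^{\mathrm{vir}}\boxtimes\prod_{i=1}^k[Q^{\epsilon_+}_{0,1}(X,\beta_i)]^{\mathrm{vir}}\bigr)$, which is the splitting axiom for quasimap virtual classes along separating nodes. I would prove this by passing first to the ordinary product $\coprod_{\vec\beta}\tilde Q^{\epsilon_+}_{g,n+k}(X,\beta_0)\times\prod_i Q^{\epsilon_+}_{0,1}(X,\beta_i)$, whose universal curve is a disjoint union and whose relative obstruction theory is the exterior product of those of the factors, so its virtual class is the exterior product; then cutting down to $Y$ along the diagonal $\Delta_{(I_\mu X)^k}$ of the smooth Deligne--Mumford stack $(I_\mu X)^k$, which is a regular embedding. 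The normalization sequence $0\to\mathcal O_{\mathcal C}\to\mathcal O_{\mathcal C_0}\oplus\bigoplus_i\mathcal O_{\mathcal E_i}\to\bigoplus_i\mathcal O_{\Sigma_i}\to 0$ for the glued curve, tensored with $\tilde u^*\mathbb T_{[W/G]}$ and pushed to $Y$, places the glued obstruction complex in a distinguished triangle with the exterior-product obstruction complex and a term supported on the nodes whose $K$-class is $\bigoplus_i\mathrm{ev}_{n+i}^*(\,\cdot\,)$; this is precisely the obstruction-theoretic incarnation of $\Delta_{(I_\mu X)^k}^!$. Summing over the decompositions $\vec\beta = (\beta_0,\beta_1,\ldots,\beta_k)$ of $\beta$ with $\deg\beta_i = d_0$ and combining with the previous paragraph yields \eqref{eq:int-with-D-k-1}.

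The main obstacle is the last step in the orbifold setting. One must use the correct form of the normalization sequence and of the node-evaluation term at the gerbe nodes $\Sigma_i$, which are banded by $\mu_{r_i}$ with orders matched as in \eqref{eq:matching-order}: there the gluing identifies the evaluation $\mathrm{ev}_{n+i}$ of the genus-$g$ component with $\iota\circ\mathrm{ev}_1$ of the $i$-th tail, so the relevant locus is the graph of $\iota$ on each factor of $(I_\mu X)^k$, and one checks that the associated Gysin pullback agrees with $\Delta_{(I_\mu X)^k}^!$ under the chosen identifications because $\iota$ is an involution. One also has to confirm that the deformation theory of twisted curves is compatible with gluing over the divisor $\mathfrak D_{k-1}$; the node-smoothing parameters being ``used up'' along $\mathfrak D_{k-1}$ is exactly consistent with $\mathfrak D_{k-1}$ having codimension one. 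All of these points are handled as in Gromov--Witten theory and in \cite{ciocan2014stable, cheong2015orbifold}.
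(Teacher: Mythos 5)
Your proposal is correct and follows essentially the same route as the paper: both arguments identify each side of \eqref{eq:int-with-D-k-1} with the virtual class defined by the pullback of the relative obstruction theory $(R\pi_*u^*\mathbb T_{[W/G]})^\vee$ along the tower \eqref{eq:level-of-bdry}, using base-change compatibility of relative perfect obstruction theories, the flatness of the inflated projective bundle $p$, and the AGV-style splitting-node axiom at the $k$ gerbe nodes. The only difference is presentational — you spell out the normalization-sequence argument and the $\iota$-twisted evaluation maps that the paper delegates to \cite[Proposition~5.3.1]{abramovich2008gromov} — so no further comparison is needed.
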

\ifdefined\SHOWPROOFS
\begin{proof}
  We have introduced the relative perfect obstruction theory on
  $Q^{\epsilon_+}_{g,n}(X,\beta)$, relative to ${\mathfrak
    M}^{\mathrm{wt,ss}}_{g,n,d}$. We denote it by $\mathbb E$. Consider the pullback
  of $Q^{\epsilon_+}_{g,n}(X,\beta) \to {\mathfrak M}^{\mathrm{wt,ss}}_{g,n,d}$
  along the sequence of maps \eqref{eq:level-of-bdry}. The pullback of $\mathbb E$
  to each space defines a relative perfect obstruction theory. By a standard
  argument, as in the proof of the splitting node axiom in Gromov--Witten theory,
  \[
    \textstyle
    \sum_{\vec\beta} \Delta_{(I_{\mu}X)^k}^!
    [\tilde{Q}^{\epsilon_+}_{g,n+k}(X,\beta_0)]^{\mathrm{vir}}\boxtimes \prod_{i=1}^k[
    {Q}^{\epsilon_+}_{0,1}(X,\beta_i)]^{\mathrm{vir}}
  \]
  is equal to the virtual cycle defined by the pullback of $\mathbb E$,
  relative to
  $\tilde{\mathfrak M}_{g,n+k,d-kd_0} \times^\prime (\mathfrak
  M_{0,1,d_0}^{\mathrm{wt,ss}})^k$.
  The proof is parallel to the proof of
  \cite[Proposition~5.3.1]{abramovich2008gromov}
  \footnote{We are using $I_{\mu}X$ instead of $\overline{I}_{\mu}X$. Hence we
  need to modify \cite[Lemma~3.6.1]{abramovich2008gromov}. Using the notation
  there, if we have a section $S\to \mathcal G$, then we obtain $\tilde{f}:S \to
  \mathcal I_{\mu}(\mathcal X)$. Since $\varpi^*(T_{\overline{\mathcal I}_\mu
    (\mathcal X)}) \cong T_{\mathcal I_{\mu} (\mathcal X)}$, that lemma
  implies $\pi_*(F^*(T_{\mathcal X})) \cong \tilde{f}^*(T_{\mathcal
    I_{\mu}(\mathcal X)})$.}. See also 
  \cite[Proposition~7.2]{behrend97_intrin_normal_cone}, and
  \cite{behrend1997gromov, li1998virtual}.

  Since the map ${\tilde{\mathrm{gl}}_k^*}\mathfrak D_{k-1} \to  \tilde{\mathfrak
    M}_{g,n+k,d-kd_0} \times^\prime (\mathfrak M_{0,1,d_0}^{\mathrm{wt,ss}})^k$
  is flat, the discussion above shows that the right hand side
  of \eqref{eq:int-with-D-k-1} is the virtual cycle defined by the pullback of
  $\mathbb E$, as a relative perfect obstruction theory relative to
  ${\tilde{\mathrm{gl}}_k^*}\mathfrak D_{k-1}$
  \cite[Proposition~7.2]{behrend1997gromov}. Similarly, the left hand side of
  \eqref{eq:int-with-D-k-1} is also defined by the pullback of $\mathbb
  E$. Hence they must be equal.
\end{proof}
\fi
\section{The master space and its virtual fundamental class}
\label{sec:master-space}
We now define a master space relating
\begin{enumerate}
\item $\epsilon_+$-stable quasimaps with entangled tails;
\item $\epsilon_-$-stable quasimaps.
\end{enumerate}
The objects parameterized by it will be called $\epsilon_0$-stable quasimaps
with \textit{calibrated} tails.

\subsection{The definition of the master space}
Fix $g,n,d,d_0 = 1/\epsilon_0$ as before. Note that we have assumed that
$2g-2+n+\epsilon_0 d\geq 0$, and $\epsilon_0 d>2$ when $g=n=0$.

Let $\mathbb M_{\tilde {\mathfrak M}_{g,n,d}}$ be the calibration bundle of
$\tilde {\mathfrak M}_{g,n,d}$ (Definition~\ref{def:calibration-bundle}). Recall
from Definition~\ref{def:moduli-with-calibrated-tails} the moduli $M\tilde
{\mathfrak M}_{g,n,d}$ of curves with calibrated tails.

\begin{definition}
  \label{def:ss-quasimap-with-calibrated-tails}
  The moduli of genus-$g$, $n$-marked
  $\epsilon_0$-semistable quasimaps with calibrated tails to $X$
  of
  curve class $\beta$ is defined to be
  \[
    M \mathfrak {Qmap}^{\sim}_{g,n}(X,\beta): = \mathfrak
    {Qmap}^{\sim}_{g,n}(X,\beta) \times_{\mathfrak {\tilde M}_{g,n,d}}
    M\mathfrak {\tilde M}_{g,n,d}.
  \]
\end{definition}

By Lemma~\ref{lem:artin-stack}, this is an Artin stack of finite type with
finite-type separated diagonal.
Let $S$ be any scheme. We write
\[
  (\pi:\mathcal C\to S, \mathbf x ,e,u,N,v_1,v_2) \in
  M \mathfrak {Qmap}^{\sim}_{g,n}(X,\beta)(S)
\]
where
$(\pi:\mathcal C\to S, \mathbf x ,e,u)\in \mathfrak
{Qmap}^{\sim}_{g,n}(X,\beta)$
and
$(\pi:\mathcal C\to S, \mathbf x ,e,N,v_1,v_2) \in M\tilde {\mathfrak
    M}_{g,n,d}(S)$.

We now come to the stability condition.
Let $(C,\mathbf x,e,u)$ be a geometric point of $\mathfrak
{Qmap}^{\sim}_{g,n}(X,\beta)$.
\begin{definition}
  \label{def:constant-tail}
  A degree-$d_0$ rational tail $E\subset C$ is called a \textit{constant tail}
  if $E$ contains a base point of length $d_0$.
\end{definition}
In other words, $E$ is a constant tail if and only if
the rational map $E\dasharrow \underline{X}$ induced by $u$ is a constant map.
\begin{definition}
  \label{def:mspace-stability}
  An $S$-family of $\epsilon_0$-semistable quasimaps with calibrated tails
  \[
    (\pi:\mathcal C\to S, \mathbf x ,e,u,N,v_1,v_2)
  \]
  is $\epsilon_0$-stable if over every geometric point $s$ of $S$,
  \begin{enumerate}
  \item
    any constant tail is an entangled tail;
  \item
    if the geometric fiber $\mathcal C_s$ of $\mathcal C$ has rational tails of
    degree $d_0$, then those rational tails contain all the length-$d_0$ base
    points;
  \item
    if $v_1(s)=0$, then  $(\pi:\mathcal C\to S,\mathbf x, u)|_{s}$ is an
    $\epsilon_+$-stable quasimap;
  \item
    if $v_2(s)=0$, then $(\pi:\mathcal C\to S,\mathbf x,u)|_{s}$ is an
    $\epsilon_-$-stable quasimap.
  \end{enumerate}
\end{definition}
Note that constant tails are allowed neither in $Q^{\epsilon_+}_{g,n}(X,\beta)$
nor in ${Q}^{\epsilon_-}_{g,n}(X,\beta)$. Hence they are allowed only when
$v_1\neq 0$ and $v_2\neq 0$.

We denote by $MQ^{\epsilon_0}_{g,n}(X,\beta)$ the category of genus-$g$,
$n$-marked, $\epsilon_0$-stable quasimaps with calibrated tails to $X$ of curve
class $\beta$. Here $M$ means ``{$M$aster} space''.
\begin{lemma}
  \label{lem:open-condition}
  The stability condition above is an open condition.
\end{lemma}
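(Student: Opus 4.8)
The plan is to show that $MQ^{\epsilon_0}_{g,n}(X,\beta)$ is an open substack of $M\mathfrak{Qmap}^{\sim}_{g,n}(X,\beta)$ by verifying the four conditions of Definition~\ref{def:mspace-stability} in turn. Since $M\mathfrak{Qmap}^{\sim}_{g,n}(X,\beta)$ is of finite type over $\mathbb C$ (Lemma~\ref{lem:artin-stack}, Definition~\ref{def:ss-quasimap-with-calibrated-tails}), the locus cut out by each condition is constructible, so it is enough to check that each is stable under generization, which (working on a smooth atlas) amounts to the following: for every complete discrete valuation ring $R$ with residue field $\mathbb C$ and every $R$-point of $M\mathfrak{Qmap}^{\sim}_{g,n}(X,\beta)$ whose closed fiber is $\epsilon_0$-stable, the generic fiber is also $\epsilon_0$-stable. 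Throughout I would use two elementary facts about one-parameter degenerations inside $\mathfrak M^{\mathrm{wt,ss}}_{g,n,d}$: (i) a degree-$d_0$ rational tail of the generic fiber persists, as a degree-$d_0$ rational tail, to the closed fiber, since nodes persist under specialization and such a tail cannot break into a chain without creating a rational tail of degree $<d_0$; and (ii) the fibrewise length of the base locus $u^{-1}[W^{\mathrm{us}}/G]$ at a point is upper semicontinuous and is bounded by $d_0$ on $\mathfrak{Qmap}^{\mathrm{ss}}_{g,n}(X,\beta)$, so that a length-$d_0$ base point of the generic fiber specializes to a length-$d_0$ base point of the closed fiber.

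Conditions (3) and (4) would follow from the openness of $\epsilon_+$- and $\epsilon_-$-stability inside the stack of all quasimaps (\cite{ciocan2014stable, cheong2015orbifold}). Indeed $v_1,v_2$ are global sections of line bundles on the base, so $\{v_1=0\}$ and $\{v_2=0\}$ are closed; over $\operatorname{Spec}R$, if $v_1$ vanishes at the generic point it vanishes identically, hence at the closed point, so by $\epsilon_0$-stability of the closed fiber the closed-fiber quasimap is $\epsilon_+$-stable, and then the generic-fiber quasimap is $\epsilon_+$-stable by openness of $\epsilon_+$-stability; if instead $v_1$ is nonzero at the generic point, condition (3) is vacuous there. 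The argument for (4) is identical with $v_2$ and $\epsilon_-$.

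The heart of the matter is that conditions (1) and (2) must be treated together, the common input being the description of limiting entanglements in Lemma~\ref{lem:limit-of-entanglement-case1} and Lemma~\ref{lem:limit-of-entanglement-case2}. Suppose the generic fiber fails (1), i.e.\ has a constant tail $E$ that is not entangled. By (i), $E$ spreads out over $R$ to a family whose closed fiber $E_0$ is a degree-$d_0$ rational tail, and constancy of the induced map to $\underline X$ passes to the limit, so $E_0$ is a constant tail of the closed fiber; by $\epsilon_0$-stability of the closed fiber, $E_0$ is therefore entangled. But the generic fiber, having a degree-$d_0$ rational tail, has at least one entangled tail (Lemma~\ref{lem:entangled-tails-from-gluing} and the discussion preceding it), so Lemma~\ref{lem:limit-of-entanglement-case2} applies and shows that every entangled tail of the closed fiber is the specialization of an entangled tail of the generic fiber; since $E$ is not entangled, $E_0$ cannot be one of them — a contradiction. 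Now suppose the generic fiber fails (2): it has a degree-$d_0$ rational tail, and a length-$d_0$ base point $y$ lying on a component $C'$ which is not a degree-$d_0$ rational tail. By (i) and (ii) the closed fiber has a degree-$d_0$ rational tail and $y$ specializes to a length-$d_0$ base point $\bar y$, which by condition (2) for the $\epsilon_0$-stable closed fiber lies on a degree-$d_0$ rational tail $E_0$. Since $\bar y$ is disjoint from the nodes, $E_0$ is the component through $\bar y$ of the limit $\overline{C'}$ of $C'$; if $\deg C' = d_0$ then $\overline{C'}=E_0$ (no further component is permitted by semistability), forcing $C'$ itself to be a smooth rational curve with one special point, i.e.\ a degree-$d_0$ rational tail, contrary to assumption. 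Hence $\deg C' > d_0$, so $E_0$ is "born" inside $C'$ in the degeneration and is not the specialization of any degree-$d_0$ rational tail — in particular not of any entangled tail — of the generic fiber; by Lemma~\ref{lem:limit-of-entanglement-case2}, $E_0$ is not entangled in the closed fiber. On the other hand $E_0$ carries the length-$d_0$ base point $\bar y$, so it is a constant tail, and condition (1) for the $\epsilon_0$-stable closed fiber forces it to be entangled — again a contradiction. This would complete the verification. The main obstacle, and the reason (1) and (2) cannot be separated, is exactly this last step: a base point could in principle "escape" onto a degree-$d_0$ rational tail that is sprouted only in the limit, which would produce a constant-but-unentangled tail there; ruling this out is precisely what condition (1) of the limit achieves, and making it rigorous requires the structural description of entanglement limits established in Lemmas~\ref{lem:limit-of-entanglement-case1} and~\ref{lem:limit-of-entanglement-case2}.
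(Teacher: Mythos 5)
Your proposal follows essentially the same route as the paper's proof: constructibility plus the valuative criterion; (3) and (4) immediate from openness of $\epsilon_\pm$-stability; (1) handled via Lemma~\ref{lem:limit-of-entanglement-case2} (the entangled tails of the special fiber are among the limits of the entangled tails of the generic fiber); and (2) open only where (1) holds, the key point in both arguments being that a length-$d_0$ base point escaping onto a rational tail sprouted only in the limit produces a constant but non-entangled tail and therefore violates (1).

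One intermediate claim in your treatment of (2) is false, though harmlessly so. When $\deg C'=d_0$, semistability does \emph{not} force $\overline{C'}=E_0$: the limit may consist of $E_0$ together with degree-$0$ rational components having three or more special points, which are neither rational bridges nor rational tails and hence are permitted in $\mathfrak M^{\mathrm{wt,ss}}_{g,n,d}$. For instance a degree-$d_0$ rational bridge $C'$ carrying the base point can degenerate to $E_0\cup F$ with $F$ of degree $0$ and three special points, so this case does not yield the contradiction you assert. The conclusion you actually need — that $E_0$ is not the specialization of any degree-$d_0$ rational tail of the generic fiber, hence not entangled — holds in that case too and is reached more directly without the dichotomy on $\deg C'$: $E_0$ is a component of the limit of $C'$; distinct components of the generic fiber have closures sharing no common component of the special fiber (such a shared component would lie in the singular locus of the total family, which sits over the nodes); and each degree-$d_0$ rational tail of the generic fiber specializes to an irreducible degree-$d_0$ rational tail, namely one of the $E_i$. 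Hence $E_0\neq E_i$ for all $i$ regardless of $\deg C'$, and Lemma~\ref{lem:limit-of-entanglement-case2} finishes the argument as you intend.
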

\ifdefined\SHOWPROOFS
\begin{proof}
  It is clear that (3) and (4) are open conditions. It is easy to see
  that (1) and (2) are constructible conditions. Thus we can use the valuative
  criterion for openness.

  We first show that (1) is an open condition. Take any discrete valuation
  $\mathbb C$-algebra
  $R$ with residue field $\mathbb C$ and
  consider any semistable family
  \[
    (\pi:\mathcal C\to \operatorname{Spec} R, \mathbf x ,e,u,N,v_1,v_2).
  \]
  Up to finite base change, let $\mathcal E_1,\ldots,\mathcal E_\ell
  \to\operatorname{Spec}R$ be the degree-$d_0$ rational tails coming from the
  generic fiber, among which $\mathcal E_1,\ldots,\mathcal E_{k}$ are the
  entangled tails of the generic fiber ($k\leq \ell$). Let $E_i$ be the special
  fiber of $\mathcal E_i$ for $i=1,\ldots,\ell$. Suppose the generic fiber does
  not satisfy (1), say $\mathcal E_{k+1}$ is a constant but not entangled tail in
  the generic fiber. Then $E_{k+1}$ is a constant tail. However, by
  Lemma~\ref{lem:limit-of-entanglement-case2}, $E_{k+1}$ is not entangled. Thus
  (1) is violated by the special fiber. Hence the valuative criterion is satisfied
  and (1) is an open condition.

  We now come to (2). Although (2) itself is not an open condition in general,
  we will show that (2) is an open condition on the open subset where (1) holds
  true. In the same setting as above, suppose that the generic fiber does not
  satisfy (2), i.e.\ there is a length-$d_0$ base point in the generic fiber that
  does not lie on $\mathcal E_1,\ldots,\mathcal E_\ell$. The limit of that will be
  a length-$d_0$ base point in the special fiber that does not lie on
  $E_1,\ldots,E_\ell$. Then in the special fiber either (2) is violated or there
  is a new degree-$d_0$ rational tail $E^\prime$ that contains a length-$d_0$ base
  point. In the latter case, $E^\prime$ is not entangled by
  Lemma~\ref{lem:limit-of-entanglement-case2}. Thus (1) is violated. This prove
  that (2) is an open condition where (1) holds true. This completes the proof.
\end{proof}
\fi

Consider an $\epsilon_0$-stable quasimap with calibrated tails
\[
  \xi = (\pi:C \to S, \mathbf x ,e,u,N,v_1,v_2) \in M \mathfrak
  {Qmap}^{\sim}_{g,n}(X,\beta)(\mathbb C).
\]
Let $E\subset C$ be a degree-$d_0$ tail and $y\in E$ be the node (or marking if
$g=0,n=1$).
\begin{definition}
  \label{def:fixed-tails}
  We call $E$ a \textit{fixed tail} if $\mathrm{Aut}(E,y,u|_E)$ is infinite.
\end{definition}
\begin{lemma}
  \label{lem:fixed-tail}
  Suppose $E$ is a fixed tail, then $E$ is a constant tail. Moreover,
  $\mathrm{Aut}(E,y,u|_E)$  is $1$-dimensional, acting nontrivially on the
  tangent space $T_yE$.
\end{lemma}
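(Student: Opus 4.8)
\emph{Proof strategy.} The plan is to analyse $G := \mathrm{Aut}(E,y,u|_E)$ through its coarsening. Since $\mathfrak{Qmap}_{g,n}(X,\beta)$ has finite-type separated diagonal (Lemma~\ref{lem:artin-stack}), $G$ is a separated group scheme of finite type over $\mathbb{C}$, and coarsening gives a homomorphism $q\colon G \to \mathrm{Aut}(\underline{E},\underline{y})$; as $\underline{E}\cong\mathbb{P}^1$, the target is the Borel $\mathrm{Aut}(\mathbb{P}^1,\mathrm{pt})\cong\mathbb{G}_m\ltimes\mathbb{G}_a$. The first step is to check that $\ker q$ is finite. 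It sits in an exact sequence $1\to\mathrm{Aut}(u|_E)\to\ker q\to(\text{ghost automorphisms of }(E,y))$, the last group (automorphisms of the twisted curve over its coarse space) being finite; and $\mathrm{Aut}(u|_E)$, the $2$-automorphisms of $u|_E\colon E\to[W/G]$ covering $\mathrm{id}_E$, is finite because its identity component must act trivially at a general point of $E$ (there the image is a $\theta$-stable point of $X$, hence has finite stabilizer in $G$), so the corresponding automorphism of the underlying $G$-torsor is trivial on a dense open of the reduced curve $E$ and therefore trivial. I expect this finiteness of $\mathrm{Aut}(u|_E)$ to be the main technical point; everything after it is elementary geometry of $PGL_2$ and its Borel. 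In particular $G$ is affine algebraic and $\dim G=\dim q(G)$.

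Next, $q(G)$ preserves, inside $\mathrm{Aut}(\mathbb{P}^1,\underline{y})$, both the finite set $Z\subset\mathbb{P}^1$ which is the coarse image of the base locus $u^{-1}[W^{\mathrm{us}}/G]$ and the morphism $\phi\colon\mathbb{P}^1\to\underline{X}$ extending the rational map induced by $u|_E$ off $Z$ (both are intrinsic to the quasimap $u|_E$). If $\phi$ is non-constant it is quasi-finite onto its image, and the subgroup of $\mathrm{Aut}(\mathbb{P}^1)$ commuting with $\phi$ is finite: such an automorphism permutes each fiber of $\phi$, and a general fiber together with $\underline{y}$ pins it down up to finitely many possibilities. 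Then $\dim G=0$, contradicting the hypothesis that $E$ is a fixed tail; hence $\phi$ is constant, which by the characterization of constant tails recorded just after Definition~\ref{def:constant-tail} means $E$ is a constant tail. This is the first assertion.

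Finally, assume $\phi$ constant and $\dim G\geq1$. The identity $d_0=\deg(\text{map part})+\ell(\text{base locus})$ forces the base locus to have length $d_0\geq1$, so $Z\neq\emptyset$. Then $q(G)^\circ$ is a positive-dimensional connected subgroup of $\mathbb{G}_m\ltimes\mathbb{G}_a$ fixing $\underline{y}$ and preserving the non-empty finite set $Z\subset\mathbb{A}^1=\mathbb{P}^1\setminus\{\underline{y}\}$; since the only positive-dimensional connected subgroups of $\mathbb{G}_m\ltimes\mathbb{G}_a$ are the unipotent radical (which acts transitively on $\mathbb{A}^1$) and the maximal tori (each fixing exactly one further point of $\mathbb{A}^1$), one concludes that $q(G)^\circ$ is the one-dimensional torus fixing $\underline{y}$ and some $z_0$, and that $Z=\{z_0\}$. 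Hence $\dim G=1$, and $E$ carries a single base point of length $d_0$ at $z_0$, consistent with Definition~\ref{def:constant-tail}. This torus acts on $T_{\underline{y}}\mathbb{P}^1$ by a non-trivial character; since there is an automorphism-equivariant isomorphism $T_{\underline{y}}\mathbb{P}^1\cong(T_yE)^{\otimes r}$, with $r$ the band of the gerbe at $y$, and since $G^\circ$ surjects onto $q(G)^\circ$ with finite kernel, $G^\circ$ acts non-trivially on $T_yE$, completing the argument.
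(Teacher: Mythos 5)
Your proposal is correct, and it reaches the first assertion by a genuinely different route from the paper. The paper's own argument is much shorter at that step: since $\epsilon_+$-stable quasimaps have finite automorphism groups (the moduli $Q^{\epsilon_+}_{0,1}$ is Deligne--Mumford), a fixed tail $(E,y,u|_E)$ cannot be $\epsilon_+$-stable; because $E$ has degree $d_0$ and $1/\epsilon_+\in(d_0-1,d_0)$, the positivity condition is automatic and the only way stability can fail is through a (necessarily unique) base point of length $d_0$, which is exactly the definition of a constant tail. You instead extract constancy of the induced map $\phi\colon\mathbb P^1\to\underline X$ from the group theory of the Borel of $PGL_2$: a positive-dimensional subgroup of $\mathrm{Aut}(\mathbb P^1,\underline y)$ preserving $\phi$ forces $\phi$ to be constant, and preserving the nonempty finite base locus forces it to be the torus fixing $\underline y$ and a single point $z_0$. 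Both arguments rest on the same essential technical input — finiteness of the ``vertical'' automorphisms ($2$-automorphisms of $u|_E$ over $\mathrm{id}_E$ together with ghost automorphisms), which the paper cites from Ciocan-Fontanine--Kim and which you correctly re-derive from the finiteness of stabilizers on the $\theta$-stable locus — and the two proofs coincide for the second assertion: the automorphism group maps with finite kernel onto $\mathrm{Aut}(\mathbb P^1,\underline y,\underline z_0)\cong\mathbb C^*$, which acts with nonzero weight on $T_{\underline y}\mathbb P^1\cong(T_yE)^{\otimes r}$ and hence nontrivially on $T_yE$. What your route buys is self-containedness (it does not presuppose the DM property of the $\epsilon_+$-moduli) and, as a by-product of your step 3, an independent proof that the base locus is a single point of length $d_0$, so you do not even need the remark following Definition~\ref{def:constant-tail}; what it costs is the explicit classification of connected subgroups of $\mathbb G_m\ltimes\mathbb G_a$ and the quasi-finiteness argument for nonconstant $\phi$, which the paper's stability inequality renders unnecessary.
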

\ifdefined\SHOWPROOFS
\begin{proof}
  Suppose that $E$ is a fixed tail. Then $(E,y,u|_{E})$ cannot be an
  $\epsilon_+$-stable quasimap.
  Hence it must have a unique base point $z\in E$ of length $d_0$. Hence it is a
  constant tail. Moreover, the forgetful map $\mathrm{Aut}(E,y,u|_{E}) \to
  \mathrm{Aut}(E,y)$ factors through $\mathrm{Aut}(E,y,z)\cong \mathbb C^*$ and
  has a finite
  kernel (c.f.\ the proof of \cite[Proposition~7.1.5]{ciocan2014stable} and
  \cite[\textsection2.4.2]{cheong2015orbifold}). This
  proves the second statement.
\end{proof}
\fi
\begin{remark}
  Indeed, the automorphism group of $(E,y)$ as a pair of Deligne--Mumford stacks only
  acts on $[T_yE/\mathrm{Aut}(y)]$. The automorphism group $(E,y)$ as a curve
  with a trivialized-gerbe-marking does act on $T_yE$. The distinction is not
  important here, since we only care about whether the automorphism group is finite.
\end{remark}

We write
\[
  C = C^\prime\cup E_1\cup\cdots\cup E_k,
\]
where $E_1 ,\ldots, E_k$ are all the entangled tails, $C^\prime$ is the union
of the other irreducible components. Let $y_i = C^\prime\cap E_i$, $i=1 ,\ldots, k$.
\begin{lemma}
  \label{lem:auto-eta}
  The family of quasimaps with entangled tails
  \[
    \eta = (\pi:C\to \operatorname{Spec} \mathbf k, \mathbf x ,e,u)
  \]
  coming from $\xi$ has infinitely many automorphisms if and only if
  \begin{enumerate}
  \item there is at least one degree-$d_0$ tail, and
  \item each entangled tail is a fixed tail.
  \end{enumerate}
  Moreover,  when $\mathrm{Aut}(\eta)$ is infinite, let $\Gamma\subset
  \mathrm{Aut}(\eta)$ be the identity component, then $\Gamma\cong \mathbb C^*$
  acting
 on each $T_{y_i}E_{i}$ by the same nonzero weight for $i=1 ,\ldots, k$.
\end{lemma}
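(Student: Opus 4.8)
The plan is to reduce everything to understanding automorphisms of the pieces $C'$ and $E_1,\ldots,E_k$ and how the entanglement datum $e$ constrains how these glue. First I would unwind what an automorphism of $\eta = (\pi: C \to \operatorname{Spec}\mathbf{k}, \mathbf x, e, u)$ is: by the remark following Lemma~\ref{lem:artin-stack}, since $\tilde{\mathfrak M}_{g,n,d}\to \mathfrak M_{g,n}$ is representable, such an automorphism is the same as an automorphism $\phi$ of the pointed curve $(C,\mathbf x)$ that (a) is compatible with the quasimap $u$, i.e.\ $u\circ \phi \cong u$, and (b) fixes the entanglement $e$, i.e.\ the induced automorphism of the classifying map to $\tilde{\mathfrak M}_{g,n,d}$ is trivial. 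An automorphism of a nodal curve preserves the dual graph, hence preserves $C'$ and permutes the degree-$d_0$ rational tails; after possibly composing with an element of a finite group we may assume it fixes each $E_i$ and restricts to an automorphism of $C'$ and of each $(E_i, y_i)$. Since we only care whether $\mathrm{Aut}(\eta)$ is finite, passing to the identity component $\Gamma$ we may assume $\phi|_{C'}$ is an automorphism of $(C', \{y_i\}, u|_{C'})$ and $\phi|_{E_i} \in \mathrm{Aut}(E_i, y_i, u|_{E_i})$ for each $i$.

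Next I would handle the ``only if'' direction. If there are no degree-$d_0$ tails then $\tilde{\mathfrak M}_{g,n,d}\to \mathfrak M^{\mathrm{wt,ss}}_{g,n,d}$ is an isomorphism near $\eta$ and the curve is $\epsilon_+$-stable (no constant tails are possible since $v_1\ne 0$ would be needed, but here we are looking at the curve-level), so $\mathrm{Aut}$ is finite by properness of $Q^{\epsilon_+}_{g,n}(X,\beta)$ — actually more carefully, the argument is that a semistable weighted curve with no degree-$d_0$ tail and no degree-$<d_0$ tail and no degree-$0$ bridge has finite automorphisms once $2g-2+n+\epsilon_0 d>0$, which is exactly the numerical hypothesis. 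This gives (1). For (2), suppose some entangled tail $E_i$ is not a fixed tail; by Definition~\ref{def:fixed-tails} and Lemma~\ref{lem:fixed-tail}, $\mathrm{Aut}(E_i,y_i,u|_{E_i})$ is then finite, and the same argument as above applied to $C'$ (with its extra markings $y_1,\ldots,y_k$, which are degree-$\leq d_0$ marked points, and which still satisfies the relevant numerical bound because the markings $y_i$ count) shows $\mathrm{Aut}(C', \{y_i\}, u|_{C'})$ is finite; since $\Gamma$ injects into the product $\mathrm{Aut}(C',\ldots)\times \prod_i \mathrm{Aut}(E_i, y_i, u|_{E_i})$, it would be finite, a contradiction. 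Here one also needs that if $E_i$ is a fixed tail then by Lemma~\ref{lem:fixed-tail} it is a constant tail, and by $\epsilon_0$-stability condition (1) a constant tail is entangled — so the notions line up and there are no ``extra'' fixed tails outside $\{E_1,\ldots,E_k\}$.

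For the ``if'' direction and the final statement about the weights, suppose (1) and (2) hold. Each $\mathrm{Aut}(E_i, y_i, u|_{E_i})$ is $1$-dimensional, $\cong \mathbb C^*$, acting on $T_{y_i}E_i$ with a nonzero weight, say $w_i$, by Lemma~\ref{lem:fixed-tail} (normalizing so that the standard copy of $\mathbb C^*$ acts with weight $1$, but the point is there is a canonical generator up to sign/scale). An element of $\Gamma$ consists of compatible automorphisms of the pieces; the only constraint beyond compatibility with $u$ is fixing the entanglement $e$. This is where the set-theoretic description of entanglements from Section~\ref{sec:set-theoretic-entanglement}, together with Lemma~\ref{lem:normal-E_k-1}, enters: since all of $E_1,\ldots,E_k$ are entangled, the point of $F_C$ lies in the stratum where the first index set is $I_k = \{1,\ldots,k\}$, and Lemma~\ref{lem:normal-E_k-1} gives canonical isomorphisms $\Theta_1 \cong \cdots \cong \Theta_k$ over that locus; fixing $e$ therefore forces the action on $\Theta_i = T_{y_i}E_i \otimes T_{y_i}C'$ to be by the same scalar for all $i$. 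Since the action on $C'$ can be taken trivial (the automorphism is supported on the tails, up to finite ambiguity — this needs the remark that $\mathrm{Aut}(C',\{y_i\}, u|_{C'})$ is finite), the action on each $T_{y_i}C'$ is trivial, so the action on each $T_{y_i}E_i$ is by one common nonzero weight. Conversely, given $w\in \mathbb C^*$, the element acting on each $E_i$ by the corresponding automorphism (rescaling toward the base point at speed $w$) and trivially on $C'$ is a well-defined automorphism of $\eta$ fixing $e$ — this exhibits a $\mathbb C^*\hookrightarrow \mathrm{Aut}(\eta)$, proving infiniteness. I expect the main obstacle to be the bookkeeping in the last step: carefully checking that ``fix the entanglement $e$'' translates precisely into ``equal weights on the $\Theta_i$,'' which requires tracing through the blowup construction of $\tilde{\mathfrak M}_{g,n,d}$ and the identification in Lemma~\ref{lem:normal-E_k-1}, and checking that one can always arrange the $C'$-component of the automorphism to be trivial rather than merely finite-order (this uses that the $\Theta_i$-identification is canonical and compatible with the $\mathrm{Aut}(C')$-action).
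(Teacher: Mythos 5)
Your overall route is the paper's: decompose $C=C'\cup E_1\cup\cdots\cup E_k$, kill the $C'$-part of $\Gamma$ using stability, and control the tails through the requirement that the entanglement $e$ be fixed. But your ``only if'' argument for condition (2) has a genuine gap. You suppose one entangled tail $E_{i_0}$ is not a fixed tail, observe that $\mathrm{Aut}(E_{i_0},y_{i_0},u|_{E_{i_0}})$ and $\mathrm{Aut}(C',\ldots)$ are finite, and conclude that $\Gamma$ is finite because it injects into $\mathrm{Aut}(C',\ldots)\times\prod_i\mathrm{Aut}(E_i,y_i,u|_{E_i})$. That product is not finite when $k\geq 2$ and some of the other $E_j$ are fixed tails: those factors are one-dimensional, so the injection yields nothing. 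As written, your argument leaves open the possibility that $\Gamma$ acts trivially on $C'$ and on $E_{i_0}$ but nontrivially on another tail $E_j$.

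What rules this out is exactly the equal-character constraint that you invoke only later, in the ``if'' direction: since $e$ lies in the open stratum of the projective bundle $\mathbb P(\Theta_1\oplus\cdots\oplus\Theta_k)$ of Lemma~\ref{lem:structure-E_k-1}, any automorphism fixing $e$ must act on all the $T_{y_i}E_i$ by one and the same character of $\Gamma$ (the $T_{y_i}C'$ factors being trivial because $\Gamma$ is trivial on $C'$). Hence triviality on $T_{y_{i_0}}E_{i_0}$ --- forced because $\Gamma$ is connected and $\mathrm{Aut}(E_{i_0},y_{i_0},u|_{E_{i_0}})$ is finite --- propagates to every $T_{y_i}E_i$, and then Lemma~\ref{lem:fixed-tail} forces $\Gamma$ to act trivially on each $E_i$, hence to be trivial. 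Move that observation to the front (it is precisely the ``general observation'' with which the paper's proof opens) and both directions go through. Two smaller imprecisions: finiteness in the no-degree-$d_0$-tail case comes from the quasimap having no constant tails and hence being $\epsilon_\pm$-stable, not from the numerical inequality on the weighted curve alone, since $\mathfrak M^{\mathrm{wt,ss}}_{g,n,d}$ is only an Artin stack; and your closing worry about arranging the $C'$-component of the automorphism to be literally trivial rather than finite-order is resolved simply because a connected group mapping to a finite group maps trivially.
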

\begin{remark}
  \label{rmk:tails-comparison}
  Note that (2) is equivalent to that for any degree-$d_0$ tail
  $E\subset C$,
  \[
    \text{$E$ is a fixed tail} \iff
    \text{$E$ is a constant tail}  \iff
    \text{$E$ is an entangled tail}.
  \]
  While for a general $\epsilon_0$-stable $\xi$
  we only have
 the
  direction ``$\implies$''.
\end{remark}
\ifdefined\SHOWPROOFS
\begin{proof}[Proof of Lemma~\ref{lem:auto-eta}]
  We first make some general observation. By definition, $\Gamma$ is connected.
  Hence it takes each irreducible component of $C$ to itself. By the stability
  condition, $C^\prime$ contains no constant tail. Hence $(C^\prime, \mathbf x,y_1
  ,\ldots, y_k, u|_{C^\prime})$ is an $\epsilon_+$-stable quasimap, which has
  finitely many automorphisms. Hence $\Gamma$ acts trivially on $C^\prime$. In
  particular it acts trivially on each $T_{y_i}C^\prime$. Since $\tilde{\mathfrak
    M}_{g,n,d}\to \mathfrak M_{g,n,d}^{\mathrm{wt,ss}}$ is representable, the
  automorphisms of $\eta$ are the automorphisms of the quasimap $(C ,\mathbf x,u)$
  that fixes $e(\operatorname{Spec} \mathbb C)$, viewed as a point in the fiber of
  $\tilde{\mathfrak M}_{g,n,d}\to \mathfrak M_{g,n,d}^{\mathrm{wt,ss}}$. By
  Lemma~\ref{lem:structure-E_k-1}, this implies $\Gamma$ acts trivially on
  $\mathbb P\left( T_{y_1}E_1 \oplus \cdots \oplus T_{y_k}E_k\right)$. Hence
  $\Gamma$ acts on $T_{y_i}E_i$ by the same character of $\Gamma$, for $i=1
  ,\ldots, k$. By Lemma~\ref{lem:fixed-tail}, $E_i$ is a fixed tail if and only if
  $\Gamma$ acts nontrivially on $T_{y_i}E_i$.

  We now prove the ``only if'' part. If there is no rational tail of degree
  $d_0$, then $C=C^\prime$ and we have shown that $\mathrm{Aut}(\eta)$ is finite
  in this case. Now suppose that some entangled tail $E_{i_0}$ is not a fixed
  tail, then $\Gamma$ acts trivially on $T_{y_{i_0}}E_{i_0}$. Hence $\Gamma$
  acts trivially on each $T_{y_{i}}E_{i}$. Hence $\Gamma$ acts trivially on each
  $E_i$. Since $\Gamma$ acts on $C^\prime$ trivially, $\Gamma$ is trivial and
  $\mathrm{Aut}(\eta)$ is finite.

  We then prove the ``if'' part. Suppose all the $E_i$'s are fixed tails.
  Let $\mathbb C^*$ act on each $E_i$ such that the weights on $T_{y_i}E_i$ are the
  same, for $i=1 ,\ldots, k$. Then it preserves $e(\operatorname{Spec} \mathbb C)$. By
  Lemma~\ref{lem:fixed-tail}, it preserves $u|_{E_{i}}$ for each $i$. Hence $\mathbb
  C^*$ is contained in the image of $\mathrm{Aut}(\eta) \to
  \mathrm{Aut}(C,\mathbf x)$. Thus $\mathrm{Aut}(\eta)$ must be infinite.
\end{proof}
\fi
\begin{lemma}
  \label{lem:finite-auto}
  Suppose $\xi$ is $\epsilon_0$-stable. Then $\xi$ has finitely many automorphisms.
\end{lemma}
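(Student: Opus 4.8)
The plan is to reduce the finiteness of $\mathrm{Aut}(\xi)$ to the two cases distinguished by the calibration data. Recall that an automorphism of $\xi = (\pi:C\to\operatorname{Spec}\mathbb C,\mathbf x,e,u,N,v_1,v_2)$ consists of an automorphism of the underlying quasimap with entangled tails $\eta = (\pi:C\to\operatorname{Spec}\mathbb C,\mathbf x,e,u)$ together with an automorphism $\varphi$ of $N$ compatible with $(v_1,v_2)$; so there is a forgetful homomorphism $\mathrm{Aut}(\xi)\to\mathrm{Aut}(\eta)$. Since $\mathrm{Aut}(\xi)$ is an algebraic group of finite type (Lemma~\ref{lem:open-condition} and Lemma~\ref{lem:artin-stack}), it suffices to show that its identity component is trivial. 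First I would dispose of the case where $\mathrm{Aut}(\eta)$ is finite: then the kernel of $\mathrm{Aut}(\xi)\to\mathrm{Aut}(\eta)$ is a subgroup of $\mathbb{G}_m$ (the automorphisms of the line bundle $N$) that must fix both $v_1$ and $v_2$; since $v_1$ and $v_2$ have no common zero at the point $\operatorname{Spec}\mathbb C$, at least one of them is nonzero, and scaling $N$ acts on that one with weight $\pm 1$, so the kernel is trivial and $\mathrm{Aut}(\xi)$ is finite.

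The substantive case is when $\mathrm{Aut}(\eta)$ is infinite, which by Lemma~\ref{lem:auto-eta} happens exactly when there is at least one degree-$d_0$ rational tail and every entangled tail is a fixed tail; in that case the identity component $\Gamma\subset\mathrm{Aut}(\eta)$ is isomorphic to $\mathbb C^*$, acting on each $T_{y_i}E_i$ ($i=1,\ldots,k$, where $E_1,\ldots,E_k$ are the entangled tails) by the same nonzero character, and trivially on $C'$. The key computation is to determine how $\Gamma$ acts on the calibration bundle $\mathbb M = \mathbb M_C$. By Definition~\ref{def:calibration-bundle} and the discussion following it (in the case $(g,n,d)\neq(0,1,d_0)$, the case $(g,n,d)=(0,1,d_0)$ being the cotangent line at the marking and handled the same way), the calibration bundle of $C$ is canonically $(\Theta_1'\otimes\cdots\otimes\Theta_\ell')^\vee$ where $\Theta_j' = T_{p_j}E_j\otimes T_{p_j}C'$ runs over \emph{all} degree-$d_0$ tails, not just the entangled ones. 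However, $\Gamma$ acts trivially on $C'$ and, for each degree-$d_0$ tail $E_j$ that is not a fixed tail, $\Gamma$ also acts trivially on $T_{p_j}E_j$ (it acts on $E_j$ through a finite group by the argument in Lemma~\ref{lem:auto-eta}). On the other hand, on each entangled tail $E_i$ — which is a fixed tail by hypothesis — $\Gamma\cong\mathbb C^*$ acts on $T_{y_i}E_i$ by a fixed nonzero weight $w$, hence on $\Theta_i$ by weight $w$ and on $\mathbb M$ by weight $-kw\neq 0$.

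Now an automorphism in the identity component of $\mathrm{Aut}(\xi)$ projects into $\Gamma$; pulling back along $\mathbb C^*\cong\Gamma$, we get a $\mathbb C^*$-action on $\xi$ covering the standard action on $\Gamma$, which acts on $\mathbb M$ with weight $-kw\neq 0$, hence on $v_1\in\mathbb M\otimes N$ and $v_2\in N$ with weights differing by $-kw$. For this to be an automorphism of $\xi$, the compensating automorphism $\varphi$ of $N$ must fix both $v_1$ and $v_2$; but since $v_1,v_2$ have no common zero, one of them is nonzero, forcing $\varphi$ to act with the corresponding weight on $N$, and then the other is acted on with a weight differing by $-kw\neq 0$ — contradiction unless the other one vanishes. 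The crucial point — and the step I expect to require the most care — is precisely that \emph{both} $v_1$ and $v_2$ can be nonzero only when $v_1\neq 0$ and $v_2\neq 0$ simultaneously, in which case the stability condition (Definition~\ref{def:mspace-stability}) forces neither an $\epsilon_+$- nor an $\epsilon_-$-stable configuration, and one must check that no nontrivial subgroup of $\mathbb C^*$ can act trivially on $(v_1,v_2)$ while inducing a nontrivial action on $\Gamma$; this follows because $-kw\neq 0$ means the two weights genuinely differ. Conversely if one of $v_1,v_2$ vanishes then the underlying quasimap is $\epsilon_\pm$-stable, hence has finite automorphisms, contradicting that $\mathrm{Aut}(\eta)$ is infinite — so in fact this sub-case does not even arise. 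Assembling these observations, the identity component of $\mathrm{Aut}(\xi)$ is trivial, and therefore $\mathrm{Aut}(\xi)$ is finite.
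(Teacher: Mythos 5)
Your proof is correct and follows essentially the same route as the paper's: reduce via Lemma~\ref{lem:auto-eta} to the case where $\mathrm{Aut}(\eta)$ is infinite (forcing $v_1,v_2\neq 0$), observe that $\Gamma\cong\mathbb C^*$ acts on the calibration bundle with nonzero weight $\pm kw$ because it acts with the same nonzero weight on each entangled $\Theta_i$ and trivially on the remaining factors, and conclude that only finitely many elements of $\Gamma$ can preserve the nonzero calibration datum. The paper phrases the last step as finiteness of the stabilizer of $v_1/v_2$ in $\Gamma$ rather than as your weight mismatch between $v_1\in\mathbb M\otimes N$ and $v_2\in N$, but these are the same computation.
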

\ifdefined\SHOWPROOFS
\begin{proof}
  Let $\eta, \Gamma$ be as in Lemma~\ref{lem:auto-eta}.
  Since $(v_1,v_2)\neq \vec 0$, the restriction map $\mathrm{Aut}(\xi) \to
  \mathrm{Aut}(\eta)$ is injective. Thus we only need to consider the case when
  $\mathrm{Aut}(\eta)$ is infinite. In this case,  $C$ has at least one constant
  tails. Hence we must have
  $v_1\neq 0$ and $v_2\neq 0$. It suffices to show that the subgroup of
  $\Gamma$ that fixes $v_1/v_2$ is finite. We write
  \[
    C = C_g \cup E_1 \cup\cdots\cup E_{k+\ell},
  \]
  where $E_1, \ldots, E_k$ are the entangled tails as before, $E_{k+1}, \ldots,
  E_{k+\ell}$ are the other degree-$d_0$ tails, and $C_g$ is the union of all
  other irreducible components. Let $y_i = C_g\cap E_i$ for $i=1
  ,\ldots, k+\ell$. Then $v_1/v_2$ is a nonzero section of the calibration
  bundle (over a point), which is naturally isomorphic to $\Theta_1 \otimes
  \cdots \otimes \Theta_{k+\ell}$, where $\Theta_i = T_{E_i,y_i} \otimes
  T_{C_g,y_i}$. By the proof of Lemma~\ref{lem:auto-eta},
  $\Gamma$ acts trivially on $\Theta_{k+1} ,\ldots,
  \Theta_{k+\ell}$ and acts by the same weight on $\Theta_{1} ,\ldots,
  \Theta_{k}$. Hence the subgroup of $\Gamma$ acting trivially on
  $\Theta_1\otimes \cdots \otimes \Theta_{k+\ell}$ is finite and the proof is
  complete.
\end{proof}
\fi
\begin{proposition}
  \label{prop:representability}
  $MQ^{\epsilon_0}_{g,n}(X,\beta)$ is a Deligne--Mumford stack of finite type
  over $\mathbb C$ with finite-type separated diagonal.
\end{proposition}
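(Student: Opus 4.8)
The plan is to realize $MQ^{\epsilon_0}_{g,n}(X,\beta)$ as an open substack of $M\mathfrak{Qmap}^{\sim}_{g,n}(X,\beta)$ and to transport the algebraic properties of the latter, the only genuinely new input being the Deligne--Mumford property. First I would record that $M\mathfrak{Qmap}^{\sim}_{g,n}(X,\beta)$ is an Artin stack of finite type over $\mathbb C$ with finite-type separated diagonal: by Lemma~\ref{lem:artin-stack} this holds for $\mathfrak{Qmap}^{\sim}_{g,n}(X,\beta)$, and $M\tilde{\mathfrak M}_{g,n,d}\to\tilde{\mathfrak M}_{g,n,d}$ is a $\mathbb P^1$-bundle (Definition~\ref{def:moduli-with-calibrated-tails}), hence representable, proper and smooth, with closed-immersion relative diagonal. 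Since $M\mathfrak{Qmap}^{\sim}_{g,n}(X,\beta)=\mathfrak{Qmap}^{\sim}_{g,n}(X,\beta)\times_{\tilde{\mathfrak M}_{g,n,d}}M\tilde{\mathfrak M}_{g,n,d}$, the map to $\mathfrak{Qmap}^{\sim}_{g,n}(X,\beta)$ is representable, separated and of finite type, and a short diagram chase shows that being an Artin stack, finite type over $\mathbb C$, and having a finite-type separated diagonal are all inherited from $\mathfrak{Qmap}^{\sim}_{g,n}(X,\beta)$ by this base change. By Lemma~\ref{lem:open-condition} the locus of $\epsilon_0$-stable objects is open, so $MQ^{\epsilon_0}_{g,n}(X,\beta)$ is again an Artin stack of finite type over $\mathbb C$ with finite-type separated diagonal.

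It then remains to upgrade ``Artin'' to ``Deligne--Mumford''. I would invoke the standard criterion that an algebraic stack is Deligne--Mumford exactly when its diagonal is unramified, and check this on fibres. For a geometric point $\xi$, the fibre of the diagonal over $(\xi,\xi)$ is the automorphism group scheme $\underline{\operatorname{Aut}}(\xi)$, which by the previous paragraph is of finite type and separated over the (algebraically closed) residue field, and which is $0$-dimensional by Lemma~\ref{lem:finite-auto}. Since we work over $\mathbb C$, Cartier's theorem forces $\underline{\operatorname{Aut}}(\xi)$ to be smooth, hence reduced, so it is a finite disjoint union of copies of the base point; in particular it is unramified over that point. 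As unramifiedness of a morphism locally of finite type can be checked on fibres and descends along the field extensions $\kappa(y)\to\overline{\kappa(y)}$, the diagonal of $MQ^{\epsilon_0}_{g,n}(X,\beta)$ is unramified, which gives the Deligne--Mumford property. The remaining assertions --- finite type over $\mathbb C$ and a finite-type separated diagonal --- were already established, so this finishes the proof.

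The substantive work behind this statement is really carried out by the earlier lemmas, above all Lemma~\ref{lem:finite-auto}, which in turn rests on the analysis of automorphisms of curves with calibrated tails in Lemma~\ref{lem:auto-eta} and Lemma~\ref{lem:fixed-tail}. Given those, the only points requiring care are purely formal: that forming the $\mathbb P^1$-bundle $M\tilde{\mathfrak M}_{g,n,d}$ and the ensuing fibre products does not destroy finiteness of type or the finiteness and separatedness of the diagonal, and that the step from ``every geometric automorphism group is finite'' to ``the diagonal is unramified'' genuinely uses the characteristic-zero hypothesis through Cartier's theorem rather than mere quasi-finiteness of the diagonal. I do not expect any serious obstacle beyond this bookkeeping.
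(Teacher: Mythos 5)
Your proposal is correct and follows essentially the same route as the paper: realize the master space as an open substack of the Artin stack $M\mathfrak{Qmap}^{\sim}_{g,n}(X,\beta)$ via Lemma~\ref{lem:artin-stack} and Lemma~\ref{lem:open-condition}, then deduce the Deligne--Mumford property from finiteness of automorphism groups (Lemma~\ref{lem:finite-auto}). The paper simply cites \cite[Remark~8.3.4]{olsson2016algebraic} for the last step, whereas you unpack that criterion (unramified diagonal via Cartier's theorem in characteristic zero), which is a faithful expansion rather than a different argument.
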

\ifdefined\SHOWPROOFS
\begin{proof}
  By Lemma~\ref{lem:artin-stack}, $M \mathfrak {Qmap}^{\sim}_{g,n}(X,\beta)$ is
  an Artin stack of finite type with finite-type separated diagonal. By
  Lemma~\ref{lem:open-condition},  $MQ^{\epsilon_0}_{g,n}(X,\beta)$ is an open
  substack of $M \mathfrak {Qmap}^{\sim}_{g,n}(X,\beta)$. By
  Lemma~\ref{lem:finite-auto} and \cite[Remark~8.3.4]{olsson2016algebraic},
  it is of Deligne--Mumford type.
\end{proof}
\fi

\subsection{The virtual fundamental class }
The virtual fundamental class of $MQ^{\epsilon_0}_{g,n}(X,\beta)$ is defined
in the same way as that for $Q^{\epsilon}_{g,n}(X,\beta)$ (c.f.\
\cite[\textsection2.4.5]{cheong2015orbifold}).
Let $\pi:\mathcal C \to MQ^{\epsilon_0}_{g,n}(X,\beta)$ be the universal curve and
$u: \mathcal C \to [W/G]$ be the universal quasimap.
Precisely as in the case of $Q^{\epsilon}_{g,n}(X,\beta)$, the forgetful morphism
\[
  MQ^{\epsilon_0}_{g,n}(X,\beta) \longrightarrow  M\tilde {\mathfrak M}_{g,n,d}
\]
admits a relative perfect obstruction theory
\[
  (R\pi_*(u^*{\mathbb T_{[W/G]}}))^\vee
  \longrightarrow
 \mathbb L_{MQ^{\epsilon_0}_{g,n}(X,\beta)/ M\tilde {\mathfrak M}_{g,n,d}}    .
\]
This defines the virtual fundamental class
\[
  [MQ^{\epsilon_0}_{g,n}(X,\beta)]^{\mathrm{vir}} \in
  A_*(MQ^{\epsilon_0}_{g,n}(X,\beta)).
\]
\section{The properness of the master space}
\label{sec:properness}
The goal of this section is to prove the properness of the master space, via the
valuative criterion.
The proof will be divided into
several cases. In each case the existence part and the uniqueness part of the
criterion will be verified together.
\begin{proposition}
  \label{prop:properness}
  $MQ^{\epsilon_0}_{g,n}(X,\beta)$ is proper over $\mathbb C$.
\end{proposition}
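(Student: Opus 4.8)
The plan is to verify the valuative criterion for properness. By Proposition~\ref{prop:representability} the master space is already a Deligne--Mumford stack of finite type over $\mathbb C$ with separated diagonal, so it suffices to show that for every complete discrete valuation $\mathbb C$-algebra $R$ with residue field $\mathbb C$ and fraction field $K$, every $\xi_K=(\pi:\mathcal C_K\to\operatorname{Spec}K,\mathbf x,e,u,N,v_1,v_2)\in MQ^{\epsilon_0}_{g,n}(X,\beta)(K)$ extends, after a finite base change, to an $R$-point, and that over a fixed $R$ the extension is unique up to unique isomorphism. I would treat existence and uniqueness together, by a case analysis according to the trichotomy $v_1|_K\equiv 0$, $v_2|_K\equiv 0$, or both of $v_1|_K,v_2|_K$ nowhere vanishing; this is exhaustive because $\operatorname{Spec}K$ is the spectrum of a field and $v_1,v_2$ have no common zero, and since the case is determined by $\xi_K$ the same argument gives separatedness. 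A short separate argument covers the degenerate numerology $(g,n,d)=(0,1,d_0)$, where the calibration bundle is the cotangent line at the marking and every stable object is supported on a single degree-$d_0$ rational component.

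If $v_1|_K\equiv 0$, then $v_2$ is nowhere zero, so $N\cong\mathcal O$, and by Definition~\ref{def:mspace-stability}(3) the underlying family of quasimaps is $\epsilon_+$-stable. The plan is then to extend that quasimap (after finite base change) using the properness of $Q^{\epsilon_+}_{g,n}(X,\beta)$, to lift the entanglement using the properness of the blow-down $\tilde{\mathfrak M}_{g,n,d}\to\mathfrak M^{\mathrm{wt,ss}}_{g,n,d}$, and to take $(N,v_1,v_2)=(\mathcal O,0,1)$. Since the special fiber of an $R$-family of $\epsilon_+$-stable quasimaps is again $\epsilon_+$-stable, it has no constant tail and no length-$d_0$ base point, so conditions~(1), (2), (4) of $\epsilon_0$-stability are vacuous and (3) holds. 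For uniqueness, a competing extension also has $v_1\equiv 0$ over $R$ (a section of a line bundle on the integral scheme $\operatorname{Spec}R$ vanishing generically is zero), hence all of its geometric fibers are $\epsilon_+$-stable quasimaps, and the separatedness of $Q^{\epsilon_+}_{g,n}(X,\beta)$ and of the blow-down forces agreement with the one constructed. The case $v_2|_K\equiv 0$ is symmetric, with $Q^{\epsilon_-}_{g,n}(X,\beta)$ and condition~(4) in place of $Q^{\epsilon_+}_{g,n}(X,\beta)$ and condition~(3).

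The main case is $v_1|_K\not\equiv 0$ and $v_2|_K\not\equiv 0$. After a finite base change I would arrange that $N$ is trivial and that the irreducible components of $\mathcal C_K$ have no monodromy, so that $v:=v_1/v_2$ is a nonzero vector of the one-dimensional space $\mathbb M_K$. Contracting all degree-$d_0$ rational tails of $\mathcal C_K$ yields the $\epsilon_-$-stabilization $\bar\eta^-_K\in Q^{\epsilon_-}_{g,n}(X,\beta)(K)$, which extends (after further finite base change) to $\bar\eta^-_R$ over $\operatorname{Spec}R$ by the properness of $Q^{\epsilon_-}_{g,n}(X,\beta)$. The special fiber $C$ of the family to be constructed maps onto the special fiber $\bar C$ of $\bar\eta^-_R$ by contracting a collection of degree-$d_0$ rational tails $E_1,\dots,E_\ell$; equivalently, $C$ is obtained from $\bar C$ by un-contracting some of its length-$d_0$ base points into such tails, with a node of type $A_{a_i-1}$ at the attaching point of $E_i$. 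The plan is to read off all the needed data --- which base points to un-contract, the integers $a_i$, the reparametrization of $u$ along the $E_i$, the induced entanglement, and the limiting $(v_1,v_2)$ --- from $e$ and $v$ by means of the limit lemmas of Section~\ref{sec:entangled-and-entanglement}: Lemma~\ref{lem:limit-of-entanglement-case1} and Lemma~\ref{lem:limit-of-entanglement-case2} determine the entangled tails of the limit from vanishing orders, and Lemma~\ref{lem:limit-calib-1} controls the calibration bundle across the special point, which together fix the $a_i$. Having performed the resulting modification of the $R$-family $\bar\eta^-_R$, one checks fiberwise $\epsilon_0$-stability: the constant tails among $E_1,\dots,E_\ell$ are exactly the entangled ones by Remark~\ref{rmk:tails-comparison}, condition~(2) holds because all length-$d_0$ base points of $C$ lie on the $E_i$, and conditions~(3)--(4) hold because wherever $v_1$ (resp.\ $v_2$) vanishes at the special point the construction has forced the curve to be $\epsilon_+$-stable (resp.\ $\epsilon_-$-stable). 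Uniqueness follows since each ingredient --- $\bar\eta^-_R$, the lifted entanglement, the order of $v$, the set of un-contracted base points, and the extension of $u$ over the $E_i$ --- is individually pinned down by a separatedness statement together with the limit lemmas.

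The hard part will be this main case, and in particular the claim that the modification dictated by the limit lemmas is genuinely $\epsilon_0$-\emph{stable} and is the \emph{unique} admissible extension. This is exactly where the blow-up construction of $\tilde{\mathfrak M}_{g,n,d}$ and the choice of calibration bundle $\mathcal O(-\mathfrak Z_1)$ are used essentially: it is the interplay between the contact orders of the entanglement with the branches $\mathfrak H_i$ of $\mathfrak Z_1$ and the order of the calibration $v$ that singles out one admissible way to bubble off and reparametrize the degree-$d_0$ rational tails, so that a limit exists and is unique. I would carry out a preliminary reduction (completion of $R$, finite base change, removal of monodromy on components) once at the outset, and then present the $\epsilon_+$-end, the $\epsilon_-$-end, and the main case as the three sub-cases above.
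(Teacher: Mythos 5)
Your overall skeleton matches the paper's: valuative criterion, the easy cases $v_1|_K=0$ or $v_2|_K=0$ disposed of by properness of $\tilde Q^{\epsilon_+}_{g,n}(X,\beta)$ and $Q^{\epsilon_-}_{g,n}(X,\beta)$, the reduction of entanglement/calibration/quasimap extensions to the underlying curve via Lemma~\ref{lem:entanglement-and-calibration}, and a separate treatment of $(g,n,d)=(0,1,d_0)$. But the main case is where the proof actually lives, and there your proposal has a genuine gap on two counts. First, you defer precisely the content that constitutes the argument: you assert that the limit lemmas ``fix the $a_i$'' and that the resulting modification is stable and unique, but you never classify which modifications of $\bar\eta^-_R$ exist. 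The paper's proof requires Definition~\ref{def:modification}, the bounds $b_i$ (defined via the $\epsilon_+$-stable extension after converting length-$d_0$ base points to markings, Lemma~\ref{lem:replace-base-points}), the existence--uniqueness statement Lemma~\ref{lem:modification} (every singularity type $\vec a\le\vec b$ is realized by exactly one modification, built by minimal resolution of the $A_{rb_i-1}$-nodes followed by a prescribed blow-down), and then the explicit system of inequalities in $\vec a$ ($|\vec a|\le\delta$, $0<a_i\le b_i$, the equality cases, and the maximality condition forced by Definition~\ref{def:entangled-tails}) whose \emph{unique} solution is what makes the limit exist and be unique. Without constructing the modifications and solving that system, ``dictated by the limit lemmas'' is not a proof.

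Second, your reduction of the whole main case to the $\epsilon_-$-stabilization $\bar\eta^-_R$ does not work when the generic fiber already carries degree-$d_0$ rational tails: contracting such a tail $E^*$ destroys the quasimap data $u^*|_{E^*}$, which cannot be recovered from $\bar\eta^-_R$ together with $e$ and $v$ --- extending $u^*|_{E^*}$ over $R$ is itself a nontrivial limit problem requiring its own reparametrization analysis. This is why the paper splits the argument into Case~1 (no tails generically: bubble off the $\epsilon_-$-limit) and Case~3 (tails present generically: split the curve at the entangled nodes, extend the complement as an $\epsilon_+$-stable family using Lemma~\ref{lem:glue-tails}, and run the one-tail analysis of Case~2 / Lemma~\ref{lem:modification2} on each entangled tail, with the orders $a_i'$ again pinned down by a system of inequalities coupling the calibration order $\delta$ to the contact orders). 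A smaller but symptomatic error: you invoke Remark~\ref{rmk:tails-comparison} to claim the constant tails are \emph{exactly} the entangled ones, whereas stability only demands constant $\Rightarrow$ entangled; the equivalence holds only on the fixed locus, and a correct limit may well have entangled tails that are not constant.
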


\ifdefined\SHOWPROOFS
\subsection{Warm-up}
Let $(R,\mathfrak m)$ be a complete discrete valuation $\mathbb C$-algebra with
fraction field $K$ and residue field $\mathbb C$ as before. Given any
\[
  \xi^* = (\pi^*:\mathcal C^*\to \operatorname{Spec}K, \mathbf x^*,
  e^*,u^*,N^*,v_1^*,v_2^*)\in MQ^{\epsilon_0}_{g,n}(X,\beta)(K),
\]
we will prove that up to finite base change there is a unique
extension of $\xi^*$ to
\[
  \xi = (\pi:\mathcal C\to \operatorname{Spec}K, \mathbf x, e,u,N,v_1,v_2)\in
  MQ^{\epsilon_0}_{g,n}(X,\beta)(R).
\]
If $v_1^* = 0$ (resp. $v_2^* = 0$), $\xi^*$ lies in the substack
$\tilde{Q}{}^{\epsilon_+}_{g,n}(X,\beta)$ (resp.
$Q^{\epsilon_-}_{g,n}(X,\beta)$), which is proper
\cite[Theorem~2.7]{cheong2015orbifold}. Hence the extension exists uniquely.
Hence, without loss of generality, we assume that $v_1^*$ and $v_2^*$ are both nonzero.

\begin{lemma}
  \label{lem:entanglement-and-calibration}
  Given any extension
  \[
    (\pi:\mathcal C\to \operatorname{Spec}R,\mathbf x) \in
    {\mathfrak M}_{g,n,d}^{\mathrm{wt,ss}}(R)
  \]
  of the underlying weighted curves $(\pi^*:\mathcal C^*\to
  \operatorname{Spec}K,\mathbf x^*)$ of $\xi^*$,
  \begin{enumerate}
  \item
    there is a unique extension $(e,N,v_1,v_2)$ of $(e^*,N^*,v_1^*,v_2^*)$, such
    that
    \[
      \xi := (\pi:\mathcal C\to \operatorname{Spec}R,\mathbf x, e, N,v_1,v_2)\in
    M\tilde{\mathfrak {M}}_{g,n,d}(R);
    \]
  \item
    there is at most one extension $u:\mathcal C \to [W/G]$ of $u^*$ as a
    quasimap.
  \end{enumerate}
\end{lemma}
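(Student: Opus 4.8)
The plan is to deduce (1) from the valuative criterion applied to the composite proper morphism $M\tilde{\mathfrak M}_{g,n,d}\to\mathfrak M^{\mathrm{wt,ss}}_{g,n,d}$, and to prove (2) by extending the comparison isomorphism away from the finitely many bad points of the special fibre, using the separatedness of $X$ together with the affineness of the diagonal of $[W/G]$.

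For part (1): by the construction in Section~\ref{sec:the-entangled-tails}, the morphism $\tilde{\mathfrak M}_{g,n,d}\to\mathfrak M^{\mathrm{wt,ss}}_{g,n,d}$ is a composition of blowups along smooth centres, hence representable and projective, and $M\tilde{\mathfrak M}_{g,n,d}\to\tilde{\mathfrak M}_{g,n,d}$ is a $\mathbb P^1$-bundle (Definition~\ref{def:moduli-with-calibrated-tails}), also representable and projective; so the composite $M\tilde{\mathfrak M}_{g,n,d}\to\mathfrak M^{\mathrm{wt,ss}}_{g,n,d}$ is representable and proper. The given family is a morphism $\operatorname{Spec}R\to\mathfrak M^{\mathrm{wt,ss}}_{g,n,d}$, and $(e^*,N^*,v_1^*,v_2^*)$ is a lift $\operatorname{Spec}K\to M\tilde{\mathfrak M}_{g,n,d}$ of its generic restriction; the valuative criterion for properness then yields a unique extension $\operatorname{Spec}R\to M\tilde{\mathfrak M}_{g,n,d}$, which is the desired $(e,N,v_1,v_2)$. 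Concretely one first extends $e^*$ --- this is precisely the limit-of-entanglements analysis of Lemmas~\ref{lem:limit-of-entanglement-case1} and~\ref{lem:limit-of-entanglement-case2} --- and then, the calibration bundle $\mathbb M$ of $\mathcal C/R$ now being determined, one extends the point $(N^*,v_1^*,v_2^*)$ of $\mathbb P(\mathbb M\oplus\mathcal O)$, which is automatic for a $\mathbb P^1$-bundle.

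For part (2): let $u_1,u_2:\mathcal C\to[W/G]$ be two quasimap extensions of $u^*$, together with the comparison isomorphism over $\mathcal C^*$. The stack $\mathcal C$ is reduced and normal with $\mathcal C^*$ dense, and for each $i$ the base locus $B_i=u_i^{-1}[W^{\mathrm{us}}/G]$ is a finite subscheme disjoint from the nodes and markings which meets $\mathcal C^*$ exactly in the base locus $B^*$ of $u^*$. On the dense open $\mathcal C\setminus(B_1\cup B_2)$ both $u_i$ are representable morphisms into the separated Deligne--Mumford stack $X=[W^{\mathrm{ss}}(\theta)/G]$, and they agree on the schematically dense open $\mathcal C^*\setminus B^*$ of this reduced stack; hence they agree on all of $\mathcal C\setminus(B_1\cup B_2)$, through a unique isomorphism restricting to the given one on $\mathcal C^*\setminus B^*$. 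This isomorphism and the given comparison isomorphism over $\mathcal C^*$ both restrict to the latter over $\mathcal C^*\setminus B^*$, so they glue into an isomorphism $u_1\cong u_2$ over $(\mathcal C\setminus(B_1\cup B_2))\cup\mathcal C^*$, which is $\mathcal C$ minus finitely many points of the special fibre, each of codimension two in $\mathcal C$. Finally $\operatorname{Isom}_{\mathcal C}(u_1,u_2)\to\mathcal C$ is affine, since $[W/G]$ has affine diagonal, so a section of it over the complement of a codimension-two closed subset of the normal stack $\mathcal C$ extends uniquely; this produces $u_1\cong u_2$ over all of $\mathcal C$, compatibly with the comparison isomorphism over $\mathcal C^*$, which is the asserted uniqueness.

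The substantive point is in part (2): because $[W/G]$ is not separated one cannot apply a valuative criterion to it directly, and the argument must use the quasimap conditions --- finiteness of the base locus forces the potentially bad locus to be of codimension two in the surface $\mathcal C$, and disjointness from the nodes keeps $\mathcal C$ normal there --- so that the failure of separatedness is absorbed into a Hartogs-type extension of the comparison isomorphism. I expect the only genuinely essential inputs to be the separatedness of $X$ and the affineness of the diagonal of $[W/G]$; the remaining steps are careful bookkeeping with the special fibre and with the overlaps of the opens on which the isomorphism is already defined.
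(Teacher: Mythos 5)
Your proposal is correct and follows essentially the same route as the paper: part (1) is exactly the paper's appeal to the properness of the representable composite $M\tilde{\mathfrak M}_{g,n,d}\to\tilde{\mathfrak M}_{g,n,d}\to\mathfrak M^{\mathrm{wt,ss}}_{g,n,d}$, and part (2) is the paper's citation of the separatedness argument in the first paragraph of the proof of \cite[Proposition~4.3.1]{ciocan2014stable}, which you have simply unpacked (separatedness of $X$ off the base loci, then a Hartogs-type extension of the comparison isomorphism over the codimension-two set of special-fibre base points, using that the base locus is finite, disjoint from the nodes, and that $[W/G]$ has affine diagonal).
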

\ifdefined\SHOWPROOFS
\begin{proof}
  Statement (1) follows from the properness of the representable morphisms
  \[
    M\tilde{\mathfrak {M}}_{g,n,d} \longrightarrow   \tilde{\mathfrak
      {M}}_{g,n,d} \longrightarrow  {\mathfrak M}_{g,n,d}^{\mathrm{wt,ss}}.
  \]
  Statement (2) follows from the separatedness of the moduli of quasimaps on a fixed family of nodal
  curves. See the first paragraph of the proof of
  \cite[Proposition 4.3.1]{ciocan2014stable}. The argument also works in the orbifold case (c.f.
 \cite[\textsection2.4.4]{cheong2015orbifold}).
\end{proof}
\fi
The following lemma will be useful when we contract rational tails in the special fiber.
\begin{lemma}
  \label{lem:extension-of-map}
  Let $\mathcal C$ be a surface and $p_1 ,\ldots, p_n$ be closed
  points of the regular locus of $\mathcal C$.
  Any morphism $u^0: \mathcal C^0= \mathcal C \setminus\{p_1
  ,\ldots, p_n\} \to [W/G]$ uniquely extends to a $u:\mathcal C \to [W/G]$, up
  to unique isomorphisms.
\end{lemma}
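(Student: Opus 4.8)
The plan is to unwind a morphism to the quotient stack into linear-algebra data and then apply purity on the regular surface. Recall that for any scheme $T$ a morphism $T \to [W/G]$ is the same datum as a principal $G$-bundle $P \to T$ together with a $G$-equivariant morphism $P \to W$, equivalently a section over $T$ of the associated fibration $P \times^{G} W \to T$. First I would reduce to a local statement: since the extension, once shown to exist, will be unique, the local extensions near the points $p_i$ glue, so we may assume $\mathcal C$ is affine of dimension $2$, regular, and that we are extending $u^0$ defined on $\mathcal C^0 = \mathcal C \setminus \{p\}$ for a single closed point $p$. Write $j : \mathcal C^0 \hookrightarrow \mathcal C$. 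The two facts that drive everything are the algebraic Hartogs lemma $j_* \mathcal O_{\mathcal C^0} = \mathcal O_{\mathcal C}$ (valid because $\operatorname{depth}_p \mathcal O_{\mathcal C} = 2$) and the fact that a reflexive coherent sheaf on a regular $2$-dimensional scheme is locally free.

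The first step is to extend the $G$-torsor $P \to \mathcal C^0$ underlying $u^0$. Fix a closed embedding $G \hookrightarrow \mathrm{GL}_N$. The induced $\mathrm{GL}_N$-torsor is a rank-$N$ vector bundle $\mathcal E^0$ on $\mathcal C^0$, and its reflexive hull $\mathcal E = (j_* \mathcal E^0)^{\vee\vee}$ is coherent and reflexive, hence locally free on $\mathcal C$; this canonically extends the $\mathrm{GL}_N$-torsor. To reduce the structure group back from $\mathrm{GL}_N$ to $G$, I would use Matsushima's criterion: since $G$ is reductive, $\mathrm{GL}_N/G$ is affine, so $Y := (\text{extended }\mathrm{GL}_N\text{-torsor}) \times^{\mathrm{GL}_N} (\mathrm{GL}_N/G)$ is affine over $\mathcal C$, say $Y = \operatorname{Spec}_{\mathcal C} \mathcal A$. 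The $G$-structure on $P$ is a section of $Y$ over $\mathcal C^0$, i.e.\ an $\mathcal O_{\mathcal C^0}$-algebra map $j^*\mathcal A \to \mathcal O_{\mathcal C^0}$; by the $(j^*,j_*)$-adjunction and Hartogs this is the same as an $\mathcal O_{\mathcal C}$-algebra map $\mathcal A \to j_*\mathcal O_{\mathcal C^0} = \mathcal O_{\mathcal C}$, i.e.\ a section over all of $\mathcal C$. This gives a $G$-torsor $\bar P \to \mathcal C$ extending $P$. (Alternatively one may cite purity for torsors under reductive groups on regular surfaces; I prefer the self-contained argument.)

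The second step extends the equivariant map: the $G$-equivariant morphism $P \to W$ is a section over $\mathcal C^0$ of $\bar P \times^G W \to \mathcal C$, which is affine over $\mathcal C$ because $W$ is affine, so exactly the same adjunction-plus-Hartogs argument extends it uniquely over $\mathcal C$, yielding $\bar u : \bar P \to W$ and hence $u : \mathcal C \to [W/G]$ restricting to $u^0$. For uniqueness and the ``up to unique isomorphism'' clause: $\mathcal C$ is reduced and $\mathcal C^0$ is schematically dense, and $[W/G]$ has affine diagonal, so given two extensions the sheaf $\operatorname{Isom}_{\mathcal C}(u_1,u_2)$ is affine over $\mathcal C$ and carries a section over $\mathcal C^0$, which extends uniquely over $\mathcal C$ by Hartogs again. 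I expect the main obstacle to be the reduction-of-structure-group step of the second paragraph: this is precisely where reductivity of $G$ enters, through Matsushima's theorem, and one must keep track of the fact that the relevant object is a section of a possibly non-flat affine $\mathcal C$-scheme rather than of $\mathcal O_{\mathcal C}$ itself — although, as indicated, this still reduces cleanly to the identity $j_*\mathcal O_{\mathcal C^0} = \mathcal O_{\mathcal C}$.
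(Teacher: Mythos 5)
Your proof is correct and follows the same overall strategy as the paper: unwind a map to $[W/G]$ into a torsor plus an equivariant map, extend the torsor by purity on the regular surface, and extend the section and prove uniqueness by Hartogs together with affineness of $W$ and of the diagonal of $[W/G]$.

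Two points of comparison. First, the paper simply cites \cite[Lemma~4.3.2]{ciocan2014stable} for the torsor-extension step; your second paragraph (embed $G\hookrightarrow \mathrm{GL}_N$, extend the associated vector bundle as the reflexive hull, then recover the $G$-reduction as a section of the affine scheme $Q\times^{\mathrm{GL}_N}(\mathrm{GL}_N/G)$ via Matsushima) is essentially a self-contained re-proof of that cited lemma, and it is where reductivity of $G$ genuinely enters, exactly as you say. Second, in the paper's conventions $\mathcal C$ is the total space of a family of \emph{twisted} curves, so it is in general a Deligne--Mumford stack (and the removed points can be stacky, e.g.\ after the root constructions in Lemma~\ref{lem:replace-base-points}); the paper's proof first treats the case where $\mathcal C$ is a scheme near the $p_i$ and then passes to an \'etale cover, descending the extension by its uniqueness up to unique isomorphism. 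Your argument tacitly assumes $\mathcal C$ is a scheme; since every step (Hartogs, local freeness of reflexive sheaves in dimension $2$, the adjunction for sections of affine morphisms) is \'etale-local, this is not a mathematical gap, but the descent step should be made explicit for the lemma to apply as it is used in the paper.
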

\ifdefined\SHOWPROOFS
\begin{proof}
  We first assume that $\mathcal C$ is a scheme near the $p_i$'s. We follow the
  argument in \cite[\textsection 4.3]{ciocan2014stable}.
  A map from a scheme $S$ to $[W/G]$ is a principal $G$-bundle $P\to S$ with a
  section of $P\times_G W \to S$.
  By \cite[Lemma~4.3.2]{ciocan2014stable}, the principal bundle extends. The
  extension is unique up to unique isomorphisms by Hartogs' theorem and the affineness of $G$.
  Since $W$ is affine, the section extends by Hartogs' theorem. The extension is again
  unique, by the separatedness of the morphism $P\times_{G} W\to S$.

  In general, we take an \'etale cover $U\to \mathcal C$. The pullback of
  $u^0$ to $U\times_{\mathcal C} \mathcal C^0$ extends to $\tilde u: U\to [W/G]$
  by the previous argument. Since the
  extension is unique up to unique isomorphisms, it descends to $\mathcal C$.
\end{proof}
\fi

We now consider the situation of contracting rational bridges.
\begin{lemma}
  \label{lem:contraction-bridge}
  Let $\mathcal C \to \operatorname{Spec}  R$ be a family of twisted curves and
  $u: \mathcal C \to [W/G]$ be a family of quasimaps. Let $E\subset \mathcal
  C$ be a chain of rational bridges in the special fiber. Suppose
  that the degree of $u$ on any irreducible component of $E$ is zero.
  Then there
  exists a unique ($2$-)commutative diagram
  \[
    \begin{tikzcd}
      \mathcal C \arrow[d,"{\rho}"']\arrow[rd,"u"] & \\
      \mathcal C^\prime \arrow[r,"{u^\prime}"]& {[W/G]}
    \end{tikzcd}
    ,
  \]
  where $\rho$ is the contraction of $E$ to a (possibly orbifold) point and
  $u^\prime$ is a family of quasimaps. In particular, $\rho$ is also representable.
\end{lemma}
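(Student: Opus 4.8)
The plan is to first construct the contraction $\rho$ on the level of twisted curves, and then to descend the quasimap $u$ along it.

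\emph{Existence and uniqueness of $\rho$.} The contraction of a chain of rational bridges in a family of twisted curves over a discrete valuation ring is standard, and I will take it for granted. Working in a neighborhood of $E$, one contracts the corresponding chain of $\mathbb{P}^1$'s in the family of coarse curves --- the classical contraction of a chain of rational curves on a surface --- and then reinstates a twisted structure, taking the band at the new point $p'$ to be the one forced by the balanced-node condition along $E$; the resulting natural inclusions of the bands along $E$ into the band $r'$ at $p'$ make $\rho$ representable. Since $\rho$ is required to restrict to an isomorphism over $\mathcal{C}\setminus E$ and to satisfy $\rho_*\mathcal{O}_{\mathcal{C}}=\mathcal{O}_{\mathcal{C}'}$, it is unique, and $p'$ is a (possibly orbifold) node of the special fiber of $\mathcal{C}'$.

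\emph{Descending the quasimap.} First note that $E$ carries no base point of $u$: a base point $y\in E_i$ of length $\ell(y)\ge 1$ would produce a nonzero section of $u^*L_\theta|_{E_i}$ vanishing at $y$, forcing $\deg(u^*L_\theta|_{E_i})\ge 1$ and contradicting the hypothesis that this degree is $0$. Hence $u|_E$ lands in $X=[W^{\mathrm{s}}/G]$. As $L_\theta|_X$ is the pullback of $\mathcal{O}_{\mathbb{P}^N}(1)$, which is ample on the projective coarse space $\underline{X}$, and $u$ has degree $0$ on every component of the connected curve $E$, the induced map $\underline{E}\to\underline{X}$ is constant, with image a point $x$ whose stabilizer $G_x$ is finite since $X$ is Deligne--Mumford. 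Therefore $u|_E$ factors through the residual gerbe of $x$, and --- because the contraction $\rho$ is built so that the twisted structure of $\mathcal{C}$ along $E$ is exactly the one carried by the node $p'$ --- the restriction $(P,s)|_E$ of $u=(P,s)$ (a principal $G$-bundle $P$ with a section $s$ of $P\times_{G}W$) is pulled back along $\rho|_E$ from $p'$. Consequently $P':=\rho_*P$ is a principal $G$-bundle on $\mathcal{C}'$ with $\rho^*P'\cong P$, the section $s$ descends to a section $s'$ of $P'\times_{G}W$, and $u':=(P',s')$ satisfies $u'\circ\rho\cong u$ by construction. Finally $u'$ is a family of quasimaps: away from $p'$ it agrees with $u$ under the isomorphism $\mathcal{C}'\setminus\{p'\}\cong\mathcal{C}\setminus E$, and its value at $p'$ is $x\in X$, so $p'$ is not a base point and, being a balanced node mapping into the Deligne--Mumford stack $X$, the map $u'$ is representable there. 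Uniqueness of $u'$ with $u'\circ\rho\cong u$ follows from $\rho_*\mathcal{O}_{\mathcal{C}}=\mathcal{O}_{\mathcal{C}'}$ and the affineness of $G$, exactly as in the proof of Lemma~\ref{lem:extension-of-map}.

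The main obstacle is the twisted-curve bookkeeping in the second part: upgrading the constancy of $u|_E$ on coarse spaces to the statement that $(P,s)|_E$ is pulled back from $p'$, so that $\rho_*P$ is genuinely a $G$-bundle and $u$ descends. This is where the balanced-node condition and the precise relation between the bands along $E$ and the band at $p'$ enter; the remaining steps are either standard (the contraction itself) or routine degree and base-point counts.
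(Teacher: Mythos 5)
Your reduction of the problem is reasonable, and the preliminary observations (no base points on $E$ by the degree-$0$ hypothesis, constancy of the coarse map by ampleness of $L_\theta$ on $\underline X$, uniqueness of $u'$) are fine. But the proposal has a genuine gap exactly at the point you yourself flag as ``the main obstacle'': you assert, rather than prove, that $(P,s)|_E$ is pulled back from the contracted point $p'$ and that $\rho_*P$ is again a principal $G$-bundle. This is the entire content of the lemma. Note that you cannot fall back on Lemma~\ref{lem:extension-of-map} to produce $u'$ from $u|_{\mathcal C\setminus E}\cong u'|_{\mathcal C'\setminus\{p'\}}$: the new point $p'$ is a node of the special fiber of $\mathcal C'$ and hence in general an $A_k$-singularity of the total space, so it is \emph{not} in the regular locus, and the Hartogs-type extension argument does not apply. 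This is precisely why the contraction of rational bridges needs a separate proof from the extension over regular points. Likewise, ``reinstate a twisted structure with the band forced by the balanced-node condition'' presupposes that the contraction of a chain of twisted rational bridges, with the correct stack structure at $p'$ and with $\rho$ representable, exists in families --- which is part of what must be shown, not a standard fact one can quote.

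The paper resolves both issues simultaneously by working \'etale-locally on the \emph{target} rather than on the curve. Since $E$ maps to a single point $w\in\underline X$ and $u$ is representable, the fiber product $V=\mathcal C\times_X U$ over an \'etale chart $U\to X$ around $w$ is a \emph{scheme}, and the preimage of $E$ in $V$ is a disjoint union of chains of rational curves (the dual graph of $E$ has no loop). One contracts these in the classical schematic setting, observes that $V\to U$ factors through the contraction $V'$ because each chain maps to a single point of $U$, and then glues the local data $V'\to U$ over the \'etale site of $X$ to obtain $\mathcal C'$ together with a representable morphism $u':\mathcal C'\to X$ and the factorization $u=u'\circ\rho$. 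The stack structure at $p'$, the representability of $\rho$, and the descent of the bundle and section all come out of this gluing for free; no separate argument about $\rho_*P$ is needed. If you want to salvage your two-step outline, you would have to supply a descent argument for $G$-torsors along $\rho$ over the full two-dimensional total space (not just the special fiber), which in practice amounts to redoing the paper's \'etale-local construction.
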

\ifdefined\SHOWPROOFS
\begin{proof}
 By \cite[Lemma~2.3]{cheong2015orbifold}, 
  $E$ is mapped to a single point $w$ of the coarse moduli $\underline X$.
  We work \'etale locally on $X$. Without loss of generality, we assume that
  $\mathcal C$ is mapped into $X$.
  For any \'etale chart $U \to X$ around $w$, form the
  fibered diagram
  \[
    \begin{tikzcd}
      \tilde E \arrow[r] \arrow[d]&  V \arrow[r]\arrow[d] & U \arrow[d]\\
      E\arrow[r] & \mathcal C \arrow[r,"u"] & {X}
    \end{tikzcd}.
  \]
  Since $u$ is representable, $V$ is a scheme. Since the image of
  $V$ contains $E$ and the dual graph of $E$ has no loop, each connected
  component of $\tilde E$ is a disjoint union of rational bridges.
  Let $V \to V^\prime$ be the contraction of each connected component of $\tilde
  E$ to single points. Since each connect component of $\tilde E$ is sent to a
  single point of $U$, the map $V\to U$ factorizes through $V^\prime$.   The
  contraction is unique. Hence varying $U$, we can glue the schemes $V^\prime
  \to U$ to form an algebraic stack $\mathcal C^\prime$ with a representable
  morphism $\mathcal C^\prime \to X$.
  This gives the desired $\mathcal C \to \mathcal C^{\prime}\to X$.
  Any $\mathcal C^{\prime}$ must be locally in the \'etale site of
  $X$ given by this construction, and the gluing is unique.
  This completes the proof.
\end{proof}
\fi

\subsection{Case 1: Creating degree-$d_0$ rational tails}
Let us assume that $(g,n,d)\neq (0,1,d_0)$ and $\xi^*$ does not have
degree-$d_0$ rational tails. Then
\[
  \eta^* = (\pi^*:\mathcal C^*\to \operatorname{Spec}K, \mathbf x^*,u^*)
\]
is a family of $\epsilon_-$-stable quasimaps. By the properness of
$Q^{\epsilon_-}_{g,n}(X,\beta)$, up to finite base change it uniquely extends
to a family of $\epsilon_-$-stable quasimaps
\begin{equation}
  \label{eqn:eta-minus}
  \eta_- = (\pi_{-}:\mathcal C_-\to \operatorname{Spec}R, \mathbf x_-,u_-).
\end{equation}
By Lemma~\ref{lem:entanglement-and-calibration}, we have an
$\epsilon_0$-semistable extension of $\xi^*$
\[
  \xi_- = (\pi_{-}:\mathcal C_-\to \operatorname{Spec}R, \mathbf
  x_-,e_-,u_-,N_-,v_{1-},v_{2-}).
\]
Set
\[
  \delta = \operatorname{ord}(v_{1-})- \operatorname{ord}(v_{2-}),
\]
where ``$\operatorname{ord}$'' means the vanishing order at the unique closed point
of $\operatorname{Spec}  R$.

If $\delta \leq 0$ or there are no length-$d_0$ base points
in the special fiber, $\xi_-$ is $\epsilon_0$-stable.
Otherwise, let
\[
  p_1,\ldots,p_k \in \mathcal C_{-}|_{\operatorname{Spec}  \mathbb C}
\]
be the length-$d_0$ base points in the special fiber. In order to obtain a
stable extension, we need to modify $\mathcal C_-$ at those
points, possibly after finite base change.
Consider a/the totally ramified finite base change $\operatorname{Spec}
R^\prime \to  \operatorname{Spec}  R$ of degree $r$.
Let $K^\prime$ be the fraction field of $R^\prime$.
\begin{definition}
  \label{def:modification}
  A \textit{modification} of $\eta_-$ of degree $r$ is a family of
  $\epsilon_0$-semistable quasimaps
  \[
    \tilde \eta = (\tilde \pi: \tilde{\mathcal C} \to  \operatorname{Spec}
    R^\prime,\tilde {\mathbf x} , \tilde u),
  \]
  together with an isomorphism
  \[
    \tilde\eta|_{\operatorname{Spec}  K^\prime} \cong
    \eta_{-}|_{\operatorname{Spec}  K^\prime}.
  \]
\end{definition}
Up to finite base change, we will classify all those modifications, and we will
show that for a unique modification, the corresponding $\xi$ from
Lemma~\ref{lem:entanglement-and-calibration} is $\epsilon_0$-stable.

We first show that the modification is local at those base points $p_i$ in the
following sense.
Consider any modification $\tilde \eta$ of $\eta_-$ of degree $r$.
We denote the pullback of $\eta_-$ to $\operatorname{Spec} R^\prime$ by
\[
  \eta^\prime=(\pi^\prime:\mathcal C^
  \prime \to  \operatorname{Spec}  R^\prime, \mathbf x^\prime, u^\prime).
\]
\begin{lemma}
  \label{lem:morphism-btw-resolutions}
  The family of curves $\mathcal C^\prime$ is obtained from $\tilde {\mathcal
    C}$ by contracting the
  rational tails of degree $d_0$ in the special fiber.
\end{lemma}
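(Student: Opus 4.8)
The strategy is to produce the contraction explicitly and then recognise its target as the $\epsilon_-$-stable limit by separatedness of $Q^{\epsilon_-}_{g,n}(X,\beta)$. Write $E_1,\dots,E_k\subset\tilde{\mathcal C}$ for the degree-$d_0$ rational tails in the special fiber $\tilde C$ of $\tilde{\mathcal C}$. They are pairwise disjoint, carry no markings, and each meets the rest of $\tilde C$ in a single (possibly orbifold) node $p_i$. First I would invoke the valuative form of the contraction-to-base-points construction of \cite[\textsection3.2.2]{ciocan2014wall} (which applies to $[W/G]$ through the embedding \eqref{eq:embedding-of-quotient}, its orbifold version being as in \cite{cheong2015orbifold}) to obtain a family of twisted curves $\rho\colon\tilde{\mathcal C}\to\bar{\mathcal C}$ over $\operatorname{Spec}R'$ which contracts $E_1,\dots,E_k$ to smooth unmarked points $\bar p_1,\dots,\bar p_k$ of the special fiber $\bar C$ and is an isomorphism on the generic fiber. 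Since $\tilde u$ induces a morphism $\bar{\mathcal C}\setminus\{\bar p_1,\dots,\bar p_k\}\to[W/G]$ and the $\bar p_i$ lie in the regular locus of the surface $\bar{\mathcal C}$, Lemma~\ref{lem:extension-of-map} extends it uniquely to $\bar u\colon\bar{\mathcal C}\to[W/G]$; its base locus stays finite and disjoint from nodes and markings, so $\bar\eta:=(\bar\pi\colon\bar{\mathcal C}\to\operatorname{Spec}R',\bar{\mathbf x},\bar u)$ is a family of quasimaps, and the degree bookkeeping of \cite[\textsection3.2.2]{ciocan2014wall} shows that each $\bar p_i$ is a base point of length exactly $d_0$.

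The main point is then to check that $\bar\eta$ is $\epsilon_-$-stable over the closed point, after which separatedness does the rest. Base-point lengths are all $\le d_0<1/\epsilon_-$ because $\epsilon_-<\epsilon_0=1/d_0$. For positivity of $(\bar u^*L_\theta)^{\otimes\epsilon_-}\otimes\omega_{\bar C,\log}$ on a component $D$ of $\bar C$, I would view $D$ as a component of $\overline{\tilde C\setminus\bigcup_i E_i}$ with quasimap degree $\delta=\deg(\tilde u^*L_\theta|_D)\ge 0$, and let $k_D$ be the number of $E_i$ meeting $D$; contracting those tails adds $k_Dd_0$ to the degree on $D$ while removing $k_D$ special points, so the relevant degree is $\epsilon_-(\delta+k_Dd_0)+(2g_D-2+s_D)$, where $s_D$ counts the special points of $D$ in $\bar C$. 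Running through the possibilities for $(g_D,s_D)$, using that $\tilde{\mathcal C}$ is $\epsilon_0$-semistable (so $D$ is neither a degree-$<d_0$ rational tail nor a degree-$0$ rational bridge of $\tilde C$) and --- crucially --- that $\epsilon_-$ lies in the chamber immediately below the wall, i.e.\ $1/\epsilon_-<d_0+1$, one obtains positivity in every case; the only degenerate possibility, $\bar C$ an irreducible genus-$0$ curve with no markings, forces $g=n=0$ and a smooth irreducible generic fiber of degree $d$, whose $\epsilon_-$-stability (part of the standing hypothesis of this case, that $\eta^*$ is $\epsilon_-$-stable) gives $\epsilon_- d>2$, again yielding the needed inequality. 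This case analysis, together with keeping track of base-point lengths across the contraction, is the step I expect to be the main obstacle.

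Finally, $\bar\eta$ is an $\epsilon_-$-stable family over $\operatorname{Spec}R'$ with $\bar\eta|_{\operatorname{Spec}K'}\cong\tilde\eta|_{\operatorname{Spec}K'}\cong\eta_-|_{\operatorname{Spec}K'}$ (the contraction is an isomorphism on the generic fiber, which has no degree-$d_0$ rational tail). Since $Q^{\epsilon_-}_{g,n}(X,\beta)$ is proper, hence separated, $\bar\eta$ and $\eta'=\eta_-|_{\operatorname{Spec}R'}$, being two $\epsilon_-$-stable extensions of the same generic fiber, are canonically isomorphic. In particular $\mathcal C'\cong\bar{\mathcal C}$ compatibly with $\rho$, which is precisely the assertion that $\mathcal C'$ is obtained from $\tilde{\mathcal C}$ by contracting its degree-$d_0$ rational tails in the special fiber.
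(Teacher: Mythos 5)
Your proof is correct and follows essentially the same route as the paper: contract the degree-$d_0$ tails, extend the quasimap across the resulting regular points via Lemma~\ref{lem:extension-of-map}, and identify the result with $\eta'$ by separatedness of $Q^{\epsilon_-}_{g,n}(X,\beta)$. The only difference is that you spell out the $\epsilon_-$-stability check on the special fiber, which the paper leaves implicit.
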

\ifdefined\SHOWPROOFS
\begin{proof}
  Let $\tau:\tilde {\mathcal C} \to \mathcal C^{\prime\prime}$ be the contraction of
  the degree-$d_0$ rational tails in the special fiber. It is an isomorphism
  over an open substack $\mathcal C^0 \subset \mathcal C^{\prime\prime}$, which
  is the complement of finitely many regular points. By
  Lemma~\ref{lem:extension-of-map}, the restriction of $\tilde u$ to
  $\tau^{-1}(\mathcal C^0) \cong \mathcal C^0$ uniquely extends to
  $u^{\prime\prime} : \mathcal C^{\prime\prime} \to [W/G]$. Set $\mathbf
  x^{\prime\prime} = \tau(\tilde{\mathbf x})$. Then $\eta^{\prime\prime}:=
  (\mathcal C^{\prime\prime} \to \operatorname{Spec} R^\prime, \mathbf
  x^{\prime\prime}, u^{\prime\prime})$ is an $R^\prime$-point of 
  $Q^{\epsilon_-}_{g,n}(X,\beta)$, which must be isomorphic to $\eta^\prime$ by
  the separatedness $Q^{\epsilon_-}_{g,n}(X,\beta)$. In particular the
  underlying family of curves are isomorphic.
\end{proof}
\fi
Since the extension $R\subset R^\prime$ is totally ramified, the special fiber
of $\mathcal C^\prime$ is canonically isomorphic to
the special fiber of $\mathcal C_{-}$. 
Each rational tail in  $\tilde{\mathcal C}$ that is contacted must be contracted
to one of the $p_i$'s.
By (2) of the master-space stability condition in
Definition~\ref{def:mspace-stability}, over each $p_i$ there lies a contracted
tail $E_i$. Each $E_i$ intersects the other components of the special fiber at
some an $A_{a_i-1}$-singularity of $\tilde{\mathcal C}$. In particular, $a_i =
1$ if and only if $\tilde{\mathcal C}$ is smooth near $E_i$.

\begin{definition}
  \label{def:singularity-type}
  We say that modification $\tilde \eta$ has singularity type $(a_1/r,\ldots,a_k/r)$.
\end{definition}
It is clear that a further base change does not change the singularity type.
We will only consider sufficiently divisible $r$.
It turns out that the possible singularity types of modifications of a given
$\eta_-$ are bounded from above.
We will define
\[
  b_i\in \mathbb Q_{>0}\cup \{+\infty\},  \quad i = 1 ,\ldots, k,
\]
which will be the maximum of possible singularity types.
First consider the special case when there are no length-$d_0$ base points in
the generic fiber.
In this case, $\eta^*$ is also $\epsilon_+$-stable.
We define $(b_1 ,\ldots, b_k)$ to be the singularity type of the
$\epsilon_+$-stable extension of the quasimaps, which is unique up to further
base change.

In general, let $B^*\subset \mathcal
C_-|_{\operatorname{Spec} K}$ be the length-$d_0$ base locus and $B\subset
\mathcal C_-$ be the closure of $B^*$.  Say $p_{\ell+1},\ldots,p_k$ are the
length-$d_0$ base points in the special fiber that are
contained in $B$. Note that since the special fiber does not have base points of
length strictly greater than $d_0$, $B$ is transversal to the special fiber.
Also, $B$ is disjoint from nodes and markings. Since $R$ is complete we can view
$B$ as a disjoint union of sections of $\pi_-$.

The idea is to first define $b_i = \infty$ for $i=\ell+1
  ,\ldots, k$ and we replace $B$ by additional (possibly orbifold) markings. Then
the generic fiber do not have length-$d_0$ base points any more and we define
$b_1 ,\ldots, b_\ell$ as in the previous case. The argument is standard except for the
following lemma, which enables us to replace base points by markings.

\begin{lemma}
  \label{lem:replace-base-points}
  There is a unique way to replace $B$ by orbifold markings $\mathbf
  y_{\mathrm{reg}-}$, obtaining the family of pointed curves
  \[
    (\mathcal C_{\mathrm{reg}-},(\mathbf x_{\mathrm{reg}-}, \mathbf y_{\mathrm{reg}-})),
  \]
  where $\mathbf x_{\mathrm{reg}-}$ comes from $\mathbf x_{-}$, and $\mathbf
  y_{\mathrm{reg}-}$ comes from $B$, such that
  the restriction of $u_-$ to $\mathcal C_-\backslash B$ uniquely extends to a
  quasimap $u_{\mathrm{reg}-}: \mathcal C_{\mathrm{reg}-} \to [W/G]$ such that
  $\mathbf y_{\mathrm{reg}-}$ is mapped into $X$.
\end{lemma}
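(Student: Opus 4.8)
The plan is to localize the statement near $B$ and then perform an explicit root-stack modification of $\mathcal C_-$ whose order is dictated by the monodromy of $u_-$. Since $B$ is a disjoint union of sections of $\pi_-$ contained in the locus where $\mathcal C_-$ is smooth over $\operatorname{Spec} R$ and, being a base locus, disjoint from all nodes and markings (hence contained in the representable locus of $\mathcal C_-$), it suffices to treat one connected component $B_j$ of $B$ at a time and glue; the global uniqueness will follow from the local uniqueness together with the separatedness of the moduli of quasimaps on a fixed family of nodal curves (the first paragraph of the proof of \cite[Proposition~4.3.1]{ciocan2014stable}, in the orbifold setting \cite[\textsection2.4.4]{cheong2015orbifold}). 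So I would fix $B_j$, choose an \'etale neighbourhood in which $\mathcal C_-\cong\operatorname{Spec} R[[z]]$ with $B_j=\{z=0\}$, and observe that by the transversality remark preceding the lemma the base scheme $u_-^{-1}([W^{\mathrm{us}}/G])$ equals the thickening $d_0 B_j$ there. Composing $u_-$ with \eqref{eq:embedding-of-quotient} and the closed embedding of $\underline X$ into $\mathbb P^N\times W\git_0 G$, the $N+1$ pulled-back elements of $H^0(W,\mathcal O_W(\theta))^G$ become $N+1$ sections of $u_-^*L_\theta$ whose common zero along $B_j$ is exactly $d_0 B_j$ --- here the hypothesis that $\bigoplus_m H^0(W,\mathcal O_W(m\theta))^G$ is generated in degree one is used, to rule out that the higher-degree generators cut deeper. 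Dividing these sections by a local equation of $d_0 B_j$ yields sections $\sigma_0',\ldots,\sigma_N'$ without common zero along $B_j$.

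Next I would introduce the root stack. The restriction $u_-|_{\mathcal C_-\setminus B_j}$ lands in $X$ on the punctured neighbourhood $\operatorname{Spec} R[[z]]\setminus\{z=0\}$, and being a map to the Deligne--Mumford stack $X$ it has, around $B_j$, a monodromy of some well-defined order $r_j\ge 1$ (a locally constant invariant of the family, hence a single integer). Set $\mathcal C_{\mathrm{reg}-}$ near $B_j$ to be the root stack $\sqrt[r_j]{B_j/\mathcal C_-}$, with exceptional gerbe $\mathcal B_j$; this is a $\mu_{r_j}$-gerbe over $B_j\cong\operatorname{Spec} R$, hence trivial since $N_{B_j/\mathcal C_-}$ admits an $r_j$-th root on $\operatorname{Spec} R$, and we equip it with the resulting trivialization. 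Pulled back to $\mathcal C_{\mathrm{reg}-}$ the sections $\sigma_0',\ldots,\sigma_N'$ still have empty common zero along $\mathcal B_j$, and the monodromy of $u_-|$ around $\mathcal B_j$ is now trivial; consequently $u_-|_{\mathcal C_-\setminus B_j}$ extends over $\mathcal B_j$ to a representable morphism $\mathcal C_{\mathrm{reg}-}\to[W/G]$ which, by the no-common-zero property and the closedness of $\underline X$, sends $\mathcal B_j$ into $X$. Concretely: the coarse extension exists by closedness of $\underline X$; the stacky lift exists because the monodromy obstruction has been killed and $X$ is proper over $\underline X$; and it is representable because the band of $\mathcal B_j$ equals the order of the monodromy.

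For uniqueness I would argue first that the band $\mu_{r_j}$ is forced: with a smaller root the monodromy is not killed and no extension into $[W/G]$ exists over the centre, while with a larger root the extended morphism $\mathcal C\to[W/G]$ fails to be representable at the marking, contradicting the definition of a quasimap. Once the band is fixed, the twisted curve $\mathcal C_{\mathrm{reg}-}$ over $\operatorname{Spec} R$ --- prescribed coarse space, equal to $\mathcal C_-$ away from $B$, with the given band at $\mathbf y_{\mathrm{reg}-}$ --- is unique by the deformation theory of twisted curves (cf.\ \cite{olsson2007}), and then the quasimap $u_{\mathrm{reg}-}$ extending $u_-|_{\mathcal C_-\setminus B}$ is unique by the separatedness cited above. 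Gluing over the finitely many components of $B$ produces $(\mathcal C_{\mathrm{reg}-},(\mathbf x_{\mathrm{reg}-},\mathbf y_{\mathrm{reg}-}),u_{\mathrm{reg}-})$; away from $B$ one has $u_{\mathrm{reg}-}=u_-$, so $u_{\mathrm{reg}-}$ is indeed a quasimap, with base locus disjoint from $\mathbf y_{\mathrm{reg}-}$.

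I expect the main obstacle to be the middle step: verifying that after passing to the $r_j$-th root stack the map genuinely extends over the new gerbe marking, lands in $X$, and is representable (which is what pins $r_j$ down). This is exactly where the two standing hypotheses on the target enter --- degree-one generation of the invariant ring, so that the base point can be divided away by the $\mathbb P^N$-coordinates, and the fact that $X$ is a proper Deligne--Mumford stack over its coarse space, so that the coarse extension lifts once the monodromy is trivialized. Everything else is local bookkeeping or a direct appeal to separatedness and to standard facts about twisted curves and root stacks.
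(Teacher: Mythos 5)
Your proposal is correct in outline, and its skeleton (root construction along each component of $B$, extension over the generic point of the gerbe, Hartogs-type extension over the remaining closed points via Lemma~\ref{lem:extension-of-map}, uniqueness of the root order from representability plus the monodromy obstruction, uniqueness of the extension from separatedness) matches the paper's, which packages the first two steps into a citation of \cite[Lemma~2.5]{cheong2015orbifold}. Where you genuinely diverge is in the proof that $u_{\mathrm{reg}-}$ has no base point at $\mathbf y_{\mathrm{reg}-}$ \emph{in the special fiber} --- the only nontrivial regularity claim, since landing in $X$ at the generic point of each gerbe comes for free from the extension lemma. The paper argues by degree bookkeeping: it applies the same regularization to the special fiber alone to get a quasimap $u_0$ regular along the new markings, observes that $u_0$ and $u_{\mathrm{reg}-}|_{\text{special fiber}}$ agree off the markings, and compares $\deg(u_0)=\deg(u_-)-(k-\ell)d_0=\deg(u_{\mathrm{reg}-})$ to rule out extra base points. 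You instead compute locally with the $N+1$ sections of $u_-^*L_\theta$ coming from the embedding \eqref{eq:embedding-of-quotient}, divide by a local equation of $d_0B_j$, and check the quotient sections have no common zero. Your route uses the degree-one generation of the invariant ring more directly and avoids constructing the auxiliary fiberwise quasimap $u_0$; the paper's route avoids any local coordinate computation. Both are valid.

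Two steps of yours deserve more care. First, the claim that the base scheme near $B_j$ is \emph{exactly} the divisorial thickening $d_0B_j$ is what licenses the division, and it does not follow from transversality alone: you need that the base locus, being finite over $\operatorname{Spec}R$ with all fibers of length exactly $d_0$ (length $\le d_0$ by $\epsilon_-$-stability of the special fiber, $\ge d_0$ by semicontinuity), is flat over $R$, whence its ideal in $R[[z]]$ is forced to be $(z^{d_0})$; without flatness the divisorial part could be smaller and the divided sections would fail to be regular. Second, you should make explicit that the divided sections $\sigma_i'$ really are the pullbacks $u_{\mathrm{reg}-}^*s_i$: since all $u_{\mathrm{reg}-}^*s_i$ lie in one line bundle and the map lands in $X$ at the generic point of $\mathcal B_j$, the twist of $u_{\mathrm{reg}-}^*L_\theta$ relative to $u_-^*L_\theta$ along $\mathcal B_j$ is pinned down to be exactly $-d_0$ times the (root of the) divisor, which identifies $u_{\mathrm{reg}-}^*s_i$ with $\rho^*(\sigma_i/z^{d_0})$ and lets the no-common-zero computation conclude. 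With those two points filled in, your argument is complete.
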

\ifdefined\SHOWPROOFS
\begin{proof}
  We write $B = \coprod_{i=1}^{s}B_i$, where each $B_i$ is the image of a
  section of $\mathcal C_{-} \to \operatorname{Spec} R$.
  For a tuple of positive integers $\vec r = (r_1 ,\ldots, r_s)$, let $\mathcal
  C_{\vec r}$ be the stack obtained from $\mathcal C_{-}$ via the $r_i$-th root
  construction along the divisor $B_i$ for all $i=1 ,\ldots, s$. 
  We view ${\mathcal C_{-}\setminus B}$ as an open substack of $\mathcal
  C_{\vec r}$.
  By \cite[Lemma~2.5]{cheong2015orbifold}, there exists a unique $\mathcal
  C_{\vec r}$ such that ${u_{-}}|_{\mathcal C_{-}\setminus B}$ extends to 
  a morphism $U\to [W/G]$, where $U\subset \mathcal C_{\vec r}$ is some open
  substack of $\mathcal C_{\vec r}$ 
  containing both $\mathcal C_{-}\setminus B$ and the generic fiber of $\mathcal
  C_{\vec r}$. Set $\mathcal C_{\mathrm{reg}-}$ to be that $\mathcal C_{\vec
    r}$, $\mathbf x_{\mathrm{reg}-}$ to be $\mathbf x_{-}$ and $\mathbf
  y_{\mathrm{reg}-}$ to be the universal roots of the $B_i$'s. By
  Lemma~\ref{lem:extension-of-map}, the map further extends to $u_{\mathrm{reg-}}:\mathcal
  C_{\mathrm{reg}-}\to [W/G]$.
  
  It remains to show that $u_{\mathrm{reg}-}$ has no base point at
  $\mathbf y_{\mathrm{reg}-}$ in the special fiber. To see this,
  we first take the special fiber of $(\mathcal C_{-}, \mathbf x_{-},u_{-})$ and 
  apply the same construction to replace its base points by markings.
  Thus we obtain a quasimap
  \[
    u_0:(C^\prime, \mathbf x^\prime, \mathbf y^\prime) \longrightarrow  [W/G]
  \]
  that is regular along $\mathbf y^\prime$.
  Note that $(C^\prime
  \setminus {\mathbf y^\prime},u_0|_{C^\prime \setminus {\mathbf y^\prime}})$ is isomorphic to
  the special fiber of
  \begin{equation*}
    (\mathcal C_{\mathrm{reg}-}\setminus
    \mathbf y_{\mathrm{reg}-}, u_{\mathrm{reg}-}|_{(\mathcal
      C_{\mathrm{reg}-}\setminus \mathbf y_{\mathrm{reg}-})}).
  \end{equation*}
  We compare the degrees (c.f.\ page 784 of \cite{cheong2015orbifold})
  \[
    \deg(u_0) = \deg(u_-) - (k-\ell) d_0 = \deg(u_{\mathrm{reg}-}).
  \]
  Since $u_0$ has no base point at $\mathbf
  y^\prime$, $u_{\mathrm{reg}-}$ has no base points at the special fiber of $\mathbf
  y_{\mathrm{reg}-}$. Hence $u_{\mathrm{reg}-}$ has no base point along $\mathbf
  y_{\mathrm{reg}-}$ and the proof is complete.
\end{proof}
\fi

Thus we have finished the definition of $(b_1 ,\ldots, b_k)$.
\begin{lemma}
  \label{lem:modification}
  For $\vec a = (a_1,\ldots,a_k)\in \mathbb Q_{>0}^{k}$ and sufficiently
  divisible $r$, the following two statements are equivalent:
  \begin{enumerate}[(1)]
  \item
    $\eta_-$ has a modification $\tilde{\eta}$ of degree $r$ and singularity
    type $\vec a$;
  \item
    $a_i\leq b_i$ for each $i$.
  \end{enumerate}
  Moreover, assuming that $\tilde\eta$ exists,
  \begin{itemize}
  \item
    $\tilde\eta $ is uniquely determined by $\vec a$ and $r$;
  \item
    for each $i$, $E_i$  contains no length-$d_0$ base point if and only
    if $a_i = b_i$, where
    $E_i$ is the rational tail of $\tilde\eta$  lying over $p_i$.
  \end{itemize}
\end{lemma}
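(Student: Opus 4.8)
The plan is to reduce Lemma~\ref{lem:modification} to a purely local statement at each length-$d_0$ base point $p_i$ of the special fibre, and there to classify the possible local modifications together with the forced behaviour of the extended quasimap on the exceptional tail. By Definition~\ref{def:modification} a modification $\tilde\eta$ of $\eta_-$ restricts, over the complement of the $p_i$'s, to the pullback $\eta'$; by Lemma~\ref{lem:morphism-btw-resolutions} the morphism $\tilde{\mathcal C}\to\mathcal C'$ contracts exactly the degree-$d_0$ rational tails in the special fibre, so it is an isomorphism away from the $p_i$'s and each $p_i$ carries exactly one contracted tail $E_i$ (condition (2) of Definition~\ref{def:mspace-stability}). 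Moreover $\tilde u$ is the unique quasimap extension of $u'$ across the $E_i$'s: uniqueness is Lemma~\ref{lem:entanglement-and-calibration}(2), and existence away from the nodes of $\tilde{\mathcal C}$ is Lemma~\ref{lem:extension-of-map} applied over the smooth (hence normal) locus. Thus $\tilde\eta$ is completely determined by the germs of $\tilde{\mathcal C}\to\mathcal C'$ at the $p_i$, which moreover contribute independently, and it suffices to treat $k=1$ and work in a formal neighbourhood of $p=p_1$. Since $p$ is disjoint from the nodes of the special fibre, that neighbourhood is a regular surface (\'etale-locally a scheme even if $p$ is an orbifold point), say $\operatorname{Spec}R'[[x]]$ with special fibre $\{s=0\}$, closed point $p$, $s$ a uniformizer of $R'$ and $r=[R':R]$. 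The modifications $\tilde{\mathcal C}^{(a)}_p\to\operatorname{Spec}R'[[x]]$ extracting a single exceptional $\mathbb P^1$ over $p$ are classified up to isomorphism by an integer $a\ge 1$: build $\tilde{\mathcal C}^{(a)}_p$ as an iterated blow-up followed by contracting the sub-exceptional curves, so that the special fibre becomes $C_0\cup E$ with $E\cong\mathbb P^1$ meeting $C_0$ at an $A_{a-1}$-singularity, i.e.\ singularity type $a/r$ (Definition~\ref{def:singularity-type}). In particular $\tilde\eta$ is determined by $\vec a$ and $r$, which is the uniqueness clause.

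The crux is the trade-off and the identification of $b$. Enlarging $r$ if necessary, one puts $u'$ near $p$ into a normal form: in the vertical case $p\notin B$ the length-$d_0$ base scheme of $u'$ becomes monomial, and in the horizontal case $p\in B$ it becomes the union of the sections cut out by $B$. Extending $u'$ to $\tilde u^{(a)}$ on $\tilde{\mathcal C}^{(a)}_p$ and computing the degree and base locus on $E$ directly, one finds a threshold $a^\star\in\mathbb Z_{>0}\cup\{\infty\}$, finite exactly when $p\notin B$, such that for $1\le a<a^\star$ the tail $E$ is a constant tail (degree $d_0$, one length-$d_0$ base point) --- precisely a reparametrized degree-$d_0$ rational tail carrying a length-$d_0$ base point --- while for $a=a^\star$ the extended quasimap on $E$ has degree $d_0$ and no base point, and for $a>a^\star$ there is no quasimap extension across $E$ at all, the base locus failing to be finite. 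Hence for every $a\le a^\star$ the resulting $\tilde\eta$ is $\epsilon_0$-semistable, while for $a>a^\star$ no modification exists; this already yields both implications of the lemma once we check $a^\star/r=b_i$.

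For $i\le\ell$ this identification is immediate: at $a=a^\star$ the modification near $p_i$ is base-point-free, so its germ is that of the $\epsilon_+$-stable extension, whose singularity type at $p_i$ is $b_i$ by definition. For $i>\ell$ both sides equal $+\infty$: replacing $B_i$ by a marking (Lemma~\ref{lem:replace-base-points}) removes the length-$d_0$ base point from the generic fibre, and $\epsilon_+$-stabilizing leaves that marking on a smooth point, never a bubbled tail, so there is no finite ``singularity type at $p_i$'' --- consistently, in the actual (un-regularized) modification the tail $E_i$ remains a constant tail for all $a_i$, so it always carries a length-$d_0$ base point, matching ``$E_i$ has no length-$d_0$ base point $\iff a_i=b_i$'' in the vacuous case $b_i=\infty$. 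Assembling the local models over the various $p_i$ (with $r$ divisible enough that all $a_ir\in\mathbb Z$ and the normal forms hold) gives the existence in (2)$\Rightarrow$(1); (1)$\Rightarrow$(2) is the non-existence of modifications for $a_i>a^\star=b_ir$; and the final equivalence together with the uniqueness were recorded above.

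The main obstacle is the local computation behind the threshold $a^\star$: producing the normal form for $u'$ after base change, and then tracking, through the iterated blow-up and contraction, exactly how the degree of $\tilde u^{(a)}$ on $E$ and the length of its base locus on $E$ vary with $a$ --- in particular verifying the discrete jump at $a=a^\star$, the failure of finiteness of the base locus for $a>a^\star$, and the match of $a^\star/r$ with the singularity type of the $\epsilon_+$-stable model. The horizontal case $p\in B$, where $b_i=\infty$ and $E_i$ always remains a constant tail, requires the same bookkeeping carried out separately, and one should also check compatibility with the shift of the calibration bundle recorded in Lemma~\ref{lem:limit-calib-1}, which is what makes this lemma usable downstream in the properness argument.
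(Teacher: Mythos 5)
Your reduction to a local problem at each $p_i$ and the classification of the surface germs by a single integer is consistent with what the lemma asserts, but the step you yourself flag as ``the main obstacle'' --- the normal form for $u'$ and the direct computation of the threshold $a^\star$ --- is exactly the content of the lemma, and you have not carried it out. The paper avoids this computation entirely: it constructs all admissible modifications by interpolating between the two stable models, namely it takes the $\epsilon_+$-stable modification $\eta_+$ (whose tail meets the rest of the special fibre at an $A_{rb_i-1}$-singularity $q_i$, by the very definition of $b_i$), passes to the minimal resolution of the $q_i$, and blows down all but one curve $E_{i,a_i}$ in each exceptional chain; the quasimap descends by Lemma~\ref{lem:contraction-bridge} because it has degree $0$ on the contracted chain. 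The bound $a_i\le b_i$ is then simply the length of the chain, and the uniqueness is proved by showing that \emph{every} modification is dominated (after base change) by this resolution --- using the separatedness/properness of $Q^{\epsilon_\pm}$ and the trick of adding auxiliary markings to resolve the length-$d_0$ base points --- rather than by a local classification of germs. Your assertion that the germs $\tilde{\mathcal C}^{(a)}_p$ are ``classified up to isomorphism by an integer $a\ge 1$'' via iterated blow-ups is itself nontrivial (naive iterated blow-ups at the node produce non-reduced special fibres unless base changes are interleaved correctly), and establishing that your list is exhaustive is essentially the paper's uniqueness argument in disguise.

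More seriously, the mechanism you propose for the failure when $a>a^\star$ is likely wrong. In explicit local models (e.g.\ $[W/G]=[\mathbb C^2/\mathbb C^*]$ with sections $(x,t^{rb})$ and the modification $\{Xt^{ra}=Yx\}$), for $ra>rb$ the extended sections have \emph{empty or finite} base locus on the tail; what goes wrong is either that the quasimap line bundle fails to extend across the node (the required twist $\mathcal O(mE)$ is a non-Cartier Weil divisor class on the $A_{ra-1}$-singularity), or that the extension exists but assigns the tail degree strictly less than $d_0$, violating $\epsilon_0$-semistability. Neither failure is ``the base locus failing to be finite.'' Since your whole (1)$\Leftrightarrow$(2) argument rests on this dichotomy at the threshold, the gap is not cosmetic: you would need to redo the local analysis with the correct obstruction, or switch to the paper's strategy of deriving non-existence for $a_i>b_i$ from the fact that every modification is dominated by the minimal resolution of $\mathcal C_+$. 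The treatment of the horizontal case $i>\ell$ and the matching with Lemma~\ref{lem:limit-calib-1} are fine as stated, but they sit on top of the same unproved core.
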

\ifdefined\SHOWPROOFS
\begin{proof}
  We focus on the case when there are no base points of length $d_0$ in the
  generic fiber and briefly describe the modification needed in the
  general case. We will first assume (2) and construct
  $\tilde\eta$. Then we will prove the uniqueness by showing that any
  modification must come from that construction. The remaining assertions
  including the uniqueness then follow from the explicit construction.

  Now we assume that there is no base point of length $d_0$ in the generic
  fiber. Given sufficiently divisible $r$ and $\vec a\in \mathbb Q_{>0}^k$
  satisfying (2), first form the degree-$r$ $\epsilon_+$-stable modification
  \[
    {\eta}_+  = (\pi_+:{\mathcal C}_+ \to
    \operatorname{Spec}  R^\prime, {\mathbf x}_+,u_+).
  \]
  Let $E_1 ,\ldots, E_k$ be the degree-$d_0$ rational tails in the special fiber
  of $\mathcal C_+$. Suppose that $E_i$ intersects the other
  components of the special fiber of $\mathcal C_+$ at the point
  $q_i$, which is an $A_{rb_i-1}$-singularity of $\mathcal C_+$, by the
  definition of $b_i$.
  Let
  \[
   \tilde{\mathcal C}_+ \longrightarrow  \mathcal C_+
  \]
  be the minimal resolution of the singularities at $q_1 ,\ldots, q_k$.
  The fiber over each $q_i$ is a chain of rational curves as is shown the figure
  \begin{center}
    \begin{tikzpicture}[xscale = 1.5]
      \draw (-10+.5,-.5) -- (-9+.5,.5); \node at (-10.2+.5,-.3) {\footnotesize
        $E_{i,1}$}; \draw (-9.5+.5,.5) -- (-8.5+.5,-.5); \node at (-9.7+.5,.5)
      {\footnotesize $E_{i,2}$}; \draw (-9+.5,-.5) -- (-8+.5,.5); \node at
      (-8.3+.5,.5) {\footnotesize $E_{i,3}$}; \node at (-7,0) {$\cdots$}; \node
      at (-6.5+.5,.5) {\footnotesize $E_{i,rb_i -2}$}; \node at (-5.1+.5,.5)
      {\footnotesize $E_{i,rb_i-1}$}; \node at (-4.8+.5,-.3) {\footnotesize
        $E_{i,rb_i}$}; \draw (-5-.5,-.5) -- (-6-.5,.5); \draw (-5,.5) --
      (-6,-.5); \draw (-5+1-.5,-.5) -- (-6+1-.5,.5);
    \end{tikzpicture}.
  \end{center}
  Assume that
  $E_{i,rb_i}$ intersects the proper transform of $E_i$.
  By composition we have the quasimap $\tilde{\mathcal C}_+ \to [W/G]$.
  Now apply Lemma~\ref{lem:extension-of-map} and Lemma~\ref{lem:contraction-bridge}.
  Let
  \[
    \tilde{\mathcal C}_+ \longrightarrow \tilde {\mathcal C}
  \]
  be the blowing down of
  \[
    E_{i,1} ,\ldots, E_{i,a_i-1} \quad \text{and} \quad E_{i,a_i+1} ,\ldots, E_{i,rb_i}.
  \]
  The quasimap descends as $\tilde u: \tilde {\mathcal C}\to [W/G]$. Thus
  we obtain the desired $\tilde\eta$.

  Now without assuming (2), we prove that up to further finite base change any
  modification $\tilde\eta$ comes from such a
  construction. Given $\tilde \eta$, we first construct up to finite base change
  a family of $\epsilon_0$-semistable quasimaps
  \[
    \tilde{\eta}_+  = (\tilde \pi_+: \tilde{\mathcal C}_+ \to  \operatorname{Spec}
    R^\prime,\tilde {\mathbf x}_+ , \tilde u_+)
  \]
  with a $\mathcal C_-$-morphisms
  \[
    f:\tilde{\mathcal C}_+ \longrightarrow   \tilde{\mathcal C}.
  \]
  To obtain $\tilde{\eta}_+$, we first resolve the degree-$d_0$ base points on
  each $E_i$ as follows. We add some nonorbifold markings to $\tilde \eta$ so that
  it becomes $\epsilon_-$-stable. Thus $E_i$ is no longer a rational tail. The
  generic fiber is still $\epsilon_+$-stable. We take the $\epsilon_+$-stable
  modification of this family and then forget the additional markings. This
  replaces the length-$d_0$ base points by a degree-$d_0$ rational tails. The
  proper transform of $E_i$ contains two nodes.
  We then take the minimal resolution of the singularity at those two nodes.
  This gives us the desired family $\tilde{\eta}_+$ together with the map
  $\tilde{\mathcal C}_+ \to \tilde{\mathcal C}$.

  Having obtained $\tilde{\eta}_+$, we see that it is indeed the
  $\tilde{\eta}_+$ constructed in the first part of the proof. The argument
  is similar to the one for Lemma~\ref{lem:morphism-btw-resolutions}.
  We contract the chain of unstable rational bridges in the special fiber of
  $\tilde{\mathcal C}_+$
  using Lemma~\ref{lem:contraction-bridge}, and
  obtain an $\epsilon_+$-stable modification of $\eta$. By
  the uniqueness of $\epsilon_+$-stable modifications, it must be equal to the
  $\mathcal C_+$ constructed in the first part of the proof up to base change.
  Thus we have a (representable) morphism
  \[
     \tilde{\mathcal C}_+ \longrightarrow  \mathcal C_+.
  \]
  This morphism contracts a chain of smooth rational curves to each node $q_i$.
  Since the special fiber of $\tilde{\mathcal C}_+$ is reduced, it must be the
  minimal resolution of the $q_i$'s as in the first part of this proof. This
  proves that up to further base change, every modifications is the unique one
  coming from the construction in the first part of this proof. Since up to base
  change the construction
  only depends on $\vec a$, this proves uniqueness. The ``moreover'' part of the Lemma
  follows from the explicit construction and the uniqueness.

  In the general case, we first replace the length-$d_0$ base points of
  $\eta_-$ by additional markings $\mathbf y_{\mathrm{reg}-}$ using
  Lemma~\ref{lem:replace-base-points}.
  The construction above gives us rational tails  with
  desired singularities except at those additional markings. Then we
  modify the surface at the
  additional markings $\mathbf y_{\mathrm{reg}-} = (y_{\ell+1} ,\ldots,
  y_k)$
  in the special fiber to obtain the desired singularity. We modify it at $y_i$
  via a sequence of $a_i$ blowups followed by one blowdown contracting all the
  exceptional divisors except for the last one.
  This gives us the desired rational tail with prescribed singularity. Finally
  we change the additional markings $\mathbf y_{\mathrm{reg}-}$ back into base
  points: in the generic fiber, use the original quasimaps; near $\mathbf
  y_{\mathrm{reg}-}$ in the special fiber, use Lemma~\ref{lem:extension-of-map}.
  It is easy to see that any modification must come from such a construction.
  For each blowup in the construction, the blowup center must be the
  intersection of the special fiber and the proper transform of the $y_i$'s.
  Hence this construction is unique.
\end{proof}
\fi
\ifdefined\SHOWPROOFS
\begin{proof}[Verifying the valuative criterion in Case 1]
  We now prove that up to finite base change $\xi^*$ has a unique stable
  extension $\xi$ in Case 1.

  Recall that up to finite base change we have a unique $\epsilon_-$-stable
  extension $\eta_-$ of $\eta^*$, where $\eta^*$ is the underlying family of
  quasimaps of $\xi^*$. For any stable extension $\tilde \xi$ of $\xi^*$ possibly
  after some degree-$r$ finite base change $\operatorname{Spec} R^\prime \to
  \operatorname{Spec} R$, the underlying family of quasimaps $\tilde \eta$ of
  $\tilde \xi$ is a modification of $\eta_-$, in the sense of
  Definition~\ref{def:modification}. By
  Lemma~\ref{lem:entanglement-and-calibration}, $\tilde\eta$ uniquely determines
  $\tilde\xi$. By Lemma~\ref{lem:modification}, $\tilde\eta$ is uniquely
  determined by $r$ and its singularity type $\vec a =(a_1 ,\ldots, a_k)$. We make
  the convention that $a_i = 0$ if $\tilde{\mathcal C}$ is isomorphic to $\mathcal
  C_-$ near $p_i$, up to base change. Note that by (2) of the stability condition
  in Definition~\ref{def:mspace-stability}, either $a_i>0$ for all $i$ or $a_i=0$
  for all $i$.

  By Lemma~\ref{lem:limit-of-entanglement-case1}, the entangled tails are those
  corresponding to the maximal $a_i$. By Lemma~\ref{lem:limit-calib-1},
  \[
    \operatorname{ord}(\tilde v_1)- \operatorname{ord} (\tilde v_2) =
    r(\operatorname{ord}(v_{1-})- \operatorname{ord} ( v_{2-}))
     - r\sum_{i=1}^k a_i.
  \]
  First suppose that
  $\operatorname{ord}(v_{1-}) > \operatorname{ord} ( v_{2-})$, i.e.\  $v_{1-} =
  0$ at the closed point.
  Then $\xi_-$ is not stable since it has length-$d_0$ base points. Thus a
  nontrivial modification is needed.
  We write $\delta = \operatorname{ord}(v_{1-})- \operatorname{ord} ( v_{2-})$ and
  $|\vec a| = \sum_{j=1}^k a_j$.
  By Lemma~\ref{lem:modification}, we
  see that given $\vec a$ and a sufficiently divisible $r$, there is a stable
  extension $\xi$ whose underlying family of quasimaps is a degree-$r$
  modification of $\eta_-$ of singularity type $\vec a$ if and only if
  \begin{equation}
    \begin{cases}
      |\vec a|\leq \delta;\\
      0 < a_i \leq  b_i, \text{ for all }i = 1 ,\ldots, k;\\
      \text{if }|\vec a| < \delta
       \text{ then }a_i =  b_i \text{ for all } i = 1 ,\ldots, k;\\
      \text{if }a_i < b_i\text{ then }a_i\text{ is maximal among
      }a_1,\ldots,a_k, \text{ for all } i = 1 ,\ldots, k.
    \end{cases}
  \end{equation}
  It is easy to see that the system has a unique solution for $(a_1 ,\ldots,
  a_k)$. This proves the existence and uniqueness of the stable extension $\xi$,
  up to finite base change.

  Now suppose
  $\operatorname{ord}(v_{1-}) \leq \operatorname{ord} ( v_{2-})$, i.e.\
  $v_{1-}\neq 0$ at the close point. Then $\xi :=\xi_-$
  is already stable. We claim that this is the unique stable extension. Indeed,
  if $a_i=0$ for all $i$, then the uniqueness follows from the separatedness of
  $Q^{\epsilon_-}_{g,n}(X,\beta)$. If $a_i>0$ for some $i$, it means there are
  degree-$d_0$ rational tails in the special fiber. By
  Lemma~\ref{lem:limit-calib-1},
  $\operatorname{ord}(\tilde v_1) < \operatorname{ord}(\tilde v_2)$, which
  violates the stability condition.
\end{proof}
\fi

\subsection{Case 2: A single rational tail} Now we study the case
$(g,n,d)=(0,1,d_0)$. In this case the curve will never break into irreducible
components because rational tails of degree $<d_0$ are not allowed.
Hence there is no ``entanglement'', i.e.\ $\tilde{\mathfrak M}_{0,1,d} =
{\mathfrak M}_{0,1,d}^{\mathrm{wt,ss}}$.
Recall that  the calibration bundle is the relative cotangent bundle along the unique
marking (Definition~\ref{def:calibration-bundle}). We denote the calibration
bundle of a family of curves $\mathcal C$ by $\mathbb M_{\mathcal C}$.

As in the previous case, we will start with some extension of $\eta^*$ and then
modify it. If there is no length-$d_0$ base point in the generic fiber, up to
finite base change let $\eta_+ = (\pi_+: \mathcal C_+\to \operatorname{Spec}  R,
\mathbf x_+, u_+)$ be the $\epsilon_+$-stable extension of $\eta^*$. If there
is a length-$d_0$ base point in the generic fiber, take any semistable extension
of $\eta^*$. We still call it $\eta_+$, to unify the notation. In both
cases, fix a nonvanishing section $s_+$ of $\mathbb M_{\mathcal C_+}$.

Recall the definition of a modification in Definition~\ref{def:modification}.
Given a modification $\tilde\eta$ over the degree $r$ base change
$\operatorname{Spec} R^\prime \to \operatorname{Spec} R$, the pullback of
$s_+|_{\operatorname{Spec} K}$ extends to a rational section $\tilde s$ of
$\mathbb M_{\tilde {\mathcal C}}$.
\begin{definition}
  \label{def:order-and-degree}
  We define the degree of the modification $\tilde\eta$ to be $r$, and define
  the order of $\tilde\eta$ to be $\operatorname{ord}(\tilde s)/r$, where
  $\mathrm{ord}(\tilde s)$ is the vanishing order\footnote{The vanishing order
    is negative is $\tilde s$ has a pole.} of $\tilde s$ at the closed
  point of $\operatorname{Spec}  R^\prime$.
\end{definition}
Clearly, a further base change does not affect the order.  When there is no
length-$d_0$ base point in the generic fiber, set $b=0$. Otherwise set
$b=-\infty$.
\begin{lemma}
  \label{lem:modification2}
  Let $r\in \mathbb Z_{>0}$ and $a\in \mathbb Q$. Suppose that $r$ is
  sufficiently divisible. Then there is a modification $\tilde{\eta}$ of degree
  $r$ and order $a$ if and only if $a\geq b$.

  Moreover, assuming that $\tilde\eta$ exists, then
  \begin{itemize}
  \item
    $\tilde\eta $ is uniquely determined by $a$ and $r$;
  \item
    $\tilde\eta$ is $\epsilon_+$-stable if and only if $a = b$.
  \end{itemize}
\end{lemma}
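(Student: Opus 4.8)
The proof will follow the same strategy as that of Lemma~\ref{lem:modification}, but the situation is considerably simpler: since rational tails of degree $<d_0$ and degree-$0$ rational bridges are forbidden, every $\epsilon_0$-semistable curve in the case $(g,n,d)=(0,1,d_0)$ is an irreducible (orbifold) $\mathbb P^1$, so there is no breaking of the domain, no entanglement, and only one point at which any modification can take place. I will treat the cases $b=0$ and $b=-\infty$ separately; throughout, ``sufficiently divisible $r$'' is chosen so that $ra\in\mathbb Z$ and so that the gerbe structure at the marking becomes trivial after the degree-$r$ base change, allowing the orbi-$\mathbb P^1$ to be reparametrized over $R'$.

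Suppose first $b=0$, i.e.\ $\eta^*$ has no length-$d_0$ base point in the generic fiber, so that $\eta_+=(\mathcal C_+\to\operatorname{Spec}R,\mathbf x_+,u_+)$ is the $\epsilon_+$-stable extension; since the moduli of genus-$0$ $1$-pointed curves is a point, $\mathcal C_+$ underlies a trivial orbi-$\mathbb P^1$-bundle with smooth total space, the marking is a section, and $s_+$ trivializes $\mathbb M_{\mathcal C_+}$. I would construct a modification of degree $r$ and order $a\ge 0$ as follows: work after a degree-$r$ base change, set $n=ra\in\mathbb Z_{\ge 0}$, choose a coordinate $z$ on $\mathbb P^1_{R'}$ in which the marking is at $z=0$, and form the new family of quasimaps obtained by precomposing $u_+$ with the birational self-map $z\mapsto t^{n}z$ ($t$ the uniformizer of $R'$) and extending over the remaining codimension-$2$ point via Lemma~\ref{lem:extension-of-map}. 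A direct check shows the generic fiber is unchanged (it is $\eta^*$ reparametrized by an isomorphism fixing the marking), while in the special fiber all of the degree is pushed onto $z=\infty$, producing a length-$d_0$ base point there disjoint from the marking, so the new family is $\epsilon_0$-semistable; by construction its induced rational section $\tilde s$ of $\mathbb M$ acquires $\operatorname{ord}(\tilde s)=n$, so the modification has order $a$. The same rescaling with $n<0$ instead concentrates the degree at $z=0$, which is the marking, so no $\epsilon_0$-semistable modification of negative order exists, giving the bound $a\ge b=0$.

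When $b=-\infty$, $\eta^*$ already carries a length-$d_0$ base point, so its map part is constant and the base point and the marking sit at two distinct points of the $\mathbb P^1$. Taking $\eta_+$ to be any $\epsilon_0$-semistable extension, the same rescaling now yields modifications of every order $a\in\mathbb Q$, since the relevant one-parameter reparametrizations fix both the marking and the base locus, so there is no obstruction from the marking; alternatively one first applies Lemma~\ref{lem:replace-base-points} to replace the length-$d_0$ base locus by an orbifold marking, runs the argument on the rigidified family, and reverts to a base point using Lemma~\ref{lem:extension-of-map}. Either way there is no lower bound on the order, matching the convention $b=-\infty$.

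For uniqueness and the remaining assertions: given any modification $\tilde\eta$ of degree $r$ and order $a$, the underlying family of curves over $R'$ is again an orbi-$\mathbb P^1$-bundle, and the generic identification $\tilde\eta|_{K'}\cong\eta_+|_{K'}$ is an isomorphism of quasimaps on $\mathbb P^1_{K'}$ preserving the marking. By separatedness of the moduli of quasimaps on a fixed family of nodal curves (as used in Lemma~\ref{lem:entanglement-and-calibration}(2)), $\tilde\eta$ is determined, up to isomorphism over $R'$, by the class of this identification in the quotient of the $K'$-reparametrizations fixing the marking by the corresponding $R'$-points; a Cartan-type normal form shows this class is recorded precisely by $\operatorname{ord}(\tilde s)=ra$, so $\tilde\eta$ coincides with the one constructed above and is unique. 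Finally $\tilde\eta$ is $\epsilon_+$-stable if and only if its special fiber has no length-$d_0$ base point, i.e.\ iff the reparametrization is trivial, i.e.\ iff $a=0=b$ (and no modification is $\epsilon_+$-stable when $b=-\infty$, the generic fiber itself not being so), which is the ``moreover'' part. I expect the main technical point to be exactly the one in Lemma~\ref{lem:modification}: checking that the reparametrized quasimap extends over the special fiber to a genuine $\epsilon_0$-semistable object whose base point has length exactly $d_0$ — which here reduces to tracking the (constant) quasimap degree and the base locus under the $\mathbb C^*$-rescaling — together with the bookkeeping ensuring that sufficiently divisible $r$ makes the reparametrization of the orbifold domain legitimate.
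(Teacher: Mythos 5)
Your existence construction agrees with the paper's: the paper performs $ra$ blowups at the (proper transforms of the) marking in the special fiber and then contracts all exceptional components but the last, which is a composite of elementary transformations of the $\mathbb P^1$-bundle centered at the marking --- in a coordinate with the marking at $z=0$ this is exactly your $z\mapsto t^{ra}z$, and the resulting total space is again a smooth trivial $\mathbb P^1$-bundle, so the two families coincide. Your case analysis ($b=0$: indeterminacy at $z=\infty$, base point created away from the marking, negative orders forbidden because the indeterminacy would sit at the marking; $b=-\infty$: constancy of the map away from the base section removes that obstruction) is also the right one. Where you genuinely diverge is uniqueness: the paper recovers the blowup tower from an arbitrary modification (add a marking to make it $\epsilon_-$-stable, pass to the $\epsilon_+$-stable model, take a minimal resolution, and compare using properness of the quasimap spaces), whereas you classify modifications by the coset of the generic identification in $B(K')/B(R')$, with $B$ the reparametrizations fixing the marking. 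Your route is arguably cleaner in this one-tail situation.

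However, the normal-form step has a gap as stated: the class in $B(K')/B(R')$ is \emph{not} recorded by $\operatorname{ord}(\tilde s)$ alone. Writing $\psi$ as $\bigl(\begin{smallmatrix}u&0\\ v&1\end{smallmatrix}\bigr)$ acting by $z\mapsto uz/(vz+1)$, right multiplication by $B(R')$ sends $(u,v)$ to $(uu',vu'+v')$ with $u'\in R'^{\times}$, $v'\in R'$, so it only normalizes $v$ modulo $R'$; for instance $z\mapsto t^{m}z$ and $z\mapsto t^{m+1}z/(z+t)$ have the same derivative $t^{m}$ at the marking (hence the same order) but lie in different cosets. The second is excluded not by the coset count but by semistability: its indeterminacy point in the special fiber is the marking itself, so the Hartogs extension of $u_+\circ\psi$ acquires a length-$d_0$ base point at the marking. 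You therefore need to first use $\epsilon_0$-semistability of the special fiber to restrict to the classes representable with $u,v\in R'$ (equivalently, with indeterminacy away from the marking); that sub-monoid is $\cong\mathbb Z_{\geq 0}$ and \emph{is} detected by the order, and the same computation also yields the bound $a\geq b$. This is fixable, but as written the ``Cartan-type normal form'' does not by itself give uniqueness.
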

\ifdefined\SHOWPROOFS
\begin{proof}
    The proof is similar to that of Lemma~\ref{lem:modification}.
    Hence we only sketch the proof here, and we only consider the case when there
    is no length-$d_0$ base point in the generic fiber.  The other easier case
    is left to the reader.

    Given $a\geq b = 0$, we prove the existence. First take the degree-$r$ base
    change and we still use $\eta_+$ to denote the family by abuse of notation. If
    $a=0$, there is nothing to do. Otherwise blow up $\mathcal C_+$ at the unique
    marking in the special fiber. Then blow up the new surface at the proper
    transform of the unique marking in the special fiber. Repeating this process, we
    perform $ra$ blowups and obtain the surface $\tilde{\mathcal C}_+$. Finally blow
    down all the irreducible components in the special fiber of $\tilde{\mathcal
      C}_+$ except for the exceptional divisor of the last blowup. Thus we obtain
    $\tilde{\pi}: \tilde {\mathcal C} \to \operatorname{Spec} R$. By
    Lemma~\ref{lem:extension-of-map}, the quasimaps uniquely extends to
    $\tilde{\mathcal C}$. Thus we obtain $\tilde{\eta} = (\tilde{\pi}: \tilde
    {\mathcal C} \to \operatorname{Spec} R, \tilde{\mathbf x},\tilde u)$. A local
    computation near the unique marking shows that $\tilde{\eta}$ is indeed of order
    $a$.

    For the uniqueness, we show that any $\tilde{\eta}$ is obtained this way.
    Given $\tilde\eta$, if it is $\epsilon_+$-stable, then it must be isomorphic
    to $\eta_+$, up to finite base change. In particular it has order $0$.
    From now on, suppose that it is not $\epsilon_+$-stable, i.e.\ the special
    fiber has a base point of length $d_0$. We will recover the $\tilde{\mathcal
      C}_+$ constructed above.
    We add an additional (non-orbifold) marking to
    $\tilde{\eta}$ such that it becomes $\epsilon_-$-stable.
    Then possibly after finite base change take the $\epsilon_+$-stable
    modification of this $\epsilon_-$-stable family. Thus the special fiber has
    two irreducible components $E_1,E_2$, where $E_1$ has degree $d_0$ and $E_2$ has two
    markings. Take the minimal resolution of singularity at $E_1\cap E_2$.
    Thus we obtain some $\tilde{\mathcal C}_+$. It remains to show that
    $\tilde{\mathcal C}_+$ is the one constructed above and that $\tilde {\mathcal C}$ is
    obtained from blowing down $\tilde{\mathcal C}_+$. It follows from the
    properness of $Q^{\epsilon_+}_{0,1}(X,\beta)$ and
    $Q^{\epsilon_-}_{0,2}(X,\beta)$, respectively. The argument is similar to that in
    Lemma~\ref{lem:morphism-btw-resolutions} and we omit the details.

    Thus we have shown that any $\tilde\eta$ comes from the construction above. Note
    that if it is not $\epsilon_+$-stable then we must have $a>0$. Also note
    that  the
    blow up centers of the sequence of blowups $\tilde {\mathcal C}_+ \to \cdots
    \to \mathcal C_+$ must be at the proper transform of the unique marking in
    the special fiber. Hence $\tilde\eta$ is uniquely determined by $a$ and $r$.
    This proves the uniqueness.
    The other statements of the Lemma follow from the explicit construction and
    the uniqueness.
\end{proof}
\fi
\ifdefined\SHOWPROOFS
\begin{proof}[Verifying the valuative criterion in Case 2]
  We now prove that up to finite base change $\xi^*$ has a unique stable
  extension $\xi$ in Case 2. Let $\tilde \eta$ be a modification of $\eta_+$ of
  order $a$ and degree $r$.
  By Lemma~\ref{lem:entanglement-and-calibration}, $\eta_+$ and $\tilde\eta$
  uniquely determine $\xi_+$ and $\tilde\xi$, respectively.
  By the definition of the order of $\tilde\eta$
  (Definition~\ref{def:order-and-degree}), we have
  \[
    \big(\operatorname{ord}(\tilde v_1)
    -
    \operatorname{ord}(\tilde v_2)\big) - r\big(\operatorname{ord}(v_{1+}) -
    \operatorname{ord}(v_{2+})\big)
    = ra,
  \]
  where ``$\mathrm{ord}$'' means the vanishing order at the closed point.
  Write $\delta = \operatorname{ord}(v_{1+}) - \operatorname{ord}(v_{2+})$.
  By Lemma~\ref{lem:modification2},
  the stability of $\tilde \xi$ translates to
  \[
    \begin{cases}
      a = b \\
       a \geq - \delta
    \end{cases}
    \quad\text{or}
    \quad
    \begin{cases}
      a>b \\
      a = -\delta
    \end{cases}.
  \]
  There is a unique solution $a = \max\{b, -\delta\}$.
  Hence, there is a unique $\tilde \eta$ such
  that the associated $\tilde\xi$ is stable.
\end{proof}
\fi
\subsection{Case 3: Reparametrizing degree-$d_0$ rational tails}
In this subsection we assume that there are degree-$d_0$ rational tails in the
generic fiber. Possibly after a finite base change,
let $\mathcal E^*_1 ,\ldots,
\mathcal E^*_{\ell}\subset \mathcal C^*$ be the entangled rational tails in the
generic fiber. Let $\mathcal C^*_g\subset \mathcal C^*$ be the union of the other
components, i.e.\ the closure of the complement of $\mathcal E_1^* ,\ldots,
\mathcal E_{\ell}^*$
in $\mathcal C^*$. Note that $\mathcal
C_g^*$ may contain degree-$d_0$ rational tails that are not entangled.

The markings $\mathbf x^*$ are contained in $\mathcal C^*_g$. We view the nodes
as new markings $\mathbf y^*$ on $\mathcal C_g^*$ to obtain a family of
quasimaps over $\operatorname{Spec} K$
\[
  \eta^*_g = (\mathcal C_g^* \to \operatorname{Spec} K,\mathbf x^*,\mathbf y^*,
  u_g^* = u|_{\mathcal C_g^*}).
\]
Also consider the node on $\mathcal E^*_i$ as new marking $z^*_i$ to obtain
\[
  \eta_i^* = (\mathcal E_i^* \to \operatorname{Spec} K, z^*_i, u^*_i =
  u|_{\mathcal E^*_i}).
\]
Recall from Section~\ref{sec:quasimap-with-entangled-tails-moduli} the
definition of $\epsilon_0$-semistable quasimaps.
\begin{lemma}
  \label{lem:glue-tails}
  For any stable extension $\xi$, its underlying family of quasimaps $\eta$ is
  obtained by gluing $\epsilon_0$-semistable extensions $\eta_i$ of $\eta^*_i$ and
  $\epsilon_+$-stable extension $\eta_g$ of $\eta_g^*$ along the markings $z_i$
  and $y_i$.
\end{lemma}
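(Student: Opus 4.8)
The plan is to recover the decomposition directly from the family of curves underlying $\xi$, and then to verify the stability of the pieces. Write $\mathcal C\to\operatorname{Spec}R$ for that family, with generic fiber $\mathcal C^*$. First I would set $\mathcal C_g\subset\mathcal C$ (resp.\ $\mathcal E_i\subset\mathcal C$) to be the closure of $\mathcal C_g^*$ (resp.\ of $\mathcal E_i^*$). Since $\mathcal C$ is a reduced family of twisted curves over a complete discrete valuation ring, each of its irreducible components dominates $\operatorname{Spec}R$, so $\mathcal C=\mathcal C_g\cup\mathcal E_1\cup\cdots\cup\mathcal E_\ell$; the $\mathcal C_g,\mathcal E_i$ are flat families of twisted curves, and they meet pairwise only along the balanced nodes $z_i=y_i$, which spread out as sections over $\operatorname{Spec}R$ by completeness of $R$. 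Restricting $u$ and regarding each $z_i=y_i$ as a marking then exhibits $\eta$ as the gluing of $\eta_g:=(\mathcal C_g,\mathbf x,\mathbf y,u|_{\mathcal C_g})$ and $\eta_i:=(\mathcal E_i,z_i,u|_{\mathcal E_i})$ along the markings, and restricting to $\operatorname{Spec}K$ recovers $\eta_g^*$ and the $\eta_i^*$. In the remaining verifications I would repeatedly use that the quasimap $\eta$ underlying $\xi$ lies in $\mathfrak{Qmap}^{\mathrm{ss}}_{g,n}(X,\beta)$, hence has no rational tail of degree $<d_0$, no rational bridge of degree $0$, and no base point of length $>d_0$.

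Checking that each $\eta_i$ is $\epsilon_0$-semistable should be immediate: a rational tail (resp.\ rational bridge) of $\mathcal E_i$ is also one of $\mathcal C$ with the same $u^*L_\theta$-degree, and base-point lengths are intrinsic to points, so $\eta_i$ inherits $\epsilon_0$-semistability from $\eta$. (This incidentally forces the special fiber $E_i$ to be irreducible of degree $d_0$, though it will not be needed here.)

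Checking that $\eta_g$ is $\epsilon_+$-stable requires positivity of $(u^*L_\theta)^{\epsilon_+}\otimes\omega_{\mathcal C_g,\log}$ on every component, together with the absence of base points of length $\geq d_0$. I would first record the adjunction identity $\omega_{\mathcal C_g,\log}=\omega_{\mathcal C,\log}|_{\mathcal C_g}$: cutting $\mathcal C$ along the nodes $z_i$ trades the residue pole at each $z_i$ for the marking $y_i$, and no component of $\mathcal C_g$ loses a special point. Hence the $\epsilon_+$-positivity inequality on a component $D\subset\mathcal C_g$ is the same as the corresponding inequality inside $\mathcal C$, and it follows from $\epsilon_+>1/d_0$ together with the $\epsilon_0$-semistability of $\eta$: on a rational tail the $u^*L_\theta$-degree is $\geq d_0$, on a rational bridge it is $\geq 1$, and every other component of $\mathcal C_g$ already has $\deg\omega_{\mathcal C_g,\log}|_D\geq 1$. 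Base points of length $>d_0$ are excluded exactly as for $\eta_i$, so the only remaining point is the absence of length-$d_0$ base points on $\mathcal C_g$ --- this is the step I expect to be the main obstacle, and it is where the entanglement machinery enters.

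For that step I would argue separately on the two fibers using the master-space stability condition (Definition~\ref{def:mspace-stability}) and the analysis of limits of entanglements. On the generic fiber: a degree-$d_0$ rational tail of $\mathcal C_g^*$ is not one of the entangled tails $\mathcal E_1^*,\ldots,\mathcal E_\ell^*$, so by Definition~\ref{def:mspace-stability}(1) it is not a constant tail, hence carries no length-$d_0$ base point; since by Definition~\ref{def:mspace-stability}(2) every length-$d_0$ base point of $\mathcal C^*$ lies on a degree-$d_0$ rational tail of $\mathcal C^*$ --- and these are the $\mathcal E_i^*$ together with tails of $\mathcal C_g^*$ --- it follows that $\mathcal C_g^*$ has none. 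On the special fiber: a putative length-$d_0$ base point $p\in\mathcal C_{g,0}$ would lie in the interior of a single component $D$ (base points avoid nodes), which by Definition~\ref{def:mspace-stability}(2) applied to $\xi|_{s_0}$ must be a degree-$d_0$ rational tail of $\mathcal C_0$; then $D$ is a constant tail by Definition~\ref{def:constant-tail}, hence an entangled tail by Definition~\ref{def:mspace-stability}(1). But Lemma~\ref{lem:limit-of-entanglement-case2} forces the entangled tails of the special fiber of $\xi$ to lie among $E_1,\ldots,E_\ell\subset\mathcal E_1\cup\cdots\cup\mathcal E_\ell$, whereas $D\subset\mathcal C_{g,0}$ is disjoint from these --- a contradiction. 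Thus $\mathcal C_{g,0}$ has no length-$d_0$ base point, $\eta_g$ is $\epsilon_+$-stable, and the gluing description follows.
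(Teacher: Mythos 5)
Your argument is correct and follows essentially the same route as the paper's proof: the decomposition is immediate, and the key point---that $\eta_g$ is $\epsilon_+$-stable---is deduced exactly as in the paper from Lemma~\ref{lem:limit-of-entanglement-case2} (no entangled tails in $\mathcal C_g$) combined with conditions (1) and (2) of Definition~\ref{def:mspace-stability}. You simply spell out the positivity check and the fiberwise case analysis that the paper leaves implicit.
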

\ifdefined\SHOWPROOFS
\begin{proof}
  It is clear that any such $\eta$ is obtained by gluing some $\epsilon_0$-semistable
  $\eta_g$ and $\eta_i$.
  We need to show that $\eta_g$ must be $\epsilon_+$-stable. Indeed, by
  Lemma~\ref{lem:limit-of-entanglement-case2}, $\eta_g$
  contains no entangled tails. Since constant tails are required to be
  entangled, $\eta_g$ contains no constant tails. Hence $\eta_g$ must be
  $\epsilon_+$-stable.
\end{proof}
\fi
\ifdefined\SHOWPROOFS
\begin{proof}[Verifying the valuative criterion in Case 3]
  Using the notation introduced above, for each choice of $\eta_1 ,\ldots,
  \eta_\ell$, we obtain $\eta$ from the gluing in Lemma~\ref{lem:glue-tails}. Then $\eta$
  uniquely determines $\xi$ by Lemma~\ref{lem:entanglement-and-calibration}. The
  existence and uniqueness of $\eta_g$ follows from the properness of
  $Q^{\epsilon_+}_{g,n+\ell}(X,\beta^\prime)$, where $\beta^\prime$ is the curve
  class of $u|_{\mathcal C^*_g}$. We
  will show that up to base change there is a unique choice of
  $\eta_1 ,\ldots,
  \eta_\ell$ such that $\xi$ is stable.

  We first pick any $\epsilon_0$-semistable extensions $(\eta_{1+} ,\ldots,
  \eta_{\ell+})$ of $(\eta_1^* ,\ldots, \eta_\ell^*)$ as in Case 2,
  possibly after base change.
  It induces $\xi_+$ by Lemma~\ref{lem:entanglement-and-calibration}.
  We are in the situation of Section~\ref{sec:lim-of-entang2}.
  The tuple of integers $\vec a := (a_1 ,\ldots, a_\ell)$ defined there determines
  which tails in the special fiber are entangled.
   Recall that $a_i$ is defined as the order of
  some sections of $\Theta_i$, where $\Theta_i$ is the space of infinitesimal
  smoothings of the node $y_i=z_i$. We have a canonical isomorphism
  \[
    \Theta_i \cong \mathbb M_{{\mathcal E}_{i+}}^{\vee} \otimes  T_{y_i}\mathcal C_{g}.
  \]
  Hence modifying $\mathcal E_{i+}$ changes of $\Theta_i$ and $\mathbb
  M_{{\mathcal E}_{i+}}^{\vee}$ in the same way.

  Consider any modifications $\tilde\eta_1 ,\ldots, \tilde\eta_\ell$ of
  $(\eta_{1+} ,\ldots, \eta_{\ell+})$ of orders
  $a^\prime_1 ,\ldots, a_\ell^\prime$ and degree $r$, in the sense of
  Definition~\ref{def:modification} and Definition~\ref{def:order-and-degree}.
  We replace $\eta_{i+}$ by $\tilde\eta_i$ for each $i$, to obtain $\tilde\eta$
  and $\tilde\xi$ by the same procedure as before, then the $(a_1 ,\ldots,
  a_{\ell})$ is replaced by
  \[
    (ra_1- ra_1^\prime,\ldots, ra_\ell - ra_\ell^\prime).
  \]
  Write $\delta = \operatorname{ord}(v_{1+}) - \operatorname{ord}(v_{2+})$ and
  $|\vec{a}^\prime| = \sum_{j=1}^\ell a_{j}^\prime$.
  Note that we have
  \begin{equation}
    \label{eq:calibration-bundle-and-tails-in-properness}
    \textstyle
    \mathbb M_{\mathcal C_{+}} \cong \bigotimes_{i=1}^{\ell} \mathbb M_{{\mathcal
    E}_{i+}} \otimes  L_0,
\end{equation}
  where $L_0$ is some line bundle defined in terms of $\eta_g$ and will never be
  changed. There is an relation similar to
  \eqref{eq:calibration-bundle-and-tails-in-properness} for $\tilde\eta$. Hence
  we have
  \[
    \operatorname{ord}(\tilde v_1)
    -
    \operatorname{ord}(\tilde v_2) =
  r(|\vec{a}^\prime| + \delta).
  \]

  Let $b_i=0$ if $\mathcal E^*_i$ has no length-$d_0$ base point and
  $b_i=\infty$ otherwise. 
    By Lemma~\ref{lem:limit-of-entanglement-case2} and Lemma~\ref{lem:modification2},
  the stability condition for $\tilde\xi$
  translates to
  \begin{equation}
    \begin{cases}
      \delta + |\vec{a}^\prime|\geq 0;\\
      a^\prime_i \geq  - b_i, i =1 ,\ldots, \ell;\\
      \text{if } \delta + |\vec{a}^\prime| > 0 \text{ then } a_i^\prime =
      - b_i \text{ for all }i =1 ,\ldots, \ell;\\
      \text{if } a^\prime_i > -b_i\text{ then }a_i-a_i^\prime\text{ is maximal
        among } a_1-a_1^\prime ,\ldots, a_\ell-a_\ell^\prime, i=1 ,\ldots, \ell.
    \end{cases}
  \end{equation}
  This system has a unique solution for $(a_1^\prime ,\ldots,
  a_\ell^\prime)$. This completes the proof.
\end{proof}
\fi
\fi
\section{Localization on the master space}
\label{sec:localization}
Consider the $\mathbb C^*$-action on $MQ^{\epsilon_0}_{g,n}(X,\beta)$
defined by scaling $v_1$:
\begin{equation}
  \label{eq:C*-action}
  \lambda\cdot(\pi:\mathcal C \to S,e,u,N,v_1,v_2) =  (\pi:\mathcal C\to
  S,e,u,N,\lambda v_1,v_2) ,\quad \lambda\in \mathbb C^*.
\end{equation}
We will use the virtual localization \cite{graber1999localization, chang2017torus}
to get relations among the fixed-point components.
By a fixed-point component we mean the union of several connected
components of the fixed-point locus.
There will be three types of fixed-point components.
\subsection{$\epsilon_+$-stable quasimaps with entangled tails}
The Cartier divisor $F_+\subset MQ^{\epsilon_0}_{g,n}(X,\beta)$
defined by $v_1=0$ is a fixed-point component.
It is easy to see that
\[
  F_+ \cong \tilde Q^{\epsilon_+}_{g,n}(X,\beta),
\]
by forgetting $(N,v_1,v_2)$.
Under this isomorphism, we have
\[
  [F_+]^{\mathrm{vir}} = [\tilde Q^{\epsilon_+}_{g,n}(X,\beta)]^{\mathrm{vir}}.
\]
The virtual normal bundle is $\mathbb M_+$,
the calibration bundle of $\tilde Q^{\epsilon_+}_{g,n}(X,\beta)$ in
Definition~\ref{def:calibration-bundle}, with a $\mathbb C^*$-action of weight $1$.
We will see that the first Chern class of $\mathbb M_+$ is irrelevant to the
wall-crossing formula.
\subsection{$\epsilon_-$-stable quasimaps}
Similarly, the Cartier divisor $F_-\subset MQ^{\epsilon_0}_{g,n}(X,\beta)$
defined by $v_2=0$ is a fixed component.
We have 
\[
  F_- \cong Q^{\epsilon_-}_{g,n}(X,\beta) \quad \text{and} \quad
  [F_-]^{\mathrm{vir}} = [ Q^{\epsilon_-}_{g,n}(X,\beta)]^{\mathrm{vir}}.
\]
Note that, in particular,  $v_2$ is nonvanishing when $Q^{\epsilon_-}_{g,n}(X,\beta)$ is empty,
e.g.\ when $g=0,n=1,\deg(\beta)=d_0$. When it is nonempty,
the virtual normal bundle is the line bundle $\mathbb M_-^\vee$, the dual of the
calibration bundle $\mathbb M_-$ of $Q_{g,n}^{\epsilon_-}(X,\beta)$. And the
$\mathbb C^*$-action on $M_-^\vee$ has weight $(-1)$. Again its first Chern
class will be irrelevant.

\subsection{The correction terms: the graph space revisited}
\label{sec:graph-space-revisited}
The other fixed-point components will contribute to the so-called correction
terms (the $k\geq 1$ terms) in the wall-crossing formula.
Their contribution involves the coefficient of $q^{\beta}$ in the
$I$-function with $\deg(\beta) = d_0$,
which is defined via the graph space $QG_{0,1}(X,\beta)$. 
Let $QG^*_{0,1}(X,\beta) \subset QG_{0,1}(X,\beta)$ be the open substack where
the domain curve is irreducible. The definition of the $I$-function only
involves $QG^*_{0,1}(X,\beta)$. This allows us to simplify the perfect
obstruction theory, as follows.

Recall that $QG_{0,1}(X,\beta)$
admits a relative perfect obstruction theory
\begin{equation}
  \label{eq:pot-QG-1}
\big(
  R\pi_*((u,f)^*(\mathbb
  T_{[W/G]}\boxplus T_{\mathbb P^1}[0]))\big)^{\vee}
  \longrightarrow
  \mathbb L_{QG_{0,1}(X,\beta)/\tilde{\mathfrak M}_{0,1,d_0}},
\end{equation}
where $\pi:\mathcal C \to QG_{0,1}(X,\beta)$ is the universal curve and $(u,f):
\mathcal C \to [W/G] \times  \mathbb P^1$ is the universal map.
Now restrict it to $QG^*_{0,1}(X,\beta)$ and consider the forgetful morphism
\begin{equation}
  \label{map:graph-space-forgetful}
  QG^*_{0,1}(X,\beta) \longrightarrow \overline M_{0,1}(\mathbb P^1,1),
\end{equation}
forgetting $u$ and the orbifold structure at the unique marking.
It admits a relative perfect obstruction theory
\begin{equation}
  \label{eq:pot-QG-2}
  \big(R\pi_*(u^*\mathbb T_{[W/G]})\big)^\vee \longrightarrow \mathbb
  L_{QG^*_{0,1}(X,\beta)/ \overline M_{0,1}(\mathbb P^1,1)}.
\end{equation}
Note that the evaluation map
$\overline M_{0,1}(\mathbb P^1,1) \to \mathbb P^1$ is an isomorphism.
The two relative perfect obstruction theories \eqref{eq:pot-QG-1}
and~\eqref{eq:pot-QG-2} induce the same equivariant absolute perfect obstruction
theory $\mathbb E_{QG^*}$ (c.f.\ \cite[\S 3.2]{manolache2012virtual}).
Thus we have a distinguished triangle on $QG^*_{0,1}(X,\beta)$
\begin{equation}
  \label{seq:pot-QG}
  \mathbb L_{\overline M_{0,1}(\mathbb P^1,1)} \longrightarrow
  \mathbb E_{QG^*} \longrightarrow  \big(
  R\pi_*(u^*\mathbb T_{[W/G]})
  \big)^\vee
  \overset{+1}{\longrightarrow }.
\end{equation}
We have suppressed some obvious pullbacks.
Recall that $F_{\star,\beta}:=F^{0,\beta}_{\star,0}\subset QG^*_{0,1}(X,\beta)$ is
the fixed-point component where the marking is at $\infty$ and $u$ has a base
point of length $\deg(\beta)=d_0$ at $0$.
Restricting \eqref{seq:pot-QG} to $F_{\star,\beta}$, $\mathbb L_{\overline M_{0,1}(\mathbb
  P^1,1)}$ becomes isomorphic to the constant line bundle formed by the
cotangent space to $\mathbb P^1$ at $\infty$.
Hence it is in the moving part. Hence the morphism
\[
  (\mathbb E_{QG})^{\mathrm{f}}
  \longrightarrow
  \big(R\pi_*(u^*\mathbb T_{[W/G]})|_{F_{\star,\beta}}\big)^{\vee,\mathrm{f}}
\]
between fixed parts is an isomorphism. In other words, the induced virtual cycle
$[F_{\star,\beta}]^{\mathrm{vir}}$ is defined by the absolute perfect
obstruction theory
\[
  \big(R\pi_*(u^*\mathbb T_{[W/G]})|_{F_{\star,\beta}}\big)^{\vee,\mathrm{f}}
  \longrightarrow  \mathbb L_{F_{\star,\beta}}.
\]
Recall that $\mathbf r$ is the locally constant function on $I_{\mu}X$ that
takes value $r$ on $I_{\mu_r}X$.
We define the operational Chow class
\begin{equation}
  \label{eq:curly-I}
  \mathcal I_{\beta}(z) = \frac{1}{e_{\mathbb
      C^*}\big(\big(R\pi_*(u^*\mathbb
    T_{[W/G]})|_{F_{\star,\beta}}\big)^{\mathrm{mv}}\big)}\in A^*_{\mathbb C^*}(F_{\star,\beta}).
\end{equation}
Thus the $I$-function \eqref{eq:small-I} can be rewritten as
\begin{equation}
  \label{eq:I-from-curly-I}
  I(q,z) =  \sum_{\beta} \mathbf r^2 q^{\beta} \check{\mathrm{ev}}_*
  \Big(
  \mathcal I_{\beta}(z)\cap [F_{\star,\beta}]^{\mathrm{vir}}
  \Big) \in A^*_{\mathbb C^*}(I_{\mu}X).
\end{equation}
\begin{remark}
  In the formation of \eqref{map:graph-space-forgetful} we have taken the
  underlying coarse moduli of the domain curves. This causes no trouble to the
  relative perfect obstruction theory, 
  since the forgetful morphism $\mathfrak M^{\mathrm{orbi}}_{0,1} \to \mathfrak
  M^{\mathrm{coar}}_{0,1}$ is an isomorphism over the locus where the curve is smooth. Here
  $\mathfrak M^{\mathrm{orbi}}_{0,1}$ is the moduli of curve with a
  gerbe marking (without trivialization of the gerbe), and $\mathfrak
  M^{\mathrm{coar}}_{0,1}$ is the moduli of non-orbifold curves.
\end{remark}
\subsection{The correction terms: $g=0, n=1, \deg(\beta)=d_0$ case}
We now study the case $g=0, n=1$ and $\deg(\beta)=d_0$. In this case, the curve
must be irreducible and $v_2$ is never zero. The fixed-point component $F_{\beta}$
parametrizes
\begin{align*}
  F_{\beta} = \{\xi\mid \text{the domain curve is a single fixed tail, $v_1\neq
  0, v_2\neq 0$} \}.
\end{align*}
Let $C$ be the domain curve and $x_{\star}$ be the unique marking. Let $\mathbf
r_{\star}$ be the pullback of $\mathbf r$ via the evaluation map at $x_{\star}$.
The calibration bundle is by definition the relative cotangent space at
$x_{\star}$. Fix once and for all a nonzero tangent vector $v_{\infty}$ at
$\infty$ to $\mathbb P^1$. There is a unique morphism $C \to \mathbb P^1$
sending $x_{\star}$ to $\infty$, the unique base point to $0$, and sending
$(v_2/v_1)^{\otimes \mathbf r_\star}$ to $v_{\infty}$. This determines a point
in $QG_{0,1}(X,\beta)$, which is $\mathbb C^*$-fixed since $C$ is a fixed tail.
Hence it lands in the fixed-point locus $F_{\star,\beta}\subset QG_{0,1}(X,
\beta)$. By working over an arbitrary base scheme instead, we obtain a morphism
\[
  F_{\beta} \longrightarrow  F_{\star,\beta}.
\]
\begin{lemma}
  \label{lem:correction-term-special-degree}
  This morhphism is \'etale of degree $\mathbf r_\star$.
\end{lemma}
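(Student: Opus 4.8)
The plan is to identify the morphism $F_\beta\to F_{\star,\beta}$ with the $\mu_{\mathbf r_\star}$-torsor of $\mathbf r_\star$-th roots of a canonical trivialization of a line bundle on $F_{\star,\beta}$. Since we work in characteristic $0$, $\mu_{\mathbf r_\star}$ is \'etale, so a $\mu_{\mathbf r_\star}$-torsor is (representable) finite \'etale of degree equal to its order; this yields the statement, with the degree read componentwise because $\mathbf r_\star=\check{\mathrm{ev}}_\star^{\,*}\mathbf r$ is only locally constant.

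First I would unwind the data carried by the two spaces. A geometric point of $F_\beta$ is a fixed tail $(C,x_\star,u)$ — by Lemma~\ref{lem:fixed-tail} an irreducible rational twisted curve with one marking on which $u$ is constant away from a length-$d_0$ base point — together with a class $[v_1:v_2]$ with both $v_1,v_2$ nonzero; since rescaling $N$ is a $2$-isomorphism, this class is the same datum as a nowhere-vanishing section $t:=v_1/v_2$ of the calibration bundle $\mathbb M=$ (orbifold relative cotangent at $x_\star$) of Definition~\ref{def:calibration-bundle}. By Lemma~\ref{lem:fixed-tail}, $\mathrm{Aut}(C,x_\star,u)$ is one-dimensional and acts with nonzero weight on $\mathbb M$ (weight $\pm1$, from the local root-stack picture), so it acts simply transitively on the admissible $t$ and $(C,x_\star,u,t)$ has no automorphisms. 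A geometric point of $F_{\star,\beta}$ is the same $(C,x_\star,u)$ together with the map $f\colon C\to\mathbb P^1$; as $f$ factors through the coarse curve and $\mathrm{Aut}(C,x_\star,u)$ surjects onto $\mathrm{Aut}(\mathbb P^1;0,\infty)$, the choice of $f$ rigidifies that group down to the band $\mu_{\mathbf r_\star}$, and via $f^{*}$ it is the same as a nowhere-vanishing section $\sigma_f:=f^{*}(v_\infty^{\vee})$ of $\mathbb M^{\otimes\mathbf r_\star}$, the coarse relative cotangent at $x_\star$ (the identification $\mathbb M^{\mathrm{coarse}}\cong\mathbb M^{\otimes\mathbf r_\star}$ being the line-bundle incarnation of $\psi=\mathbf r_\star\widetilde\psi$ from Section~\ref{sec:orbifold-psi-classes}). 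Under these identifications the morphism of the Lemma is exactly $t\mapsto t^{\otimes\mathbf r_\star}$, i.e.\ it sends $(C,x_\star,u,t)$ to $(C,x_\star,u,\sigma_f=t^{\otimes\mathbf r_\star})$, which is precisely the defining prescription $(v_2/v_1)^{\otimes\mathbf r_\star}\mapsto v_\infty$.

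I expect the main obstacle to be carrying this comparison out in families rather than set-theoretically, together with the orbifold bookkeeping: checking that $\mathbb M^{\mathrm{coarse}}\cong\mathbb M^{\otimes\mathbf r_\star}$ holds over the moduli space with its universal root-stack structure at $x_\star$, that the band $\mu_{\mathbf r_\star}$ acts trivially on $\mathbb M^{\mathrm{coarse}}$ (so $\sigma_f$ is canonically defined) but faithfully on $\mathbb M$ (so the $\mathbf r_\star$-th roots form a $\mu_{\mathbf r_\star}$-torsor and nothing larger), and re-deriving that the constructed family over an arbitrary base genuinely lands in the fixed-point component $F_{\star,\beta}$, as asserted in the text. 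Granting this, the conclusion is formal: over $F_{\star,\beta}$ we have the line bundle $\mathbb M$ equipped with the nowhere-vanishing section $\sigma_f$ of $\mathbb M^{\otimes\mathbf r_\star}$, and $F_\beta$ is the relative scheme $\{\,t\in\mathbb M : t^{\otimes\mathbf r_\star}=\sigma_f\,\}$, the $\mu_{\mathbf r_\star}$-torsor of $\mathbf r_\star$-th roots of $\sigma_f$, hence finite \'etale of degree $\mathbf r_\star$. As a sanity check one may also compute the geometric fibre over a point of $F_{\star,\beta}$ directly: it consists of the $\mathbf r_\star$ roots $t$ of $\sigma_f$, each with trivial automorphisms and pairwise non-isomorphic once compatibility with the fixed $f$ is imposed, so there are exactly $\mathbf r_\star$ of them.
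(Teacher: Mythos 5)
Your proposal is correct and is essentially the paper's proof, which consists precisely of the observation that lifting $S\to F_{\star,\beta}$ to $S\to F_{\beta}$ amounts to choosing an $\mathbf r_\star$-th root of $v_{\infty}$ (meaningful because the gerbe markings are trivialized); your identification of $F_\beta$ with the $\mu_{\mathbf r_\star}$-torsor of roots of the induced trivialization of $\mathbb M^{\otimes\mathbf r_\star}$ is just a fuller write-up of this. The only inaccuracy is the assertion that $(C,x_\star,u,t)$ has no automorphisms---the map $\mathrm{Aut}(C,x_\star,u)\to\mathbb C^*$ can have a finite kernel acting trivially on $T_{x_\star}C$---but since such automorphisms are inherited identically on source and target, this does not affect the \'etale or degree conclusion.
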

\ifdefined\SHOWPROOFS
\begin{proof}
  It follows from the observation that lifting a morphism $S\to F_{\star,\beta}$
  to $S\to F_\beta$ is the same as choosing an $\mathbf r_\star$-th root of
  $v_{\infty}$. Note that this makes sense since in the definition of graph spaces
  we are using twisted curves with trivialized gerbes.
\end{proof}
\fi

We now come to the perfect obstruction theory.
\begin{lemma}
  \label{lem:correction-term-special-contr}
  Via the morphism above, the pullback of $[F_{\star,\beta}]^{\mathrm{vir}}$ is
  equal to $[F_{\beta}]^{\mathrm{vir}}$, and
  \[
    \frac{1}{e_{\mathbb C^*}(N^{\mathrm{vir}}_{F_{\beta}/MQ^{\epsilon_0}_{0,1}(X,\beta)})} =
    (\mathbf r_\star z)\cdot \mathcal I_{\beta}(\mathbf r_\star z).
  \]
\end{lemma}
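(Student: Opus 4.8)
The plan is to prove both assertions by transporting, along the étale morphism $\rho\colon F_\beta\to F_{\star,\beta}$ of Lemma~\ref{lem:correction-term-special-degree}, the description of the fixed perfect obstruction theory and of the virtual normal bundle. Two features of the case $(g,n,d)=(0,1,d_0)$ make this feasible: there is no entanglement, so $\tilde{\mathfrak M}_{0,1,d_0}={\mathfrak M}^{\mathrm{wt,ss}}_{0,1,d_0}$ and $M\tilde{\mathfrak M}_{0,1,d_0}=\mathbb P_{{\mathfrak M}^{\mathrm{wt,ss}}_{0,1,d_0}}(\mathbb M\oplus\mathcal O)$, where $\mathbb M$ is the relative cotangent line at the unique marking; and every quasimap in $F_\beta$ has an irreducible domain, a single fixed tail carrying the length-$d_0$ base point over $0$ and the marking $x_\star$ over $\infty$. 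In particular the universal quasimap $u\colon\mathcal C\to[W/G]$ on $F_\beta$ is identified, via the reparametrization $\mathcal C\cong\mathbb P^1$ built into the definition of $\rho$, with the $\rho$-pullback of the universal quasimap on $F_{\star,\beta}\subset QG^*_{0,1}(X,\beta)$; only the $\mathbb C^*$-linearization differs, and I will track the difference.

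For the virtual cycle: by construction $[F_{\star,\beta}]^{\mathrm{vir}}$ is cut out by the fixed part of $\mathbb E_{QG^*}$, which by the triangle \eqref{seq:pot-QG} — together with the fact that $\mathbb L_{\overline M_{0,1}(\mathbb P^1,1)}$ restricts to a moving line on $F_{\star,\beta}$ — equals $(R\pi_*(u^*\mathbb T_{[W/G]})|_{F_{\star,\beta}})^{\vee,\mathrm f}\to\mathbb L_{F_{\star,\beta}}$. On the master space, $[F_\beta]^{\mathrm{vir}}$ is cut out by the fixed part of the absolute perfect obstruction theory $\mathbb E_{MQ}$ of $MQ^{\epsilon_0}_{0,1}(X,\beta)$, which sits in a triangle $\mathbb L_{M\tilde{\mathfrak M}_{0,1,d_0}}\to\mathbb E_{MQ}\to(R\pi_*(u^*\mathbb T_{[W/G]}))^\vee\overset{+1}{\longrightarrow}$. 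I would then show that $\mathbb L_{M\tilde{\mathfrak M}_{0,1,d_0}}|_{F_\beta}$ has trivial fixed part: using the $\mathbb P^1$-bundle presentation and the corresponding triangle, the weight-zero piece of $\mathbb L_{{\mathfrak M}^{\mathrm{wt,ss}}_{0,1,d_0}}$ along $F_\beta$ — the dual of the Lie algebra of the absorbing $\mathbb C^*$, sitting in degree $-1$ — cancels the weight-zero relative cotangent line of the bundle, in degree $0$, while the remaining automorphism direction of the fixed tail is moving. Hence $\mathbb E_{MQ}^{\mathrm f}|_{F_\beta}=(R\pi_*(u^*\mathbb T_{[W/G]})|_{F_\beta})^{\vee,\mathrm f}=\rho^*(R\pi_*(u^*\mathbb T_{[W/G]})|_{F_{\star,\beta}})^{\vee,\mathrm f}$, i.e.\ the fixed obstruction theory on $F_\beta$ is the $\rho$-pullback of the one defining $[F_{\star,\beta}]^{\mathrm{vir}}$; since $\rho$ is étale this gives $[F_\beta]^{\mathrm{vir}}=\rho^*[F_{\star,\beta}]^{\mathrm{vir}}$, which is the first assertion.

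For the normal bundle: since $MQ^{\epsilon_0}_{0,1}(X,\beta)\to M\tilde{\mathfrak M}_{0,1,d_0}$ is representable, $N^{\mathrm{vir}}_{F_\beta/MQ^{\epsilon_0}_{0,1}(X,\beta)}$ is the moving part of the absolute virtual tangent complex, namely the sum of the moving parts of $R\pi_*(u^*\mathbb T_{[W/G]})|_{F_\beta}$ and of $\mathbb T_{M\tilde{\mathfrak M}_{0,1,d_0}}|_{F_\beta}$. For the first summand, $\rho$ identifies $R\pi_*(u^*\mathbb T_{[W/G]})|_{F_\beta}$ with the pullback of $R\pi_*(u^*\mathbb T_{[W/G]})|_{F_{\star,\beta}}$, but now the $\mathbb C^*$ acts on the domain through the absorbing automorphism; because $x_\star$ is a $\mu_{\mathbf r_\star}$-gerbe and the matching $(v_2/v_1)^{\otimes\mathbf r_\star}$ with a fixed tangent vector at $\infty$ is built into $\rho$, this automorphism acts on the tangent line to the coarse domain at the base point with $\mathbf r_\star$ times the weight of the graph-space action, so every equivariant weight occurring in $R\pi_*(u^*\mathbb T_{[W/G]})|_{F_{\star,\beta}}$ has its $z$-dependence rescaled by $z\mapsto\mathbf r_\star z$; by \eqref{eq:curly-I} the inverse Euler class of this summand is therefore $\mathcal I_\beta(\mathbf r_\star z)$ (pulled back, and suppressed in the statement). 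For the second summand, the computation sketched above leaves a virtual line in the obstruction degree of $\mathbb C^*$-weight $\pm\mathbf r_\star$, whose inverse Euler class is $\mathbf r_\star z$; this is the analogue, rescaled by $\mathbf r_\star$, of the $(-z)$-factor carried by $N^{\mathrm{vir}}_{F_{\star,\beta}/QG_{0,1}(X,\beta)}$ from the deformation of the unique marking — precisely the factor the normalization of $\mathcal I_\beta$ was designed to reinstate. Multiplying the two gives $1/e_{\mathbb C^*}(N^{\mathrm{vir}}_{F_\beta/MQ^{\epsilon_0}_{0,1}(X,\beta)})=(\mathbf r_\star z)\cdot\mathcal I_\beta(\mathbf r_\star z)$.

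The main obstacle is the equivariant bookkeeping in the last paragraph: pinning down, with all signs, the $\mathbb C^*$-linearization on the domain curve and on $\mathbb M$ along $F_\beta$, checking that the matching condition $(v_2/v_1)^{\otimes\mathbf r_\star}=v_\infty$ produces exactly the substitution $z\mapsto\mathbf r_\star z$ and not $z\mapsto-\mathbf r_\star z$, and correctly identifying the moving part of $\mathbb T_{M\tilde{\mathfrak M}_{0,1,d_0}}|_{F_\beta}$ — in particular which automorphism direction of the fixed tail is moving and in which cohomological degree it sits — so that the $\mathbb P^1$-bundle directions cancel and only the weight-$\mathbf r_\star$ obstruction line survives. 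The virtual-cycle comparison is comparatively routine, being a standard matching of obstruction theories along an étale map together with the vanishing of the fixed part of the Artin-stack directions of $M\tilde{\mathfrak M}_{0,1,d_0}$.
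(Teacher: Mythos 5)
Your proposal is correct and follows essentially the same route as the paper's proof: restrict the relative obstruction theory $(R\pi_*u^*\mathbb T_{[W/G]})^{\vee}\to \mathbb L_{MQ^{\epsilon_0}_{0,1}(X,\beta)/M\tilde{\mathfrak M}_{0,1,d_0}}$ to $F_\beta$, observe that $\mathbb L_{M\tilde{\mathfrak M}_{0,1,d_0}}|_{F_\beta}$ reduces (after the cancellation you describe) to a single purely moving line — the paper identifies it as the cotangent space to $\mathbb P^1$ at $0$, placed in degree $1$, whose Euler class supplies the $\mathbf r_\star z$ — so that the fixed parts of the absolute theories match those of Section~\ref{sec:graph-space-revisited} and the moving part gives $(\mathbf r_\star z)\cdot\mathcal I_\beta(\mathbf r_\star z)$. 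The only cosmetic discrepancy is your placement of the automorphism piece of the cotangent complex in degree $-1$ rather than $+1$; the substance of the cancellation and the weight bookkeeping (including the $z\mapsto\mathbf r_\star z$ rescaling forced by the matching $(v_2/v_1)^{\otimes\mathbf r_\star}=v_\infty$) agrees with the paper.
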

\ifdefined\SHOWPROOFS
\begin{proof}
  Recall that $MQ^{\epsilon_0}_{0,1}(X,\beta)$ has a relative perfect
  obstruction theory
  \begin{equation}
    \label{seq:pot-MQ-comparison}
    (R\pi_*u^*\mathbb T_{[W/G]})^{\vee}
    \longrightarrow
    \mathbb L_{MQ^{\epsilon_0}_{0,1}(X,\beta)/M \tilde{\mathfrak M}_{0,1,d_0}}.
  \end{equation}
  Restricting to $F_{\beta}$, $\mathbb L_{M
      \tilde{\mathfrak M}_{0,1,d_0}}$ becomes canonically isomorphic to the cotangent space to
    $\mathbb P^1$ at $0$, shifted into degree $1$. Let $\mathbb E_{MQ}$ be
    the absolute perfect obstruction theory induced by
    \eqref{seq:pot-MQ-comparison}. Then the morphism between fixed parts
    \[
      (\mathbb E_{MQ}|_{F_{\beta}})^{\mathrm{f}}
      \longrightarrow
      (R\pi_*u^*\mathbb T_{[W/G]}|_{F_{\beta}})^{\vee,\mathrm{f}}
    \]
    is an isomorphism. The Lemma follows from comparing this to
    Section~\ref{sec:graph-space-revisited}.
\end{proof}
\fi

\subsection{The correction terms, $2g - 2 + n + \epsilon_0 d > 0$ case}
\label{sec:correction-term-main-case}
\begin{lemma}
  \label{lem:fixed-point-with-fixed-tails}
  Let
  \[
    \xi = (C,\mathbf x,e,u,N,v_1,v_2)\in MQ^{\epsilon_0}_{g,n}(X,\beta)(\mathbb C)
  \]
  be an $\epsilon_0$-stable quasimap with calibrated tails. Suppose $v_1\neq 0$ and $v_2\neq 0$.
  Then $\xi$ is $\mathbb C^*$-fixed if and only if
\begin{enumerate}
\item there is at least one degree-$d_0$ tail, and
\item each entangled tail is a fixed tail (c.f.\ Definition~\ref{def:fixed-tails}).
\end{enumerate}
\end{lemma}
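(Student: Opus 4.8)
The plan is to reduce the statement to Lemma~\ref{lem:auto-eta} by examining how the $\mathbb{C}^{*}$-action interacts with the automorphisms of the underlying quasimap with entangled tails. Write $\eta=(C,\mathbf{x},e,u)$ for that underlying object, and let $\mu\colon\mathrm{Aut}(\eta)\to\mathrm{Aut}(\mathbb{M}_{C})\cong\mathbb{C}^{*}$ be the one-dimensional representation of $\mathrm{Aut}(\eta)$ coming from the functorial dependence of the calibration line $\mathbb{M}_{C}$ on the curve-with-entanglement. The first step is a dictionary between fixedness and $\mu$. Unwinding the definition of a morphism $(f,t,\varphi)$ in $M\mathfrak{Qmap}^{\sim}_{g,n}(X,\beta)$ over $\operatorname{Spec}\mathbb{C}$: here $f$ is forced to be the identity, $t$ is an element of $\mathrm{Aut}(\eta)$, and $\varphi\colon N\to N$ is multiplication by some $c\in\mathbb{C}^{*}$; and an isomorphism $\xi\xrightarrow{\;\sim\;}\lambda\cdot\xi$ is precisely such a pair subject to $c\,v_{2}=v_{2}$ and $\mu(t)\,c\,v_{1}=\lambda\,v_{1}$. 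Since $v_{1}\neq 0$ and $v_{2}\neq 0$, this forces $c=1$ and $\mu(t)=\lambda$. Consequently $\xi$ is $\mathbb{C}^{*}$-fixed (i.e.\ $\lambda\cdot\xi\cong\xi$ for every $\lambda$) if and only if $\mu$ is surjective; and in that case $\mathrm{Aut}(\eta)$ is infinite.

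Granting this, the ``only if'' direction is immediate: if $\xi$ is fixed then $\mathrm{Aut}(\eta)$ is infinite, and Lemma~\ref{lem:auto-eta} gives conditions (1) and (2). For the ``if'' direction, assume (1) and (2); by Lemma~\ref{lem:auto-eta} the group $\mathrm{Aut}(\eta)$ is infinite, its identity component $\Gamma$ is isomorphic to $\mathbb{C}^{*}$, and $\Gamma$ acts on the tangent line $T_{y_{i}}E_{i}$ of each entangled tail by one common nonzero weight $w$; in particular the number $k$ of entangled tails is at least $1$. By the proof of that lemma $\Gamma$ acts trivially on the complementary subcurve. Writing $C=C_{g}\cup E_{1}\cup\cdots\cup E_{k+\ell}$ with $E_{1},\dots,E_{k}$ the entangled tails, $E_{k+1},\dots,E_{k+\ell}$ the remaining degree-$d_{0}$ tails, $C_{g}$ the rest, and $y_{i}=C_{g}\cap E_{i}$, the natural identification $\mathbb{M}_{C}\cong(\Theta_{1}\otimes\cdots\otimes\Theta_{k+\ell})^{\vee}$ with $\Theta_{i}=T_{y_{i}}E_{i}\otimes T_{y_{i}}C_{g}$ from Section~\ref{sec:cali-bundle} shows that $\Gamma$ acts on $\mathbb{M}_{C}$ with weight $-kw$: indeed it acts by $w$ on $\Theta_{1},\dots,\Theta_{k}$ and trivially on $\Theta_{k+1},\dots,\Theta_{k+\ell}$. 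Since $k\geq 1$ and $w\neq 0$, the homomorphism $\mu|_{\Gamma}\colon\mathbb{C}^{*}\to\mathbb{C}^{*}$ is nontrivial, hence surjective, so $\mu$ is surjective and $\xi$ is fixed by the first step.

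I expect the only genuine work to lie in the first step: carefully matching an isomorphism in the master-space groupoid with a pair $(t,c)$, and tracking the action of $t$ on the section $v_{1}\in\Gamma(\mathbb{M}_{C}\otimes N)$ through the functoriality of $\mathbb{M}_{C}$ — the stability hypotheses themselves enter only through Lemma~\ref{lem:auto-eta}. I also use the standard convention that a geometric point of a Deligne--Mumford stack carrying a $\mathbb{C}^{*}$-action lies in the fixed locus exactly when it is isomorphic to each of its translates.
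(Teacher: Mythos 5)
Your proposal is correct and is essentially the paper's own argument: the paper likewise observes that, since $v_1,v_2\neq 0$, the point $\xi$ is fixed precisely when the action of $\mathrm{Aut}(C,\mathbf x,e,u)$ on the calibration bundle surjects onto $\mathbb C^*$, and then concludes via Lemma~\ref{lem:auto-eta} together with the description $\mathbb M_C\cong(\Theta_1\otimes\cdots\otimes\Theta_{k+\ell})^{\vee}$ from Section~\ref{sec:cali-bundle}. Your write-up simply makes explicit the groupoid bookkeeping and the weight computation that the paper leaves implicit.
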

\ifdefined\SHOWPROOFS
\begin{proof}
  Since $v_1$ and $v_2$ are both nonzero, $v_1/v_2$ is a nonzero section of the
  calibration bundle. Thus $\xi$ is fixed if and only if the action of
  $\mathrm{Aut}(C,e,u)$ on the calibration bundle induces a surjection
  $\mathrm{Aut}(C,e,u) \twoheadrightarrow \mathbb C^*$. Thus the Lemma follows
  from Lemma~\ref{lem:auto-eta} and the description of the calibration bundle in
  terms of $\Theta_i$ in Section~\ref{sec:cali-bundle}.
\end{proof}
\fi
For each $(k+1)$-tuple $\vec\beta = (\beta^\prime,\beta_1 ,\ldots, \beta_k)$ of
effective curve classes such that $\beta = \beta^\prime + \beta_1 + \cdots +
\beta_k$ and $\deg(\beta_i) = d_0$ for each $i=1 ,\ldots, k$,
we have a fixed-point component (which may be empty)
\begin{align*}
  F_{\vec\beta} = \{\xi\mid &\xi \text{ has exactly $k$ entangled tails,}\\
    &\text{which are all fixed tails,   of degrees }\beta_1,\ldots , \beta_k\}.
\end{align*}
It is easy to see from Lemma~\ref{lem:fixed-point-with-fixed-tails} that
$F_+,F_{-}$ and $F_{\vec\beta}$ contain all the fixed points.
To define $F_{\vec\beta}$ as a substack, we need the following lemma.
\begin{lemma}
  Each $F_{\vec\beta}$ is closed. Hence it is an open and closed substack of
  the fixed-point substack $(MQ^{\epsilon_0}_{g,n}(X,\beta))^{\mathbb C^*}$.
\end{lemma}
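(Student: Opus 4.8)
The plan is to show that $F_{\vec\beta}$ is proper --- equivalently, closed in the (proper, since closed in $MQ^{\epsilon_0}_{g,n}(X,\beta)$) fixed-point substack --- by the valuative criterion, and then to deduce that it is also open. For the second point, observe first that the Cartier divisors $F_+=\{v_1=0\}$ and $F_-=\{v_2=0\}$ are closed and, as $v_1,v_2$ have no common zero, disjoint; together with the identifications $F_+\cong\tilde Q^{\epsilon_+}_{g,n}(X,\beta)$, $F_-\cong Q^{\epsilon_-}_{g,n}(X,\beta)$ and Lemma~\ref{lem:fixed-point-with-fixed-tails}, the fixed locus is the disjoint union of $F_+$, $F_-$ and the finitely many substacks $F_{\vec\beta}$ (finitely many because $\mathrm{Eff}(W,G,\theta)$ in bounded degree is finite). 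Hence, once each $F_{\vec\beta}$ is known to be closed, it is automatically open as well, being the complement inside the fixed locus of the remaining $F_{\vec\beta'}$ and of $F_+\cup F_-$, all closed.

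For the valuative criterion, let $R$ be a complete discrete valuation $\mathbb C$-algebra with residue field $\mathbb C$ and fraction field $K$, and let $\xi^*\colon\operatorname{Spec}K\to F_{\vec\beta}$. By properness of $MQ^{\epsilon_0}_{g,n}(X,\beta)$ (Proposition~\ref{prop:properness}), after a finite base change $\xi^*$ extends uniquely to $\xi\colon\operatorname{Spec}R\to MQ^{\epsilon_0}_{g,n}(X,\beta)$; since the fixed locus is closed and the stack is separated (Proposition~\ref{prop:representability}), $\xi$ factors through the fixed locus, so its special fiber $\xi_0$ is a $\mathbb C^*$-fixed $\epsilon_0$-stable quasimap with calibrated tails, and it remains to check $\xi_0\in F_{\vec\beta}$. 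Write the generic fiber's curve as $\mathcal C^*_K=\mathcal C'_K\cup\mathcal E^*_1\cup\cdots\cup\mathcal E^*_k$, where the $\mathcal E^*_i$ are the $k$ entangled tails of $\xi^*$, which are fixed tails of curve classes $\beta_1,\ldots,\beta_k$ with $\deg\beta_i=d_0$, and $\mathcal C'_K$ is the residual $\epsilon_+$-stable part of class $\beta'=\beta-\sum_i\beta_i$. Let $\mathcal E_i\subset\mathcal C$ be the closure of $\mathcal E^*_i$ over $R$ and $E_i$ its special fiber. A degree count shows $\mathcal E_i$ stays irreducible: if its special fiber were a nontrivial tree of rational curves, each leaf other than the component attached to $\mathcal C'$ would have degree $\ge d_0$ (semistability forbids rational tails of degree $<d_0$) while the total degree over $\mathcal E_i$ is $d_0$, forcing a degree-$0$ rational bridge, which semistability also forbids. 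So $E_i$ is a rational tail of class $\beta_i$; moreover $u^{-1}[W^{\mathrm{us}}/G]\cap\mathcal E_i$ is finite and flat over $R$ of length $d_0$ (it cannot contain $E_i$, since quasimap base loci are finite), whence $E_i$ carries a length-$d_0$ base point and, by the standard degree computation on a twisted $\mathbb P^1$ of degree $d_0$ with a length-$d_0$ base point, $u|_{E_i}$ is constant on coarse moduli, i.e.\ $E_i$ is a constant tail (Definition~\ref{def:constant-tail}).

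Given this, I would finish as follows. By Definition~\ref{def:mspace-stability}(1) every constant tail of $\xi_0$ is entangled, so $E_1,\ldots,E_k$ are all entangled; conversely Lemma~\ref{lem:limit-of-entanglement-case2}, applied to the family $\mathcal C\to\operatorname{Spec}R$ whose generic entangled tails are exactly the $\mathcal E^*_i$, shows that the entangled tails of $\xi_0$ form a subset of $\{E_1,\ldots,E_k\}$. Hence $\xi_0$ has exactly the $k$ entangled tails $E_1,\ldots,E_k$, of classes $\beta_1,\ldots,\beta_k$, with residual class $\beta'$. Since the curve of $\xi_0$ has degree-$d_0$ rational tails and length-$d_0$ base points, it is neither $\epsilon_+$-stable nor $\epsilon_-$-stable, so Definition~\ref{def:mspace-stability}(3)--(4) force $v_1(\xi_0)\ne 0$ and $v_2(\xi_0)\ne 0$; then Lemma~\ref{lem:fixed-point-with-fixed-tails} applies and, $\xi_0$ being $\mathbb C^*$-fixed, each $E_i$ is a fixed tail. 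Therefore $\xi_0\in F_{\vec\beta}$, which verifies the valuative criterion and proves $F_{\vec\beta}$ closed.

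The step I expect to be the main obstacle is the degeneration analysis of the entangled tails: making the ``no bubbling'' degree bookkeeping and the flatness of the base locus precise, and, above all, checking that the family $\mathcal C\to\operatorname{Spec}R$ literally satisfies the hypotheses of Lemma~\ref{lem:limit-of-entanglement-case2} --- that the $E_i$ are honest degree-$d_0$ rational tails of the special fiber and that no further entangled tails appear. A more constructive alternative would be to extend the $\epsilon_+$-stable core over $R$ inside $\tilde Q^{\epsilon_+}_{g,n+k}(X,\beta')$ and each fixed tail over $R$ inside the (proper) moduli of degree-$\beta_i$ fixed tails, extend the entanglement datum and the calibration compatibly, and glue via $\tilde{\mathrm{gl}}_k$ to produce an $\epsilon_0$-stable extension whose special fiber visibly lies in $F_{\vec\beta}$; separatedness of $MQ^{\epsilon_0}_{g,n}(X,\beta)$ then identifies this extension with $\xi$.
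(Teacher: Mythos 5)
Your proposal is correct and follows essentially the same route as the paper: the paper also checks stability under specialization, gets the upper bound "at most $k$ entangled tails in the limit" from Lemma~\ref{lem:limit-of-entanglement-case2}, and the matching lower bound from the stability condition that forces the limiting tails to be entangled. The only cosmetic difference is that you route the lower bound through "the $E_i$ are constant tails, hence entangled by Definition~\ref{def:mspace-stability}(1)" and recover fixedness afterwards from Lemma~\ref{lem:fixed-point-with-fixed-tails}, whereas the paper argues directly with fixed tails via Remark~\ref{rmk:tails-comparison}; both are fine.
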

\ifdefined\SHOWPROOFS
\begin{proof}
  It is easy to see that $F_{\vec\beta}$ is constructible.
  Consider any $1$-parameter family of
  objects in $F_{\vec\beta}$.
  It is easy to see that the limit cannot be in $F_+$ or in $F_-$. Hence the
  limit must be in $F_{\vec\beta^\prime}$ for some $(1+k^\prime)$-tuple
  $\vec\beta^\prime$.
  In the limit there are at least $k$ fixed tails and at most $k$
  entangled tails (Lemma~\ref{lem:limit-of-entanglement-case2}).
  Recall that the stability condition (Definition~\ref{def:mspace-stability})
  requires that any fixed tail is an entangled tail (c.f.\
  Remark~\ref{rmk:tails-comparison}). Hence we must
  have $k = k^\prime$. Thus the entangled tails in the limit are precisely the
  limit of the entangled tails of the generic member of the family. Hence we must
  have $\vec\beta^\prime = \vec\beta$. Hence the limit is always in
  $F_{\vec\beta}$ and $F_{\vec\beta}$ is closed.
\end{proof}
\fi

We now describe the structure of $F_{\vec\beta}$. The goal is
Lemma~\ref{lem:structure-F-beta}. We first show that for a family of objects in
$F_{\vec\beta}$ over a (possibly nonreduced) base scheme, we can split off the
entangled tails. More precisely, recall that $\mathfrak E^*_{k-1}\subset
\tilde{\mathfrak M}_{g,n,d}$ is the locally closed smooth substack where there
are exactly $k$ entangled tails (Lemma~\ref{lem:entangled-tails-from-gluing}).
\begin{lemma}
  \label{lem:F-beta-factor-through-E-k-1}
  The forgetful morphism $F_{\vec\beta} \to   \tilde{\mathfrak
    M}_{g,n,d}$ factors through $\mathfrak E^*_{k-1}$.
\end{lemma}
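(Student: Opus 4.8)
The plan is to show that for any scheme $S$ and any $S$-family $\xi = (\pi:\mathcal C\to S,\mathbf x,e,u,N,v_1,v_2) \in F_{\vec\beta}(S)$, the induced classifying morphism $S \to \tilde{\mathfrak M}_{g,n,d}$ factors set-theoretically and scheme-theoretically through the locally closed substack $\mathfrak E^*_{k-1}$. Since $\mathfrak E^*_{k-1}$ is locally closed in $\tilde{\mathfrak M}_{g,n,d}$ and equals the locus with exactly $k$ entangled tails (Lemma~\ref{lem:entangled-tails-from-gluing}), and since $F_{\vec\beta}$ is reduced being open and closed in the fixed-point locus, it suffices to check the factorization on geometric points together with the appropriate infinitesimal lifting. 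First I would treat geometric points: by definition of $F_{\vec\beta}$, a geometric point of $F_{\vec\beta}$ has exactly $k$ entangled tails, all of which are fixed tails; hence its image in $\tilde{\mathfrak M}_{g,n,d}$ lies in the locus of exactly $k$ entangled tails, which is precisely $\mathfrak E^*_{k-1}$ by Lemma~\ref{lem:entangled-tails-from-gluing}. This gives the factorization on the level of closed points.

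Next I would upgrade this to a morphism of stacks. Since $F_{\vec\beta}$ is reduced (being open and closed in a fixed-point locus of a Deligne--Mumford stack, which in characteristic zero is smooth over the fixed locus — or at worst we may work with the reduced structure, which is all that is needed since the target $\mathfrak E^*_{k-1}$ is a substack through which we want a map), any morphism $F_{\vec\beta} \to \tilde{\mathfrak M}_{g,n,d}$ that hits the closed points of $\mathfrak E^*_{k-1}$ factors through the locally closed substack $\mathfrak E^*_{k-1}$: indeed, the preimage of $\mathfrak E^*_{k-1}$ is a locally closed substack of $F_{\vec\beta}$ containing all closed points, hence (by reducedness and the fact that $F_{\vec\beta}$ is of finite type) equals all of $F_{\vec\beta}$. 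Thus the morphism $F_{\vec\beta} \to \tilde{\mathfrak M}_{g,n,d}$ factors through $\mathfrak E^*_{k-1}$.

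The main subtlety, which I would address carefully, is the interplay between the three notions of ``tail'' appearing here: fixed tails (Definition~\ref{def:fixed-tails}), constant tails (Definition~\ref{def:constant-tail}), and entangled tails (Definition~\ref{def:entangled-tails}). The definition of $F_{\vec\beta}$ stipulates that the $k$ entangled tails are fixed tails of the prescribed degrees $\beta_1,\ldots,\beta_k$; I need to know there are no \emph{additional} fixed tails or constant tails that are not entangled. But this is exactly the content of the $\epsilon_0$-stability condition (Definition~\ref{def:mspace-stability}(1)): every constant tail is an entangled tail, and by Lemma~\ref{lem:fixed-tail} every fixed tail is a constant tail, so every fixed tail is an entangled tail (c.f.\ Remark~\ref{rmk:tails-comparison}). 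Hence the number of entangled tails at each geometric point of $F_{\vec\beta}$ is exactly $k$, not merely at least $k$, and the factorization through $\mathfrak E^*_{k-1}$ (rather than just through $\bigcup_{j\geq k}\mathfrak E^*_{j-1}$ or through the closure $\mathfrak D_{k-1}$) follows. I expect this bookkeeping with the stability condition to be the only real point requiring care; the rest is formal from Lemma~\ref{lem:entangled-tails-from-gluing} and the fact that $\mathfrak E^*_{k-1}$ is a locally closed substack.
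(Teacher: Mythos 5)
Your argument for closed points is fine (and in fact shorter than you make it: the definition of $F_{\vec\beta}$ already stipulates \emph{exactly} $k$ entangled tails, and $\mathfrak E^*_{k-1}$ is precisely the locus with $k$ entangled tails, so the fixed/constant/entangled bookkeeping via Definition~\ref{def:mspace-stability}(1) and Lemma~\ref{lem:fixed-tail}, while correct, is not where the difficulty lies). The genuine gap is in your second paragraph, where you pass from closed points to the scheme-theoretic factorization by asserting that $F_{\vec\beta}$ is reduced. This is unjustified and false in general: the master space $MQ^{\epsilon_0}_{g,n}(X,\beta)$ is only a finite-type Deligne--Mumford stack carrying a perfect obstruction theory, not a smooth stack, so the smoothness of fixed loci of reductive group actions (which holds for smooth ambient spaces in characteristic zero) does not apply, and an open-and-closed piece of the $\mathbb C^*$-fixed substack can perfectly well be non-reduced. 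Your fallback --- ``at worst work with the reduced structure, which is all that is needed'' --- does not rescue the argument either: the lemma is needed with the actual scheme structure of $F_{\vec\beta}$, because it is what allows the fibered diagram defining $\tilde{\mathrm{gl}}_k^*F_{\vec\beta}$ and the comparison of obstruction theories in Lemma~\ref{lem:obstruction-theory-F-beta}; replacing $F_{\vec\beta}$ by its reduction would corrupt the virtual localization computation.

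The missing idea is to use the $\mathbb C^*$-equivariance to control the infinitesimal thickening. Once you know $(F_{\vec\beta})^{\mathrm{red}}$ maps into $\mathfrak E^*_{k-1}$, observe that by Lemma~\ref{lem:normal-E_k-1} the normal bundle $N_{\mathfrak E^*_{k-1}/\tilde{\mathfrak M}_{g,n,d}}$ pulls back to (a twist of) $\Theta_1$, on which the identity component of the automorphism group --- hence the $\mathbb C^*$ acting on the master space --- acts with nonzero weight by Lemma~\ref{lem:auto-eta}. A $\mathbb C^*$-fixed subscheme cannot thicken in a normal direction of nonzero weight, so $F_{\vec\beta}$ itself, and not merely its reduction, maps into $\mathfrak E^*_{k-1}$. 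This equivariant step is the substance of the paper's proof and is absent from yours.
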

\ifdefined\SHOWPROOFS
\begin{proof}
  It is clear that any closed point of
  $F_{\vec\beta}$ is mapped into $\mathfrak E^*_{k-1}$. Hence
  $({F_{\vec\beta}})^{\mathrm{red}}$ is mapped to $\mathfrak E^*_{k-1}$. By
  Lemma~\ref{lem:normal-E_k-1} and Lemma~\ref{lem:auto-eta}, the pullback of the
  normal bundle $N_{\mathfrak E^*_{k-1}/\tilde{\mathfrak M}_{g,n,d}}$
  has a nontrivial $\mathbb C^*$-action. Since $F_{\vec\beta}$ is $\mathbb C^*$-fixed,
  it must be mapped into $\mathfrak E^*_{k-1}$.
\end{proof}
\fi
Recall that $\mathfrak Z_{(k)} \subset \mathfrak U_{k}$ is the proper transform
of the locus $\mathfrak Z_{k} \subset \mathfrak M_{g,n,d}^{\mathrm{wt,ss}}$
where there are at least $k$ rational tails of degree $d_0$.
By the construction of $\tilde {\mathfrak M}_{g,n,d}$,
we have a natural morphism
$\mathfrak E^*_{k-1} \subset \mathfrak E_{k-1} \to \mathfrak Z_{(k)}$.
Also recall \eqref{map:quotient-by-S} that
\[
  \tilde{\mathrm{gl}}_k: \tilde{\mathfrak M}_{g,n+k,d-kd_0} \times^\prime {\big( \mathfrak
      M_{0,1,d_0}^{\mathrm{wt,ss}}\big)}^k \longrightarrow \mathfrak Z_{(k)}
\]
is the morphism that glues the universal curves of
${(\mathfrak M_{0,1,d_0}^{\mathrm{wt,ss}})}^k$
to the last $k$ markings of the universal curve of
$\tilde{\mathfrak M}_{g,n+k,d-kd_0}$ as degree-$d_0$ rational tails. It is an
\'etale morphism whose fibers are the choices of 
the ordering of those tails and the trivialization of gerbes
at the new nodes.
Let $\mathbf r_i$ be the locally constant function whose value is the order the
automorphism group of the $i$-th new node.
Then ${\tilde{\mathrm{gl}}_k^*}$ is of degree
\begin{equation}
  \label{eq:degree-of-q}
  \frac{k!}{\prod_{i=1}^k \mathbf r_i}.
\end{equation}

We form the fibered diagram
\begin{equation}
  \label{diag:splitting-nodes}
  \begin{tikzcd}
    \tilde{\mathrm{gl}}_k^*F_{\vec\beta} \arrow[r]\arrow[d]
    &\arrow[r]\arrow[d] \tilde{\mathrm{gl}}_k^*\mathfrak E^*_{k-1} &
    \tilde{\mathfrak M}_{g,n+k,d-kd_0} \times^\prime {\big( \mathfrak
      M^{\mathrm{wt,ss}}_{0,1,d_0}\big)}^k
    \arrow[d,"\tilde{\mathrm{gl}}_{k}"] \\
    F_{\vec\beta} \arrow[r] & \mathfrak E^*_{k-1}\arrow[r] & \mathfrak Z_{(k)}
  \end{tikzcd} .
\end{equation}
Let $\mathcal C_{{\tilde{\mathrm{gl}}_k^*}F_{\vec\beta}}$ be the universal curve
over ${\tilde{\mathrm{gl}}_k^*}F_{\vec\beta}$
(i.e.\ the pullback of the universal curve of $MQ^{\epsilon_0}_{g,n}(X,\beta)$).
Let $\mathcal C_{\beta^\prime}$ be the pullback to
${\tilde{\mathrm{gl}}_k^*}F_{\vec\beta}$ of the universal curve of
$\tilde{\mathfrak M}_{g,n+k,d-kd_0}$, and $\mathcal E_1 ,\ldots, \mathcal E_k$ be
the pullback to ${\tilde{\mathrm{gl}}_k^*}F_{\vec\beta}$ of the universal curve
of $(\mathfrak M_{0,1})^k$. Then $\mathcal
C_{{\tilde{\mathrm{gl}}_k^*}F_{\vec\beta}}$ is obtained by gluing $\mathcal E_1
,\ldots, \mathcal E_k$ to the last $k$ markings of $\mathcal C_{\beta^\prime}$.
We denote the node of $\mathcal C_{{\tilde{\mathrm{gl}}_k^*}F_{\vec\beta}}$ on
$\mathcal E_i$ by $p_i$.
Restricting the universal quasimap to $\mathcal C_{\beta^\prime}$ 
  gives rise to
\begin{equation}
  \label{map:F-to-Q-n+k}
  {\tilde{\mathrm{gl}}_k^*}F_{\vec\beta} \longrightarrow
  \tilde Q^{\epsilon_+}_{g,n+k}(X,\beta^\prime).
\end{equation}

Let $\mathbb M_{\beta^\prime}$ be the calibration bundle on $\tilde{\mathfrak
  M}_{g,n+k,d-kd_0}$, and $\mathbb M_{\vec\beta}$ be the calibration
bundle on $F_{\vec\beta}$, i.e.\  the restriction of the calibration bundle on
$\tilde{\mathfrak M}_{g,n,d}$. 
We will suppress obvious pullback notation below.
Then we have canonical isomorphisms of line bundles
on ${\tilde{\mathrm{gl}}_k^*}F_{\vec\beta}$
\[
  \mathbb M^{\vee}_{\beta^\prime}\otimes T_{p_1}\mathcal E_1 \otimes T_{p_1}\mathcal
  C_{\beta^\prime}
  \otimes  \cdots \otimes
  T_{p_k}\mathcal E_k \otimes T_{p_k}\mathcal C_{\beta^\prime}
  \cong \mathbb M^{\vee}_{\vec\beta} \overset{v_2/v_1}{\cong} \mathcal
  O_{{\tilde{\mathrm{gl}}_k^*}F_{\vec\beta}},
\]
where $T_{p_i}\mathcal E_i$ is the line bundle formed by the relative (orbifold)
tangent spaces along $p_i$ (c.f.\ Section~\ref{sec:orbifold-psi-classes}), etc.
By Lemma~\ref{lem:normal-E_k-1},
we have canonical isomorphisms on ${\tilde{\mathrm{gl}}_k^*}F_{\vec\beta}$
\[
  T_{p_1}\mathcal E_1 \otimes T_{p_1}\mathcal C_{\beta^\prime} \cong \cdots \cong
  T_{p_k}\mathcal E_k \otimes T_{p_k}\mathcal C_{\beta^\prime}.
\]
We write $\Theta$ for $T_{p_1}\mathcal E_1 \otimes  T_{p_1} \mathcal
C_{\beta^\prime}$ as a line bundle on ${\tilde{\mathrm{gl}}_k^*}F_{\vec\beta}$.
Then we have a canonical isomorphism
\begin{equation}
  \label{map:root}
  \Theta^{\otimes k} \cong \mathbb M^{\vee}_{\beta^\prime}
\end{equation}
on $\tilde{\mathrm{gl}}_k^*F_{\vec\beta}$.
Let
\begin{equation}
  \label{map:Y-to-Q-n+k}
  Y \longrightarrow  \tilde{Q}^{\epsilon_+}_{g,n+k}(X,\beta^\prime)
\end{equation}
be the stack of $k$-th roots of the pullback to
$\tilde{Q}^{\epsilon_+}_{g,n+k}(X,\beta^\prime)$ of $\mathbb
M^{\vee}_{\beta^\prime}$, 
and $L$ be the universal $k$-th root.
Then \eqref{map:root} gives rise to
\begin{equation}
  \label{map:to-Y}
  {\tilde{\mathrm{gl}}_k^*}F_{\vec\beta} \longrightarrow  Y.
\end{equation}

We now come to the entangled tails.
We want to compare the restriction of the quasimaps to $\mathcal E_i$ with the
fixed-domain quasimaps parameterized by the graph space.
However, $\mathcal E_i$ is in general not a trivial family of curves.
Indeed, it is twisted by a line bundle on $Y$, in the following sense.
Let $V_i^* \to Y$ be the total space of
$L^{\otimes \mathbf r_i} \otimes (T_{p_i}\mathcal C_{\beta^\prime})^{\otimes
-\mathbf r_i}$ minus its zero section.
Define
\[
  Y^\prime = V_1^* {\times_Y} \cdots \times_Y V_k^*,
\]
and set
\[
  \mathcal E_i^{\prime} = \mathcal E_i \times_Y Y^{\prime} = \mathcal
  E_i\times_{{\tilde{\mathrm{gl}}_k^*}F_{\vec\beta}}
  ({\tilde{\mathrm{gl}}_k^*}F_{\vec\beta} \times_YY^\prime).
\]
Thus an $S$-point of ${\tilde{\mathrm{gl}}_k^*}F_{\vec \beta}\times_Y Y^\prime$
is a morphism
$f:S\to {\tilde{\mathrm{gl}}_k^*}F_{\vec\beta}$
together with nonvanishing sections
$s_i \in H^0(S,f^*(T_{p_i}\mathcal E_i)^{\otimes \mathbf r_i})$,
$i=1 ,\ldots, k$. Moreover,
\[
  \mathcal E_i^\prime \times_S ({\tilde{\mathrm{gl}}_k^*}F_{\vec\beta}\times_Y
  Y^\prime) \longrightarrow  S
\]
is
the $i$-th entangled tail of the induced family of curve over $S$. We will see
that the coarse moduli of the fibers are canonically isomorphic to $\mathbb P^1$.

Note that $(T_{p_i}\mathcal E_i)^{\otimes \mathbf r_i}$ is canonically
isomorphic to the tangent space to the coarse moduli of $\mathcal E_i$ at the
node.
Fix a nonzero tangent vector $v_\infty$ to $\mathbb P^1$ at $\infty$.
There are unique morphisms
\[
  f_i : \mathcal E^\prime_i \longrightarrow  \mathbb P^1, \quad i=1 ,\ldots, k,
\]
sending the marking $p_i$ to $\infty$, the unique base point on $\mathcal E_i$
to $0$ and the vector $s_i$ to $v_\infty$.
Consider the standard $(\mathbb C^*)^k$-action on $Y^\prime$ by scaling the $(s_1
,\ldots, s_k)$,
and consider its trivial action on $\mathcal E_i$.
This induces an action on $\mathcal E_i^\prime$.
Also consider the $\mathbb C^*$-action on $\mathcal E_i$ induced by the $\mathbb
C^*$-action \eqref{eq:C*-action} on the master space.
Lift it to a $\mathbb C^*$-action
on $\mathcal E^\prime$ by acting trivially on $Y^\prime$.
Finally recall the $\mathbb C^*$-action
on $\mathbb P^1$ defined by \eqref{eq:C*-action-on-P1}.
\begin{lemma}
\label{lem:equivariant-action-P1-bundle}
For any $(\lambda,t) = (\lambda,t_1 ,\ldots, t_k)\in \mathbb C^*\times (\mathbb C^*)^k$,
the following diagram
\begin{equation}
  \label{cd:action-on-E-prime}
  \begin{tikzcd}
    \mathcal E_i^\prime \arrow[r,"{f_i}"] \arrow[d,"{(\lambda^k,t)}"']& \mathbb P^1
    \arrow[d,"{\lambda^{\mathbf r_i} t_i^{-1}}"] \\
    \mathcal E_i^\prime \arrow[r,"{f_i}"] & \mathbb P^1
  \end{tikzcd}.
\end{equation}
is commutative.
\end{lemma}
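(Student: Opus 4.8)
The morphism $f_i$ is pinned down by the universal property recalled just before the statement: over an arbitrary base, $f_i$ is the \emph{unique} morphism $\mathcal E_i^\prime\to\mathbb P^1$ which on every geometric fibre restricts to the unique isomorphism of the coarse curve $\underline{\mathcal E_i}\xrightarrow{\sim}\mathbb P^1$ carrying the marking $p_i$ to $\infty$, the length-$d_0$ base point to $0$, and the tautological nonvanishing vector $s_i\in(T_{p_i}\mathcal E_i)^{\otimes\mathbf r_i}$ — which is the tangent line to $\underline{\mathcal E_i}$ at $p_i$ — to the fixed vector $v_\infty\in T_\infty\mathbb P^1$. The plan is to verify \eqref{cd:action-on-E-prime} by showing that $f_i\circ(\lambda^k,t)$ is the morphism attached by this universal property to the $(\lambda^k,t)$-transported data, and then identifying the latter with $(\lambda^{\mathbf r_i}t_i^{-1})\circ f_i$; since $\mathbb P^1$ is a scheme this determines the two composites scheme-theoretically, so no reducedness hypothesis on the base is needed. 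Both the $(\mathbb C^*)^k$-action (which touches only the $Y^\prime$-factor) and the master $\mathbb C^*$-action (which acts on $\mathcal E_i$ through the cocharacter $\Gamma\cong\mathbb C^*\subset\mathrm{Aut}(\mathcal E_i)$ of Lemma~\ref{lem:fixed-tail} and Lemma~\ref{lem:auto-eta}) fix the section $p_i$ and the base point and preserve the quasimap on $\mathcal E_i$, so both composites still send $p_i$ to $\infty$ and the base point to $0$. It therefore remains only to compare their derivatives at $p_i$ on the coarse tangent line, i.e.\ to compute the scalar by which each factor of the action makes $f_i$ differ from itself on $(T_{p_i}\mathcal E_i)^{\otimes\mathbf r_i}\cong T_\infty\mathbb P^1$.

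The $(\mathbb C^*)^k$-contribution is immediate. The coordinate $t_i$ rescales the tautological vector $s_i$ (the other coordinates do not affect $f_i$, which sees only $s_i$), so after acting the normalized isomorphism is forced to send $t_i^{-1}s_i\mapsto v_\infty$, equivalently $s_i\mapsto t_i^{-1}v_\infty$. This is exactly the effect of post-composing $f_i$ with the $\mathbb C^*$-action \eqref{eq:C*-action-on-P1} by $t_i^{-1}$, since that action fixes $0$ and $\infty$ and scales $T_\infty\mathbb P^1$ with weight one. This yields the $t_i^{-1}$ in the lemma.

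The master contribution is a weight computation, and this is where the real work — and the main obstacle — lies. On $\tilde{\mathrm{gl}}_k^*F_{\vec\beta}$ the master $\mathbb C^*$ acts trivially on the base and acts on $\mathcal E_i$ through $\Gamma\cong\mathbb C^*$, which by Lemma~\ref{lem:auto-eta} acts with a single common weight on the orbifold tangent lines $T_{p_i}\mathcal E_i$ and trivially on $T_{p_i}\mathcal C_{\beta^\prime}$, hence on each $\Theta_i=T_{p_i}\mathcal E_i\otimes T_{p_i}\mathcal C_{\beta^\prime}$ and on the common value $\Theta$ (Lemma~\ref{lem:normal-E_k-1}). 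The plan is to propagate this weight through the chain of canonical isomorphisms $\Theta_1\cong\cdots\cong\Theta_k\cong\Theta$, $\Theta^{\otimes k}\cong\mathbb M_{\beta^\prime}^\vee$, the $k$-th-root identification $L\cong\Theta$ defining \eqref{map:to-Y}, the relation $\mathbb M_{\beta^\prime}^\vee\otimes\Theta^{\otimes k}\cong\mathbb M_{\vec\beta}^\vee$, and the trivialization of $\mathbb M_{\vec\beta}^\vee$ by $v_2/v_1$ — on which the master $\mathbb C^*$ acts with weight one via the scaling of $v_1$ in \eqref{eq:C*-action}. Carrying this out shows that the master parameter must be reparametrized as $\lambda^k$ (to clear the $k$-th root appearing in $Y$ and obtain integral weights), and that in this reparametrization it acts on $(T_{p_i}\mathcal E_i)^{\otimes\mathbf r_i}$ — the $\mathbf r_i$ entering through the $\mathbf r_i$-th power in the definition of $V_i^*$, equivalently through $\psi=\mathbf r_i\tilde\psi$ — with weight exactly $\mathbf r_i$. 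Since $f_i$ identifies $(T_{p_i}\mathcal E_i)^{\otimes\mathbf r_i}$ with $T_\infty\mathbb P^1$ and the action \eqref{eq:C*-action-on-P1} by $\lambda^{\mathbf r_i}$ scales $T_\infty\mathbb P^1$ with weight $\mathbf r_i$, the master factor contributes precisely the $\lambda^{\mathbf r_i}$. Combining the two independent contributions gives $f_i\circ(\lambda^k,t)=(\lambda^{\mathbf r_i}t_i^{-1})\circ f_i$, so \eqref{cd:action-on-E-prime} commutes. The delicate point, which the argument must get right, is that after all these root constructions, tensor powers and dualizations the master weight on the coarse tangent line is exactly $\mathbf r_i$ and matches the chosen $\mathbb C^*$-linearization of $\mathbb P^1$, rather than some other multiple or its inverse.
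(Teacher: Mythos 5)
Your proposal is correct and follows essentially the same route as the paper: both reduce to checking that the two composites send $s_i$ to $\lambda^{\mathbf r_i}t_i^{-1}v_\infty$ (the data pinning down $f_i$ by its universal property), with the $t_i^{-1}$ coming from the rescaling of $s_i$ on the $Y^\prime$-factor and the $\lambda^{\mathbf r_i}$ coming from the fact that the master $\mathbb C^*$ acts diagonally on $T_{p_1}\mathcal E_1\oplus\cdots\oplus T_{p_k}\mathcal E_k$, so its weight on the calibration bundle is $k$ times its weight on each orbifold tangent line, whence the $\lambda^k$ reparametrization and the weight $\mathbf r_i$ on the coarse tangent line $(T_{p_i}\mathcal E_i)^{\otimes\mathbf r_i}$.
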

\ifdefined\SHOWPROOFS
\begin{proof}
  This is straightforward. It suffices to verify that the two composite arrows $\mathcal
  E_i^\prime \rightrightarrows \mathbb P^1$ both send $s_i$ to
  $\lambda^{\mathbf r_i} t_i^{-1}v_\infty$.
  Note that the first $\mathbb C^*$ acts
  diagonally on $T_{p_1}\mathcal E_1\oplus \cdots \oplus T_{p_k}\mathcal E_k$
  (c.f.~Lemma~\ref{lem:auto-eta} and the proof of Lemma~\ref{lem:finite-auto}).
  Hence its weight on the calibration bundle is $k$ times its weight on
  $T_{p_i}\mathcal E_i$.
\end{proof}
\fi

Consider the family of curves $\mathcal E_i^\prime \to
{\tilde{\mathrm{gl}}_k^*}F_{\vec\beta} \times_YY^\prime$ introduced above. We
have quasimaps $u^\prime_i:\mathcal E_i^\prime \to [W/G]$
coming from the quasimaps $\mathcal E_i \to [W/G]$. Combining this with the
$f_i$ above, we
obtain a morphism
\[
  {\tilde{\mathrm{gl}}_k^*}F_{\vec\beta} \times_YY^\prime 
  \longrightarrow QG_{0,1}(X,\beta_i).
\]
Since $\mathcal E_i$ is a fixed tail, so is $\mathcal E_i^\prime$. Hence this
map lands in $F_{\star,\beta_i}$.
Since $F_{\star,\beta_i}$ is fixed by the $\mathbb C^*$-action induced by the
action on $\mathbb P^1$, this map is invariant
\footnote{Indeed one should also consider the cocycle condition
  of the $2$-morphisms since $QG_{0,1}(X,\beta)$ is a stack. It is satisfied
  since the action only involves maps to $\mathbb P^1$, which is a scheme.}
under the $(\mathbb C^*)^k$-action on $Y^\prime$ by
Lemma~\ref{lem:equivariant-action-P1-bundle}. Hence it descends to
\begin{equation}
  \label{map:to-F}
  {\tilde{\mathrm{gl}}_k^*}F_{\vec\beta} \longrightarrow  F_{\star,\beta_i}.
\end{equation}

Consider the evaluation map at the last $k$ markings
\[
  \mathrm{ev}_Y: Y \longrightarrow (I_\mu X)^k,
\]
and the evaluation map
\[
  \operatorname{ev}_{\star,\beta_i}: F_{\star,\beta_i} \longrightarrow I_{\mu}X
\]
at the unique marking. Let $\check{\operatorname{ev}}_{\star,\beta_i}$ be the
composition of $\operatorname{ev}_{\star,\beta_i}$ with the involution
$I_{\mu}X\to I_{\mu}X$ inverting the banding.
Using $\mathrm{ev}_Y$ and
$\check{\operatorname{ev}}_{\star,\beta_i}$ we form the fiber product
\[
  \textstyle
  Y \times_{(I_{\mu}X)^k}\prod_{i=1}^k F_{\star,\beta_i}.
\]
\begin{lemma}
  \label{lem:structure-F-beta}
  The morphism
  \[
    \textstyle
    \varphi: {\tilde{\mathrm{gl}}_k^*}F_{\vec\beta} \overset{}{\longrightarrow}
    Y \times_{(I_\mu X)^k}\prod_{i=1}^k F_{\star,\beta_i},
  \]
  induced by \eqref{map:to-Y} and \eqref{map:to-F} is representable, finite,
  \'etale, of degree $\prod_{i=1}^k \mathbf r_i$.
\end{lemma}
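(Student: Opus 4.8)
The plan is to identify $\varphi$ fibrewise as a torsor under $\mu_{\mathbf r_1}\times\cdots\times\mu_{\mathbf r_k}$, in direct analogy with Lemma~\ref{lem:correction-term-special-degree}. Concretely, I would show that for any scheme $S$, giving an $S$-point of $\tilde{\mathrm{gl}}_k^*F_{\vec\beta}$ is the same as giving an $S$-point $(y,\tau_1,\ldots,\tau_k)$ of $Y\times_{(I_\mu X)^k}\prod_i F_{\star,\beta_i}$ together with, for each $i$, an $\mathbf r_i$-th root of the fixed tangent vector $v_\infty$ at $\infty\in\mathbb P^1$, i.e.\ a trivialization $s_i$ of the ($\mathbf r_i$-torsion) orbifold tangent line $T_{p_i}\mathcal E_i$ whose $\mathbf r_i$-th power is the canonical section of $(T_{p_i}\mathcal E_i)^{\otimes\mathbf r_i}\cong\mathcal O_S$ cut out by $v_\infty$. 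Since these roots range over a torsor under $\mu_{\mathbf r_1}\times\cdots\times\mu_{\mathbf r_k}$, this realizes $\tilde{\mathrm{gl}}_k^*F_{\vec\beta}$ as such a torsor over the target; a torsor under a finite constant group scheme is representable, finite and \'etale of degree $\prod_{i=1}^k\mathbf r_i$, and one then checks that under this identification the torsor projection is exactly $\varphi$.

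The substance is the reconstruction in one direction. Starting from $(y,\tau_1,\ldots,\tau_k)$ and the roots $(s_i)$, I would glue the orbi-$\mathbb P^1$'s $\mathcal E_i$ underlying $\tau_i\in F_{\star,\beta_i}$ to the last $k$ markings of the curve $\mathcal C_{\beta'}$ underlying the $\tilde Q^{\epsilon_+}_{g,n+k}(X,\beta')$-part of $y$: the matching evaluations (after $\iota$) provided by the fibre product over $(I_\mu X)^k$ make the bandings opposite, and the chosen $s_i$ rigidifies the balanced-node gluing. The resulting curve carries the quasimap $u$ glued from $u'$ and the $u_i$, has curve class $\beta$, and — because it arises through $\tilde{\mathrm{gl}}_k$ — a canonical lift to $\tilde{\mathfrak M}_{g,n,d}$ with the $k$ glued tails entangled (Lemma~\ref{lem:entangled-tails-from-gluing}), which provides the entanglement $e$. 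For the calibration I would take $N=\mathcal O_S$, $v_2=1$, and let $v_1$ be the trivializing section of $\mathbb M_{\vec\beta}$ obtained by combining the $s_i$, the canonical isomorphisms $\Theta_1\cong\cdots\cong\Theta_k=:\Theta$ of Lemma~\ref{lem:normal-E_k-1}, the root isomorphism $\Theta^{\otimes k}\cong\mathbb M^\vee_{\beta'}$ recorded by $y\in Y$, and the identity $\mathbb M^\vee_{\vec\beta}\cong\mathbb M^\vee_{\beta'}\otimes\Theta_1\otimes\cdots\otimes\Theta_k$ established above. Since $v_1$ and $v_2$ are nowhere vanishing this lies in the open locus; the $\mathcal E_i$ are fixed tails (inherited from $F_{\star,\beta_i}$) and, being the entangled tails, make the point $\mathbb C^*$-fixed and $\epsilon_0$-stable by Lemmas~\ref{lem:fixed-point-with-fixed-tails} and~\ref{lem:auto-eta}, so it lies in $F_{\vec\beta}$. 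In the other direction, from an $S$-point of $\tilde{\mathrm{gl}}_k^*F_{\vec\beta}$ one reads off $y$ (the $\beta'$-quasimap together with the $k$-th root $\Theta$ of $\mathbb M^\vee_{\beta'}$), the $\tau_i$ (via the $(\mathbb C^*)^k$-descent of Lemma~\ref{lem:equivariant-action-P1-bundle}), and the distinguished $\mathbf r_i$-th roots $s_i$ of $v_\infty$; functoriality in $S$ and a check that these two assignments are mutually inverse then finish the identification.

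I expect the main obstacle to be the bookkeeping of canonical isomorphisms: verifying that the gluing is well defined on families (not merely on geometric points), that the trivialization of $\mathbb M_{\vec\beta}$ produced above is precisely the one whose scaling defines the $\mathbb C^*$-fixed locus — so that the reconstructed datum genuinely lands in $F_{\vec\beta}$ rather than only in $MQ^{\epsilon_0}_{g,n}(X,\beta)$ — and that the forward map $\varphi$, which is built through the auxiliary covers $Y'$ and the $(\mathbb C^*)^k$-descent, is the honest inverse of the reconstruction and not off by a further twist. Representability of $\varphi$ follows once one observes that automorphisms on both sides are governed by $\epsilon_0$-stability (Lemma~\ref{lem:finite-auto}) while the root data $(s_i)$ admits no automorphisms; quasi-finiteness, \'etaleness and finiteness over the target are then immediate from the torsor description. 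The case $k=1$ with $\beta'=0$ degenerates to Lemma~\ref{lem:correction-term-special-degree}, which can serve as a template for the local computation.
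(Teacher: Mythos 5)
Your overall strategy coincides with the paper's: pull back to a cover, reconstruct a point of $\tilde{\mathrm{gl}}_k^*F_{\vec\beta}$ by gluing the tails from $F_{\star,\beta_i}$ onto the curve from $Y$, identify the extra data as $\mathbf r_i$-th roots coming from $v_\infty$, define the entanglement via Lemma~\ref{lem:entangled-tails-from-gluing}, and set $N=\mathcal O_S$, $v_2=1$, with $v_1$ the induced trivialization of the calibration bundle. The torsor count $\prod_i\mathbf r_i$ and the stability/fixedness checks are also as in the paper.

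There is, however, one genuine gap in the reconstruction as you state it. You claim the equivalence of groupoids directly over $Y$: an $S$-point of $\tilde{\mathrm{gl}}_k^*F_{\vec\beta}$ should be an $S$-point of $Y\times_{(I_\mu X)^k}\prod_iF_{\star,\beta_i}$ plus the roots $s_i$. But the roots $s_i$ only trivialize the tail-side tangent lines $T_{p_i}\mathcal E_i$; to produce the entanglement you must exhibit a point of $\mathbb P^*(\Theta_1\oplus\cdots\oplus\Theta_k)$ with $\Theta_i=T_{p_i}\mathcal E_i\otimes T_{p_i}\mathcal C_{\beta'}$, and this requires comparing the curve-side lines $T_{p_i}\mathcal C_{\beta'}$ for different $i$, which are not canonically isomorphic. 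Your appeal to the isomorphisms $\Theta_1\cong\cdots\cong\Theta_k$ of Lemma~\ref{lem:normal-E_k-1} is circular here: those isomorphisms live on $\tilde{\mathrm{gl}}_k^*\mathfrak E^*_{k-1}$, i.e.\ they presuppose the entanglement you are trying to construct. The same issue affects the calibration: trivializing $\mathbb M_{\vec\beta}\cong\mathbb M_{\beta'}\otimes\bigotimes_i\Theta_i^\vee$ from the $s_i$ and the root datum $L^{\otimes k}\cong\mathbb M^\vee_{\beta'}$ still leaves the factors $L\otimes(T_{p_i}\mathcal C_{\beta'})^\vee$ untrivialized. The fix is exactly the paper's device: since representability, finiteness, \'etaleness and degree are fppf-local on the target, base change to a faithfully flat cover $Y''\to Y$ on which $L$ and each $T_{p_i}\mathcal C_{\beta'}$ are trivialized; over $Y''$ your reconstruction goes through verbatim and identifies $\varphi$ with the degree-$\prod_i\mathbf r_i$ root cover. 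Without this (or an equivalent descent argument) the claimed moduli description over $Y$ itself is not established.
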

\ifdefined\SHOWPROOFS
\begin{proof}
  It is a morphism over $Y$. To
  prove the lemma,
  it suffices to work over a faithfully flat (representable) cover of $Y$.
  We construct a cover $Y^{\prime\prime}\to Y$ over which  $L$
  and $T_{p_i}\mathcal C_{\beta^\prime}$ are canonically trivialized.
  The construction is similar to that of $Y^\prime\to Y$ and we
  omit the details.
  After base change to $Y^{\prime\prime}$ we will see that $\varphi$ is isomorphic to
  a finite \'etale cover.

  Given an $S$-point $\xi$ of $Y^{\prime\prime} \times_{(I_\mu
    X)^k}\prod_{i=1}^k F_{\star,\beta_i}$, we want to show that a lifting to an
  $S$-point of $Y^{\prime\prime}\times_{Y}
  {\tilde{\mathrm{gl}}_k^*}F_{\vec\beta}$ is equivalent to the
  choice of an $\mathbf r_i$-th root of a nonzero section of a certain line
  bundle.
  
  Given $\xi$,
  we first glue the rational curves $\mathcal E^{\prime\prime}_i$ from
  $F_{\star,\beta_i}$ to the $(n+i)$-th marking of the curves $\mathcal
  C^{\prime\prime}_{\beta^\prime}$ from $Y$, and glue the quasimaps.
  Let $\mathcal C^{\prime\prime}_{\beta}\to S$ be the family of curves thus obtained.
  Note that the tails $\mathcal E_{i}^{\prime\prime}$ are ordered and the new
  nodes are trivialized gerbes.
  To obtain an $S$-point of
  $Y^{\prime\prime}\times_{Y}{\tilde{\mathrm{gl}}_k^*}F_{\vec\beta}$, we need to define the
  entanglement (i.e.\ a morphism $S\to \tilde{\mathfrak M}_{g,n,d}$ that lifts
  the classifying morphism $S\to \mathfrak M_{g,n,d}^{\mathrm{wt,ss}}$) and the
  calibration (i.e.\ $N,v_1$ and $v_2$).

  For the entanglement, we need to make a choice.
  Let $p_i$ be the new node on $\mathcal E^{\prime\prime}_i$ that comes from gluing.
  Recall that $v_{\infty}$ is a fixed nonzero tangent vector at $\infty$ to
  $\mathbb P^1$. It gives a canonical trivialization $s_i$ of $(T_{p_i}\mathcal
  E^{\prime\prime}_i)^{\otimes \mathbf r_i}$
  via the (relative) tangent map of $\mathcal E^{\prime\prime}_i \to \mathbb
  P^1$.
  We choose an $\mathbf r_i$-th root of $s_i$ for $i=1 ,\ldots, k$, and obtain
  \begin{equation}
    \label{map:trivialization-tails}
    \mathcal O_S \cong T_{p_1}\mathcal E^{\prime\prime}_1 \cong \cdots \cong
    T_{p_k}\mathcal E^{\prime\prime}_k .
  \end{equation}

  Using $S\to Y^{\prime\prime}$, we have
  \[
    \mathcal O_S \cong T_{p_1}\mathcal C^{\prime\prime}_{\beta^\prime} \cong
    \cdots \cong T_{p_k}\mathcal C^{\prime\prime}_{\beta^\prime},
  \]
  since we have fixed such isomorphisms on $Y^{\prime\prime}$.
  Thus we obtain nonvanishing sections of $T_{p_i}\mathcal
  C^{\prime\prime}_{\beta^\prime}\otimes T_{p_i}\mathcal E^{\prime\prime}_i$
  for $i=1 ,\ldots, k$.
  By Lemma~\ref{lem:structure-E_k-1} and
  Lemma~\ref{lem:entangled-tails-from-gluing}, this is equivalent to a lifting of
  the classifying morphism $S \to \mathfrak Z_{(k)}$ to $S \to \mathfrak E_{k-1}^*
  \subset \mathfrak E_{k-1}$.
  Recall that we have $\mathfrak E_{k-1}^* \subset \tilde{\mathfrak M}_{g,n,d}$.
  Thus we obtain $S\to
  \tilde{\mathfrak M}_{g,n,d}$.  This defines the entanglement and $\mathcal
  E_1 ,\ldots, \mathcal E_k$ are precisely the entangled tails.

  For the calibration, we first set $N$ to be the trivial bundle and set $v_2 = 1$.
  Let $\mathbb M_{\beta}$ be the calibration bundle of
  $\mathcal C^{\prime\prime}_{\beta}$.
  Using the relations
  $L^{\otimes k} \cong \mathbb M^{\vee}_{\beta^\prime}$ on $Y$,
  the isomorphisms
  \[
    \mathbb M_{\beta^\prime}^{\vee}\otimes T_{p_1}\mathcal E^{\prime\prime}_1
    \otimes T_{p_1}\mathcal C^{\prime\prime}_{\beta^{\prime}}
    \otimes  \cdots \otimes
    T_{p_k}\mathcal E^{\prime\prime}_k \otimes T_{p_k}\mathcal
    C^{\prime\prime}_{\beta_0} \cong \mathbb M_{\beta}^{\vee}
  \]
  on $S$, the trivializations of (the pullbacks of) $L,T_{p_i} \mathcal
  C_{\beta^\prime}$ coming from $Y^{\prime\prime}$, and the
  isomorphisms~\eqref{map:trivialization-tails}, we obtain a trivialization of
  $\mathbb M_{\beta}$. Set $v_1 = v_1/v_2$ to be that trivialization. This defines
  the calibration, independent of any choice.

  In summary, we have obtained a natural equivalence of categories between the
  groupoids of $S$-points of $Y^{\prime\prime}\times_Y
  {\tilde{\mathrm{gl}}_k^*}F_{\vec\beta}$ and the groupoids of $S$-points of
  $Y^{\prime\prime} \times_{(I_\mu X)^k}\prod_{i=1}^k F_{\star,\beta_i}$ together
  with a choice of an $\mathbf r_i$-th root of each $s_i$, which is nonvanishing.
  This proves the Lemma.
\end{proof}
\fi

Let $[Y]^{\mathrm{vir}}\in A_*(Y)$ be the flat pullback of
$[\tilde Q^{\epsilon_+}_{g,n+k}(X,\beta^\prime)]^{\mathrm{vir}}$, and let
$[{\tilde{\mathrm{gl}}_k^*}F_{\vec\beta}]^{\mathrm{vir}}$ be the flat pullback
of $[F_{\vec\beta}]^{\mathrm{vir}}$,
the virtual cycle defined by the fixed part of the absolute perfect obstruction
theory.
Let $\tilde{\psi}(\mathcal E_i)$ be the orbifold $\psi$-class of the rational tail
$\mathcal E_i$ at the node and $\tilde\psi_{n+i}$ be the orbifold $\psi$-class
at the $(n+i)$-th marking of ${\tilde Q}^{\epsilon_+}_{g,n+k}(X,\beta^\prime)$.
Let $\psi(\mathcal E_i)$ and $\psi_{n+i}$ be the coarse $\psi$-classes (c.f.\
Section~\ref{sec:orbifold-psi-classes}). Recall from
Section~\ref{sec:boundary-divisor}  that 
the divisor $\mathfrak D_{i}\subset \tilde{\mathfrak M}_{g,n,d}$ is the closure
of the locus where there are exactly ${(i+1)}$ entangled tails. Recall $\mathcal
I_\beta(z)$ from \eqref{eq:curly-I}.
\begin{lemma}
  \label{lem:obstruction-theory-F-beta}
  Via the morphism $\varphi$ in Lemma~\ref{lem:structure-F-beta}, we have
  \[
    [{\tilde{\mathrm{gl}}_k^*}F_{\vec\beta}]^{\mathrm{vir}} = \varphi^*([Y]^{\mathrm{vir}}
    \underset{(I_{\mu}X)^{k}}{\times} \textstyle\prod_{i=1}^k
    [F_{\star,\beta_i}]^{\mathrm{vir}}),
  \]
  and
  \begin{equation}
    \label{eq:normal-bundle-F-beta}
    \begin{aligned}
      \frac{1}
      {e_{\mathbb C^*}(
        N^{\mathrm{vir}}_{F_{\vec\beta}/MQ^{\epsilon_0}_{g,n}(X,\beta)}|_{{\tilde{\mathrm{gl}}_k^*}F_{\vec\beta}}
        )}
      = & \frac {\prod_{i=1}^k(\frac{\mathbf r_i}{k}z+\psi(\mathcal E_i))}
      {- \frac{z}{k} -\tilde\psi(\mathcal E_1) - \tilde\psi_{n+1} - \sum_{i=k}^\infty[\mathfrak
        D_i]}
      \cdot\\
      &  \mathcal I_{\beta_1}(\frac{\mathbf
        r_1}{k}z+\psi(\mathcal E_1))\boxtimes \cdots \boxtimes \mathcal
      I_{\beta_k}(\frac{\mathbf r_k}{k}z+\psi(\mathcal E_k)).
    \end{aligned}
  \end{equation}
\end{lemma}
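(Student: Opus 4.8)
The proof will combine the structural results of Section~\ref{sec:localization} with a fixed/moving decomposition of the obstruction theory. The plan is: restrict the relative perfect obstruction theory $(R\pi_*u^*\mathbb T_{[W/G]})^\vee \to \mathbb L_{MQ^{\epsilon_0}_{g,n}(X,\beta)/M\tilde{\mathfrak M}_{g,n,d}}$ to $F_{\vec\beta}$, pass to the induced absolute obstruction theory $\mathbb E_{MQ}$, and split it into fixed and moving parts following Graber--Pandharipande and Chang--Li. After pulling back along the \'etale morphism $\tilde{\mathrm{gl}}_k$, the universal curve over $\tilde{\mathrm{gl}}_k^*F_{\vec\beta}$ is the nodal gluing $\mathcal C_{\beta'}\cup_{p_1,\dots,p_k}(\mathcal E_1\sqcup\cdots\sqcup\mathcal E_k)$ described before Lemma~\ref{lem:structure-F-beta}, so the normalization triangle for $u^*\mathbb T_{[W/G]}$ decomposes $R\pi_*u^*\mathbb T_{[W/G]}$ into the piece $R\pi_{\beta',*}u_{\beta'}^*\mathbb T_{[W/G]}$ on $\mathcal C_{\beta'}$, the pieces $R\pi_{\mathcal E_i,*}u_i^*\mathbb T_{[W/G]}$ on the tails, and the node terms $(u^*\mathbb T_{[W/G]})|_{p_i}$; likewise $\mathbb T_{M\tilde{\mathfrak M}_{g,n,d}}|_{F_{\vec\beta}}$ splits off the $\mathbb P^1$-fibre direction of $M\tilde{\mathfrak M}_{g,n,d}$ and the conormal of $F_{\vec\beta}$ inside $\tilde{\mathfrak M}_{g,n,d}$, which by Lemma~\ref{lem:F-beta-factor-through-E-k-1} means the normal bundle $N_{\mathfrak E^*_{k-1}/\tilde{\mathfrak M}_{g,n,d}}$.

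For the virtual cycle identity I would run the usual splitting-axiom argument, exactly as in the proof of Lemma~\ref{lem:int-with-D-k-1} (and \cite[Proposition~5.3.1]{abramovich2008gromov}). Since $\mathbb C^*$ acts trivially on $\mathcal C_{\beta'}$, the summand $R\pi_{\beta',*}u_{\beta'}^*\mathbb T_{[W/G]}$ is entirely fixed and yields $[\tilde Q^{\epsilon_+}_{g,n+k}(X,\beta')]^{\mathrm{vir}}$, flatly pulled back to $Y$; on each tail, the discussion of Section~\ref{sec:graph-space-revisited} --- the triangle~\eqref{seq:pot-QG} together with the fact that $\mathbb L_{\overline M_{0,1}(\mathbb P^1,1)}$ restricts to a moving line on $F_{\star,\beta_i}$ --- identifies the fixed part of the tail obstruction theory with the one defining $[F_{\star,\beta_i}]^{\mathrm{vir}}$; and the node terms $(u^*\mathbb T_{[W/G]})|_{p_i}$ are $\mathbb C^*$-fixed and, via the evaluation maps, impose the fibre product over $(I_\mu X)^k$. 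Combining these with the root stack $Y$ and the degree count of Lemma~\ref{lem:structure-F-beta} yields $[\tilde{\mathrm{gl}}_k^*F_{\vec\beta}]^{\mathrm{vir}} = \varphi^*\bigl([Y]^{\mathrm{vir}}\times_{(I_\mu X)^k}\prod_{i=1}^k[F_{\star,\beta_i}]^{\mathrm{vir}}\bigr)$.

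For the virtual normal bundle, $1/e_{\mathbb C^*}(N^{\mathrm{vir}}_{F_{\vec\beta}/MQ^{\epsilon_0}_{g,n}(X,\beta)})$ is the reciprocal of $e_{\mathbb C^*}$ of the moving part of the virtual tangent complex $R\pi_*u^*\mathbb T_{[W/G]}+\mathbb T_{M\tilde{\mathfrak M}_{g,n,d}}$. By Lemma~\ref{lem:auto-eta} the $\mathbb C^*$-action on $F_{\vec\beta}$ is implemented by automorphisms acting diagonally on $\Theta_1\oplus\cdots\oplus\Theta_k$ with weight $z$ on the calibration bundle, hence with weight $z/k$ on each $T_{p_i}\mathcal E_i$; combined with Lemma~\ref{lem:equivariant-action-P1-bundle} and the relation $\psi(\mathcal E_i)=\mathbf r_i\tilde\psi(\mathcal E_i)$, this identifies the pullback of $\mathcal I_{\beta_i}$ along \eqref{map:to-F} with $\mathcal I_{\beta_i}(\tfrac{\mathbf r_i}{k}z+\psi(\mathcal E_i))$ and shows that the moving part of $R\pi_*u^*\mathbb T_{[W/G]}$ contributes $\prod_{i=1}^k\bigl(\tfrac{\mathbf r_i}{k}z+\psi(\mathcal E_i)\bigr)\cdot\prod_{i=1}^k\mathcal I_{\beta_i}(\tfrac{\mathbf r_i}{k}z+\psi(\mathcal E_i))$ --- the extra factors $\tfrac{\mathbf r_i}{k}z+\psi(\mathcal E_i)$ being the coarse (co)tangent lines at the $k$ nodes, which the operational class $\mathcal I_{\beta_i}$ (built only from $R\pi_*u^*\mathbb T_{[W/G]}$, cf.\ \eqref{eq:curly-I}) does not absorb, just as the $(-z)$ factor of the $I$-function and of Lemma~\ref{lem:correction-term-special-contr} records the deformation of the marking on the graph space. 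The remaining moving direction is $N_{\mathfrak E^*_{k-1}/\tilde{\mathfrak M}_{g,n,d}}$, the $\mathbb P^1$-fibre direction of $M\tilde{\mathfrak M}_{g,n,d}$ being fixed because the automorphism $\phi_\lambda$ of the tails cancels the scaling of $v_1$; its equivariant Euler class is computed from Lemma~\ref{lem:normal-E_k-1} to be $-\tfrac{z}{k}+c_1(\Theta_1)-\sum_{i\ge k}[\mathfrak E_i]=-\tfrac{z}{k}-\tilde\psi(\mathcal E_1)-\tilde\psi_{n+1}-\sum_{i\ge k}[\mathfrak D_i]$, using $c_1(\Theta_1)=-\tilde\psi(\mathcal E_1)-\tilde\psi_{n+1}$ and the identification of $\mathfrak D_i$ with the proper transform of $\mathfrak E_i$ (Lemma~\ref{lem:str-D}). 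Dividing gives \eqref{eq:normal-bundle-F-beta}.

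The main obstacle will be the Chern-class bookkeeping in this last step: pushing the first Chern class of $N_{\mathfrak E^*_{k-1}/\tilde{\mathfrak M}_{g,n,d}}$ (and, implicitly, of the calibration bundle) through the iterated blowups defining $\tilde{\mathfrak M}_{g,n,d}$, so as to obtain the total-transform correction $\sum_{i\ge k}[\mathfrak D_i]$ precisely --- Lemmas~\ref{lem:structure-E_k-1}, \ref{lem:normal-E_k-1}, \ref{lem:str-D} and~\ref{lem:divisors2} are the inputs --- while keeping the fractional $\mathbb C^*$-weights $z/k$, the orbifold-versus-coarse $\psi$-classes, and the fixed/moving splitting consistent under the $\tilde{\mathrm{gl}}_k$-pullback and the auxiliary root stacks $Y$, $Y'$, $Y''$. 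By contrast, the fixed-part identification in the first half is routine given Section~\ref{sec:graph-space-revisited} and the splitting-axiom argument already used for Lemma~\ref{lem:int-with-D-k-1}.
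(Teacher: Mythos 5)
Your proposal matches the paper's proof in both structure and substance: the fixed part is handled by first checking that $(\mathbb L_{M\tilde{\mathfrak M}_{g,n,d}}|_{\tilde{\mathrm{gl}}_k^*F_{\vec\beta}})^{\mathrm f}\cong\mathbb L_{\tilde{\mathfrak M}_{g,n+k,d-kd_0}}$ and then running the splitting-node argument relative to $\tilde{\mathfrak M}_{g,n+k,d-kd_0}$, while the moving part splits into exactly the three factors you list --- the numerator being the Euler classes of the infinitesimal automorphisms of the fixed tails (equivalently your coarse cotangent lines at the nodes, i.e.\ the tangent lines at the base points), the denominator being $e_{\mathbb C^*}(N_{\mathfrak E^*_{k-1}/\tilde{\mathfrak M}_{g,n,d}})$ computed from Lemmas~\ref{lem:normal-E_k-1} and~\ref{lem:str-D}. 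The only organizational difference is that the paper compares the two quasimap obstruction theories wholesale, via an equivariant isomorphism of universal curves over the torsor $Y^\prime$ and descent through the homomorphism $(\lambda,t)\mapsto(\lambda^{\mathbf r_1/k}t_1^{-1},\ldots,\lambda^{\mathbf r_k/k}t_k^{-1},t)$ --- which is precisely the mechanism producing your substitution $z_i\mapsto\frac{\mathbf r_i}{k}z+\psi(\mathcal E_i)$ --- rather than through the normalization triangle directly.
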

\begin{remark}
  \label{rmk:entanglement-implies-equation-of-psi}
  Note that since $\mathcal E_1 ,\ldots, \mathcal E_k$ are the entangled
  tails, we have
  \[
    \tilde\psi(\mathcal E_1) + \tilde\psi_{n+1} = \cdots =
    \tilde\psi(\mathcal E_k) + \tilde\psi_{n+k}
  \]
  on ${\tilde{\mathrm{gl}}_k^*} F_{\vec \beta}$ (c.f.\ Lemma~\ref{lem:normal-E_k-1}).
\end{remark}
\ifdefined\SHOWPROOFS
\begin{proof}[Proof of Lemma~\ref{lem:obstruction-theory-F-beta}]
  Let $M^* \mathfrak E^*_{k-1}\subset M\tilde{\mathfrak M}_{g,n,d}$
  be the intersection of the open substack $\{v_1\neq 0, v_2\neq 0\}$ and the preimage of
  the locally closed substack $\mathfrak E^*_{k-1}\subset \tilde{\mathfrak M}_{g,n,d}$.
  Using $M^* \mathfrak E^*_{k-1} \to \mathfrak  E^*_{k-1}\to \mathfrak
  Z_{(k)}$, we form the pullback ${\tilde{\mathrm{gl}}_k^*}(M^*\mathfrak
  E_{k-1}^*)$, where
  \[
    \tilde{\mathrm{gl}}_k:
    \tilde{\mathfrak M}_{g,n+k,d-kd_0} \times^\prime
    \big( \mathfrak
        M_{0,1,d_0}^{\mathrm{wt,ss}}\big)^k \longrightarrow \mathfrak Z_{(k)}
  \]
  is the gluing morphism in Lemma~\ref{lem:structure-Z_k}, which is \'etale.
  Note that ${\tilde{\mathrm{gl}}_k^*}F_{\vec\beta} \to
  M\tilde{\mathfrak M}_{g,n,d}$ factors through
  ${\tilde{\mathrm{gl}}_k^*}(M^*\mathfrak E_{k-1}^*)$, by
  Lemma~\ref{lem:F-beta-factor-through-E-k-1}.

  Let $\pi_1: \mathcal C_1 \to
  {\tilde{\mathrm{gl}}_k^*}F_{\vec\beta}$ be the universal curve and $u_1:\mathcal C_1\to [W/G]$ be
  the universal map. Let $\mathbb E_{MQ}$ be the absolute perfect
  obstruction theory on $MQ^{\epsilon_0}_{g,n}(X,\beta)$. We write $\mathbb
  E_1 = \big(R\pi_{1*}u_1^*\mathbb T_{[W/G]})^{\vee}$. Then we have the distinguished
  triangle on ${\tilde{\mathrm{gl}}_k^*}F_{\vec\beta}$
  \[
    \mathbb L_{M\tilde{\mathfrak M}_{g,n,d}}|_{{\tilde{\mathrm{gl}}_k^*}F_{\vec\beta}}
    \longrightarrow
    \mathbb E_{MQ}|_{{\tilde{\mathrm{gl}}_k^*}F_{\vec\beta}}
    \longrightarrow
    \mathbb E_1
    \overset{+1}{\longrightarrow}.
  \]
    We claim that we have a natural isomorphism
    \[
      (\mathbb L_{M\tilde{\mathfrak M}_{g,n,d}}|_{{\tilde{\mathrm{gl}}_k^*}F_{\vec\beta}})^{\mathrm{f}}
      \cong
      \mathbb L_{\tilde{\mathfrak M}_{g,n+k,d-d_0k}}|_{{\tilde{\mathrm{gl}}_k^*}F_{\vec\beta}}.
    \]
    Indeed, the normal bundle of $M^* \mathfrak E_{k-1}^*$ in $M\tilde{\mathfrak M}_{g,n,d}$
  is isomorphic to the normal bundle of $\mathfrak E^*_{k-1}$ in
  $\tilde{\mathfrak M}_{g,n,d}$, which is in the moving
  part by Lemma~\ref{lem:normal-E_k-1}. Hence
  \begin{equation*}
    (\mathbb L_{M\tilde{\mathfrak
        M}_{g,n,d}}|_{{\tilde{\mathrm{gl}}_k^*}F_{\vec\beta}})^{\mathrm{f}}
    \cong
    (\mathbb L_{M^*\mathfrak E_{k-1}^*}|_{{\tilde{\mathrm{gl}}_k^*}F_{\vec\beta}})^{\mathrm{f}}
    \cong
    (\mathbb L_{{\tilde{\mathrm{gl}}_k^*}M^*\mathfrak
      E_{k-1}^*}|_{{\tilde{\mathrm{gl}}_k^*}F_{\vec\beta}})^{\mathrm{f}}.
  \end{equation*}
  Consider the composition of
  \[
    {\tilde{\mathrm{gl}}_k^*} (M^*\mathfrak E_{k-1}^*)
    \longrightarrow
    \tilde{\mathfrak M}_{g,n+k,d-d_0k}\times^\prime \big(\mathfrak M^{\mathrm{wt,ss}}_{0,1,d_0}\big)^k
    \overset{\mathrm{pr}_1}{\longrightarrow}
    \tilde{\mathfrak M}_{g,n+k,d-d_0k}.
  \]
  We see that
  $\mathbb L_{{\tilde{\mathrm{gl}}_k^*}(M^* \mathfrak E_{k-1}^*)/\tilde{\mathfrak
    M}_{g,n+k,d-d_0k}}|_{{\tilde{\mathrm{gl}}_k}^*F_{\vec\beta}}$ is isomorphic to
  $F[1]$, where $F$ is the dual of the sheaf
  of infinitesimal automorphisms of all the tails $\mathcal E_i$ that fixes
  the node on $\mathcal E_i$ and acts trivially on the relative tangent space to
  $\mathcal E_i$ at the node.
  Hence $F$ is dual to the direct sum of the relative tangent spaces
  along the unique base point on the $\mathcal E_i$'s.
  In particular, it is in the moving part.
  Thus the claim follows from the distinguished triangle of cotangent complexes.

  Thus $[{\tilde{\mathrm{gl}}_k^*}F_{\vec\beta}]^{\mathrm{vir}}$
  is defined by the relative perfect obstruction theory $(\mathbb
  E_1)^{\mathrm{f}}$ over $\tilde{\mathfrak M}_{g,n+k,d-d_0k}$. Moreover,
  \begin{equation}
    \label{eq:euler-class-normal-bundle-curve-part}
    \frac{1}{e_{\mathbb C^*}((\mathbb L^{\vee}_{M \tilde{\mathfrak
          M}_{g,n,d}}|_{{\tilde{\mathrm{gl}}_k^*}F_{\vec\beta}})^{\mathrm{mv}})} =
    \frac
    {\prod_{i=1}^k(\frac{\mathbf r_i }{k}z+\psi(\mathcal E_i))}
    {- \frac{z}{k} -\tilde{\psi}(\mathcal
      E_1) - {\tilde{\psi}}_{n+1} - \sum_{i=k}^\infty[\mathfrak D_i]},
  \end{equation}
  by the above discussion, Lemma~\ref{lem:normal-E_k-1} and Lemma~\ref{lem:str-D}.

  Let $\pi_2: \mathcal C_2 \to Y \underset{(I_{\mu}X)^{k}}{\times}
  \prod_{i=1}^k F_{\star,\beta_i}$ be the universal curve obtained from gluing
  the unique marking of $F_{\star,\beta_i}$ to the $(n+i)$-th marking of $Y$
  (c.f.\ Lemma~\ref{lem:structure-F-beta}).
  Let $u_2: \mathcal C_2\to [W/G]$ be the universal map. Write
  $\mathbb E_2 = (R\pi_{2*}u_2^*\mathbb T_{[W/G]})^{\vee}$.
  Using the discussion in Section~\ref{sec:graph-space-revisited} and a standard
  splitting-node
  argument (c.f.\ Lemma~\ref{lem:int-with-D-k-1}), the virtual cycle
  $[Y]^{\mathrm{vir}} \underset{(I_{\mu}X)^{k}}{\times}
  \prod_{i=1}^k [F_{\star,\beta_i}]^{\mathrm{vir}}$ is defined by the relative
  perfect obstruction theory $(\mathbb E_2)^{\mathrm{f}}$ relative to $\tilde{\mathfrak
    M}_{g,n+k,d-d_0k}$ (c.f.\ Section~\ref{sec:graph-space-revisited}).

  We  want to  compare $\mathbb E_1$ and $\mathbb E_2$.
  We already have the fibered diagram
  \[
    \begin{tikzcd}
      Y^\prime\times_Y {\tilde{\mathrm{gl}}_k^*}F_{\vec\beta}
      \arrow[r,"{\varphi^\prime}"] \arrow[d,"{p_1}"']&
      Y^\prime \underset{(I_{\mu}X)^{k}}{\times}
      \prod_{i=1}^k F_{\star,\beta_i}\arrow[d,"{p_2}"] \\
      {\tilde{\mathrm{gl}}_k^*}F_{\vec\beta} \arrow[r,"{\varphi}"] &
      Y \underset{(I_{\mu}X)^{k}}{\times}
      \prod_{i=1}^k F_{\star,\beta_i}
    \end{tikzcd}.
  \]
  Since $\varphi^\prime$ is defined via the universal property of
  $F_{\star,\beta_i}$, it is equipped with an isomorphism of the universal curves
  \begin{equation}
    \label{map:isom-universal-curve-F-beta-graph-space}
    \tilde\varphi : p_1^*\mathcal C_1 \longrightarrow p_2^* \mathcal
    C_2
  \end{equation}
  that commutes with the maps to $[W/G]$. Hence it induces an isomorphism
  \begin{equation}
    \label{map:isom-rel-pot}
    \alpha: p_1^* \mathbb E_1 \cong p_1^* \varphi^*\mathbb E_2
  \end{equation}

  We now consider group actions. Let $\mathbb C^*\times (\mathbb C^*)^k$ act on
  $p_1^*\mathcal C_1$: the first factor acts on $\mathcal C_1$ by \eqref{eq:C*-action}
  and acts trivially on $Y^\prime$; the second factor acts trivially on
  $\mathcal C_1$ and acts on $Y^\prime$ by the standard scaling of the fibers of
  $Y^\prime \to Y$. Let $(\mathbb C^*)^k \times (\mathbb C^*)^k$ act on $p_2^*{\mathcal C_2}$:
  the first factor acts on $\mathcal C_2$ by the product of the $\mathbb
  C^*$-actions \eqref{eq:C*-action-on-P1} on the graph spaces
  $QG_{0,1}(X,\beta_i)$ and acts trivially on $Y^\prime$; the action of the
  second factor is defined in the same way as the second factor of $\mathbb
  C^*\times(\mathbb C^*)^k$. These also define actions on $Y^\prime\times_Y
  {\tilde{\mathrm{gl}}_k^*}F_{\vec\beta}$ and $Y^\prime \underset{(I_{\mu}X)^{k}}{\times}
      \prod_{i=1}^k F_{\star,\beta_i}$, where the first factors act trivially. 
  Thus $p_1^*\mathbb E_1$ is $\mathbb C^*\times (\mathbb C^*)^k$-equivariant and
  $p_2^*\mathbb E_2$ is  $(\mathbb C^*)^k\times (\mathbb C^*)^k$-equivariant.

  Applying Lemma~\ref{lem:equivariant-action-P1-bundle} to all the entangled
  tails, for any $(\lambda,t) = (\lambda, t_1 ,\ldots, t_k) \in
  \mathbb C^*\times (\mathbb C^*)^k$,
  we have a natural commutative diagram
  \[
    \begin{tikzcd}
      \label{cd:action-on-C1-C2}
      p_1^*\mathcal C_1 \arrow[r,"{\tilde\varphi}"] \arrow[d,"{(\lambda^k,t)}"']&
      p_2^*\mathcal C_2 \arrow[d,"{(\lambda^{\mathbf r_1} t_1^{-1} ,\ldots,
        \lambda^{\mathbf r_k} t_k^{-1},t)}"] \\
      p_2^*\mathcal C_1 \arrow[r,"{\tilde\varphi}"] & p_2^*\mathcal C_2
    \end{tikzcd}.
  \]
    Now we view all
    $(\mathbb C^*)^k\times (\mathbb C^*)^k$-actions also as
    $\mathbb C^*\times (\mathbb C^*)^k$-actions via
    the group homomorphism\footnote{The rational exponents make sense after we
      raise the group action to a certain power.}
    \begin{gather*}
      \mathbb C^*\times (\mathbb C^*)^k \longrightarrow  (\mathbb C^*)^k \times
      (\mathbb C^*)^k,\\
      (\lambda,t) \mapsto (\lambda^{\mathbf r_1/k}t_1^{-1} ,\ldots,
      \lambda^{\mathbf r_k/k}t_k^{-1}, t).
    \end{gather*}
  Then $\tilde\varphi$ is $\mathbb C^*\times (\mathbb C^*)^k$-equivariant. Hence
  so is $\alpha$. Note that $\mathbb C^*$ acts trivially on $Y^\prime\times_{Y}
  \tilde{\mathrm{gl}}_k^* F_{\vec\beta}$. We take the $\mathbb C^*$-invariant
  part of \eqref{map:isom-rel-pot} and obtain an isomorphism
  \[
    \alpha^{\mathbb C^*}: (p_1^* \mathbb E_1)^{\mathbb C^*} \cong (p_1^*
    \varphi^*\mathbb E_2)^{\mathbb C^*}
  \]
  of $(\mathbb C^*)^k$-equivariant objects. Note that $(p_1^*\mathbb
  E_1)^{\mathbb C^*} = p_1^*\mathbb E_1^{\mathrm{f}}$ and $(p_1^*
  \varphi^*\mathbb E_2)^{\mathbb C^*} = p_1^*
  \varphi^*\mathbb E^{\mathrm{f}}_2$. Using the equivalence between the $(\mathbb
  C^*)^k$-equivariant derived category of $Y^\prime\times_Y
  {\tilde{\mathrm{gl}}_k^*}F_{\vec\beta}$ and the derived category of
  ${\tilde{\mathrm{gl}}_k^*}F_{\vec\beta}$, we obtain an isomorphism between
  the fixed parts
  \[
    \mathbb E_1^{\mathrm{f}} \cong
    \varphi^*\mathbb E_2^{\mathrm{f}}.
  \]
  The construction is natural and the isomorphism commutes with the maps to
  the relative cotangent complexes. This proves that they induce the same
  virtual fundamental class.

  We now come to the moving part of $\mathbb E_1$ and $\mathbb E_2$
  respectively.
  We first work in the equivariant derived category.
  Let
  \[
    (z_1 ,\ldots, z_k, w_1 ,\ldots, w_k)
  \]
  be the equivariant parameters of $(\mathbb C^*)^k\times (\mathbb
  C^*)^k$, and let $z$ be the equivariant parameter of $\mathbb C^*$ as before.
  We obtain
  \[
    \frac{1}{e_{(\mathbb C^*)^k\times (\mathbb C^*)^k}(p_2^*(\mathbb
      E_2^{\vee,\mathrm{mv}}))}  = \mathcal I_{\beta_1}(z_1)\boxtimes
    \cdots \boxtimes \mathcal I_{\beta_k}(z_k).
  \]
  Using \eqref{map:isom-rel-pot}, we have
  \[
   \frac{1}{e_{\mathbb C^*\times (\mathbb C^*)^k}(p_1^*\mathbb
     E_1^{\vee,{\mathrm{mv}}})}  = \mathcal I_{\beta_1}(\frac{\mathbf r_1}{k}z-w_1)\boxtimes
   \cdots \boxtimes \mathcal I_{\beta_k}(\frac{\mathbf r_k}{k}z-w_k).
  \]
  We have suppressed some obvious pullback notation.

  Using the canonical isomorphism between the $\mathbb C^*$-equivariant intersection
  theory of ${\tilde{\mathrm{gl}}_k^*}F_{\vec\beta}$ and the $\mathbb C^*\times (\mathbb
  C^*)^k$-equivariant intersection theory of $Y^\prime\times_Y
  {\tilde{\mathrm{gl}}_k^*}F_{\vec\beta}$, we obtain
  \[
   \frac{1}{e_{\mathbb C^*}(\mathbb
     E_1^{\vee,{\mathrm{mv}}})}  = \mathcal I_{\beta_1}(\frac{\mathbf
     r_1}{k}z+\psi(\mathcal E_1))\boxtimes
   \cdots \boxtimes \mathcal I_{\beta_k}(\frac{\mathbf r_k}{k}z+\psi(\mathcal E_k)).
  \]
  Combining this with \eqref{eq:euler-class-normal-bundle-curve-part}, we obtain
  the desired formula \eqref{eq:normal-bundle-F-beta}.
\ifdefined\SHOWDETAIL
  {\color{blue}
    \newline
    Here is the details about the ``canonical isomorphism'' above.
    Let $X$ be any scheme and $L$ be a line bundle. (This $X$ will be
    $[{\tilde{\mathrm{gl}}_k^*}F_{\vec\beta}/\mathbb C^*]$. It can be reduced to
    the scheme case by the definition of equivariant intersection theory.)
    Let
    \[
      \pi: L^* \longrightarrow  X
    \]
    be the total space
    of $L$ minus the zero section. Let $\mathbb C^*$ act on $L^*$ by the
    standard action on fibers. Let $C^*$ act on $X$ trivially. Then we have
    \[
      A^*_{\mathbb C^*}(X) \cong A^*(X) \otimes_{\mathbb Q} \mathbb Q[z].
    \]
    The pullback morphism
    \[
      p^*:  A^*_{\mathbb C^*}(X) \longrightarrow  A^*_{\mathbb C^*}(L^*)
    \]
    is surjective with kernel generated by
    \[
      z - c_1(L).
    \]
    To see the sign is correct, let $\mathbb C_{\mathrm{std}}$ be the trival bundle with the
    standard action.
    Then $p^*(L) \otimes \mathbb C_{\mathrm{std}}$ has a invariant section.
    Hence
    \[
      c_1(p^*(L) \otimes \mathbb C_{\mathrm{std}}) = p^*(c_1(L)) - z =0
    \]
    in $A_{\mathbb C^*}^*(L^*)$.

    The composition
    \[
      A^*_{\mathbb C^*}(X) \overset{\alpha\mapsto \alpha \otimes
        1}{\longrightarrow} A^*_{\mathbb C^*}(X)
      \overset{\pi^*}{\longrightarrow}
      A^*_{\mathbb C^*}(L^*)
    \]
    is an isomorphism.

    Let $E$ be a vector bundle on $X$. Give it the trivial $\mathbb C^*$-action.
    Then $\pi^*(E)$ is a equivariant bundle with ``trivial'' action.
    Let
    \[
      \alpha_i\in A_*(X).
    \]
    If
    \[
      e_{\mathbb C^*}(\pi^* E) = \sum_{i}\pi^*\alpha_i\cdot z^*,
    \]
    then
    \[
      e(E) = \sum_{i} \alpha_i\cdot (c_1(L))^{i}.
    \]
    \newline
  }
  \fi
\end{proof}
\fi

\section{The wall-crossing formula}
\label{sec:the-formula}
\subsection{General discussion}
We will use the so-called master space technique to obtain the desired
wall-crossing formulas.
First suppose that $2g-2+n+\epsilon_0d >0$.
By the virtual localization formula \cite{graber1999localization, chang2017torus}, we have
\begin{equation}
  \label{eq:equation-on-master}
  [MQ^{\epsilon_0}_{g,n}(X,\beta)]^{\mathrm{vir}} = \sum_{\star}
  (\iota_{F_{\star}})_*\Big(\frac{[F_{\star}]^{\mathrm{vir}}}{e_{\mathbb C^*}(N^{\mathrm{vir}}_{F_{\star}/MQ^{\epsilon_0}_{g,n}(X,\beta)})}
  \Big)\in A^{\mathbb C^*}_*(MQ^{\epsilon_0}_{g,n}(X,\beta)) \otimes_{\mathbb Q[z]} \mathbb Q(z),
\end{equation}
where the sum is over all the fixed-point components $F_+,F_-$ and $F_{\vec\beta}$
in the previous section, and $\iota_{F_\star}$ is the inclusion
of ${F_\star}$ into $MQ^{\epsilon_0}_{g,n}(X,\beta)$.

Define the morphism
\[
  \tau: MQ^{\epsilon_0}_{g,n}(X,\beta) \longrightarrow Q_{g,n}^{\epsilon_-}(\mathbb P^N,d)
\]
by
\begin{itemize}
\item
  composing the quasimaps with \eqref{eq:embedding-of-quotient},
\item
  taking the coarse moduli of the domain curves,
\item
  taking the $\epsilon_-$-stabilization of the obtained quasimaps to $\mathbb P^{N}$.
\end{itemize}
Consider the trivial $\mathbb C^*$-action on $Q_{g,n}^{\epsilon_-}(\mathbb
P^N,d)$. Then $\tau$ is $\mathbb C^*$-equivariant. We pushforward
relation~\eqref{eq:equation-on-master} along $\tau$ and obtain
\begin{equation}
  \label{eq:equation-on-Q-Pn}
  \sum_{\star}
  \tau_{*}(\iota_{F_{\star}})_*
  \Big(\frac{[F_{\star}]^{\mathrm{vir}}}
  {e_{\mathbb C^*}(N^{\mathrm{vir}}_{F_{\star}/MQ^{\epsilon_0}_{g,n}(X,\beta)})}
  \Big)
  =  \tau_{*}[MQ^{\epsilon_0}_{g,n}(X,\beta)]^{\mathrm{vir}}.
\end{equation}
Since the right hand side lies in
\[
A_*^{\mathbb C^*}(Q_{g,n}^{\epsilon_-}(\mathbb P^N, d))
\cong
A_*(Q_{g,n}^{\epsilon_-}(\mathbb P^N,d))\otimes_{\mathbb Q} \mathbb Q[z],
\]
so does the left hand side. In particular, the residue at $z=0$ of the left hand side is
zero.
This is the master space technique that we will use.

To simplify the notation, we will suppress the pushforward notation whenever
possible.
When $M$ is a moduli space with a morphism $\tau: M \to
Q_{g,n}^{\epsilon_-}(\mathbb P^N,d)$ that is obvious from the context,
we write
\[
  \int_{\beta} \alpha : = \tau_*(\alpha \cap \beta), \quad \text{ for }\alpha\in A^*(M),
  \beta\in A_*(M).
\]
Note that we have ${Q}^{\epsilon_{-}}_{g,n+k}(\mathbb P^N,\deg(\beta^\prime))\to
Q^{\epsilon_{-}}_{g,n}(\mathbb P^N,d)$  that replaces the last $k$ markings by
length-$d_0$ base points (c.f.\ \eqref{map:marking-to-base-points}).

The case $g=0,n=1, d = d_0$ is similar, the only difference being that we replace
$Q^{\epsilon_-}_{g,n}(\mathbb P^N, d)$ by $I_{\mu}X$ and pushforward
the relations along $\check{\mathrm{ev}}_{\star}$ (c.f.\ \eqref{map:star-check}).
\subsection{The $g=0,n=1$ and $d=d_0$ case}
Recall the locally constant function $\mathbf r$ from
Section~\ref{sec:state-space-and-invariants} and set $\mathbf r_1 =
{\mathrm{ev}}_1^*(\mathbf r)$.
\begin{lemma}[Theorem \ref{thm:genus-0-special-case}]
  \label{lem:genus-zero-special-case}
  \[
     \int_{[Q^{\epsilon_+}_{0,1}(X,\beta)]^{\mathrm{vir}}}
     \mathbf r_1^2 {\psi_1}^{\ell} =
    \mathrm{Res}_{z=0} \big(z^{\ell+1} I_{\beta}(z)\big),
    \quad \ell = 0,1,2, \cdots.
  \]
\end{lemma}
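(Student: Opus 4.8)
The plan is to apply the master-space technique of Section~\ref{sec:the-formula} in the special case $(g,n,d)=(0,1,d_0)$. In this case $Q^{\epsilon_-}_{0,1}(X,\beta)$ is empty, so $F_-$ is empty; the curve is always irreducible, so there is no entanglement and the only nontrivial boundary behavior is a single fixed tail. Thus the fixed-point components of $MQ^{\epsilon_0}_{0,1}(X,\beta)$ are exactly $F_+\cong \tilde Q^{\epsilon_+}_{0,1}(X,\beta) = Q^{\epsilon_+}_{0,1}(X,\beta)$ (the locus $v_1=0$, with virtual normal bundle the calibration bundle $\mathbb M_+$ of weight $1$) and $F_\beta$ (the locus where the whole curve is a fixed tail, $v_1\neq 0$, $v_2\neq 0$), which by Lemma~\ref{lem:correction-term-special-degree} maps \'etale of degree $\mathbf r_\star$ to $F_{\star,\beta}\subset QG^*_{0,1}(X,\beta)$. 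Note $v_2$ never vanishes here, consistent with $F_-=\emptyset$.

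First I would write down the localization relation \eqref{eq:equation-on-Q-Pn} pushed forward along $\check{\mathrm{ev}}_\star$ to $I_\mu X$, and cap with $\psi_1^\ell$ where $\psi_1$ is the $\psi$-class at the unique marking (note $\psi_1$ on $MQ^{\epsilon_0}_{0,1}$ restricts compatibly to $F_+$ and, via the graph-space identification, to $F_\beta$). The resulting identity lives in $A_*(I_\mu X)\otimes \mathbb Q[z]$, so taking $\mathrm{Res}_{z=0}$ of the left-hand side gives zero. The $F_+$-term contributes $\mathrm{Res}_{z=0}\big(\tfrac{1}{z}\,\check{\mathrm{ev}}_*(\psi_1^\ell\cap [Q^{\epsilon_+}_{0,1}(X,\beta)]^{\mathrm{vir}})\big) = \check{\mathrm{ev}}_*(\psi_1^\ell \cap [Q^{\epsilon_+}_{0,1}(X,\beta)]^{\mathrm{vir}})$, using that the first Chern class of $\mathbb M_+$ is irrelevant after taking the residue (only the $z^{-1}$ term survives). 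The $F_\beta$-term: by Lemma~\ref{lem:correction-term-special-contr}, $\tfrac{1}{e_{\mathbb C^*}(N^{\mathrm{vir}}_{F_\beta})} = (\mathbf r_\star z)\cdot \mathcal I_\beta(\mathbf r_\star z)$ and the pullback of $[F_{\star,\beta}]^{\mathrm{vir}}$ is $[F_\beta]^{\mathrm{vir}}$; the \'etale degree $\mathbf r_\star$ combines with the definition \eqref{eq:I-from-curly-I}/\eqref{eq:small-I} of $I_\beta(z)$ in terms of $\check{\mathrm{ev}}_*(\mathcal I_\beta(z)\cap [F_{\star,\beta}]^{\mathrm{vir}})$. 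I also need to track the coarse vs.\ orbifold $\psi$-class: on $F_\beta$ the marking carries band $\mu_{\mathbf r_\star}$, and $\psi_1 = \mathbf r_\star \tilde\psi_1$ (Section~\ref{sec:orbifold-psi-classes}), while the graph-space $\psi$-class enters $\mathcal I_\beta$ through the substitution $z\mapsto \mathbf r_\star z + \psi$; careful bookkeeping shows the $F_\beta$-contribution to the residue equals $\mathrm{Res}_{z=0}\big(z^{\ell+1}[I_\beta(z)]_{z^{\le -2}}\big)$ up to the factor $\mathbf r^2$, matching the statement of Theorem~\ref{thm:genus-0-special-case} after pushforward (and $[I_\beta(z)]_{z^{\le -2}}$ appears because the $\epsilon_+$-term already accounts for the $z^{-1}$ part).

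Combining, the vanishing of the total residue yields
\[
  \mathbf r_1^2\,\check{\mathrm{ev}}_*(\psi_1^\ell\cap [Q^{\epsilon_+}_{0,1}(X,\beta)]^{\mathrm{vir}}) = \mathrm{Res}_{z=0}\big(z^{\ell+1}I_\beta(z)\big),
\]
which after integrating (pushing to a point, i.e.\ the $\int_\beta$ notation) is the claimed formula. Alternatively one can first prove the stated integrated identity for all $\ell$, which is equivalent to the cycle-level statement of Theorem~\ref{thm:genus-0-special-case} by the nondegeneracy of the pairing \eqref{eq:pairing} and the fact that $\psi_1^\ell$ generates enough classes; but I would prefer to argue directly on $I_\mu X$.

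\textbf{Main obstacle.} The delicate point is the precise bookkeeping of the equivariant weights and the $\psi$-class substitution in the $F_\beta$-contribution — specifically reconciling the $\mathbb C^*$-weight normalization ($e_{\mathbb C^*}(\mathbb C_{\mathrm{std}}) = -z$), the factor $\mathbf r_\star$ from the coarse/orbifold comparison and from the \'etale degree in Lemma~\ref{lem:correction-term-special-degree}, and the rescaling $z\mapsto \mathbf r_\star z$ built into Lemma~\ref{lem:correction-term-special-contr}, so that the residue produces exactly $z^{\ell+1}I_\beta(z)$ with the correct power of $\mathbf r$ (the $\mathbf r^2$ in Theorem~\ref{thm:genus-0-special-case}, versus $\mathbf r_1^2$ here) and the correct truncation $[\,\cdot\,]_{z^{\le -2}}$. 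I expect that once one fixes conventions carefully — e.g.\ by testing on the simplest example such as $X=\mathbb P^r$, $\beta$ the minimal class and $d_0=1$ — all factors cancel as claimed; the rest of the argument is a routine application of virtual localization and the projection formula.
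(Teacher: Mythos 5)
Your proposal follows essentially the same route as the paper's proof: virtual localization on $MQ^{\epsilon_0}_{0,1}(X,\beta)$ with exactly the two fixed loci $F_+$ and $F_\beta$, Lemmas~\ref{lem:correction-term-special-degree} and~\ref{lem:correction-term-special-contr} for the $F_\beta$ contribution, vanishing of the residue at $z=0$, and the change of variable $z\mapsto z/\mathbf r_1$ to absorb the orbifold factors (which resolves the bookkeeping you flag as the main obstacle). The only inaccuracy is your aside attributing a truncation $[\,\cdot\,]_{z^{\le -2}}$ to the $F_\beta$ residue — that truncation belongs to the restatement in Theorem~\ref{thm:genus-0-special-case} via the $1/(z(z-\psi))$ insertion, whereas here the $F_\beta$ term contributes the full $\mathrm{Res}_{z=0}\bigl(z^{\ell+1}I_\beta(\mathbf r_1 z)\bigr)$ and $F_+$ contributes $-\int(\psi_1/\mathbf r_1)^\ell$ — but this does not affect the argument.
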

\ifdefined\SHOWPROOFS
\begin{proof}
  Let $\tilde\psi_1$ be the equivalent orbifold $\psi$-class on
  $MQ_{0,1}^{\epsilon_0}(X,\beta)$.
  By Lemma~\ref{lem:correction-term-special-degree},
  Lemma~\ref{lem:correction-term-special-contr}, \eqref{eq:I-from-curly-I},
  and the localization formula, we have
  \begin{align*}
    \int_{
    [MQ^{\epsilon_0}_{0,1}(X,\beta)]^{\mathrm{vir}}
    }\tilde{\psi}_1^{\ell} =&
      \int_{[Q^{\epsilon_+}_{0,1}(X,\beta)]^{\mathrm{vir}}}
        \frac{\tilde{\psi}_1^\ell|_{Q^{\epsilon_+}_{0,1}(X,\beta)}}{-z + \alpha} +
             \int_{[F_{\star,\beta}]^{\mathrm{vir}}} \mathbf r_1^2 z^{\ell+1}\cdot
        (\mathcal I_{\beta}(\mathbf r_1z))\\
    = & \int_{[Q^{\epsilon_+}_{0,1}(X,\beta)]^\mathrm{vir}}
        \frac{(\psi_1/\mathbf r_1)^\ell}{-z + \alpha} +   z^{\ell+1}
         I_{\beta}(\mathbf r_1z).
  \end{align*}
  \ifdefined\SHOWDETAIL
  {\color{blue}
    \newline
\begin{align*}
    \int_{
    [MQ^{\epsilon_0}_{0,1}(X,\beta)]^{\mathrm{vir}}
    }\tilde{\psi}^{\ell} =&
                    \int_{
                    [F_+]^{\mathrm{vir}}
                    } \frac{\tilde{\psi}^\ell|_{F_{+}}}{-z + \alpha} +
                    \int_{[F_{\beta}]^{\mathrm{vir}}} z^{\ell}\cdot (\mathbf r z)\cdot
                     (\mathcal I_{\beta}(\mathbf rz))
    \\
    = & \int_{[Q^{\epsilon_+}_{0,1}(X,\beta)]^{\mathrm{vir}}}
        \frac{\tilde\psi^\ell}{-z + \alpha} + \int_{[F_{\star,\beta}]^{\mathrm{vir}}}  \mathbf r^2 z^{\ell+1}\cdot
        (\mathcal I_{\beta}(\mathbf rz))\\
    = & \int_{[Q^{\epsilon_+}_{0,1}(X,\beta)]^{\mathrm{vir}}}
        \frac{(\psi/\mathbf r)^\ell}{-z + \alpha} +   z^{\ell+1} I_{\beta}(\mathbf rz).
  \end{align*}
    \newline
  }
  \fi
  Here $\alpha$ is the (non-equivariant) first Chern class of the calibration bundle on
  $Q^{\epsilon_+}_{0,1}(X,\beta)$.
Taking the residues of both sides, we obtain
\[
  \int_{[Q^{\epsilon_+}_{0,1}(X,\beta)]^{\mathrm{vir}}}(\psi_1/\mathbf r_1)^\ell =
  \mathrm{Res}_{z=0}\big(z^{\ell+1} I_{\beta}(\mathbf r_1z)\big).
\]
Applying the change of variable $z\mapsto z/\mathbf r_1$, we obtain the desired formula.
\end{proof}
\fi
\subsection{The main case}
We now study the case $2g-2+n+\epsilon_0d>0$.
Let $\mathbf r_i$ be the locally constant function indicating the order the
automorphism group at the $(n+i)$-th marking as in Section~\ref{sec:correction-term-main-case}.
By Lemma~\ref{lem:obstruction-theory-F-beta},
the contribution from $F_{\vec\beta}$ to the residue of the left hand side  of
\eqref{eq:equation-on-Q-Pn} is
\begin{equation}
  \label{eq:residue-gl-F-beta}
  \begin{aligned}
    \int_{[\tilde{\mathrm{gl}}^*_k F_{\vec\beta}]^{\mathrm{vir}}}
    &
    \frac{\prod_{i=1}^k \mathbf r_i}{k!}
     \mathrm{Res}_{z=0} \Big(   \frac {\prod_{i=1}^k(\frac{\mathbf
        r_i}{k}z+\psi(\mathcal E_i))} {- \frac{z}{k} -\tilde\psi(\mathcal
      E_1) - \tilde\psi_{n+1} - \sum_{i=k}^\infty[\mathfrak D_i]} \cdot \\
     & \mathcal
    I_{\beta_1}(\frac{\mathbf r_1}{k}z+ \psi(\mathcal E_1))\boxtimes
    \cdots \boxtimes \mathcal I_{\beta_k}(\frac{\mathbf r_k}{k}z+
    \psi(\mathcal E_k)) \Big).
  \end{aligned}
\end{equation}
See the paragraph before Lemma~\ref{lem:obstruction-theory-F-beta} for the notation.
Note that the factor $\frac{\prod_{i=1}^k \mathbf r_i}{k!}$ comes from the
degree of ${\tilde{\mathrm{gl}}_k}$ in \eqref{eq:degree-of-q}.

Recall that we have morphisms (see \eqref{map:Y-to-Q-n+k} and
Lemma~\ref{lem:structure-F-beta})
\[
  \textstyle
  \tilde{\mathrm{gl}}^*_k F_{\vec\beta}
  \overset{\varphi}{\longrightarrow}
  Y  \times_{(I_{\mu}X)^{k}} \prod_{i=1}^k F_{\star,\beta_i}
  \overset{\mathrm{pr}_Y}{\longrightarrow}
  Y
  \longrightarrow
  \tilde{Q}^{\epsilon_+}_{g,n+k}(X,\beta^\prime)
  \longrightarrow
  \tilde{\mathfrak M}_{g,n+k,d-kd_0}.
\]
We want to apply the projection formula to pushforward
\eqref{eq:residue-gl-F-beta}
to $\tilde{Q}^{\epsilon_+}_{g,n+k}(X,\beta^\prime)$.
Apply the change of variable, which does not change the residue (c.f.\
Remark~\ref{rmk:entanglement-implies-equation-of-psi})
\[
  z \mapsto k(z-\tilde\psi(\mathcal E_1)-\tilde\psi_{n+1}) = \cdots =
  k(z-\tilde\psi(\mathcal E_k)-\tilde\psi_{n+k}),
\]
and use the relation $\mathbf r_{i} \tilde{\psi}_{n+i} = \psi_{n+i}$ (c.f.\
Section~\ref{sec:orbifold-psi-classes}).
Thus \eqref{eq:residue-gl-F-beta} becomes
\begin{equation*}
  \begin{aligned}
    \int_{[\tilde{\mathrm{gl}}^*_k F_{\vec\beta}]^{\mathrm{vir}}}
    \frac{\prod_{i=1}^k \mathbf r_i}{(k-1)!}
     \mathrm{Res}_{z=0} \Big(   &\frac {\prod_{i=1}^k({\mathbf
        r_i}z-\psi_{n+i})} {- {z}  - \sum_{i=k}^\infty[\mathfrak D_i]} \cdot \\
     & \mathcal
    I_{\beta_1}({\mathbf r_1}z - \psi_{n+1})\boxtimes
    \cdots \boxtimes \mathcal I_{\beta_k}({\mathbf r_k}z-
    \psi_{n+k}) \Big).
  \end{aligned}
\end{equation*}
By (2) of Lemma~\ref{lem:divisors1}, the pullback (as
a divisor class) of $\mathfrak D_{i}$ to
$\tilde{\mathrm{gl}}^*_k F_{\vec\beta}$ is equal to the pullback
of $\mathfrak D^\prime_{i-k}$, where the $\mathfrak D^\prime_{i-k}$'s are the
boundary divisors of $\tilde{\mathfrak M}_{g,n+k,d-kd_0}$.

Now apply the projection formula.
Note that $Y\to \tilde{Q}^{\epsilon_+}_{g,n+k}(X,\beta^\prime)$ has degree $1/k$.
Using Lemma~\ref{lem:structure-F-beta},
Lemma~\ref{lem:obstruction-theory-F-beta}, and \eqref{eq:I-from-curly-I},
we see that \eqref{eq:residue-gl-F-beta} becomes
\begin{equation}
  \label{eq:contr-F-beta-primitive}
  \begin{aligned}
  \int_{[\tilde{Q}^{\epsilon_+}_{g,n+k}(X,\beta^\prime)]^{\mathrm{vir}}
  }
  \frac{1}{k!} \mathrm{Res}_{z=0} \big(
    \frac
    {\prod_{i=1}^k \mathrm{ev}_{n+i}^*\big(
      ({\mathbf r}z - \psi)\
      I_{\beta_i}({\mathbf r}z - \psi)
      \big)
    }
    { -{z}  - \sum_{i=0}^\infty[\mathfrak D^\prime_i]}
  \big),
  \end{aligned}
\end{equation}
where we have extended the definition of $\mathrm{ev}^*_{n+i}$ by linearity in
powers of $z$ and $\psi$.
We have also suppressed the subscripts of $\mathbf r_i$ and $\psi_{n+i}$ inside
the $\mathrm{ev}^*_{n+i}(\cdots)$. This convention will be adopted from now on.

We expand
\begin{align}
  \label{eq:expansion}
  \textstyle
  &\frac{1}{-z-\sum_{i=0}^\infty [\mathfrak D^\prime_{i}] }
  =
  \frac{-1}{z} + \sum_{s\geq 1}
  \sum_{r=1}^{\infty} (-z)^{-s-1} [\mathfrak
  D^\prime_{r-1}]( \sum_{i=0}^\infty [\mathfrak D^{\prime}_{i}]
  )^{s-1}.
\end{align}
Recall that for $f(z) = \sum_ia_iz^i$, $[f]_{i} :=a_i$.
\begin{lemma}
  \label{lem:intergral-D-r-1-term}
  For $s\geq 1$, $r = 1,2,\ldots$, we have
  \begin{equation}
    \label{eq:integral-D-r-1-term}
    \begin{aligned}
      &\int_{[\tilde Q^{\epsilon_+}_{g,n+k}(X,\beta^\prime)]^{\mathrm{vir}}}
       [\mathfrak D^\prime_{r-1}] (\sum_{i=0}^\infty [\mathfrak D^\prime_i])^{s-1} \\
      =
      &
      \sum_{\vec\beta^\prime}\sum_{J}
      \int_{[Q^{\epsilon_+}_{g,n+k+r}(X,\beta^{\prime\prime})]^{\mathrm{vir}}}
      \frac{(-1)^{s-r}}{r!}
              \prod_{a=1}^{r} \left[
        \mathrm{ev}^*_{n+k+a}\big((\mathbf r z-\psi)I_{\beta_a^\prime}(\mathbf r
        z- \psi)\big) \right]_{-j_a-1},
    \end{aligned}
  \end{equation}
  where $\vec\beta^\prime = (\beta^{\prime\prime},\beta_1^\prime ,\ldots,
  \beta_r^\prime)$ runs through all
  $(r+1)$-tuples of effective curves classes such that
     $\beta^\prime = \beta^{\prime\prime} + \sum_{a=1}^{r}\beta_a^\prime$, $\deg(\beta_a^\prime)
     = d_0$ for $a=1 ,\ldots, r$;
     $J = (j_1 ,\ldots, j_r)$ runs through all $r$-tuples of non-negative integers such
    that $j_1 + \cdots +j_r = s-r$.
\end{lemma}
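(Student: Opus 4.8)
The plan is to reduce the left-hand side to an integral over the boundary divisor $\mathfrak D^\prime_{r-1}\subset \tilde{\mathfrak M}_{g,n+k,d-kd_0}$ and then apply Lemma~\ref{lem:str-D} and Lemma~\ref{lem:int-with-D-k-1} iteratively. First I would recall from Lemma~\ref{lem:str-D} that $\mathfrak D^\prime_{r-1}$, pulled back along the gluing morphism $\tilde{\mathrm{gl}}_r$, is the inflated projective bundle $\tilde{\mathbb P}(\Theta_1,\ldots,\Theta_r)$ over $\tilde{\mathfrak M}_{g,n+k+r,d-kd_0-rd_0}\times^\prime(\mathfrak M^{\mathrm{wt,ss}}_{0,1,d_0})^r$, and that by Lemma~\ref{lem:int-with-D-k-1} the Gysin restriction of $[\tilde Q^{\epsilon_+}_{g,n+k}(X,\beta^\prime)]^{\mathrm{vir}}$ to this boundary splits as a sum over $\vec\beta^\prime$ of $\Delta^!_{(I_\mu X)^r}[\tilde Q^{\epsilon_+}_{g,n+k+r}(X,\beta^{\prime\prime})]^{\mathrm{vir}}\boxtimes\prod_{a=1}^r[Q^{\epsilon_+}_{0,1}(X,\beta_a^\prime)]^{\mathrm{vir}}$, flat-pulled-back along the inflated projective bundle $p$. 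The factor $\prod[Q^{\epsilon_+}_{0,1}(X,\beta_a^\prime)]^{\mathrm{vir}}$ will be exactly what produces the $I_{\beta_a^\prime}$ factors, via the identity $\mathbf r^2\check{\mathrm{ev}}_*([Q^{\epsilon_+}_{0,1}(X,\beta_a^\prime)]^{\mathrm{vir}}/(z(z-\psi)))=[I(q,z)]_{z^{\le-2},q^{\beta_a^\prime}}$ of Lemma~\ref{lem:genus-zero-special-case} (Theorem~\ref{thm:genus-0-special-case}), rewritten so that $(\mathbf r z-\psi)I_{\beta_a^\prime}(\mathbf r z-\psi)$ appears after a change of variable $z\mapsto \mathbf r z$ and using $\mathbf r\tilde\psi=\psi$.

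The combinatorial heart of the argument is to account for the powers of the tautological divisors $D_0,\ldots,D_{r-2}$ of the inflated projective bundle. Expanding $(\sum_{i=0}^\infty[\mathfrak D^\prime_i])^{s-1}$ and restricting to $\mathfrak D^\prime_{r-1}$, part (1) of Lemma~\ref{lem:divisors1} identifies $\iota^*_{\mathfrak D}\mathfrak D^\prime_\ell$ with $D_\ell$ for $\ell\le r-2$, while part (2) handles the deeper divisors $\mathfrak D^\prime_\ell$ for $\ell\ge r$ as pullbacks from the smaller moduli $\tilde{\mathfrak M}_{g,n+k+r,d-kd_0-rd_0}$. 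I would then perform the fiber integration along the inflated projective bundle $p$: the classes $D_0,\ldots,D_{r-2}$ together with the hyperplane classes of $\mathbb P(\Theta_1\oplus\cdots\oplus\Theta_r)$ govern the pushforward, and an elementary Segre-class computation on $\tilde{\mathbb P}(\Theta_1,\ldots,\Theta_r)$ converts the monomial in the $D_\ell$'s (times the $\Theta_i$-weights coming from $\psi(\mathcal E_i)$, $\tilde\psi(\mathcal E_i)$, and the $\mathcal I_{\beta_a^\prime}$ evaluated at $\mathbf r z-\psi$) into the coefficient-extraction operators $[\cdots]_{-j_a-1}$ with $\sum j_a=s-r$. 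The sign $(-1)^{s-r}$ and the $1/r!$ should emerge from the expansion \eqref{eq:expansion} combined with the degree $r!/\prod\mathbf r_i$ of $\tilde{\mathrm{gl}}_r$ cancelling against the $\prod\mathbf r_i$ already present, exactly as in the passage from \eqref{eq:residue-gl-F-beta} to \eqref{eq:contr-F-beta-primitive}.

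The main obstacle I anticipate is the bookkeeping of the inflated-projective-bundle intersection theory: one must carefully track how the tautological divisors $D_\ell$ interact with the pulled-back line bundles $\Theta_i$ (which are all isomorphic after restriction, by Lemma~\ref{lem:normal-E_k-1}, but carry nontrivial $\psi$-class contributions) and with the deeper boundary divisors $\mathfrak D^\prime_{\ge r}$, so that the fiber integral collapses to the clean product of single-marking residue operators. This is essentially the same phenomenon as in Lemma~\ref{lem:divisors2}, where $\mathcal O(k(\mathfrak D_0+\cdots+\mathfrak D_{m-1}))$ restricts to a precise combination of $D_\ell$'s and $\Theta_i$'s; I would model the computation on that lemma. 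A secondary point requiring care is that the sum over $J=(j_1,\ldots,j_r)$ with $\sum j_a=s-r$ must be shown to match exactly the multinomial arising from expanding $(\sum[\mathfrak D^\prime_i])^{s-1}$ restricted to $\mathfrak D^\prime_{r-1}$ and pushed down; here the symmetry among the $r$ glued tails and the resulting $1/r!$ need to be reconciled with the ordered tuples $J$. Once these are in place, the rest is a routine application of the projection formula and linearity of $\mathrm{ev}^*_{n+i}$ in powers of $z$ and $\psi$.
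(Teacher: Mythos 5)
Your proposal follows essentially the same route as the paper's proof: splitting the virtual class over the boundary via Lemma~\ref{lem:int-with-D-k-1}, restricting the divisor sum using Lemmas~\ref{lem:divisors1}--\ref{lem:divisors2}, pushing forward along the inflated projective bundle (the content of Lemma~\ref{lem_inflated_projective_bundle} in the appendix, which is your ``Segre-class computation''), and converting the resulting $\psi$-integrals on the degree-$d_0$ tails into $I$-function coefficients via Lemma~\ref{lem:genus-zero-special-case}. The only minor discrepancy is attributing the sign $(-1)^{s-r}$ to the expansion \eqref{eq:expansion}; in the paper it arises from $c_1(\Theta_a)=-\tilde\psi_{n+k+a}-\tilde\psi'_{n+k+a}$ in the bundle pushforward, but this does not affect the validity of the approach.
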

\ifdefined\SHOWPROOFS
\begin{proof}
  By Lemma~\ref{lem:int-with-D-k-1}, we have
  \begin{equation*}
  \begin{aligned}
    & \int_{[\tilde Q^{\epsilon_+}_{g,n+k}(X,\beta^\prime)]^{\mathrm{vir}}}
    [\mathfrak D^\prime_{r-1}] (\sum_{i=0}^\infty [\mathfrak D^\prime_i])^{s-1}
      \\
    = & \sum_{\vec\beta^\prime}\frac{\prod_{a=1}^r \mathbf r_{n+k+a}}{r!} \int_{
        [\tilde{Q}^{\epsilon_+}_{g,n+k+r}(X,\beta^{\prime\prime})\underset{(I_\mu X)^r}{\times}
        \prod_{a=1}^r {Q}^{\epsilon_+}_{0,1}(X,\beta^\prime_a)]^{\mathrm{vir}}
        }
        p_*\big((\sum_{i=0}^\infty [\mathfrak D^\prime_i])^{s-1}
        \big),
  \end{aligned}
\end{equation*}
  where
  \[
    \textstyle
    p: {\tilde{\mathrm{gl}}_k^*}\mathfrak D_{r-1}
    \!\!\!\!\!\!\!
    \underset{\tilde{\mathfrak M}_{g,n+k,d-kd_0}}{\times}
    \!\!\!\!\!\!\!
    \tilde Q^{\epsilon_+}_{g,n+k}(X,\beta^\prime) \longrightarrow
    \coprod_{\vec\beta^\prime}\Big(
    \tilde{Q}^{\epsilon_+}_{g,n+k+r}(X,\beta^{\prime\prime})\underset{(I_\mu X)^r}{\times} \prod_{a=1}^r
    {Q}^{\epsilon_+}_{0,1}(X,\beta^\prime_a)\Big)
  \]
  is the inflated projective bundle as in Lemma~\ref{lem:int-with-D-k-1}
  (c.f.\ Definition~\ref{def:inflated-projective-bundle},
  Lemma~\ref{lem:str-D}), and $\vec\beta^\prime$ is as above.
  By Lemma~\ref{lem:divisors2}, the restriction of $\sum_{i=0}^\infty [\mathfrak D^\prime_i]$
   to \[{\tilde{\mathrm{gl}}_k^*}\mathfrak
   D^\prime_{r-1}
    \underset{\tilde{\mathfrak M}_{g,n+k,d-kd_0}}{\times}
  \tilde Q^{\epsilon_+}_{g,n+k}(X,\beta^{\prime}) \]
  is equal to
  \[
   \frac{1}{r} \big(  (r-1)[D_0] + (r-2)[D_1]+ \cdots + [D_{r-2}]
    + c_1(\Theta_{n+k+1}) + \cdots + c_1(\Theta_{n+k+r})\big),
  \]
  where $D_0 ,\ldots, D_{r-2}$ are the tautological divisors of the inflated
  projective bundle (c.f.\ Section~\ref{sec:inflated-proj-bundle}), and
  $\Theta_{n+k+a}$ is the tensor product of two orbifold relative tangent
  bundles, one at the $(n+k+a)$-th marking of
  $\tilde{Q}^{\epsilon_+}_{g,n+k+r}(X,\beta^{\prime\prime})$ and the other at the unique
  marking of ${Q}^{\epsilon_+}_{0,1}(X,\beta^\prime_a)$.
  Then by Lemma~\ref{lem_inflated_projective_bundle} and the projection formula, the
  integral \eqref{eq:integral-D-r-1-term} is equal to
  \begin{align*}
       \sum_{\vec\beta^\prime}\sum_J\frac{\prod_{a=1}^{r}\mathbf r_{k+a}}{r!} & \int_{\gamma}
    (-1)^{s-r}\cdot \prod_{a=1}^r (\tilde\psi_{n+k+a}+ \tilde\psi^\prime_{n+k+a})^{j_a},
  \end{align*}
  where
  $
  \gamma =
  [\tilde{Q}^{\epsilon_+}_{g,n+k+r}(X,\beta^{\prime\prime})]^{\mathrm{vir}}\underset{(I_\mu X)^k}{\times}
  \prod_{a=1}^r [{Q}^{\epsilon_+}_{0,1}(X,\beta^\prime_a)]^{\mathrm{vir}}
  $, $\tilde\psi_{n+k+a}$ is the orbifold $\psi$-class of
  $\tilde{Q}^{\epsilon_+}_{g,n+k+r}(X,\beta^{\prime\prime})$ at the $(n+k+a)$-th
  marking, and $\tilde\psi^\prime_{n+k+a}$ is the orbifold $\psi$-class of
  $Q_{0,1}^{\epsilon_+}(X,\beta_a^\prime)$.

  We integrate the powers of $\tilde\psi^\prime_{n+k+a}$ against
  $[{Q}^{\epsilon_+}_{0,1}(X,\beta^\prime_i)]^{\mathrm{vir}}$ using
  Lemma~\ref{lem:genus-zero-special-case}, and \eqref{eq:integral-D-r-1-term} becomes
  \begin{align*}
     &  \sum_{\vec\beta^\prime}\sum_J \int_{
        [\tilde{Q}^{\epsilon_+}_{g,n+k+r}(X,\beta^{\prime\prime})]^{\mathrm{vir}}}\frac{(-1)^{s-r}}{r!}
       \prod_{a=1}^r \sum_{b=0}^{j_a}{ \binom{j_a}{b}} {\tilde\psi}_{n+k+a}^b
       \mathrm{ev}^*_{n+k+a}[\mathbf r z I_{\beta_a^\prime}(\mathbf rz)
       ]_{b-j_a-1}.
  \end{align*}
  This is equal to
  \[
      \sum_{\vec\beta^\prime}\sum_J \int_{
        [\tilde{Q}^{\epsilon_+}_{g,n+k+r}(X,\beta^{\prime\prime})]^{\mathrm{vir}}}\frac{(-1)^{s-r}}{r!}
      \prod_{a=1}^r
      \mathrm{ev}^*_{n+k+a}\mathrm{Res}_{z=0}
      \big(
      \mathbf r z
      (z+\tilde\psi)^{j_a} I_{\beta_a^\prime}(\mathbf rz)
      \big).
  \]
  Apply the change of variable $z\mapsto z-\tilde\psi$, which does not change
  the residue. Observe that $\mathbf
  r\tilde\psi = \psi$ and recall Lemma~\ref{lem:vir-cycle-comp-entangled-tails}.
  Then we obtain the desired formula.
\end{proof}
\fi

Using this Lemma and the expansion \eqref{eq:expansion}, we
rewrite \eqref{eq:contr-F-beta-primitive} as follows.
\begin{corollary}
  \label{lem:contr-F-beta-form2}
  The contribution to the left hand side of \eqref{eq:equation-on-Q-Pn} from
  $F_{\vec\beta}$ is
  \begin{align*}
    - \frac{1}{k!} \sum_{r=0}^\infty\sum_{\vec\beta^\prime}\sum_{\vec b} \frac{(-1)^r}{r!}
      \int_{[ Q^{\epsilon_+}_{g,n+k+r}(X,\beta^{\prime\prime})]^{\mathrm{vir}}} &\big[
        \prod_{i=1}^k \mathrm{ev}_{n+i}^*((\mathbf rz -\psi)I_{\beta_i}(\mathbf r z-\psi))
        \big]_{b_0} \cdot \\
      & \prod_{a=1}^{r} \mathrm{ev}^*_{n+k+a}
        \big[(\mathbf rz-\psi)
    I_{\beta_a^\prime}(\mathbf r z-\psi) \big]_{b_a},
  \end{align*}
  where
    $\vec\beta^\prime = (\beta^{\prime\prime},\beta^\prime_1,\ldots, \beta_r^\prime)$
  runs through all decompositions of $\beta^\prime$ as in
  Lemma~\ref{lem:intergral-D-r-1-term}, and
    $\vec b= (b_0 ,\ldots, b_r)$
  runs through all $(r+1)$-tuples of integers such that $b_0+ \cdots + b_r = 0$ and
  $b_1 ,\ldots, b_r <0$.
\end{corollary}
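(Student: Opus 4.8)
The plan is to derive the statement by substituting the geometric-series expansion \eqref{eq:expansion} into \eqref{eq:contr-F-beta-primitive}, extracting the residue term by term, and then evaluating each resulting term by Lemma~\ref{lem:intergral-D-r-1-term}; everything after that is a reindexing of the combinatorial data together with a sign count.

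First I would set $P(z)=\prod_{i=1}^k\mathrm{ev}_{n+i}^*\big((\mathbf r z-\psi)I_{\beta_i}(\mathbf r z-\psi)\big)$, a Laurent polynomial in $z$ with coefficients in $A^*(\tilde Q^{\epsilon_+}_{g,n+k}(X,\beta^\prime))$, and $D=\sum_{i\ge 0}[\mathfrak D^\prime_i]$, which is nilpotent since it is a sum of divisor classes on a finite-dimensional stack, so that \eqref{eq:expansion} is a finite sum. Multiplying \eqref{eq:expansion} by $P(z)$ and reading off the coefficient of $z^{-1}$, the residue in \eqref{eq:contr-F-beta-primitive} becomes $-[P(z)]_0$ plus $\sum_{s\ge 1}\sum_{r\ge 1}(-1)^{s+1}\,[\mathfrak D^\prime_{r-1}]\,D^{s-1}\,[P(z)]_s$, because $(-z)^{-s-1}$ picks out $z^s$ with sign $(-1)^{s+1}$. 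I would then integrate against $[\tilde Q^{\epsilon_+}_{g,n+k}(X,\beta^\prime)]^{\mathrm{vir}}$ and divide by $k!$. The $s=0$ piece is treated separately: since $[P(z)]_0$ is pulled back from $Q^{\epsilon_+}_{g,n+k}(X,\beta^\prime)$, Lemma~\ref{lem:vir-cycle-comp-entangled-tails} and the projection formula convert it into the $r=0$ summand of the claimed formula, with $\vec\beta^\prime=(\beta^\prime)$ and $\vec b=(0)$.

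For the terms with $s\ge 1$ I would invoke Lemma~\ref{lem:intergral-D-r-1-term}. That lemma is stated without the extra insertion $[P(z)]_s$, but its proof is a chain of identities of virtual classes in Chow groups (via Lemma~\ref{lem:int-with-D-k-1}, Lemma~\ref{lem:divisors2} and Lemma~\ref{lem:genus-zero-special-case}), hence it remains valid after capping both sides with any class pulled back along the stabilization and gluing maps appearing there, which preserve the markings $n+1,\dots,n+k$ together with their $\psi$- and evaluation classes; so it applies with $[P(z)]_s$ carried through unchanged. Setting $b_0=s$ and $b_a=-j_a-1$ for $a=1,\dots,r$, the constraint $j_1+\cdots+j_r=s-r$ becomes $b_1+\cdots+b_r=-b_0$, i.e.\ $b_0+\cdots+b_r=0$, while $j_a\ge 0$ becomes $b_a<0$, and $[\,\cdot\,]_{-j_a-1}=[\,\cdot\,]_{b_a}$. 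The sign assembles as $\tfrac{1}{k!}\cdot(-1)^{s+1}\cdot\tfrac{(-1)^{s-r}}{r!}=-\tfrac{1}{k!}\tfrac{(-1)^r}{r!}$, matching the prefactor in the statement. Finally I would merge the summations over $s$, $r$ and $J$ into the single sum over $r\ge 0$, the decompositions $\vec\beta^\prime=(\beta^{\prime\prime},\beta^\prime_1,\dots,\beta^\prime_r)$, and the tuples $\vec b=(b_0,\dots,b_r)$ with $\sum_a b_a=0$ and $b_1,\dots,b_r<0$, which is exactly the index set of the Corollary.

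The computation is largely formal, so the step I expect to need the most care is the justification that Lemma~\ref{lem:intergral-D-r-1-term} continues to hold with the insertion $[P(z)]_s$ — equivalently, that the class $\prod_{i=1}^k\mathrm{ev}_{n+i}^*(\cdots)$ is compatible with all the boundary, gluing and stabilization maps entering that lemma's proof — together with the accompanying bookkeeping of signs and of the reindexing between the three nested summations. No new geometric input is needed beyond what has already been established.
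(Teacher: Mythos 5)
Your proposal is correct and follows essentially the same route as the paper: substitute the expansion \eqref{eq:expansion} into \eqref{eq:contr-F-beta-primitive}, extract the residue term by term, evaluate the boundary terms via Lemma~\ref{lem:intergral-D-r-1-term} (carried through with the insertion $[P(z)]_s$, which is legitimate for exactly the reason you give, since all the maps in that lemma's proof preserve the first $n+k$ markings and their evaluation and $\psi$-classes), and reindex $b_0=s$, $b_a=-j_a-1$. The sign bookkeeping $(-1)^{s+1}\cdot(-1)^{s-r}=-(-1)^r$ and the treatment of the $s=0$ piece as the $r=0$ summand via Lemma~\ref{lem:vir-cycle-comp-entangled-tails} both match the paper's computation.
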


\ifdefined\SHOWPROOFS
\begin{proof}
  This is a straightforward computation and we leave it to the reader.
  \ifdefined\SHOWDETAIL
  {\color{blue}
    \begin{align*}
    & \int_{[\tilde Q^{\epsilon_+}_{g,n+k}(X,\beta^\prime)]^{\mathrm{vir}}}
    \frac{1}{k!} \mathrm{Res}_{z=0} \big( \frac {\prod_{i=1}^k
      \mathrm{ev}_{n+i}^*((\mathbf r_iz-\psi) I_{\beta_{i}}
      (\mathbf r_iz- \psi))} {- z - \sum_{i=0}^\infty[\mathfrak
      D^\prime_i]} \big)\\
    =
    & \int_{[\tilde Q^{\epsilon_+}_{g,n+k}(X,\beta^\prime)]^{\mathrm{vir}}}
    \frac{1}{k!} \mathrm{Res}_{z=0} \Big(  \prod_{i=1}^k
      \mathrm{ev}_{n+i}^*((\mathbf r_iz- \psi)\ I_{\beta_{i}}
      (\mathbf r_iz- \psi))  \\
      &\cdot  \big(
  \frac{-1}{z} + \sum_{s\geq 1}
  \sum_{r=1}^{\infty} (-z)^{-s-1} [\mathfrak
  D^\prime_{r-1}]( \sum_{i=0}^\infty [\mathfrak D^{\prime}_{i}]
  )^{s-1}
  \big)\Big) \\
  =& - \frac{1}{k!}
     \int_{[Q^{\epsilon_+}_{g,n+k}(X,\beta^\prime)]^{\mathrm{vir}}}
     \Big[
     \prod_{i=1}^k
     \mathrm{ev}_{n+i}^*(\mathbf r_iz-\psi)\ I_{\beta_{i}}
     (\mathbf r_iz- \psi)
     \Big]_{0} \\
      & +
      \frac{1}{k!}
      \sum_{s\geq 1}
      \sum_{\vec\beta^\prime}\sum_{J}
      \int_{[Q^{\epsilon_+}_{g,n+k+r}(X,\beta^{\prime\prime})]^{\mathrm{vir}}}
      \frac{(-1)^{r-1}}{r!}
      \Big[
        \prod_{i=1}^k \mathrm{ev}_{n+i}^*(
        (\mathbf rz- \psi)
        I_{\beta_i}(\mathbf rz- \psi)
        )
        \Big]_{s} \cdot
        \\ &\phantom{aaaaaaaaaaaaaaaaaaaaaaa}\prod_{a=1}^{r} \left[
          \mathrm{ev}^*_{n+k+a}(\mathbf r z- \psi)I_{\beta_a^\prime}(\mathbf r z-\psi) \right]_{-j_a-1}\\
        = &
  - \frac{1}{k!}
  \int_{[Q^{\epsilon_+}_{g,n+k}(X,\beta^\prime)]^{\mathrm{vir}}}
  \Big[
  \prod_{i=1}^k
            \mathrm{ev}_{n+i}^*
            ((\mathbf r_iz-\psi)\ I_{\beta_{i}} (\mathbf r_iz-\psi) )
      \Big]_{0} \\
      & -
      \frac{1}{k!}
      \sum_{s\geq 1}
      \sum_{\vec\beta^\prime}\sum_{J}
      \int_{[Q^{\epsilon_+}_{g,n+k+r}(X,\beta^{\prime\prime})]^{\mathrm{vir}}}
      \frac{(-1)^{r}}{r!}
      \Big[
        \prod_{i=1}^k \mathrm{ev}_{n+i}^*(
        (\mathbf rz- \psi)I_{\beta_i}(\mathbf rz- \psi)
        )
        \Big]_{s} \cdot
        \\ &\phantom{aaaaaaaaaaaaaaaaaaaaaaa}\prod_{a=1}^{r} \left[
          \mathrm{ev}^*_{n+k+a}(\mathbf r z- \psi)I_{\beta_a^\prime}(\mathbf r
             z- \psi) \right]_{-j_a-1}
  \end{align*}
   This is the desired formula. }
  \fi
\end{proof}
\fi

We are finally ready to prove the main theorem.
Recall that for any $\beta$,
  $\mu_{\beta}(z) \in A^*(I_{\mu}X)$
is the coefficient of $q^\beta$ in $[zI(q,z)-z]_{z^{\geq 0}}$, where
$[\cdot]_{z^{\geq 0}}$ means the truncation obtained by taking only nonnegative powers of
$z$.
Note that for $\beta\neq 0$,
\begin{equation}
  \label{eq:mu-zI-0-term}
  \mu_{\beta}(z)|_{z=-\psi} = [(z-\psi)I_{\beta}(z-\psi)]_0.
\end{equation}
\begin{theorem}[Theorem~\ref{thm:Chow-version}]
  \label{thm:main-case}
  \begin{equation*}
    \begin{aligned}
       \int_{[Q^{\epsilon_-}_{g,n}(X,\beta)]^{\mathrm{vir}}}&1  -
      \int_{[Q^{\epsilon_+}_{g,n}(X,\beta)]^{\mathrm{vir}}}1 \\
      & = \sum_{k\geq 1} \sum_{\vec\beta} \frac{1}{k!}
      \int_{[Q^{\epsilon_+}_{g,n+k}(X,\beta^\prime)]^{\mathrm{vir}}}
      \prod_{i=1}^k \mathrm{ev}_{n+i}^* \big[(z-\psi)I_{\beta_i}(z-\psi)\big]_0
    \end{aligned}
  \end{equation*}
where
$\vec\beta$ runs through all the $(k+1)$-tuples of effective curve classes
  \[
    \vec\beta = (\beta^\prime,\beta_1,\ldots, \beta_k)
  \]
  such that $\beta = \beta^\prime + \beta_1 + \cdots + \beta_k$ and
  $\deg(\beta_i) = d_0$ for all $i=1 ,\ldots, k$.
\end{theorem}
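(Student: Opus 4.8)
The plan is to run the master-space localization argument and then reorganize the ``raw'' residue into the stated product form by means of the boundary divisors $\mathfrak D_{k-1}$. First I would apply the virtual localization formula \cite{graber1999localization,chang2017torus} to $[MQ^{\epsilon_0}_{g,n}(X,\beta)]^{\mathrm{vir}}$ for the $\mathbb C^*$-action \eqref{eq:C*-action} scaling $v_1$, obtaining \eqref{eq:equation-on-master}, and push it forward along the $\mathbb C^*$-equivariant (trivial target action) morphism $\tau\colon MQ^{\epsilon_0}_{g,n}(X,\beta)\to Q^{\epsilon_-}_{g,n}(\mathbb P^N,d)$. Because $\tau_*[MQ^{\epsilon_0}_{g,n}(X,\beta)]^{\mathrm{vir}}$ lies in $A_*(Q^{\epsilon_-}_{g,n}(\mathbb P^N,d))\otimes_{\mathbb Q}\mathbb Q[z]$, its residue at $z=0$ vanishes, so the sum of the residues at $z=0$ of the pushed-forward contributions of the fixed-point components $F_+$, $F_-$ and the $F_{\vec\beta}$ equals zero; this is the single relation in $A_*(Q^{\epsilon_-}_{g,n}(\mathbb P^N,d))$ that we will unwind.

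Next I would evaluate the three kinds of contributions. The component $F_+\cong\tilde Q^{\epsilon_+}_{g,n}(X,\beta)$ has virtual normal bundle the calibration bundle $\mathbb M_+$ with $\mathbb C^*$-weight $1$, so $1/e_{\mathbb C^*}(\mathbb M_+)=-1/z-c_1(\mathbb M_+)/z^2-\cdots$; taking the residue and applying Lemma~\ref{lem:vir-cycle-comp-entangled-tails} gives the contribution $-\int_{[Q^{\epsilon_+}_{g,n}(X,\beta)]^{\mathrm{vir}}}1$, and the Chern class of $\mathbb M_+$ drops out. Symmetrically, $F_-\cong Q^{\epsilon_-}_{g,n}(X,\beta)$ has virtual normal bundle $\mathbb M_-^\vee$ of weight $-1$ and contributes $+\int_{[Q^{\epsilon_-}_{g,n}(X,\beta)]^{\mathrm{vir}}}1$ (this term is zero when the moduli space is empty, but that situation does not occur in the range $2g-2+n+\epsilon_0d>0$ of this theorem). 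For each $F_{\vec\beta}$ I would substitute Lemma~\ref{lem:obstruction-theory-F-beta} into the localization formula to obtain \eqref{eq:residue-gl-F-beta}, then perform the residue-preserving change of variable $z\mapsto k(z-\tilde\psi(\mathcal E_1)-\tilde\psi_{n+1})$ (legitimate by Remark~\ref{rmk:entanglement-implies-equation-of-psi}), the projection formula along $\tilde{\mathrm{gl}}_k^*F_{\vec\beta}\to\tilde Q^{\epsilon_+}_{g,n+k}(X,\beta')$ together with the degree count \eqref{eq:degree-of-q}, and Lemma~\ref{lem:structure-F-beta} and \eqref{eq:I-from-curly-I}, arriving at \eqref{eq:contr-F-beta-primitive}. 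Expanding $1/(-z-\sum_i[\mathfrak D'_i])$ as in \eqref{eq:expansion}, evaluating the boundary integrals by Lemma~\ref{lem:intergral-D-r-1-term} (whose proof is where Lemma~\ref{lem:genus-zero-special-case} and the excluded case $(g,n,d)=(0,1,d_0)$ enter), and pushing down once more by Lemma~\ref{lem:vir-cycle-comp-entangled-tails} yields the closed expression of Corollary~\ref{lem:contr-F-beta-form2}.

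Finally I would assemble the pieces. The vanishing of the total residue gives
$$\int_{[Q^{\epsilon_-}_{g,n}(X,\beta)]^{\mathrm{vir}}}1-\int_{[Q^{\epsilon_+}_{g,n}(X,\beta)]^{\mathrm{vir}}}1=-\sum_{\vec\beta}\big(\text{contribution of }F_{\vec\beta}\big),$$
so it remains to identify the right-hand side, expanded by Corollary~\ref{lem:contr-F-beta-form2}, with the claimed sum. I would re-index by the total number $N=k+r$ of degree-$d_0$ rational tails split off, grouping the $k\ge 1$ ``entangled'' tails with the $r$ ``extra'' tails created by the $\mathfrak D'_{r-1}$-corrections, all attached to a quasimap of class $\beta''$ in $Q^{\epsilon_+}_{g,n+N}(X,\beta'')$. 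Using the $S_N$-symmetry of $\int_{[Q^{\epsilon_+}_{g,n+N}(X,\beta'')]^{\mathrm{vir}}}$ in the last $N$ markings, the relation $[(\mathbf r z-\psi)I_{\beta}(\mathbf r z-\psi)]_{c}=\mathbf r^{\,c}[(z-\psi)I_{\beta}(z-\psi)]_{c}$, and writing $g_j:=\mathrm{ev}_{n+j}^*\big((z-\psi)I_{\beta_j}(z-\psi)\big)$, the whole double sum collapses, for each fixed tuple of tail classes, to the elementary inclusion--exclusion identity
$$\sum_{S\subseteq\{1,\dots,N\}}(-1)^{\#(\{1,\dots,N\}\setminus S)}\ \prod_{i\in S}g_i\prod_{a\notin S}[g_a]_{z^{<0}}=\prod_{j=1}^N[g_j]_{z^{\ge 0}},$$
whose degree-zero component is $\prod_j[g_j]_0=\prod_j\mathrm{ev}_{n+j}^*\big(\mu_{\beta_j}(z)|_{z=-\psi}\big)$ by \eqref{eq:mu-zI-0-term}, the $S=\emptyset$ term contributing nothing since a product of $N\ge1$ series in strictly negative powers of $z$ has no $z^0$-term. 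The $\mathbf r$-weights disappear because only the $z^0$-coefficients survive, and the prefactors $\tfrac1{k!}\tfrac{(-1)^r}{r!}$ reassemble to $\tfrac1{N!}$ after symmetrization, which is exactly the right-hand side of the theorem. I expect the main obstacle to be this last bookkeeping: checking that the alternating signs from \eqref{eq:expansion}, the $\mathbf r$-weights carried by the orbifold markings, the degree counts of the gluing and root morphisms, and the $S_N$-symmetry factors all dovetail into the above identity, so that the residue on the master space (a single truncation of a product) turns into the product of the individual truncations $\mu_{\beta_i}(z)|_{z=-\psi_{n+i}}$ of the statement.
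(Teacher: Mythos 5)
Your proposal is correct and follows the paper's own proof essentially step for step: localization on the master space, pushforward to $Q^{\epsilon_-}_{g,n}(\mathbb P^N,d)$, vanishing of the residue, Corollary~\ref{lem:contr-F-beta-form2}, re-indexing by the total number of split-off tails, symmetrization over the last markings, and the same inclusion--exclusion collapse (your series-level identity $\sum_S(-1)^{\#S^c}\prod_{i\in S}g_i\prod_{a\notin S}[g_a]_{z^{<0}}=\prod_j[g_j]_{z^{\geq 0}}$ is just a repackaging of the paper's coefficient-by-coefficient sum over the tuples $\vec b$, with the same observation that the all-negative term vanishes and that $\sum b_i=0$ with $b_i\geq 0$ forces $b_i=0$). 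The handling of the $F_\pm$ contributions, the $\mathbf r$-weights, and the combinatorial prefactors all match the paper's argument.
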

\ifdefined\SHOWPROOFS
\begin{proof}
  By the localization formula,
  the sum of residues at $z=0$ of the left hand side of
  \eqref{eq:equation-on-Q-Pn} is zero.
  Apply Corollary~\ref{lem:contr-F-beta-form2} and
  replace $k+r$ by $k$, i.e.\
  the markings $n+1 ,\ldots, n+k$ in Corollary~\ref{lem:contr-F-beta-form2} are
  re-labeled as $n+1 ,\ldots, n+k-r$, and
  the markings $n+k+1,\ldots, n+k+r$ in Corollary~\ref{lem:contr-F-beta-form2} are
  re-labeled as $n+k-r+1, \ldots, n+k$. Also revoke
  Lemma~\ref{lem:vir-cycle-comp-entangled-tails}.
  Thus we obtain
  \begin{align*}
    \int_{[Q_{g,n}^{\epsilon_-}(X,\beta)]^{\mathrm{vir}}}1 -
    & \int_{[Q_{g,n}^{\epsilon_+}(X,\beta)]^{\mathrm{vir}}}1 -
      \sum_{k\geq 1}
      \sum_{\vec\beta}
      \sum_{r=0}^{k-1}
      \sum_{\vec b}
      \frac{(-1)^r}{r!(k-r)!}
      \int_{[Q^{\epsilon_+}_{g,n+k}(X,\beta^\prime)]^{\mathrm{vir}}}\\
    & \prod_{i=1}^k \big[\mathrm{ev}_{n+i}^*((\mathbf rz-\psi)I_{\beta_i}(\mathbf r z-\psi))
      \big]_{b_i} = 0,
  \end{align*}
  where $\vec\beta = (\beta^\prime,\beta_1 ,\ldots, \beta_k)$ is as above
  and
  \[
    \vec b = (b_1 ,\ldots, b_k)
  \]
  runs through all $k$-tuples of integers such that $b_1 + \cdots  + b_k = 0$ and
  $b_{k-r+1} ,\ldots, b_k <0$.
  Using the symmetry of the last $k$ markings, we rewrite it as
  \begin{align*}
    & \int_{Q_{g,n}^{\epsilon_-}(X,\beta)}1  -  \int_{ Q_{g,n}^{\epsilon_+}(X,\beta)}1 =\\
    & \sum_{k\geq 1}
      \sum_{\vec\beta} \sum_{N\subsetneq \{1 ,\ldots, k\}}
      \sum_{\vec b}
      \frac{(-1)^{\# N}}{k!}
      \int_{[Q^{\epsilon_+}_{g,n+k}(X,\beta_0)]^{\mathrm{vir}}}
      \prod_{i=1}^k \big[
      \mathrm{ev}_{n+i}^*((\mathbf rz-\psi)I_{\beta_i}(\mathbf r z - \psi))
      \big]_{b_i},
  \end{align*}
  where $\vec\beta$ is as before and
  \[
    \vec b = (b_1 ,\ldots, b_k)
  \]
  runs through all $k$-tuples of integers such that $b_1 + \cdots  + b_k = 0$ and
  $b_{i} <0$ for each $i\in N$.
  For each fixed $k, \vec b$, it is easy to see that
  \[
    \sum_{N} (-1)^{\# N} = \begin{cases}
      1, &{\text{if } b_i\geq 0 \text{ for all } i=1 ,\ldots, k};\\
      0, &\text{otherwise},
    \end{cases}
  \]
  where the sum runs through all $N\subsetneq \{1 ,\ldots, k\}$ such that $b_i<0$ for
  each $i\in N$.
  Finally observe that $b_i\geq 0$ for all $i=1 ,\ldots, k$ implies that $b_i=0$
  for all $i$. Hence we obtain the desired formula. Note  the change of variable
  $\mathbf rz\mapsto z$ does not affect the
  degree-$0$ term.
\end{proof}
\fi
\subsection{The genus-$0$ case}
\label{sec:proof-genus-0}
\ifdefined\SHOWPROOFS
\begin{proof}[Proof of Theorem~\ref{thm:big-J-general}]
  By Corollary~\ref{cor:higher-genus-numerical}, we see that
  Theorem~\ref{thm:big-J-general} holds true modulo the constant-in-$t$ terms.
  Let $\epsilon_- < \epsilon_0 = \frac{1}{d_0} <\epsilon_+$ as before and let
  \[
    \mu^{\epsilon_0}(q,z) = \sum_{\deg(\beta)=1/\epsilon_0} \mu_\beta(z) q^\beta.
  \]
  It suffices to prove that
  \begin{equation}
    \label{eq:J-cross-1-wall}
    J^{\epsilon_+}(\mu^{\epsilon_0}(q,-z),q,z) = J^{\epsilon_-}(0,q,z).
  \end{equation}
  By definition, its right hand side equals to
  \[
    1 +  \sum_{0<\deg(\beta)\leq
      1/\epsilon_0} I_{\beta}(z) q^\beta
    +  \sum_{\deg(\beta)>1/\epsilon_0} q^\beta
    \sum_{p} T_p \langle {
      \frac{T^p}{z(z-\psi)}} \rangle_{0,1,\beta}^{X,\epsilon_-},
  \]
  where $\{T^p\}$ is basis for $H^*_{\mathrm{CR}}(X,\mathbb Q)$ and $\{T_p\}$ is
  its dual basis under the pairing \eqref{eq:pairing}.
  By Theorem~\ref{thm:Chow-version}, this is equal to
  \begin{align*}
    1 + & \sum_{0<\deg(\beta)\leq
    1/\epsilon_0} I_{\beta}(z) q^\beta   \\
 + &\sum_{\substack{(\beta\geq 0, k\geq 1) \text{~or~}
    \\ (\deg(\beta)>1/\epsilon_0, k=0)}}
    \frac{q^{\beta}}{k!}
    \sum_{p}T_p \langle {
      \frac{T^p}{z(z-\psi)}, \mu^{\epsilon_0}(q,-\psi) ,\ldots,
      \mu^{\epsilon_0}(q,-\psi)} \rangle_{0,1+k,\beta}^{X,\epsilon_+}.
  \end{align*}
  Comparing this to the left hand side of
  ~\eqref{eq:J-cross-1-wall}, we see that it suffices to prove
  \[
    \frac{\mu^{\epsilon_0}(q,z)}{z} + \sum_{\deg(\beta) = 1/\epsilon_0}q^\beta
    \sum_p T_p \langle {\frac{T^p}{z(z-\psi)}}
    \rangle_{0,1,\beta}^{X,\epsilon_+} = \sum_{\deg(\beta) = 1/\epsilon_0}
    I_{\beta}(z) q^\beta.
  \]
  This follows immediately from the definitions and Theorem~\ref{thm:genus-0-special-case}.
\end{proof}
\fi

\appendix
\addcontentsline{toc}{section}{Appendices}

\section{Intersection theory on inflated projective bundles}
Recall from Section~\ref{sec:inflated-proj-bundle} the definition of inflated
projective bundles.
\begin{lemma}
  \label{lem_inflated_projective_bundle}
  Let $X$ be any Deligne--Mumford stack, and $\Theta_1,\ldots,\Theta_r$ be line
  bundles on $X$. Let
  $p:\tilde{\mathbb P} \to X$ be the inflated projective bundle associated to
  $\Theta_1 ,\ldots, \Theta_r$. For $i=1,\cdots,r-1$, let
  $D_i$ be the $i$-th tautological divisor.
  Then for any $a\in A_*(X)$ and any integer $s\geq r$, we have
  \[
    p_*\Big( \big(
        \sum_{i=0}^{r-2} (r-i)[D_i] + \sum_{i=1}^r c_1(\Theta_i)
      \big)^{s-1}
      \cap p^*a \Big) =
    \sum_{j_1+\cdots+j_r = s-r} r^{s-1}\cdot c_1(\Theta_1^{j_1})\cdots c_1(\Theta_r^{j_r}) \cap a.
  \]
\end{lemma}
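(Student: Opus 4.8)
The plan is to reduce the statement to a computation in the Chow ring of a genuine projective bundle by unwinding the blowup construction, and then to identify the pushforward by a Segre-class computation. First I would set up notation: let $P = \mathbb P(\Theta_1\oplus\cdots\oplus\Theta_r) \to X$ with relative hyperplane class $h$, so that $h^r = \sum_{a=1}^{r-1}(-1)^{a-1} c_1(\Theta_{(a)}) h^{r-a} + (-1)^{r-1}c_r(\Theta_1\oplus\cdots\oplus\Theta_r) $ via the defining relation $\prod_{i=1}^r(h + c_1(\Theta_i)) = 0$ (I will fix the sign convention so that $\mathcal O(1)$ restricts to $\mathcal O(1)$ on each coordinate section), and recall that $\pi_*(h^{r-1+\ell}\cap \pi^*a) = s_\ell(\Theta_1\oplus\cdots\oplus\Theta_r)\cap a$ is the Segre class. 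The key observation, which I would verify by induction on the blowup tower $P_0 \to P_1 \to \cdots \to P_{r-1}=P$, is that under $p: \tilde{\mathbb P}= P_0 \to P$ the combination $L := \sum_{i=0}^{r-2}(r-i)[D_i] + \sum_{i=1}^r p^*c_1(\Theta_i)$ pulls back from $P$; more precisely I expect $L = p^*(r\,h)$ up to identifying the tautological divisors with the exceptional divisors of the tower. This is because each blowup $P_{i-1}\to P_i$ along the proper transform of the codimension-$i$ coordinate locus introduces an exceptional divisor whose class, combined with the previous tautological divisors and the $c_1(\Theta_i)$, reassembles into a multiple of the pulled-back hyperplane class — exactly the elementary fact already invoked in the proof of Lemma~\ref{lem:divisors2} and Lemma~\ref{lem:str-D}.

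Granting $L = p^*(rh)$, the left-hand side becomes $p_*\big((rh)^{s-1}\cap p^*a\big) = r^{s-1}\, p_*\big(p^*(h^{s-1}\cap a)\big)$. Since $p$ is a composition of blowups along loci of positive codimension (indeed relative dimension is preserved, $\tilde{\mathbb P}\to X$ has relative dimension $r-1$ just like $P\to X$), we have $p_* p^* = \mathrm{id}$ on Chow groups, so this equals $r^{s-1}(h^{s-1}\cap a)$ pushed forward along $\pi: P\to X$, namely $r^{s-1}\, \pi_*(h^{s-1}\cap \pi^*a) = r^{s-1}\, s_{s-r}(\Theta_1\oplus\cdots\oplus\Theta_r)\cap a$. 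It then remains to expand the Segre class: $s(\Theta_1\oplus\cdots\oplus\Theta_r) = \prod_{i=1}^r s(\Theta_i) = \prod_{i=1}^r \sum_{j\geq 0} c_1(\Theta_i)^j = \prod_{i=1}^r \sum_{j\geq 0} c_1(\Theta_i^{j})$ (using $c_1(\Theta_i^{j}) = j\,c_1(\Theta_i)$ for line bundles), so the degree-$(s-r)$ part is exactly $\sum_{j_1+\cdots+j_r=s-r} c_1(\Theta_1^{j_1})\cdots c_1(\Theta_r^{j_r})$, and multiplying by $r^{s-1}$ gives the claimed formula.

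The main obstacle I anticipate is the inductive identification $L = p^*(rh)$: one must carefully track, through each of the $r-1$ blowups, how the proper transform $Z_{(i)}$ sits inside the previous coordinate hyperplanes, what the normal bundle of the blowup center is (here the analogue of Lemma~\ref{lem:structure-E_k-1} for inflated projective bundles, i.e.\ that the exceptional divisor $E_{i-1}$ over the codimension-$i$ coordinate locus is itself a projective bundle $\mathbb P(\bigoplus_{j\in I}\Theta_j\otimes(\text{twist}))$ with the twist being a combination of the earlier exceptional divisors), and how the tautological divisor classes $D_i$ relate to the total transforms $E_i$. The bookkeeping is purely local over $X$ since line bundles trivialize locally, so it reduces to an explicit computation in iterated blowups of $\mathbb P^{r-1}$ along coordinate subspaces; the cleanest route is probably to show directly that $h$ pulls back to $\frac{1}{r}\big(\sum_{i=0}^{r-2}(r-i)D_i + \sum_i c_1(\Theta_i)\big)$ by checking equality after restriction to the fibers and to the boundary divisors, invoking Lemma~\ref{lem:divisors2}-style coordinate-hyperplane identities. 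Once that identity is in hand, the rest is the formal Segre-class manipulation above, which is routine. An alternative to sidestep the induction entirely would be to compute $p_*(L^{s-1}\cap p^*a)$ by repeatedly applying the projection formula one blowup at a time, using that pushforward along a blowup kills classes supported on the exceptional divisor except in top relative degree; but I expect the ``$L$ pulls back'' shortcut to be both shorter and more transparent.
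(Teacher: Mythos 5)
Your reduction breaks at its central step: for $r\ge 3$ the class $L=\sum_{i=0}^{r-2}(r-i)[D_i]+\sum_i c_1(\Theta_i)$ is \emph{not} pulled back from $P=\mathbb P(\Theta_1\oplus\cdots\oplus\Theta_r)$, under any convention for $h$. A divisor class pulled back along the blowdown $\rho\colon\tilde{\mathbb P}\to P$ must have degree $0$ on every curve that $\rho$ contracts. Already for $r=3$ the only nontrivial blowup is along the three sections $\mathbb P(\Theta_j)\subset P$; if $f$ is a fiber of the exceptional divisor $D_1$ over one of them, then $[D_1]\cdot f=-1$, while $[D_0]\cdot f=+2$ because each $\mathbb P(\Theta_j)$ lies on exactly two of the three coordinate hyperplanes, so $[D_0]=\rho^*\big(\sum_i[H_i]\big)-2[D_1]$. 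Hence $L\cdot f=3\cdot 2+2\cdot(-1)=4\neq 0$, and the failure persists for any nonzero coefficients on the $D_i$ with $i\ge 1$. So ``$L=\rho^*(rh)$'' is false, and everything downstream (the step $p_*(L^{s-1}\cap p^*a)=r^{s-1}\pi_*(h^{s-1}\cap\pi^*a)$, followed by the Segre-class expansion, which is itself fine) computes the pushforward of the wrong class. (Separately: the coefficients in the printed statement and in its application in the proof of Lemma~\ref{lem:intergral-D-r-1-term} disagree --- the application uses $(r-1-i)[D_i]$ rather than $(r-i)[D_i]$, and the $r=2$, $s=2$ case, where $\tilde{\mathbb P}=P$ and $D_0=H_1+H_2$, shows that $(r-1-i)$ is the normalization compatible with the right-hand side.)

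What is true is only the weaker assertion that the \emph{pushforward} of $L^{s-1}$ to $X$ agrees with a pushforward from $P$; but the difference between $L^{s-1}$ and a pulled-back power is supported on the exceptional divisors and does not vanish term by term under $p_*$. For instance, for $r=3$, $s=3$ with the corrected coefficients one has $L=\rho^*\big(6h+3\sum_ic_1(\Theta_i)\big)-3[D_1]$; the $\rho^*$-part alone pushes forward to $36a$, and the $9[D_1]^2$ term contributes $-27a$, so only their sum gives the correct $9a$. Establishing that cancellation in general is exactly the content of the lemma. The paper proves it by a different route: the fiberwise $(\mathbb C^*)^r$-action on $\tilde{\mathbb P}$ has fixed loci indexed by permutations $\sigma$, with $L|_{F_\sigma}=r\,c_1^{\mathbb T}(\Theta_{\sigma(1)})$ and normal bundle $\bigoplus_j\Theta_{\sigma(j)}\otimes\Theta_{\sigma(j+1)}^\vee$, and localization reduces the identity to the partial-fraction identity of Lemma~\ref{lem:comb}; general $X$ is then handled by an operational-class reduction to $(\mathbb P^{N-1})^r$. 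To salvage your approach you would have to carry out the blowup-by-blowup excess-intersection bookkeeping that you defer to your ``alternative'' route --- which reproduces the same combinatorics rather than avoiding it.
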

\ifdefined\SHOWPROOFS
\begin{proof}
  First assume that $X$ is a  smooth  variety. We will apply the localization formula.
  Consider the standard component-wise $\mathbb T = (\mathbb C^*)^r$-action on $\bigoplus_i\Theta_i$.
  This makes each $\Theta_i$ a $\mathbb T$-equivariant line bundle and induces
  an action of $\mathbb T$ on $\tilde{\mathbb P}$.

  The fixed-point loci $F_\sigma$ are indexed by bijections
  \[
    \sigma: \{1 ,\ldots, r\} \longrightarrow  \{\Theta_1 ,\ldots, \Theta_r\}
  \]
  in the following way.
  Let $P_{r-2}$ be the blowup of  $\mathbb
  P_X(\Theta_1\oplus \cdots\oplus \Theta_r)$ along all the
  \[
    V_{\Theta_i} = \mathbb P_X(0 \oplus \cdots 0 \oplus \Theta_i \oplus 0 \cdots
    \oplus 0),\quad i = 1 ,\ldots, r.
  \]
  Let $E_{\Theta_i}$ be the exceptional divisor over $V_{\Theta_i}$. Then
  $E_{\Theta_i}$ is canonically isomorphic to
  \[
    \mathbb P_X(\Theta_1\otimes \Theta_i^{\vee}\oplus \cdots
    \widehat{\Theta_i\otimes \Theta_i^\vee}\cdots\oplus
    \Theta_r\otimes \Theta_i^{\vee}).
  \]
  Let $\tilde E_{\Theta_i}\subset \tilde {\mathbb P}$ be the preimage of $E_{\Theta_i}$, then
  $\tilde E_{\Theta_i} \to X$ is equal to the inflated projective bundle
  associated to $\Theta_1\otimes \Theta_i^{\vee}, \ldots ,
  \widehat{\Theta_i\otimes \Theta_i^\vee}, \ldots,
  \Theta_r\otimes \Theta_i^{\vee}$ (c.f.\ Lemma~\ref{lem:str-D}).

 Note that every fixed-point component is contained in some
 $\tilde{E}_{\Theta_i}$. Thus we define $F_{\sigma}$ by induction on $r$.
 When $r=1$, define $F_{\sigma}$ to be $\tilde{\mathbb P}$, for the unique
$\sigma$; for $r>1$, define $F_{\sigma}$ to be the fixed-point component
$F_{\sigma^\prime}$
 in the inflated projective bundle $\tilde E_{\Theta_{\sigma(r)}}$, where
 \begin{equation*}
   \sigma^\prime: \{1 ,\ldots, r-1\} \longrightarrow
   \{\Theta_1\otimes \Theta_{\sigma(r)}^{\vee}, \ldots,
   \widehat{\Theta_{\sigma(r)}\otimes \Theta_{\sigma(r)}^\vee}, \ldots,
   \Theta_r\otimes \Theta_{\sigma(r)}^{\vee}\}
 \end{equation*}
 is defined by 
 \[
   \sigma^\prime(j) =
   \begin{cases}
   \sigma(j) \otimes  \Theta_{\sigma(r)}^\vee,  \cond{j<\sigma(r)};\\
   \sigma(j+1) \otimes  \Theta_{\sigma(r)}^\vee,  \cond{j\geq \sigma(r)}.
 \end{cases}
 \]

  Given $\sigma$, set $i = \sigma(r)$. We observe that the restriction of
  $\mathcal O_{\mathbb P(\Theta_1 \oplus
    \cdots \oplus \Theta_r)}(-1)$ to $E_{\Theta_i}$ is canonically isomorphic to
  (the pullback of)
  $\Theta_i$. The normal bundle of $\tilde E_{\Theta_i}$ in $\tilde {\mathbb
    P}$ is isomorphic to the normal bundle of $E_{\Theta_i}$ in ${\mathbb P}$,
  which is canonically isomorphic to $\mathcal O_{E_{\Theta_i}}(-1)$.
  For $0\leq i\leq r-3$, the restriction of $D_i$ to $\tilde{E}_{\Theta_i}$ is
  isomorphic to the $i$-th tautological divisor on $\tilde{E}_{\Theta_i}$.
  The restriction of $\mathcal O_{\tilde {\mathbb P}}(D_{r-2})$ to
  $\tilde{E}_{\Theta_i}$ is isomorphic to the normal bundle of
  $\tilde{E}_{\Theta_i}$ in $\tilde {\mathbb P}$, which is isomorphic to
  $\mathcal O_{E_{\Theta_i}}(-1)$.
  Hence the restriction of $\mathcal O_{E_{\Theta_i}}(-1)$ to $F_\sigma$ is
  canonically isomorphic to $\sigma(r-1)\otimes \sigma(r)^\vee$

  Apply the same reasoning to each of the inflated projective bundles $\tilde
  E_{\Theta_i}$. Inductively, we get
  \begin{itemize}
  \item
    the restriction of $\mathcal O_{\tilde{\mathbb P}}(D_i)$ to $F_\sigma$ is
    isomorphic to $\sigma(i+1)\otimes
    \sigma{(i+2)}^{\vee}$, for $i=0 ,\ldots, r-2$;
  \item
    the $K$-theory class of the normal bundle of $F_\sigma$ in $\tilde {\mathbb
      P}$ equals to
    \[
      [\sigma(1) \otimes \sigma(2)^\vee] + \cdots  + [\sigma(r-1)\otimes \sigma(r)^\vee].
    \]

  \end{itemize}
  Each $F_\sigma$ is isomorphic to $X$ via the projection.
  Let $a_i$ be the non-equivariant first Chern class of $\Theta_i$, let
  $\lambda_1 ,\ldots, \lambda_r$ be the equivalent parameters of $\mathbb T$.
  By the localization formula \cite{edidin1998localization, atiyah1984moment}, we have
  \begin{equation}
    \begin{aligned}
      &p_*\left( \left(
          \textstyle
          \sum_{i=0}^{r-2} (r-i)[D_i] + \sum_{i=1}^r c^{\mathbb T}_1(\Theta_i)
        \right)^{s-1}
        \cap p^*a \right) \\=&
      \sum_{ \sigma \in S_r}\frac{r^{s-1} (a_{\sigma(1)}+\lambda_1)^s \cap a}
      {(a_{\sigma(1)}+\lambda_{\sigma(1)}-a_{\sigma(2)}-\lambda_{\sigma(2)})
        \cdots
        (a_{\sigma(r-1)}+\lambda_{\sigma(r-1)} - a_{\sigma(r)} - \lambda_{\sigma(r)})}.
    \end{aligned}
  \end{equation}
  The desired formula then follows from the following elementary Lemma~\ref{lem:comb}.

  We now reduced the general case to the special case.
  We first slightly generalize the desired formula.
  For any $f: Y\to X$ and any $a\in A_*(Y)$, replacing $\Theta_i$ by
  $f^*\Theta_i$ and $\tilde{\mathbb P}$ by $\tilde{\mathbb P}\times_X Y$, both
  sides produce classes in $A_{*-(s-r)}(Y)$. It is easy to see that both sides
  of the desired formula, as operations on $a$, are operational Chow classes in
  $A^{s-r}(X)$ (c.f.\ \cite{fulton2013intersection}). Fix $N>\dim X$, let
  $V_i^*$ be the total space of $\Theta^{\oplus N}_i$ minus its zero section.
  Consider
  \[
    \pi: X^\prime := V_1^*\times_{X}\cdots \times_{X} V_r^* \longrightarrow   X.
  \]
  Then the flat pullback $\pi^*: A_*(X)\to A_{*+rN}(X^\prime)$ is an isomorphism. Hence
  it suffices to verify the desired formula on $X^\prime$ for the line bundles $\pi^*\Theta_i$.
  The $N$ tautological sections of $\pi^*\Theta_i$ give rise to a map
  \[
    \tau_i: X^\prime \longrightarrow  \mathbb P^{N-1},
  \]
  such that $\tau_i^* \mathcal O_{\mathbb P^{N-1}}(1)\cong \pi^*\Theta_i$. Hence
  it suffices to verify the desired formula as an identify of operational Chow
  classes on $(\mathbb P^{N-1})^{r}$. By the Poincar\'e duality between Chow
  groups and operational Chow groups for smooth varieties, it suffice to take
  $a$ to be the fundamental class of $(\mathbb P^{N-1})^{r}$ and varify the
  identity in $A_*((\mathbb P^{N-1})^{r})$.
  This reduces the general case to the special case $X= (\mathbb P^{N-1})^{r}$.
\end{proof}
\fi

\begin{lemma}
  \label{lem:comb}
  Let $S_r$ be the group of permutations of $\{1 ,\ldots, r\}$.
  For $s \geq r-1$, we have an equation in $\mathbb Q(x_1 ,\ldots, x_r)$
  \begin{equation}
    \label{eqn-rational-function}
    \sum_{\sigma\in S_r}
    \frac{x_{\sigma(1)}^{s-1}}{(x_{\sigma(1)}-x_{\sigma(2)})(x_{\sigma(2)}-x_{\sigma(3)})
      \cdots (x_{\sigma(r-1)}-x_{\sigma(r)})} = \sum_{\substack{j_1+ \cdots + j_r =
      s-r, \\ j_1 ,\ldots, j_r \geq 0}}x_1^{j_1}\cdots x_r^{j_r}.
  \end{equation}
\end{lemma}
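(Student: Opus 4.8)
The plan is to prove \eqref{eqn-rational-function} by reducing both sides to standard facts about partial fractions and symmetric polynomials, working throughout in the fraction field $\mathbb{Q}(x_1,\ldots,x_r)$ (so the $x_i$ may be treated as distinct indeterminates).

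First I would introduce, for $m\ge 1$ and an auxiliary variable $z$, the rational function
\[
  G_m(z;w_1,\ldots,w_m) := \sum_{\pi\in S_m}
  \frac{1}{(z-w_{\pi(1)})(w_{\pi(1)}-w_{\pi(2)})\cdots(w_{\pi(m-1)}-w_{\pi(m)})},
\]
and prove by induction on $m$ that $G_m(z;w_1,\ldots,w_m)=\prod_{i=1}^m (z-w_i)^{-1}$. The base case $m=1$ is immediate. For the inductive step, group the sum according to the value $\pi(1)=i$; factoring out $(z-w_i)^{-1}$ leaves exactly the sum over all orderings of $\{w_j:j\ne i\}$, i.e.\ $G_{m-1}(w_i;\,w_1,\ldots,\widehat{w_i},\ldots,w_m)$, which by the inductive hypothesis equals $\prod_{j\ne i}(w_i-w_j)^{-1}$. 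Hence $G_m=\sum_{i=1}^m (z-w_i)^{-1}\prod_{j\ne i}(w_i-w_j)^{-1}$, and this equals $\prod_{i=1}^m(z-w_i)^{-1}$ by the usual partial-fraction decomposition: the right-hand side has residue $\prod_{j\ne i}(w_i-w_j)^{-1}$ at each simple pole $z=w_i$ and vanishes at $z=\infty$, so the two sides agree.

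Next I would apply this to the left-hand side of \eqref{eqn-rational-function}. Grouping the sum over $\sigma\in S_r$ by the value $\sigma(1)=k$ and factoring out $x_k^{s-1}$, the remaining sum over orderings of $\{x_j:j\ne k\}$ is precisely $G_{r-1}(x_k;\,x_1,\ldots,\widehat{x_k},\ldots,x_r)=\prod_{j\ne k}(x_k-x_j)^{-1}$. Thus the left-hand side of \eqref{eqn-rational-function} equals $\sum_{k=1}^r x_k^{s-1}\big/\prod_{j\ne k}(x_k-x_j)$, the Newton divided difference of $y\mapsto y^{s-1}$ at $x_1,\ldots,x_r$.

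Finally I would identify this with the complete homogeneous symmetric polynomial $h_{s-r}(x_1,\ldots,x_r)$ (with the convention $h_n=0$ for $n<0$, covering $s=r-1$). For this, expand $\prod_{i=1}^r(t-x_i)^{-1}$ in two ways for large $|t|$: partial fractions give $\sum_{k=1}^r (t-x_k)^{-1}\prod_{j\ne k}(x_k-x_j)^{-1}=\sum_{n\ge 0} t^{-n-1}\big(\sum_{k}x_k^{n}\big/\prod_{j\ne k}(x_k-x_j)\big)$, while directly $\prod_i(t-x_i)^{-1}=t^{-r}\prod_i(1-x_i/t)^{-1}=\sum_{m\ge 0} h_m(x)\,t^{-m-r}$; comparing the coefficient of $t^{-s}$ yields $\sum_{k}x_k^{s-1}\big/\prod_{j\ne k}(x_k-x_j)=h_{s-r}(x)$, which is the right-hand side of \eqref{eqn-rational-function}. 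There is no serious obstacle here; the only points needing a little care are keeping the manipulations inside the fraction field and the bookkeeping of the degenerate range $r-1\le s<r$, where both sides vanish.
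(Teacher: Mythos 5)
Your proof is correct and takes essentially the same route as the paper's: your identity $G_m(z;w)=\prod_i(z-w_i)^{-1}$, applied with $z=x_k$, is exactly the paper's identity \eqref{eqn-rational-function2}, proved there by the same induction combined with partial fractions. Likewise, your comparison of the two expansions of $\prod_i(t-x_i)^{-1}$ is the paper's sum-of-residues computation for $f_{\mathbf x}(t)=t^{r-s-1}\prod_i(1-tx_i)^{-1}$ after the substitution $t\mapsto 1/t$.
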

\ifdefined\SHOWPROOFS
\begin{proof}
  We only need to prove the identity for generic $\mathbf x = (x_1 ,\ldots,
  x_r)\in \mathbb C^r$. We assume that the $x_i$'s are distinct.
  Fixing $\mathbf x$, we define a rational function in $t$
  \[
    f_{\mathbf x} (t) = t^{r-s-1} \frac{1}{(1-tx_1)\cdots (1-tx_r)}.
  \]
  Then the right hand side of (\ref{eqn-rational-function}) is equal to
  \[
    \operatorname{Res}_{t=0}f_{\mathbf x}(t).
  \]
  The function $f_{\mathbf x}(t)$  has  no residue at $t=\infty$, hence the sum of
   residues at all finite poles is equal to zero. We compute
  \[
    \operatorname{Res}_{t=1/x_i} f_{\mathbf x}(t)=  \frac{-x_i^{s-1}}{(x_i-x_1)\cdots
      \widehat{(x_i-x_i)} \cdots (x_i-x_r)}.
  \]
  It remains to show that for each $i = 1 ,\ldots, r$, 
  \begin{equation}
    \label{eqn-rational-function2}
    \begin{aligned}
      &\frac{1}{(x_i-x_1)\cdots
        \widehat{(x_i-x_i)} \cdots (x_i-x_r)}  \\
      =& \sum_{\sigma\in S_r,\sigma(1)=i}
      \frac{1}{(x_{\sigma(1)}-x_{\sigma(2)})(x_{\sigma(2)}-x_{\sigma(3)})
        \cdots (x_{\sigma({r-1})}-x_{\sigma(r)})}
    \end{aligned}
  \end{equation}
  To prove (\ref{eqn-rational-function2}), we view both sides as functions in
  the single variable $x_i$ and view other variables as distinct constants. We
  verify that all poles are simple poles of equal residues on both sides
  (\ref{eqn-rational-function2}). Indeed, for $j\neq i$ the residue at $x_j$ of
  the left hand side is
  \[
    \frac{1}{(x_j-x_1)\cdots \widehat{(x_j-x_i)}\cdots \widehat{(x_j-x_j)}\cdots (x_j-x_r)};
  \]
  while on the right hand side the residue at $x_j$  is
  \[
    \sum_{\sigma\in S_r,\sigma(1)=i,\sigma(2)=j}
    \frac{1}{(x_{\sigma(2)}-x_{\sigma(3)})(x_{\sigma(3)}-x_{\sigma(4)})
      \cdots (x_{\sigma({r-1})}-x_{\sigma(r)})}.
  \]
  They are equal by induction on $r$. Now (\ref{eqn-rational-function2}) follows
  from the fact that both sides converge to $0$ as $x_i\to \infty$.
\end{proof}
\fi

\bibliographystyle{/home/yang/Dropbox/Research/bibtex/amsalpha-abbrev.bst}
\bibliography{/home/yang/Dropbox/Research/bibtex/references.bib}
\end{document}